%2multibyte Version: 5.50.0.2960 CodePage: 65001

\documentclass{amsart}
%%%%%%%%%%%%%%%%%%%%%%%%%%%%%%%%%%%%%%%%%%%%%%%%%%%%%%%%%%%%%%%%%%%%%%%%%%%%%%%%%%%%%%%%%%%%%%%%%%%%%%%%%%%%%%%%%%%%%%%%%%%%%%%%%%%%%%%%%%%%%%%%%%%%%%%%%%%%%%%%%%%%%%%%%%%%%%%%%%%%%%%%%%%%%%%%%%%%%%%%%%%%%%%%%%%%%%%%%%%%%%%%%%%%%%%%%%%%%%%%%%%%%%%%%%%%
\usepackage{amssymb}
\usepackage{amsfonts}

\setcounter{MaxMatrixCols}{10}
%TCIDATA{OutputFilter=LATEX.DLL}
%TCIDATA{Version=5.50.0.2960}
%TCIDATA{Codepage=65001}
%TCIDATA{<META NAME="SaveForMode" CONTENT="1">}
%TCIDATA{BibliographyScheme=Manual}
%TCIDATA{Created=Sunday, May 12, 2013 13:35:20}
%TCIDATA{LastRevised=Monday, May 25, 2015 14:48:45}
%TCIDATA{<META NAME="GraphicsSave" CONTENT="32">}
%TCIDATA{<META NAME="DocumentShell" CONTENT="Articles\SW\AMS Journal Article">}
%TCIDATA{CSTFile=amsartci.cst}
%TCIDATA{ComputeDefs=
%$\Omega \left( \theta \right) $
%}

\newtheorem{theorem}{Theorem}
\theoremstyle{plain}
\newtheorem{acknowledgement}{Acknowledgement}

\newtheorem{corollary}{Corollary}

\newtheorem{definition}{Definition}
\newtheorem{example}{Example}

\newtheorem{lemma}{Lemma}
\newtheorem{notation}{Notation}
\newtheorem{problem}{Problem}
\newtheorem{proposition}{Proposition}
\newtheorem{remark}{Remark}

\numberwithin{equation}{section}
\input{tcilatex}

\begin{document}
\title[Two weight boundedness]{A two weight theorem for $\alpha $-fractional
singular integrals with an energy side condition and quasicube testing}
\author[E.T. Sawyer]{Eric T. Sawyer}
\address{ Department of Mathematics \& Statistics, McMaster University, 1280
Main Street West, Hamilton, Ontario, Canada L8S 4K1 }
\email{sawyer@mcmaster.ca}
\thanks{Research supported in part by NSERC}
\author[C.-Y. Shen]{Chun-Yen Shen}
\address{ Department of Mathematics \\
National Central University \\
Chungli, 32054, Taiwan }
\email{chunyshen@gmail.com}
\thanks{C.-Y. Shen supported in part by the NSC, through grant
NSC102-2115-M-008-015-MY2}
\author[I. Uriarte-Tuero]{Ignacio Uriarte-Tuero}
\address{ Department of Mathematics \\
Michigan State University \\
East Lansing MI }
\email{ignacio@math.msu.edu}
\thanks{ I. Uriarte-Tuero has been partially supported by grants DMS-1056965
(US NSF), MTM2010-16232, MTM2009-14694-C02-01 (Spain), and a Sloan
Foundation Fellowship. }
\date{May 25, 2015}

\begin{abstract}
Let $\sigma $ and $\omega $ be locally finite positive Borel measures on $%
\mathbb{R}^{n}$ with no common point masses, and let $T^{\alpha }$\ be a
standard $\alpha $-fractional Calder\'{o}n-Zygmund operator on $\mathbb{R}%
^{n}$ with $0\leq \alpha <n$. Suppose that $\Omega :\mathbb{R}%
^{n}\rightarrow \mathbb{R}^{n}$ is a globally biLipschitz map, and refer to
the images $\Omega Q$ of cubes $Q$ as \emph{quasicubes}. Furthermore, assume
as side conditions the $\mathcal{A}_{2}^{\alpha }$ conditions and certain $%
\alpha $\emph{-energy conditions} taken over quasicubes. Then we show that $%
T^{\alpha }$ is bounded from $L^{2}\left( \sigma \right) $ to $L^{2}\left(
\omega \right) $ if the quasicube testing conditions hold for $T^{\alpha }$%
\textbf{\ }and its dual, and if the quasiweak boundedness property holds for 
$T^{\alpha }$.

Conversely, if $T^{\alpha }$ is bounded from $L^{2}\left( \sigma \right) $
to $L^{2}\left( \omega \right) $, then the quasitesting conditions hold, and
the quasiweak boundedness condition holds. If the vector of $\alpha $%
-fractional Riesz transforms $\mathbf{R}_{\sigma }^{\alpha }$ (or more
generally a strongly elliptic vector of transforms) is bounded from $%
L^{2}\left( \sigma \right) $ to $L^{2}\left( \omega \right) $, then the $%
\mathcal{A}_{2}^{\alpha }$ conditions hold. We do not know if our
quasienergy conditions are necessary when $n\geq 2$.
\end{abstract}

\maketitle
\tableofcontents

\section{Introduction}

In this paper we prove a two weight inequality for standard $\alpha $%
-fractional Calder\'{o}n-Zygmund operators $T^{\alpha }$ in Euclidean space $%
\mathbb{R}^{n}$, where we assume $n$-dimensional $\mathcal{A}_{2}^{\alpha }$
conditions and certain $\alpha $\emph{-energy conditions} as side conditions
(in higher dimensions the Poisson kernels used in these two conditions
differ). We state and prove our theorem in the more general setting of \emph{%
quasicubes}\footnote{%
The previous version \texttt{arXiv:1302.5093v9 }of this paper, with some
corrections and without quasicubes, will appear in Revista Mat. \cite%
{SaShUr5}. The current paper is not intended for publication.}. This
extension to quasicubes is for the most part a cosmetic modification of the
proof for usual cubes, but definitions need to be carefully made, and the
extension of cube arguments to quasicubes must be checked in detail. The
quasicube testing conditions enhance the flexibility of the $T1$ theorem,
since any fixed set of testing conditions is highly unstable - for this see
the examples of weight pairs in \cite{LaSaUr2}.

We begin by describing the notion of quasicube used in this paper - a
special case of the classical notion used in quasiconformal theory.

\begin{definition}
We say that a homeomorphism $\Omega :\mathbb{R}^{n}\rightarrow \mathbb{R}%
^{n} $ is a globally biLipschitz map if%
\begin{equation}
\left\Vert \Omega \right\Vert _{Lip}\equiv \sup_{x,y\in \mathbb{R}^{n}}\frac{%
\left\Vert \Omega \left( x\right) -\Omega \left( y\right) \right\Vert }{%
\left\Vert x-y\right\Vert }<\infty ,  \label{rigid}
\end{equation}%
and $\left\Vert \Omega ^{-1}\right\Vert _{Lip}<\infty $.
\end{definition}

Note that a globally biLipschitz map $\Omega $ is differentiable almost
everywhere, and that there are constants $c,C>0$ such that%
\begin{equation*}
c\leq J_{\Omega }\left( x\right) \equiv \left\vert \det D\Omega \left(
x\right) \right\vert \leq C,\ \ \ \ \ x\in \mathbb{R}^{n}.
\end{equation*}

\begin{example}
\label{wild}Quasicubes can be wildly shaped, as illustrated by the standard
example of a logarithmic spiral in the plane $f_{\varepsilon }\left(
z\right) =z\left\vert z\right\vert ^{2\varepsilon i}=ze^{i\varepsilon \ln
\left( z\overline{z}\right) }$. Indeed, $f_{\varepsilon }:\mathbb{%
C\rightarrow C}$ is a globally biLipschitz map with Lipschitz constant $%
1+C\varepsilon $ since $f_{\varepsilon }^{-1}\left( w\right) =w\left\vert
w\right\vert ^{-2\varepsilon i}$ and%
\begin{equation*}
\nabla f_{\varepsilon }=\left( \frac{\partial f_{\varepsilon }}{\partial z},%
\frac{\partial f_{\varepsilon }}{\partial \overline{z}}\right) =\left(
\left\vert z\right\vert ^{2\varepsilon i}+i\varepsilon \left\vert
z\right\vert ^{2\varepsilon i},i\varepsilon \frac{z}{\overline{z}}\left\vert
z\right\vert ^{2\varepsilon i}\right) .
\end{equation*}%
On the other hand, $f_{\varepsilon }$ behaves wildly at the origin since the
image of the closed unit interval on the real line under $f_{\varepsilon }$
is an infinite logarithmic spiral.
\end{example}

\begin{definition}
Suppose that $\Omega :\mathbb{R}^{n}\rightarrow \mathbb{R}^{n}$ is a
globally biLipschitz map.

\begin{enumerate}
\item If $E$ is a measurable subset of $\mathbb{R}^{n}$, we define $\Omega
E\equiv \left\{ \Omega \left( x\right) :x\in E\right\} $ to be the image of $%
E$ under the homeomorphism $\Omega $.

\begin{enumerate}
\item In the special case that $E=Q$ is a cube in $\mathbb{R}^{n}$, we will
refer to $\Omega Q$ as a quasicube (or $\Omega $-quasicube if $\Omega $ is
not clear from the context).

\item We define the center $c_{\Omega Q}=c\left( \Omega Q\right) $ of the
quasicube $\Omega Q$ to be the point $\Omega c_{Q}$ where $c_{Q}=c\left(
Q\right) $ is the center of $Q$.

\item We define the side length $\ell \left( \Omega Q\right) $ of the
quasicube $\Omega Q$ to be the sidelength $\ell \left( Q\right) $ of the
cube $Q$.

\item For $r>0$ we define the `dilation' $r\Omega Q$ of a quasicube $\Omega
Q $ to be $\Omega rQ$ where $rQ$ is the usual `dilation' of a cube in $%
\mathbb{R}^{n}$ that is concentric with $Q$ and having side length $r\ell
\left( Q\right) $.
\end{enumerate}

\item If $\mathcal{K}$ is a collection of cubes in $\mathbb{R}^{n}$, we
define $\Omega \mathcal{K}\equiv \left\{ \Omega Q:Q\in \mathcal{K}\right\} $
to be the collection of quasicubes $\Omega Q$ as $Q$ ranges over $\mathcal{K}
$.

\item If $\mathcal{F}$ is a grid of cubes in $\mathbb{R}^{n}$, we define the
inherited quasigrid structure on $\Omega \mathcal{F}$ by declaring that $%
\Omega Q$ is a child of $\Omega Q^{\prime }$ in $\Omega \mathcal{F}$ if $Q$
is a child of $Q^{\prime }$ in the grid $\mathcal{F}$.
\end{enumerate}
\end{definition}

Note that if $\Omega Q$ is a quasicube, then $\left\vert \Omega Q\right\vert
^{\frac{1}{n}}\approx \left\vert Q\right\vert ^{\frac{1}{n}}=\ell \left(
Q\right) =\ell \left( \Omega Q\right) $ shows that the measure of $\Omega Q$
is approximately its sidelength to the power $n$, more precisely there are
positive constants $c,C$ such that $c\left\vert J\right\vert ^{\frac{1}{n}%
}\leq \ell \left( J\right) \leq C\left\vert J\right\vert ^{\frac{1}{n}}$ for
any quasicube $J=\Omega Q$. We will generally use the expression $\left\vert
J\right\vert ^{\frac{1}{n}}$ in the various estimates arising in the proofs
below, but will often use $\ell \left( J\right) $ when defining collections
of quasicubes. Moreover, there are constants $R_{big}$ and $R_{small}$ such
that we have the comparability containments%
\begin{equation*}
Q+\Omega x_{Q}\subset R_{big}\Omega Q\text{ and }R_{small}\Omega Q\subset
Q+\Omega x_{Q}\ .
\end{equation*}

Given a fixed globally biLipschitz map $\Omega $ on $\mathbb{R}^{n}$, we
will define below the $n$-dimensional $\mathcal{A}_{2}^{\alpha }$
conditions, testing conditions, and energy conditions using $\Omega $%
-quasicubes in place of cubes, and we will refer to these new conditions as
quasi-$\mathcal{A}_{2}^{\alpha }$ (-testing, -energy) conditions. It turns
out that the $\mathcal{A}_{2}^{\alpha }$ conditions are equivalent to the
quasi-$\mathcal{A}_{2}^{\alpha }$ conditions, and so we can simply use the $%
\mathcal{A}_{2}^{\alpha }$ conditions throughout the paper. We will then
prove a $T1$ theorem with quasitesting and with a quasienergy side
condition. We now describe a particular case informally, and later explain
the full theorem in detail.

We show that for positive locally finite Borel measures $\sigma $ and $%
\omega $ without common point masses, and \emph{assuming} the quasienergy
conditions in the Theorem below, a strongly elliptic collection of standard $%
\alpha $-fractional Calder\'{o}n-Zygmund operators $\mathbf{T}^{\alpha }$ is
bounded from $L^{2}\left( \sigma \right) $ to $L^{2}\left( \omega \right) $,%
\begin{equation}
\left\Vert \mathbf{T}^{\alpha }\left( f\sigma \right) \right\Vert
_{L^{2}\left( \omega \right) }\lesssim \left\Vert f\right\Vert _{L^{2}\left(
\sigma \right) },  \label{2 weight}
\end{equation}%
(with $0\leq \alpha <n$) if and only if the $\mathcal{A}_{2}^{\alpha }$
conditions hold, the quasicube testing conditions for $\mathbf{T}^{\alpha }$
and its dual hold, and the quasiweak boundedness property holds. This
identifies the culprit in higher dimensions as the pair of quasienergy
conditions. We point out that these quasienergy conditions are implied by
higher dimensional analogues of essentially all the other side conditions
used previously in two weight theory, in particular doubling conditions, the
Energy Hypothesis (1.16) in \cite{LaSaUr2}, and the uniformly full dimension
assumption on the weights in version 3 of \cite{LaWi}.

The final argument by M. Lacey (\cite{Lac}) in the proof of the
Nazarov-Treil-Volberg conjecture for the Hilbert transform is the
culmination of a large body of work on two-weighted inequalities beginning
with the work of Nazarov, Treil and Volberg (\cite{NaVo}, \cite{NTV1}, \cite%
{NTV2}, \cite{NTV3} and \cite{Vol}) and continuing with that of Lacey and
the authors (\cite{LaSaUr1}, \cite{LaSaUr2}, \cite{LaSaShUr} and \cite%
{LaSaShUr2}), just to mention a few. See the references for further work. We
consider standard singular integrals $T$, as well as their $\alpha $%
-fractional counterparts $T^{\alpha }$, and include

\begin{enumerate}
\item the control of the functional quasienergy condition by the quasienergy
condition modulo $\mathcal{A}_{2}^{\alpha }$,

\item a proof of the necessity of the $\mathcal{A}_{2}^{\alpha }$ condition
for the boundedness of the vector of $\alpha $-fractional Riesz transforms $%
\mathbf{R}^{\alpha ,n}$,

\item the extensions of certain one-dimensional arguments to higher
dimension in light of the differing Poisson integrals used in the $\mathcal{A%
}_{2}^{\alpha }$ and energy conditions,

\item the treatment of certain complications arising from the Lacey-Wick
Monotonicity Lemma,

\item and of course the replacement of cubes by an arbitrary collection of
quasicubes throughout.
\end{enumerate}

These are the main ingredients in this paper. The final point is to adapt
the stopping time and recursion arguments of M. Lacey \cite{Lac} to complete
the proof of our theorem, but only after splitting the stopping form into
two sublinear stopping forms dictated by the right hand side of the
Lacey-Wick Monotonicity Lemma.

It turns out that in higher dimensions, there are two natural `Poisson
integrals' $\mathrm{P}^{\alpha }$ and $\mathcal{P}^{\alpha }$\ that arise,
the usual Poisson integral $\mathrm{P}^{\alpha }$ that emerges in connection
with energy considerations, and a different Poisson integral $\mathcal{P}%
^{\alpha }$ that emerges in connection with size considerations - in
dimension $n=1$ these two Poisson integrals coincide. The standard Poisson
integral $\mathrm{P}^{\alpha }$ appears in the energy conditions, and the
reproducing Poisson integral $\mathcal{P}^{\alpha }$ appears in the $%
\mathcal{A}_{2}$ condition. These two kernels coincide in dimension $n=1$
for the case $\alpha =0$ corresponding to the Hilbert transform.

\begin{acknowledgement}
We are grateful to Michael Lacey for pointing out a number of problems with
our arguments and various oversights in the versions of \cite{SaShUr}, \cite%
{SaShUr2} (now withdrawn), \cite{SaShUr3} on the \emph{arXiv}, including the
mistake in our monotonicity lemma, which has been corrected by M. Lacey and
B. Wick in version 1 of \cite{LaWi}, and in our consequent adaptation of the
stopping time and recursion argument in \cite{Lac}. See these preprints for
some of the details.
\end{acknowledgement}

\begin{remark}
There is overlap of the previous versions 1-6 of this paper \cite{SaShUr}
with the subsequent work of M. Lacey and B. Wick in versions 1 and 2 of \cite%
{LaWi}, but the authors there do not acknowledge this overlap. Some results
and some details of arguments in the current paper overlap with those in 
\cite{LaWi}. In particular: the Monotonicity Lemma \ref{mono} here is due to
Lacey and Wick in Lemma 4.2 of \cite{LaWi}; Lemma \ref{standard delta} here
is proved in \cite{LaWi}, but with the larger bound $\mathcal{A}_{2}^{\alpha
}$ there in place of $A_{2}^{\alpha }$; and an argument treating the
additional term in the Lacey-Wick Monotonicity Lemma as it arises in
functional energy is essentially in \cite{LaWi}. We note that the side
condition in \cite{LaWi} - uniformly full dimension - permits a reversal of
energy, something not assumed in this paper, and that reversal of energy
implies our energy conditions.
\end{remark}

\section{Statements of results}

Now we turn to a precise description of our two weight theorem. For this we
fix once and for all a globally biLipschitz map $\Omega :\mathbb{R}%
^{n}\rightarrow \mathbb{R}^{n}$ for use in all of our quasi-notions. In
order to state our theorem precisely, we need to define standard fractional
singular integrals, the two different Poisson kernels, and a quasienergy
condition sufficient for use in the proof of the two weight theorem. These
are introduced in the following subsections.

\subsection{Standard fractional singular integrals}

Let $0\leq \alpha <n$. Consider a kernel function $K^{\alpha }(x,y)$ defined
on $\mathbb{R}^{n}\times \mathbb{R}^{n}$ satisfying the following fractional
size and smoothness conditions of order $1+\delta $ for some $\delta >0$,%
\begin{eqnarray}
\left\vert K^{\alpha }\left( x,y\right) \right\vert &\leq &C_{CZ}\left\vert
x-y\right\vert ^{\alpha -n},  \label{sizeandsmoothness'} \\
\left\vert \nabla K^{\alpha }\left( x,y\right) \right\vert &\leq
&C_{CZ}\left\vert x-y\right\vert ^{\alpha -n-1},  \notag \\
\left\vert \nabla K^{\alpha }\left( x,y\right) -\nabla K^{\alpha }\left(
x^{\prime },y\right) \right\vert &\leq &C_{CZ}\left( \frac{\left\vert
x-x^{\prime }\right\vert }{\left\vert x-y\right\vert }\right) ^{\delta
}\left\vert x-y\right\vert ^{\alpha -n-1},\ \ \ \ \ \frac{\left\vert
x-x^{\prime }\right\vert }{\left\vert x-y\right\vert }\leq \frac{1}{2}, 
\notag \\
\left\vert \nabla K^{\alpha }\left( x,y\right) -\nabla K^{\alpha }\left(
x,y^{\prime }\right) \right\vert &\leq &C_{CZ}\left( \frac{\left\vert
y-y^{\prime }\right\vert }{\left\vert x-y\right\vert }\right) ^{\delta
}\left\vert x-y\right\vert ^{\alpha -n-1},\ \ \ \ \ \frac{\left\vert
y-y^{\prime }\right\vert }{\left\vert x-y\right\vert }\leq \frac{1}{2}. 
\notag
\end{eqnarray}

Then we define a standard $\alpha $-fractional Calder\'{o}n-Zygmund operator
associated with such a kernel as follows.

\begin{definition}
\label{def alpha standard}We say that $T^{\alpha }$ is a \emph{standard }$%
\alpha $\emph{-fractional singular integral operator with kernel $K^{\alpha
} $} if $T^{\alpha }$ is a bounded linear operator from some $L^{p}\left( 
\mathbb{R}^{n}\right) $ to some $L^{q}\left( \mathbb{R}^{n}\right) $ for
some fixed $1<p\leq q<\infty $, if $K^{\alpha }(x,y)$ is defined on $\mathbb{%
R}^{n}\times \mathbb{R}^{n}$ and satisfies (\ref{sizeandsmoothness'}), and
if $T^{\alpha }$ and $K^{\alpha }$ are related by%
\begin{equation*}
T^{\alpha }f(x)=\int K^{\alpha }(x,y)f(y)dy,\ \ \ \ \ \text{a.e.-}x\notin
supp\ f,
\end{equation*}%
whenever $f\in L^{p}\left( \mathbb{R}^{n}\right) $ has compact support in $%
\mathbb{R}^{n}$. We say $K^{\alpha }(x,y)$ is a \emph{standard }$\alpha $%
\emph{-fractional kernel} if it satisfies (\ref{sizeandsmoothness'}).
\end{definition}

We note that a more general definition of kernel has only order of
smoothness $\delta >0$, rather than $1+\delta $, but the use of the
Monotonicity and Energy Lemmas below, which involve first order Taylor
approximations to the kernel functions $K^{\alpha }\left( \cdot ,y\right) $,
requires order of smoothness more than $1$. A \emph{smooth truncation} of $%
T^{\alpha }$ has kernel $\eta _{\delta ,R}\left( \left\vert x-y\right\vert
\right) K^{\alpha }\left( x,y\right) $ for a smooth function $\eta _{\delta
,R}$ compactly supported in $\left( \delta ,R\right) $, $0<\delta <R<\infty $%
, and satisfying standard CZ estimates. A typical example of an $\alpha $%
-fractional transform is the $\alpha $-fractional \emph{Riesz} vector of
operators%
\begin{equation*}
\mathbf{R}^{\alpha ,n}=\left\{ R_{\ell }^{\alpha ,n}:1\leq \ell \leq
n\right\} ,\ \ \ \ \ 0\leq \alpha <n.
\end{equation*}%
The Riesz transforms $R_{\ell }^{n,\alpha }$ are convolution fractional
singular integrals $R_{\ell }^{n,\alpha }f\equiv K_{\ell }^{n,\alpha }\ast f$
with odd kernel defined by%
\begin{equation*}
K_{\ell }^{\alpha ,n}\left( w\right) \equiv \frac{w^{\ell }}{\left\vert
w\right\vert ^{n+1-\alpha }}\equiv \frac{\Omega _{\ell }\left( w\right) }{%
\left\vert w\right\vert ^{n-\alpha }},\ \ \ \ \ w=\left(
w^{1},...,w^{n}\right) .
\end{equation*}

However, in dealing with energy considerations, it is more convenient to use
the \emph{tangent line truncation} of\emph{\ }the Riesz transform $R_{\ell
}^{\alpha ,n}$ whose kernel is defined to be $\Omega _{\ell }\left( w\right)
\psi _{\delta ,R}^{\alpha }\left( \left\vert w\right\vert \right) $ where $%
\psi _{\delta ,R}^{\alpha }$ is continuously differentiable on an interval $%
\left( 0,S\right) $ with $0<\delta <R<S$, and where $\psi _{\delta
,R}^{\alpha }\left( r\right) =r^{\alpha -n}$ if $\delta \leq r\leq R$, and
has constant derivative on both $\left( 0,\delta \right) $ and $\left(
R,S\right) $ where $\psi _{\delta ,R}^{\alpha }\left( S\right) =0$. Note
that the tangent line extension of a $C^{1,\delta }$ function on the line is
again $C^{1,\delta }$ with no increase in the $C^{1,\delta }$ norm. As shown
in the one dimensional case in \cite{LaSaShUr3}, boundedness of $R_{\ell
}^{n,\alpha }$ with one set of appropriate truncations together with the $%
\mathcal{A}_{2}^{\alpha }$ condition below, is equivalent to boundedness of $%
R_{\ell }^{n,\alpha }$ with all truncations. Similar considerations apply to
general\emph{\ }$\alpha $-fractional singular integrals\emph{,} so that we
are free to use the tangent line truncations throughout the proof of our
theorem.

\subsection{Quasicube testing conditions}

Let $\mathcal{P}^{n}$ denote the collection of all half open half closed
cubes $Q=\dprod\limits_{k=1}^{n}\left[ c_{k}-\frac{\ell }{2},c_{k}+\frac{%
\ell }{2}\right) $ in $\mathbb{R}^{n}$ with sides \textbf{p}arallel to the
coordinate axes, and define the \textbf{c}enter of $Q$ to be $c_{Q}=\left(
c_{1},...,c_{n}\right) $ and the side \textbf{l}ength $\ell \left( Q\right) $
of $Q$ to be $\ell $. The following `dual' quasicube testing conditions are
necessary for the boundedness of $T^{\alpha }$ from $L^{2}\left( \sigma
\right) $ to $L^{2}\left( \omega \right) $, where $\mathcal{Q}^{n}$ :%
\begin{eqnarray*}
\mathfrak{T}_{T^{\alpha }}^{2} &\equiv &\sup_{Q\in \Omega \mathcal{P}^{n}}%
\frac{1}{\left\vert Q\right\vert _{\sigma }}\int_{Q}\left\vert T^{\alpha
}\left( \mathbf{1}_{Q}\sigma \right) \right\vert ^{2}\omega <\infty , \\
\left( \mathfrak{T}_{T^{\alpha }}^{\ast }\right) ^{2} &\equiv &\sup_{Q\in
\Omega \mathcal{P}^{n}}\frac{1}{\left\vert Q\right\vert _{\omega }}%
\int_{Q}\left\vert \left( T^{\alpha }\right) ^{\ast }\left( \mathbf{1}%
_{Q}\omega \right) \right\vert ^{2}\sigma <\infty ,
\end{eqnarray*}%
and where we interpret the right sides as holding uniformly over all tangent
line trucations of $T^{\alpha }$.

\begin{remark}
We alert the reader that the symbols $Q,I,J,K$ will all be used to denote
either cubes or quasicubes, and the context will make clear which is the
case. Throughout most of the proof of the main theorem only quasicubes are
considered.
\end{remark}

\subsection{Quasiweak boundedness property}

The quasiweak boundedness property for $T^{\alpha }$ with constant $C$ is
given by 
\begin{eqnarray*}
&&\left\vert \int_{Q}T^{\alpha }\left( 1_{Q^{\prime }}\sigma \right) d\omega
\right\vert \leq \mathcal{WBP}_{T^{\alpha }}\sqrt{\left\vert Q\right\vert
_{\omega }\left\vert Q^{\prime }\right\vert _{\sigma }}, \\
&&\ \ \ \ \ \text{for all quasicubes }Q,Q^{\prime }\text{ with }\frac{1}{C}%
\leq \frac{\left\vert Q\right\vert ^{\frac{1}{n}}}{\left\vert Q^{\prime
}\right\vert ^{\frac{1}{n}}}\leq C, \\
&&\ \ \ \ \ \text{and either }Q\subset 3Q^{\prime }\setminus Q^{\prime }%
\text{ or }Q^{\prime }\subset 3Q\setminus Q,
\end{eqnarray*}%
and where we interpret the left side above as holding uniformly over all
tangent line trucations of $T^{\alpha }$. Note that the quasiweak
boundedness property is implied by either the \emph{tripled} quasicube
testing condition,%
\begin{equation*}
\left\Vert \mathbf{1}_{3Q}\mathbf{T}^{\alpha }\left( \mathbf{1}_{Q}\sigma
\right) \right\Vert _{L^{2}\left( \omega \right) }\leq \mathfrak{T}_{\mathbf{%
T}^{\alpha }}^{\limfunc{triple}}\left\Vert \mathbf{1}_{Q}\right\Vert
_{L^{2}\left( \sigma \right) },\ \ \ \ \ \text{for all quasicubes }Q\text{
in }\mathbb{R}^{n},
\end{equation*}%
or the tripled dual quasicube testing condition defined with $\sigma $ and $%
\omega $ interchanged and the dual operator $\mathbf{T}^{\alpha ,\ast }$ in
place of $\mathbf{T}^{\alpha }$. In turn, the tripled quasicube testing
condition can be obtained from the quasicube testing condition for the
truncated weight pairs $\left( \omega ,\mathbf{1}_{Q}\sigma \right) $. See
also Remark \ref{surgery} below.

\subsection{Poisson integrals and $\mathcal{A}_{2}^{\protect\alpha }$}

Now let $\mu $ be a locally finite positive Borel measure on $\mathbb{R}^{n}$%
, and suppose $Q$ is an $\Omega $-quasicube in $\mathbb{R}^{n}$. Define $\mu
_{\Omega }$ to be the pushforward of the measure $\mu $ under the globally
biLipschitz map $\Omega $ given by $\int fd\mu _{\Omega }=\int \left( f\circ
\Omega \right) d\mu $, and similarly for $\nu _{\Omega ^{-1}}$. Recall that $%
\left\vert Q\right\vert ^{\frac{1}{n}}\approx \ell \left( Q\right) $ for a
quasicube $Q$. The two $\alpha $-fractional Poisson integrals of $\mu $ on a
quasicube $Q$ are given by:%
\begin{eqnarray*}
\mathrm{P}^{\alpha }\left( Q,\mu \right) &\equiv &\int_{\mathbb{R}^{n}}\frac{%
\left\vert Q\right\vert ^{\frac{1}{n}}}{\left( \left\vert Q\right\vert ^{%
\frac{1}{n}}+\left\vert x-x_{Q}\right\vert \right) ^{n+1-\alpha }}d\mu
\left( x\right) , \\
\mathcal{P}^{\alpha }\left( Q,\mu \right) &\equiv &\int_{\mathbb{R}%
^{n}}\left( \frac{\left\vert Q\right\vert ^{\frac{1}{n}}}{\left( \left\vert
Q\right\vert ^{\frac{1}{n}}+\left\vert x-x_{Q}\right\vert \right) ^{2}}%
\right) ^{n-\alpha }d\mu \left( x\right) ,
\end{eqnarray*}%
where we emphasize that $\left\vert x-x_{Q}\right\vert $ denote Euclidean
distance between $x$ and $x_{Q}$ and $\left\vert Q\right\vert $ denotes the
Lebesgue measure of the quasicube $Q$. At this point we observe that (\ref%
{rigid}) implies that if $Q=\Omega K$ where $K$ is a cube in $\mathbb{R}^{n}$%
, then the corresponding centers satisfy $c_{Q}=\Omega c_{K}$ and we have%
\begin{equation*}
\mathrm{P}^{\alpha }\left( K+c_{Q}-c_{K},\mu \right) \approx \mathrm{P}%
^{\alpha }\left( Q,\mu \right) =\mathrm{P}^{\alpha }\left( K,\mu _{\Omega
^{-1}}\right) \approx \mathrm{P}^{\alpha }\left( Q+c_{K}-c_{Q},\mu _{\Omega
^{-1}}\right) ,
\end{equation*}
and simlarly for $\mathcal{P}^{\alpha }$. We refer to $\mathrm{P}^{\alpha }$
as the \emph{standard} Poisson integral and to $\mathcal{P}^{\alpha }$ as
the \emph{reproducing} Poisson integral.

Let $\sigma $ and $\omega $ be locally finite positive Borel measures on $%
\mathbb{R}^{n}$ with no common point masses, and suppose $0\leq \alpha <n$.
First recall that the classical $A_{2}^{\alpha }$ constant is defined by the
following supremum over all usual cubes $K$ in $\mathcal{Q}^{n}$, 
\begin{equation*}
A_{2}^{\alpha }\left( \sigma ,\omega \right) \equiv \sup_{K\in \mathcal{P}%
^{n}}\frac{\left\vert K\right\vert _{\sigma }}{\left\vert K\right\vert ^{1-%
\frac{\alpha }{n}}}\frac{\left\vert K\right\vert _{\omega }}{\left\vert
K\right\vert ^{1-\frac{\alpha }{n}}},
\end{equation*}%
which is easily seen to be equivalent to both the supremum over all
quasicubes $Q$\ in $\Omega \mathcal{Q}$, 
\begin{equation*}
A_{2}^{\alpha }\left( \sigma ,\omega \right) \equiv \sup_{Q\in \Omega 
\mathcal{P}^{n}}\frac{\left\vert Q\right\vert _{\sigma }}{\left\vert
Q\right\vert ^{1-\frac{\alpha }{n}}}\frac{\left\vert Q\right\vert _{\omega }%
}{\left\vert Q\right\vert ^{1-\frac{\alpha }{n}}},
\end{equation*}%
and to the $A_{2}^{\alpha }$ condition for the weight pair $\left( \sigma
_{\Omega ^{-1}},\omega _{\Omega ^{-1}}\right) $ of pushforward measures
under $\Omega ^{-1}$: 
\begin{equation*}
A_{2}^{\alpha }\left( \sigma _{\Omega ^{-1}},\omega _{\Omega ^{-1}}\right)
\equiv \sup_{K\in \mathcal{P}^{n}}\frac{\left\vert K\right\vert _{\sigma
_{\Omega ^{-1}}}}{\left\vert K\right\vert ^{1-\frac{\alpha }{n}}}\frac{%
\left\vert K\right\vert _{\omega _{\Omega ^{-1}}}}{\left\vert K\right\vert
^{1-\frac{\alpha }{n}}}.
\end{equation*}%
We now define the \emph{one-tailed} $\mathcal{A}_{2}^{\alpha }$ constant
using $\mathcal{P}^{\alpha }$. The energy constants $\mathcal{E}_{\alpha }$
introduced in the next subsection will use the standard Poisson integral $%
\mathrm{P}^{\alpha }$.

\begin{notation}
Let $\Omega \mathcal{P}^{n}$ denote the collection of all quasicubes $%
Q=\Omega K$ where $K\in \mathcal{P}^{n}$ is a usual cube in $\mathbb{R}^{n}$
with sides parallel to the axes, and denote by $\Omega \mathcal{D}^{n}$ or
simply $\Omega \mathcal{D}$ a dyadic quasigrid in $\mathbb{R}^{n}$, where $%
\Omega \mathcal{D\subset }\Omega \mathcal{P}^{n}$. We will typically keep $%
\Omega $ in the notation for these global quasigrids $\Omega \mathcal{D}$,
but will usually suppress $\Omega $ from subquasigrids $\mathcal{F}\subset
\Omega \mathcal{D}$ of stopping times, etc.
\end{notation}

\begin{definition}
The one-sided constants $\mathcal{A}_{2}^{\alpha }$ and $\mathcal{A}%
_{2}^{\alpha ,\ast }$ for the weight pair $\left( \sigma ,\omega \right) $
are given by%
\begin{eqnarray*}
\mathcal{A}_{2}^{\alpha } &\equiv &\sup_{Q\in \Omega \mathcal{P}^{n}}%
\mathcal{P}^{\alpha }\left( Q,\sigma \right) \frac{\left\vert Q\right\vert
_{\omega }}{\left\vert Q\right\vert ^{1-\frac{\alpha }{n}}}<\infty , \\
\mathcal{A}_{2}^{\alpha ,\ast } &\equiv &\sup_{Q\in \Omega \mathcal{P}^{n}}%
\mathcal{P}^{\alpha }\left( Q,\omega \right) \frac{\left\vert Q\right\vert
_{\sigma }}{\left\vert Q\right\vert ^{1-\frac{\alpha }{n}}}<\infty .
\end{eqnarray*}
\end{definition}

These definitions are of course equivalent to the analogous definitions
using usual cubes, 
\begin{eqnarray*}
\mathcal{A}_{2}^{\alpha } &\equiv &\sup_{K\in \mathcal{P}^{n}}\mathcal{P}%
^{\alpha }\left( K,\sigma \right) \frac{\left\vert K\right\vert _{\omega }}{%
\left\vert K\right\vert ^{1-\frac{\alpha }{n}}}<\infty , \\
\mathcal{A}_{2}^{\alpha ,\ast } &\equiv &\sup_{K\in \mathcal{P}^{n}}\mathcal{%
P}^{\alpha }\left( K,\omega \right) \frac{\left\vert K\right\vert _{\sigma }%
}{\left\vert K\right\vert ^{1-\frac{\alpha }{n}}}<\infty .
\end{eqnarray*}%
which we can use interchangably. Now we turn to the definition of a
quasiHaar basis of $L^{2}\left( \mu \right) $.

\subsection{A weighted quasiHaar basis}

Recall we have fixed a globally biLipschitz map $\Omega :\mathbb{R}%
^{n}\rightarrow \mathbb{R}^{n}$. We will use a construction of a quasiHaar
basis in $\mathbb{R}^{n}$ that is adapted to a measure $\mu $ (c.f. \cite%
{NTV2} for the nonquasi case). Given a dyadic quasicube $Q\in \Omega 
\mathcal{D}$, let $\bigtriangleup _{Q}^{\mu }$ denote orthogonal projection
onto the finite dimensional subspace $L_{Q}^{2}\left( \mu \right) $ of $%
L^{2}\left( \mu \right) $ that consists of linear combinations of the
indicators of\ the children $\mathfrak{C}\left( Q\right) $ of $Q$ that have $%
\mu $-mean zero over $Q$:%
\begin{equation*}
L_{Q}^{2}\left( \mu \right) \equiv \left\{ f=\dsum\limits_{Q^{\prime }\in 
\mathfrak{C}\left( Q\right) }a_{Q^{\prime }}\mathbf{1}_{Q^{\prime
}}:a_{Q^{\prime }}\in \mathbb{R},\int_{Q}fd\mu =0\right\} .
\end{equation*}%
Then we have the important telescoping property for dyadic quasicubes $%
Q_{1}\subset Q_{2}$:%
\begin{equation}
\mathbf{1}_{Q_{0}}\left( x\right) \left( \dsum\limits_{Q\in \left[
Q_{1},Q_{2}\right] }\bigtriangleup _{Q}^{\mu }f\left( x\right) \right) =%
\mathbf{1}_{Q_{0}}\left( x\right) \left( \mathbb{E}_{Q_{0}}^{\mu }f-\mathbb{E%
}_{Q_{2}}^{\mu }f\right) ,\ \ \ \ \ Q_{0}\in \mathfrak{C}\left( Q_{1}\right)
,\ f\in L^{2}\left( \mu \right) .  \label{telescope}
\end{equation}%
We will at times find it convenient to use a fixed orthonormal basis $%
\left\{ h_{Q}^{\mu ,a}\right\} _{a\in \Gamma _{n}}$ of $L_{Q}^{2}\left( \mu
\right) $ where $\Gamma _{n}\equiv \left\{ 0,1\right\} ^{n}\setminus \left\{ 
\mathbf{1}\right\} $ is a convenient index set with $\mathbf{1}=\left(
1,1,...,1\right) $. Then $\left\{ h_{Q}^{\mu ,a}\right\} _{a\in \Gamma _{n}%
\text{ and }Q\in \Omega \mathcal{D}}$ is an orthonormal basis for $%
L^{2}\left( \mu \right) $, with the understanding that we add the constant
function $\mathbf{1}$ if $\mu $ is a finite measure. In particular we have%
\begin{equation*}
\left\Vert f\right\Vert _{L^{2}\left( \mu \right) }^{2}=\sum_{Q}\left\Vert
\bigtriangleup _{Q}^{\mu }f\right\Vert _{L^{2}\left( \mu \right)
}^{2}=\sum_{Q}\sum_{a\in \Gamma _{n}}\left\vert \widehat{f}\left( Q\right)
\right\vert ^{2},
\end{equation*}%
where%
\begin{equation*}
\left\vert \widehat{f}\left( Q\right) \right\vert ^{2}\equiv \sum_{a\in
\Gamma _{n}}\left\vert \left\langle f,h_{Q}^{\mu ,a}\right\rangle _{\mu
}\right\vert ^{2},
\end{equation*}%
and the measure is suppressed in the notation. Indeed, this follows from (%
\ref{telescope}) and Lebesgue's differentiation theorem for quasicubes. We
also record the following useful estimate. If $I^{\prime }$ is any of the $%
2^{n}$ $\Omega \mathcal{D}$-children of $I$, and $a\in \Gamma _{n}$, then 
\begin{equation}
\left\vert \mathbb{E}_{I^{\prime }}^{\mu }h_{I}^{\mu ,a}\right\vert \leq 
\sqrt{\mathbb{E}_{I^{\prime }}^{\mu }\left( h_{I}^{\mu ,a}\right) ^{2}}\leq 
\frac{1}{\sqrt{\left\vert I^{\prime }\right\vert _{\mu }}}.
\label{useful Haar}
\end{equation}

\subsection{Good quasigrids and quasienergy conditions}

Given a dyadic quasicube $K\in \Omega \mathcal{D}$ and a positive measure $%
\mu $ we define the quasiHaar projection $\mathsf{P}_{K}^{\mu }\equiv
\sum_{_{J\in \Omega \mathcal{D}:\ J\subset K}}\bigtriangleup _{J}^{\mu }$ on 
$K$ by 
\begin{equation*}
\mathsf{P}_{K}^{\mu }f=\sum_{_{J\in \Omega \mathcal{D}:\ J\subset
K}}\dsum\limits_{a\in \Gamma _{n}}\left\langle f,h_{J}^{\mu ,a}\right\rangle
_{\mu }h_{J}^{\mu ,a}\text{ and }\left\Vert \mathsf{P}_{K}^{\mu
}f\right\Vert _{L^{2}\left( \mu \right) }^{2}=\sum_{_{J\in \Omega \mathcal{D}%
:\ J\subset K}}\dsum\limits_{a\in \Gamma _{n}}\left\vert \left\langle
f,h_{J}^{\mu ,a}\right\rangle _{\mu }\right\vert ^{2},
\end{equation*}%
and where a quasiHaar basis $\left\{ h_{J}^{\mu ,a}\right\} _{a\in \Gamma
_{n}\text{ and }J\in \Omega \mathcal{D}}$ adapted to the measure $\mu $ was
defined in the section on a weighted quasiHaar basis above.

Now we define various notions for quasicubes which are inherited from the
same notions for cubes. The main objective here is to use the familiar
notation that one uses for cubes, but now extended to $\Omega $-quasicubes.
We have already introduced quasigrids $\Omega \mathcal{D}$, the notions of
center, sidelength and dyadic associated to quasicubes $Q\in \Omega \mathcal{%
P}^{n}$, as well as quasiHaar functions, and we will continue to extend to
quasicubes the additional familiar notions related to cubes as we come
across them. We begin with the notion of \emph{good}. Fix a quasigrid $%
\Omega \mathcal{D}$. We say that a dyadic quasicube $J$ is $\left( \mathbf{r}%
,\varepsilon \right) $-\emph{deeply embedded} in a (not necessarily dyadic)
quasicube $K$, or simply $\mathbf{r}$\emph{-deeply embedded} in $K$, which
we write as $J\Subset _{\mathbf{r}}K$, when $J\subset K$ and both 
\begin{eqnarray}
\ell \left( J\right) &\leq &2^{-\mathbf{r}}\ell \left( K\right) ,
\label{def deep embed} \\
\limfunc{quasidist}\left( J,\partial K\right) &\geq &\frac{1}{2}\ell \left(
J\right) ^{\varepsilon }\ell \left( K\right) ^{1-\varepsilon },  \notag
\end{eqnarray}%
where we define the quasidistance $\limfunc{quasidist}\left( E,F\right) $
between two sets $E$ and $F$ to be the Euclidean distance $\limfunc{dist}%
\left( \Omega ^{-1}E,\Omega ^{-1}F\right) $ between the preimages $\Omega
^{-1}E$ and $\Omega ^{-1}F$ of $E$ and $F$ under the map $\Omega $, and
where we recall that $\ell \left( J\right) \approx \left\vert J\right\vert ^{%
\frac{1}{n}}$. For the most part we will consider $J\Subset _{\mathbf{r}}K$
when $J$ and $K$ belong to a common quasigrid $\Omega \mathcal{D}$, but an
exception is made when defining the refined energy constant below.

\begin{definition}
Let $\mathbf{r}\in \mathbb{N}$ and $0<\varepsilon <1$. Fix a quasigrid $%
\Omega \mathcal{D}$. A dyadic quasicube $J$ is $\left( \mathbf{r}%
,\varepsilon \right) $\emph{-good}, or simply \emph{good}, if for \emph{every%
} dyadic superquasicube $I$, it is the case that \textbf{either} $J$ has
side length at least $2^{-\mathbf{r}}$ times that of $I$, \textbf{or} $%
J\Subset _{\mathbf{r}}I$ is $\left( \mathbf{r},\varepsilon \right) $-deeply
embedded in $I$.
\end{definition}

Note that this definition simply asserts that a dyadic quasicube $J=\Omega
J^{\prime }$ is $\left( \mathbf{r},\varepsilon \right) $-good if and only if
the cube $J^{\prime }$ is $\left( \mathbf{r},\varepsilon \right) $-good in
the usual sense. Finally, we say that $J$ is $\mathbf{r}$\emph{-nearby} in $%
K $ when $J\subset K$ and%
\begin{equation*}
\ell \left( J\right) >2^{-\mathbf{r}}\ell \left( K\right) .
\end{equation*}%
The parameters $\mathbf{r},\varepsilon $ will be fixed sufficiently large
and small respectively later in the proof, and we denote the set of such
good dyadic quasicubes by $\Omega \mathcal{D}_{\limfunc{good}}$.

Throughout the proof, it will be convenient to also consider pairs of
quasicubes $J,K$ where $J$ is $\mathbf{\rho }$\emph{-deeply embedded} in $K$%
, written $J\Subset _{\mathbf{\rho }}K$ and meaning (\ref{def deep embed})
holds with the same $\varepsilon >0$ but with $\mathbf{\rho }$ in place of $%
\mathbf{r}$; as well as pairs of quasicubes $J,K$ where $J$ is $\mathbf{\rho 
}$\emph{-nearby} in\emph{\ }$K$, $\ell \left( J\right) >2^{-\mathbf{\rho }%
}\ell \left( K\right) $, for a parameter $\mathbf{\rho }\gg \mathbf{r}$ that
will be fixed later. We define the smaller `good' quasiHaar projection $%
\mathsf{P}_{K}^{\limfunc{good},\omega }$ by%
\begin{equation*}
\mathsf{P}_{K}^{\limfunc{good},\mu }f\equiv \sum_{_{J\in \mathcal{G}\left(
K\right) }}\bigtriangleup _{J}^{\mu }f=\sum_{_{J\in \mathcal{G}\left(
K\right) }}\dsum\limits_{a\in \Gamma _{n}}\left\langle f,h_{J}^{\mu
,a}\right\rangle _{\mu }h_{J}^{\mu ,a},
\end{equation*}%
where $\mathcal{G}\left( K\right) $ consists of the good subcubes of $K$:%
\begin{equation*}
\mathcal{G}\left( K\right) \equiv \left\{ J\in \Omega \mathcal{D}_{\limfunc{%
good}}:J\subset K\right\} ,
\end{equation*}%
and also the larger `subgood' quasiHaar projection $\mathsf{P}_{K}^{\limfunc{%
subgood},\mu }$ by%
\begin{equation*}
\mathsf{P}_{K}^{\limfunc{subgood},\mu }f\equiv \sum_{_{J\in \mathcal{M}_{%
\limfunc{good}}\left( K\right) }}\sum_{J^{\prime }\subset J}\bigtriangleup
_{J^{\prime }}^{\mu }f=\sum_{_{J\in \mathcal{M}_{\limfunc{good}}\left(
K\right) }}\sum_{J^{\prime }\subset J}\dsum\limits_{a\in \Gamma
_{n}}\left\langle f,h_{J^{\prime }}^{\mu ,a}\right\rangle _{\mu
}h_{J^{\prime }}^{\mu ,a},
\end{equation*}%
where $\mathcal{M}_{\limfunc{good}}\left( K\right) $ consists of the \emph{%
maximal} good subcubes of $K$. We thus have 
\begin{eqnarray*}
\left\Vert \mathsf{P}_{K}^{\limfunc{good},\mu }\mathbf{x}\right\Vert
_{L^{2}\left( \mu \right) }^{2} &\leq &\left\Vert \mathsf{P}_{K}^{\limfunc{%
subgood},\mu }\mathbf{x}\right\Vert _{L^{2}\left( \mu \right) }^{2} \\
&\leq &\left\Vert \mathsf{P}_{I}^{\mu }\mathbf{x}\right\Vert _{L^{2}\left(
\mu \right) }^{2}=\int_{I}\left\vert \mathbf{x}-\left( \frac{1}{\left\vert
I\right\vert _{\mu }}\int_{I}\mathbf{x}dx\right) \right\vert ^{2}d\mu \left(
x\right) ,\ \ \ \ \ \mathbf{x}=\left( x_{1},...,x_{n}\right) ,
\end{eqnarray*}%
where $\mathsf{P}_{I}^{\mu }\mathbf{x}$ is the orthogonal projection of the
identity function $\mathbf{x}:\mathbb{R}^{n}\rightarrow \mathbb{R}^{n}$ onto
the vector-valued subspace of $\oplus _{k=1}^{n}L^{2}\left( \mu \right) $
consisting of functions supported in $I$ with $\mu $-mean value zero.

At this point we emphasize that in the setting of quasicubes we continue to
use the linear function $\mathbf{x}$ and not the pushforward of $\mathbf{x}$
by $\Omega $. The reason of course is that the quasienergy defined below is
used to capture the first order information in the Taylor expansion of a
singular kernel.

Recall that in dimension $n=1$, and for $\alpha =0$, the energy condition
constant was defined by%
\begin{equation*}
\mathcal{E}^{2}\equiv \sup_{I=\dot{\cup}I_{r}}\frac{1}{\left\vert
I\right\vert _{\sigma }}\sum_{r=1}^{\infty }\left( \frac{\mathrm{P}^{\alpha
}\left( I_{r},\mathbf{1}_{I}\sigma \right) }{\left\vert I_{r}\right\vert }%
\right) ^{2}\left\Vert \mathsf{P}_{I_{r}}^{\omega }\mathbf{x}\right\Vert
_{L^{2}\left( \omega \right) }^{2}\ ,
\end{equation*}%
where $I$, $I_{r}$ and $J$ are intervals in the real line. The extension of
the energy conditions to higher dimensions uses the collection 
\begin{equation*}
\mathcal{M}_{\mathbf{r}-\limfunc{deep}}\left( K\right) \equiv \left\{ \text{%
maximal }J\Subset _{\mathbf{r}}K\right\}
\end{equation*}%
of \emph{maximal} $\mathbf{r}$-deeply embedded dyadic subquasicubes of a
quasicube $K$ (a subquasicube $J$ of $K$ is a \emph{dyadic} subquasicube of $%
K$ if $J\in \Omega \mathcal{D}$ when $\Omega \mathcal{D}$ is a dyadic
quasigrid containing $K$). We let $J^{\ast }=\gamma J$ where $\gamma \geq 2$%
. Then the goodness parameter $\mathbf{r}$ is chosen sufficiently large,
depending on $\varepsilon $ and $\gamma $, that the bounded overlap property 
\begin{equation}
\sum_{J\in \mathcal{M}_{\mathbf{r}-\limfunc{deep}}\left( K\right) }\mathbf{1}%
_{J^{\ast }}\leq \beta \mathbf{1}_{K}\ ,  \label{bounded overlap}
\end{equation}%
holds for some positive constant $\beta $ depending only on $n,\gamma ,%
\mathbf{r}$ and $\varepsilon $. Indeed, the maximal $\mathbf{r}$-deeply
embedded subquasicubes $J$ of $K$ satisfy the condition%
\begin{equation*}
c_{n}\ell \left( J\right) ^{\varepsilon }\ell \left( K\right)
^{1-\varepsilon }\leq \limfunc{quasidist}\left( J,K^{c}\right) \leq
C_{n}\ell \left( J\right) ^{\varepsilon }\ell \left( K\right)
^{1-\varepsilon },
\end{equation*}%
Now with $0<\varepsilon <1$ and $\gamma \geq 2$ fixed, choose $\mathbf{r}$
so large that $2^{-\left( 1-\varepsilon \right) \mathbf{r}}<\frac{c_{n}}{%
2\gamma }$. Let $y\in K$. Then if $y\in \gamma J$, we have%
\begin{eqnarray*}
c_{n}\ell \left( J\right) ^{\varepsilon }\ell \left( K\right)
^{1-\varepsilon } &\leq &\limfunc{quasidist}\left( J,K^{c}\right) \leq
\gamma \ell \left( J\right) +\limfunc{quasidist}\left( \gamma J,K^{c}\right)
\\
&\leq &\gamma \ell \left( J\right) +\limfunc{quasidist}\left( y,K^{c}\right)
,
\end{eqnarray*}%
which implies%
\begin{equation*}
\frac{c_{n}}{2}\ell \left( J\right) ^{\varepsilon }\ell \left( K\right)
^{1-\varepsilon }\leq \limfunc{quasidist}\left( y,K^{c}\right) .
\end{equation*}%
But we also have 
\begin{equation*}
\limfunc{quasidist}\left( y,K^{c}\right) \leq \ell \left( J\right)
+dist\left( J,K^{c}\right) \leq \ell \left( J\right) +C_{n}\ell \left(
J\right) ^{\varepsilon }\ell \left( K\right) ^{1-\varepsilon }\leq \left( 
\frac{c_{n}}{2\gamma }+C_{n}\right) \ell \left( J\right) ^{\varepsilon }\ell
\left( K\right) ^{1-\varepsilon },
\end{equation*}%
and so altogether,%
\begin{equation*}
\frac{1}{\frac{c_{n}}{2\gamma }+C_{n}}\limfunc{quasidist}\left(
y,K^{c}\right) \leq \ell \left( J\right) ^{\varepsilon }\ell \left( K\right)
^{1-\varepsilon }\leq \frac{2}{c_{n}}\limfunc{quasidist}\left(
y,K^{c}\right) ,
\end{equation*}%
which proves (\ref{bounded overlap}) since the number $\beta $ of dyadic
numbers $2^{j}=\ell \left( J\right) $ that satisfy this last inequality is
bounded independent of $K$ and $y$.

For a fixed a dyadic grid $\Omega \mathcal{D}$, we define a refinement of $%
\mathcal{M}_{\mathbf{r}-\limfunc{deep},\Omega \mathcal{D}}\left( K\right) $
for certain $K$ and each $\ell \geq 1$ as follows. First, we say that a
quasicube $K$ (not necessarily in $\Omega \mathcal{D}$) is a \emph{shifted} $%
\Omega \mathcal{D}$-quasicube if it is a union of $2^{n}$ $\Omega \mathcal{D}
$-quasicubes $K^{\prime }$ with side length $\ell \left( K^{\prime }\right) =%
\frac{1}{2}\ell \left( K\right) $. Thus for any $\Omega \mathcal{D}$%
-quasicube $L$ there are exactly $2^{n}$ shifted $\Omega \mathcal{D}$%
-quasicubes of twice the side length that contain $L$, and one of them is of
course the $\Omega \mathcal{D}$-parent of $L$. We denote the collection of
shifted $\Omega \mathcal{D}$-quasicubes by $\mathcal{S}\Omega \mathcal{D}$.
Now for a shifted quasicube $K\in \mathcal{S}\Omega \mathcal{D}$, define $%
\mathcal{M}_{\mathbf{r}-\limfunc{deep},\Omega \mathcal{D}}\left( K\right) $
to consist of the \emph{maximal} $\mathbf{r}$-deeply embedded $\Omega 
\mathcal{D}$-dyadic subquasicubes $J$ of $K$. (In the special case that $K$
itself belongs to $\Omega \mathcal{D}$, then $\mathcal{M}_{\mathbf{r}-%
\limfunc{deep},\Omega \mathcal{D}}\left( K\right) =\mathcal{M}_{\mathbf{r}-%
\limfunc{deep}}\left( K\right) $). Then for $\ell \geq 1$ we define the
refinement%
\begin{eqnarray*}
\mathcal{M}_{\mathbf{r}-\limfunc{deep},\Omega \mathcal{D}}^{\ell }\left(
K\right) &\equiv &\left\{ J\in \mathcal{M}_{\mathbf{r}-\limfunc{deep},\Omega 
\mathcal{D}}\left( \pi ^{\ell }K^{\prime }\right) \text{ for some }K^{\prime
}\in \mathfrak{C}_{\Omega \mathcal{D}}\left( K\right) :\right. \\
&&\ \ \ \ \ \ \ \ \ \ \ \ \ \ \ \ \ \ \ \ \ \ \ \ \ \ \ \ \ \ \left.
J\subset L\text{ for some }L\in \mathcal{M}_{\mathbf{r}-\limfunc{deep}%
}\left( K\right) \right\} ,
\end{eqnarray*}%
where $\mathfrak{C}_{\Omega \mathcal{D}}\left( K\right) $ is the obvious
extension to shifted quasicubes of the set of $\Omega \mathcal{D}$-children.
Thus $\mathcal{M}_{\mathbf{r}-\limfunc{deep},\Omega \mathcal{D}}^{\ell
}\left( K\right) $ is the union, over all children $K^{\prime }$ of $K$, of
those quasicubes in $\mathcal{M}_{\mathbf{r}-\limfunc{deep}}\left( \pi
^{\ell }K^{\prime }\right) $ that happen to be contained in some $L\in 
\mathcal{M}_{\mathbf{r}-\limfunc{deep},\Omega \mathcal{D}}\left( K\right) $.

Since $J\in \mathcal{M}_{\mathbf{r}-\limfunc{deep},\Omega \mathcal{D}}^{\ell
}\left( K\right) $ implies $\gamma J\subset K$, we also have from (\ref%
{bounded overlap}) that%
\begin{equation}
\sum_{J\in \mathcal{M}_{\mathbf{r}-\limfunc{deep},\Omega \mathcal{D}}^{\ell
}\left( K\right) }\mathbf{1}_{J^{\ast }}\leq \beta \mathbf{1}_{K}\ ,\ \ \ \
\ \text{for each }\ell \geq 0.  \label{bounded overlap'}
\end{equation}%
Of course $\mathcal{M}_{\mathbf{r}-\limfunc{deep},\Omega \mathcal{D}%
}^{1}\left( K\right) \supset \mathcal{M}_{\mathbf{r}-\limfunc{deep}}\left(
K\right) $, but $\mathcal{M}_{\mathbf{r}-\limfunc{deep},\Omega \mathcal{D}%
}^{\ell }\left( K\right) $ is in general a finer subdecomposition of $K$ the
larger $\ell $ is, and may in fact be empty.

Now we proceed to the definition of various quasienergy conditions, which
are symbolically identical to those defined for usual cubes.

\begin{definition}
\label{energy condition}Suppose $\sigma $ and $\omega $ are positive locally
finite Borel measures on $\mathbb{R}^{n}$ without common point masses, and
fix $\gamma \geq 2$. Then the\ deep quasienergy condition constant $\mathcal{%
E}_{\alpha }^{\limfunc{deep}}$, the refined quasienergy condition constant $%
\mathcal{E}_{\alpha }^{\limfunc{refined}}$, and finally the quasienergy
condition constant $\mathcal{E}_{\alpha }$ itself, are given by%
\begin{eqnarray*}
\left( \mathcal{E}_{\alpha }^{\limfunc{deep}}\right) ^{2} &\equiv &\sup_{I=%
\dot{\cup}I_{r}}\frac{1}{\left\vert I\right\vert _{\sigma }}%
\sum_{r=1}^{\infty }\sum_{J\in \mathcal{M}_{\mathbf{r}-\limfunc{deep}}\left(
I_{r}\right) }\left( \frac{\mathrm{P}^{\alpha }\left( J,\mathbf{1}%
_{I\setminus \gamma J}\sigma \right) }{\left\vert J\right\vert ^{\frac{1}{n}}%
}\right) ^{2}\left\Vert \mathsf{P}_{J}^{\limfunc{subgood},\omega }\mathbf{x}%
\right\Vert _{L^{2}\left( \omega \right) }^{2}, \\
\left( \mathcal{E}_{\alpha }^{\limfunc{refined}}\right) ^{2} &\equiv
&\sup_{\Omega \mathcal{D}}\sup_{I}\sup_{\ell \geq 1}\frac{1}{\left\vert
I\right\vert _{\sigma }}\sum_{J\in \mathcal{M}_{\mathbf{r}-\limfunc{deep}%
,\Omega \mathcal{D}}^{\ell }\left( I\right) }\left( \frac{\mathrm{P}^{\alpha
}\left( J,\mathbf{1}_{I\setminus \gamma J}\sigma \right) }{\left\vert
J\right\vert ^{\frac{1}{n}}}\right) ^{2}\left\Vert \mathsf{P}_{J}^{\limfunc{%
subgood},\omega }\mathbf{x}\right\Vert _{L^{2}\left( \omega \right) }^{2} \\
\left( \mathcal{E}_{\alpha }\right) ^{2} &\equiv &\left( \mathcal{E}_{\alpha
}^{\limfunc{deep}}\right) ^{2}+\left( \mathcal{E}_{\alpha }^{\limfunc{refined%
}}\right) ^{2}\ .
\end{eqnarray*}%
where $\sup_{\Omega \mathcal{D}}\sup_{I}$ in the second line is taken over
all quasigrids $\Omega \mathcal{D}$ and shifted quasicubes $I\in \mathcal{S}%
\Omega \mathcal{D}$, and $\sup_{I=\dot{\cup}I_{r}}$ in the first line is
taken over

\begin{enumerate}
\item all dyadic quasigrids $\Omega \mathcal{D}$,

\item all $\Omega \mathcal{D}$-dyadic quasicubes $I$,

\item and all subpartitions $\left\{ I_{r}\right\} _{r=1}^{N\text{ or }%
\infty }$ of the quasicube $I$ into $\Omega \mathcal{D}$-dyadic
subquasicubes $I_{r}$.
\end{enumerate}
\end{definition}

\begin{remark}
The refined quasienergy condition appears in only one place in this paper,
namely in the right hand side of the estimate (\ref{B bound}) in Lemma \ref%
{refined lemma} below.
\end{remark}

We emphasize that in the above definitions we take $I$, $I_{r}$ and $J$ to
be quasicubes and $\mathrm{P}^{\alpha }\left( J,\mathbf{1}_{I\setminus \text{
}\gamma J}\sigma \right) $ and $\mathsf{P}_{J}^{\limfunc{subgood},\omega }$
as defined above for quasicubes, and as mentioned earlier $\mathbf{x}$
remains the identity function.

Note that in the refined quasienergy condition there is no outer
decomposition $I=\dot{\cup}I_{r}$. There are similar definitions for the
dual (backward) quasienergy conditions that simply interchange $\sigma $ and 
$\omega $ everywhere. These definitions of\ the quasienergy conditions
depend on the choice of $\gamma $ and the goodness parameters $\mathbf{r}$
and $\varepsilon $. Note that we can `plug the $\gamma $-hole' in the
Poisson integral $\mathrm{P}^{\alpha }\left( J,\mathbf{1}_{I\setminus \gamma
J}\sigma \right) $ for both $\mathcal{E}_{\alpha }^{\limfunc{deep}}$ and $%
\mathcal{E}_{\alpha }^{\limfunc{refined}}$ using the $A_{2}^{\alpha }$
condition and the bounded overlap property (\ref{bounded overlap'}). Indeed,
define 
\begin{eqnarray}
&&  \label{plug} \\
&&\left( \mathcal{E}_{\alpha }^{\limfunc{deep}\limfunc{plug}}\right)
^{2}\equiv \sup_{I=\dot{\cup}I_{r}}\frac{1}{\left\vert I\right\vert _{\sigma
}}\sum_{r=1}^{\infty }\sum_{J\in \mathcal{M}_{\mathbf{r}-\limfunc{deep}%
}\left( I_{r}\right) }\left( \frac{\mathrm{P}^{\alpha }\left( J,\mathbf{1}%
_{I}\sigma \right) }{\left\vert J\right\vert ^{\frac{1}{n}}}\right)
^{2}\left\Vert \mathsf{P}_{J}^{\limfunc{subgood},\omega }\mathbf{x}%
\right\Vert _{L^{2}\left( \omega \right) }^{2}\ ,  \notag \\
&&\left( \mathcal{E}_{\alpha }^{\limfunc{refined}\limfunc{plug}}\right)
^{2}\equiv \sup_{\Omega \mathcal{D}}\sup_{I}\sup_{\ell \geq 0}\frac{1}{%
\left\vert I\right\vert _{\sigma }}\sum_{J\in \mathcal{M}_{\mathbf{r}-%
\limfunc{deep},\Omega \mathcal{D}}^{\ell }\left( I\right) }\left( \frac{%
\mathrm{P}^{\alpha }\left( J,\mathbf{1}_{I}\sigma \right) }{\left\vert
J\right\vert ^{\frac{1}{n}}}\right) ^{2}\left\Vert \mathsf{P}_{J}^{\limfunc{%
subgood},\omega }\mathbf{x}\right\Vert _{L^{2}\left( \omega \right) }^{2}\ .
\notag
\end{eqnarray}%
Recall that we have both%
\begin{eqnarray}
&&\left( \mathcal{E}_{\alpha }^{\limfunc{deep}\limfunc{plug}}\right) ^{2}
\label{plug the hole deep} \\
&\lesssim &\sup_{I=\dot{\cup}I_{r}}\frac{1}{\left\vert I\right\vert _{\sigma
}}\sum_{r=1}^{\infty }\sum_{J\in \mathcal{M}_{\mathbf{r}-\limfunc{deep}%
}\left( I_{r}\right) }\left( \frac{\mathrm{P}^{\alpha }\left( J,\mathbf{1}%
_{I\setminus \gamma J}\sigma \right) }{\left\vert J\right\vert ^{\frac{1}{n}}%
}\right) ^{2}\left\Vert \mathsf{P}_{J}^{\limfunc{subgood},\omega }\mathbf{x}%
\right\Vert _{L^{2}\left( \omega \right) }^{2}  \notag \\
&&+\sup_{I=\dot{\cup}I_{r}}\frac{1}{\left\vert I\right\vert _{\sigma }}%
\sum_{r=1}^{\infty }\sum_{J\in \mathcal{M}_{\mathbf{r}-\limfunc{deep}}\left(
I_{r}\right) }\left( \frac{\mathrm{P}^{\alpha }\left( J,\mathbf{1}_{\gamma
J}\sigma \right) }{\left\vert J\right\vert ^{\frac{1}{n}}}\right)
^{2}\left\Vert \mathsf{P}_{J}^{\limfunc{subgood},\omega }\mathbf{x}%
\right\Vert _{L^{2}\left( \omega \right) }^{2}  \notag \\
&\lesssim &\left( \mathcal{E}_{\alpha }\right) ^{2}+\sup_{I=\dot{\cup}I_{r}}%
\frac{1}{\left\vert I\right\vert _{\sigma }}\sum_{r=1}^{\infty }\sum_{J\in 
\mathcal{M}_{\mathbf{r}-\limfunc{deep}}\left( I_{r}\right) }\left( \frac{%
\left\vert \gamma J\right\vert _{\sigma }}{\left\vert J\right\vert ^{1+\frac{%
1}{n}-\frac{\alpha }{n}}}\right) ^{2}\left\vert J\right\vert ^{\frac{2}{n}%
}\left\vert J\right\vert _{\omega }  \notag \\
&\lesssim &\left( \mathcal{E}_{\alpha }\right) ^{2}+A_{2}^{\alpha }\sup_{I=%
\dot{\cup}I_{r}}\frac{1}{\left\vert I\right\vert _{\sigma }}%
\sum_{r=1}^{\infty }\sum_{J\in \mathcal{M}_{\mathbf{r}-\limfunc{deep}}\left(
I_{r}\right) }\left\vert \gamma J\right\vert _{\sigma }\lesssim \left( 
\mathcal{E}_{\alpha }^{\limfunc{deep}}\right) ^{2}+\beta A_{2}^{\alpha }\ , 
\notag
\end{eqnarray}%
and similarly 
\begin{equation}
\left( \mathcal{E}_{\alpha }^{\limfunc{refined}\limfunc{plug}}\right)
^{2}\lesssim \left( \mathcal{E}_{\alpha }^{\limfunc{refined}}\right)
^{2}+\beta A_{2}^{\alpha }  \label{plug the hole refined}
\end{equation}%
by (\ref{bounded overlap}) and (\ref{bounded overlap'}) respectively.

\begin{notation}
As above, we will typically use the side length $\ell \left( J\right) $ of a 
$\Omega $-quasicube when we are describing collections of quasicubes, and
when we want $\ell \left( J\right) $ to be a dyadic number; while in
estimates we will typically use $\left\vert J\right\vert ^{\frac{1}{n}%
}\approx \ell \left( J\right) $, and when we want to compare powers of
volumes of quasicubes. We will continue to use the prefix `quasi' when
discussing quasicubes, quasiHaar, quasienergy and quasidistance in the text,
but will not use the prefix `quasi' when discussing other notions. Finally,
we will not modify any mathematical symbols to reflect quasinotions, except
for using $\Omega \mathcal{D}$ to denote a quasigrid, and $\limfunc{quasidist%
}\left( E,F\right) \equiv \limfunc{dist}\left( \Omega ^{-1}E,\Omega
^{-1}F\right) $ to denote quasidistance between sets $E$ and $F$, and using $%
\left\vert x-y\right\vert _{\limfunc{quasi}}\equiv \left\vert \Omega
^{-1}x-\Omega ^{-1}y\right\vert $ to denote quasidistance between points $x$
and $y$. This limited use of quasi in the text serves mainly to remind the
reader we are working entirely in the `quasiworld'.
\end{notation}

In the next remark we give a brief description of how and where these
quasienergy conditions will be implemented in the proof.

\begin{remark}
There are two layers of dyadic decomposition in the quasienergy condition;
the outer layer $I=\dot{\cup}I_{r}$ which is essentially arbitrary, and an
inner layer $I_{r}=\overset{\cdot }{\dbigcup\limits_{J\in \mathcal{M}_{%
\mathbf{r}-\limfunc{deep}}\left( I_{r}\right) }}J$ in which the quasicubes $%
J $ are `nicely arranged' within $I_{r}$. Relative to this doubly layered
decomposition we sum the products $\left( \frac{\mathrm{P}^{\alpha }\left( J,%
\mathbf{1}_{I\setminus \gamma J}\sigma \right) }{\left\vert J\right\vert ^{%
\frac{1}{n}}}\right) ^{2}\left\Vert \mathsf{P}_{J}^{\limfunc{subgood},\omega
}\mathbf{x}\right\Vert _{L^{2}\left( \omega \right) }^{2}$, which resemble a
type of $A_{2}^{\alpha }$ expression as defined above. The point of the
outer decomposition is to capture `stopping time quasicubes', which are
essentially arbitrary in this proof, although sometimes restricted to
certain collections of good quasicubes. The point of the\ inner
decomposition is that with $J^{\ast }=\gamma J$ for $J\in \mathcal{M}_{%
\mathbf{r}-\limfunc{deep}}\left( I_{r}\right) $, we have $J^{\ast }\subset
I_{r}$ and we can then write%
\begin{equation*}
\mathrm{P}^{\alpha }\left( J,\mathbf{1}_{I}\sigma \right) =\mathrm{P}%
^{\alpha }\left( J,\mathbf{1}_{J^{\ast }}\sigma \right) +\mathrm{P}^{\alpha
}\left( J,\mathbf{1}_{I\setminus J^{\ast }}\sigma \right) ,
\end{equation*}%
and use that $\left\Vert \mathsf{P}_{J}^{\limfunc{subgood},\omega }\mathbf{x}%
\right\Vert _{L^{2}\left( \omega \right) }^{2}=\left\Vert \mathsf{P}_{J}^{%
\limfunc{subgood},\omega }\left( \mathbf{x}-\mathbf{c}_{J}\right)
\right\Vert _{L^{2}\left( \omega \right) }^{2}\leq \left\vert J\right\vert ^{%
\frac{2}{n}}\left\vert J\right\vert _{\omega }$ to estimate the product
involving $\mathbf{1}_{J^{\ast }}\sigma $ by%
\begin{equation*}
\left( \frac{\mathrm{P}^{\alpha }\left( J,\mathbf{1}_{J^{\ast }}\sigma
\right) }{\left\vert J\right\vert ^{\frac{1}{n}}}\right) ^{2}\left\Vert 
\mathsf{P}_{J}^{\limfunc{subgood},\omega }\mathbf{x}\right\Vert
_{L^{2}\left( \omega \right) }^{2}\lesssim \left( \frac{\left\vert J^{\ast
}\right\vert ^{\frac{\alpha }{n}-1}\left\vert J^{\ast }\right\vert _{\sigma }%
}{\left\vert J\right\vert ^{\frac{1}{n}}}\right) ^{2}\left\vert J\right\vert
^{\frac{2}{n}}\left\vert J\right\vert _{\omega }\lesssim A_{2}^{\alpha
}\left\vert J^{\ast }\right\vert _{\sigma }\ ,
\end{equation*}%
to which we apply the bounded overlap property (\ref{bounded overlap}),
while the remaining product involving $\mathbf{1}_{I\setminus J^{\ast
}}\sigma $,%
\begin{equation*}
\left( \frac{\mathrm{P}^{\alpha }\left( J,\mathbf{1}_{I\setminus J^{\ast
}}\sigma \right) }{\left\vert J\right\vert ^{\frac{1}{n}}}\right)
^{2}\left\Vert \mathsf{P}_{J}^{\limfunc{subgood},\omega }\mathbf{x}%
\right\Vert _{L^{2}\left( \omega \right) }^{2}\ ,
\end{equation*}%
has a `hole' in the support of $\mathbf{1}_{I\setminus J^{\ast }}\sigma $
that contains the support of $\omega $ in the quasicube $J$ well inside the
hole, and moreover these holes are `nicely arranged' within $I_{r}$. Of
particular importance is that for pairwise disjoint subquasicubes $J^{\prime
}\subset J$, the projections $\left\Vert \mathsf{P}_{J^{\prime }}^{\limfunc{%
subgood},\omega }\mathbf{x}\right\Vert _{L^{2}\left( \omega \right) }^{2}$
are additive, and the Poisson ratios are essentially constant $\frac{\mathrm{%
P}^{\alpha }\left( J^{\prime },\mathbf{1}_{I\setminus J^{\ast }}\sigma
\right) }{\left\vert J^{\prime }\right\vert ^{\frac{1}{n}}}\approx \frac{%
\mathrm{P}^{\alpha }\left( J,\mathbf{1}_{I\setminus J^{\ast }}\sigma \right) 
}{\left\vert J\right\vert ^{\frac{1}{n}}}$. The \emph{deep} quasienergy
condition suffices for all arguments in the proof except for bounding the
two testing conditions for the Poisson operator $\mathbb{P}$, in which case
we also use the \emph{refined} quasienergy condition - see Lemma \ref%
{refined lemma} below.
\end{remark}

\subsection{Statement of the Theorem}

We can now state our main quasicube two weight theorem. Recall that $\Omega :%
\mathbb{R}^{n}\rightarrow \mathbb{R}^{n}$ is a globally biLipschitz map, and
that $\Omega \mathcal{P}^{n}$ denotes the collection of all quasicubes in $%
\mathbb{R}^{n}$ whose preimages under $\Omega $ are usual cubes with sides
parallel to the coordinate axes. Denote by $\Omega \mathcal{D}^{n}\subset
\Omega \mathcal{P}^{n}$ a dyadic quasigrid in $\mathbb{R}^{n}$.

\begin{theorem}
\label{T1 theorem}Suppose that $T^{\alpha }$ is a standard $\alpha $%
-fractional Calder\'{o}n-Zygmund operator on $\mathbb{R}^{n}$, and that $%
\omega $ and $\sigma $ are positive Borel measures on $\mathbb{R}^{n}$
without common point masses. Set $T_{\sigma }^{\alpha }f=T^{\alpha }\left(
f\sigma \right) $ for any smooth truncation of $T_{\sigma }^{\alpha }$.

\begin{enumerate}
\item Suppose $0\leq \alpha <n$ and that $\gamma \geq 2$ is given. Then the
operator $T_{\sigma }^{\alpha }$ is bounded from $L^{2}\left( \sigma \right) 
$ to $L^{2}\left( \omega \right) $, i.e. 
\begin{equation}
\left\Vert T_{\sigma }^{\alpha }f\right\Vert _{L^{2}\left( \omega \right)
}\leq \mathfrak{N}_{T_{\sigma }^{\alpha }}\left\Vert f\right\Vert
_{L^{2}\left( \sigma \right) },  \label{two weight}
\end{equation}%
uniformly in smooth truncations of $T^{\alpha }$, and moreover%
\begin{equation*}
\mathfrak{N}_{T_{\sigma }^{\alpha }}\leq C_{\alpha }\left( \sqrt{\mathcal{A}%
_{2}^{\alpha }+\mathcal{A}_{2}^{\alpha ,\ast }}+\mathfrak{T}_{T^{\alpha }}+%
\mathfrak{T}_{T^{\alpha }}^{\ast }+\mathcal{E}_{\alpha }+\mathcal{E}_{\alpha
}^{\ast }+\mathcal{WBP}_{T^{\alpha }}\right) ,
\end{equation*}%
provided that the two dual $\mathcal{A}_{2}^{\alpha }$ conditions hold, and
the two dual quasitesting conditions for $T^{\alpha }$ hold, the quasiweak
boundedness property for $T^{\alpha }$ holds for a sufficiently large
constant $C$ depending on the goodness parameter $\mathbf{r}$, and provided
that the two dual quasienergy conditions $\mathcal{E}_{\alpha }+\mathcal{E}%
_{\alpha }^{\ast }<\infty $ hold uniformly over all dyadic quasigrids $%
\Omega \mathcal{D}\subset \Omega \mathcal{P}^{n}$, and where the goodness
parameters $\mathbf{r}$ and $\varepsilon $ implicit in the definition of $%
\mathcal{M}_{\mathbf{r}-\limfunc{deep},\Omega \mathcal{D}}^{\ell }\left(
K\right) $ are fixed sufficiently large and small respectively depending on $%
n$, $\alpha $ and $\gamma $.

\item Conversely, suppose $0\leq \alpha <n$ and that $\mathbf{T}^{\alpha
}=\left\{ T_{j}^{\alpha }\right\} _{j=1}^{J}$ is a vector of Calder\'{o}%
n-Zygmund operators with standard kernels $\left\{ K_{j}^{\alpha }\right\}
_{j=1}^{J}$. In the range $0\leq \alpha <\frac{n}{2}$, we assume the
following \emph{ellipticity} condition: there is $c>0$ such that for \emph{%
each} unit vector $\mathbf{u}$ there is $j$ satisfying 
\begin{equation}
\left\vert K_{j}^{\alpha }\left( x,x+t\mathbf{u}\right) \right\vert \geq
ct^{\alpha -n},\ \ \ \ \ t\in \mathbb{R}.  \label{Ktalpha}
\end{equation}%
For the range $\frac{n}{2}\leq \alpha <n$, we asume the following \emph{%
strong ellipticity} condition: for each $m\in \left\{ 1,-1\right\} ^{n}$,
there is a sequence of coefficients $\left\{ \lambda _{j}^{m}\right\}
_{j=1}^{J}$ such that%
\begin{equation}
\left\vert \sum_{j=1}^{J}\lambda _{j}^{m}K_{j}^{\alpha }\left( x,x+t\mathbf{u%
}\right) \right\vert \geq ct^{\alpha -n},\ \ \ \ \ t\in \mathbb{R}.
\label{Ktalpha strong}
\end{equation}%
holds for \emph{all} unit vectors $\mathbf{u}$ in the $n$-ant 
\begin{equation*}
V_{m}=\left\{ x\in \mathbb{R}^{n}:m_{i}x_{i}>0\text{ for }1\leq i\leq
n\right\} ,\ \ \ \ \ m\in \left\{ 1,-1\right\} ^{n}.
\end{equation*}%
Furthermore, assume that each operator $T_{j}^{\alpha }$ is bounded from $%
L^{2}\left( \sigma \right) $ to $L^{2}\left( \omega \right) $, 
\begin{equation*}
\left\Vert \left( T_{j}^{\alpha }\right) _{\sigma }f\right\Vert
_{L^{2}\left( \omega \right) }\leq \mathfrak{N}_{T_{j}^{\alpha }}\left\Vert
f\right\Vert _{L^{2}\left( \sigma \right) }.
\end{equation*}%
Then the fractional $\mathcal{A}_{2}^{\alpha }$ condition holds, and
moreover,%
\begin{equation*}
\sqrt{\mathcal{A}_{2}^{\alpha }+\mathcal{A}_{2}^{\alpha ,\ast }}\leq C%
\mathfrak{N}_{\mathbf{T}^{\alpha }}.
\end{equation*}
\end{enumerate}
\end{theorem}

\begin{problem}
Given any strongly elliptic vector $\mathbf{T}^{\alpha }$ of classical $%
\alpha $-fractional Calder\'{o}n-Zygmund operators, it is an open question
whether or not the usual (quasi or not) energy conditions are necessary for
boundedness of $\mathbf{T}^{\alpha }$. See \cite{SaShUr4} for a failure of 
\emph{energy reversal} in higher dimensions - such an energy reversal was
used in dimension $n=1$ to prove the necessity of the energy condition for
the Hilbert transform.
\end{problem}

\begin{remark}
The boundedness of an individual operator $T^{\alpha }$ cannot in general
imply the finiteness of either $A_{2}^{\alpha }$ or $\mathcal{E}_{\alpha }$.
For a trivial example, if $\sigma $ and $\omega $ are supported on the $x$%
-axis in the plane, then the second Riesz tranform $R_{2}$ is the zero
operator from $L^{2}\left( \sigma \right) $ to $L^{2}\left( \omega \right) $%
, simply because the kernel $K_{2}\left( x,y\right) $ of $R_{2}$ satisfies $%
K_{2}\left( \left( x_{1},0\right) ,\left( y_{1},0\right) \right) =\frac{0-0}{%
\left\vert x_{1}-y_{1}\right\vert ^{3-\alpha }}=0$.
\end{remark}

\begin{remark}
\label{surgery}In \cite{LaWi}, in the setting of usual (nonquasi) cubes, M.
Lacey and B. Wick use the NTV technique of surgery to show that the weak
boundedness property for the Riesz transform vector $\mathbf{R}^{\alpha ,n}$
is implied by the $\mathcal{A}_{2}^{\alpha }$ and testing conditions, and
this has the consequence of eliminating the weak boundedness property as a
condition. Their proof of this implication extends to the more general
operators $T^{\alpha }$ and quasicubes considered here, and so the quasiweak
boundedness property can be dropped from the statement of Theorem \ref{T1
theorem}. In any event, the weak boundedness property is necessary for the
norm inequality, and as such can be viewed as a weak cousin of the testing
conditions.
\end{remark}

\section{Proof of Theorem \protect\ref{T1 theorem}}

We now give the proof of Theorem \ref{T1 theorem} in the following 8
sections. Using the analogue for dyadic quasigrids of the good random grids
of Nazarov, Treil and Volberg, a standard argument of NTV, see e.g. \cite%
{Vol}, reduces the two weight inequality (\ref{2 weight}) for $T^{\alpha }$
to proving boundedness of a bilinear form $\mathcal{T}^{\alpha }\left(
f,g\right) $ with uniform constants over dyadic quasigrids, and where the
quasiHaar supports $\limfunc{supp}\widehat{f}$ and $\limfunc{supp}\widehat{g}
$ of the functions $f$ and $g$ are contained in the collection $\Omega 
\mathcal{D}^{\limfunc{good}}$ of good quasicubes, whose children are all
good as well, with goodness parameters $\mathbf{r<\infty }$ and $\varepsilon
>0$ chosen sufficiently large and small respectively. Here the quasiHaar
support of $f$ is $\limfunc{supp}\widehat{f}\equiv \left\{ I\in \Omega 
\mathcal{D}:\bigtriangleup _{I}^{\sigma }f\neq 0\right\} $, and similarly
for $g$. We may also assume that $\left\vert \partial Q\right\vert _{\sigma
+\omega }=0$ for all dyadic quasicubes $Q$ in the grids $\Omega \mathcal{D}$
we consider, since this property holds with probability $1$ for random grids 
$\Omega \mathcal{D}$.

In fact we can assume even more, namely that the quasiHaar supports $%
\limfunc{supp}\widehat{f}$ and $\limfunc{supp}\widehat{g}$ of $f$ and $g$
are contained in the collection of $\mathbf{\tau }$\emph{-good} quasicubes%
\begin{equation}
\Omega \mathcal{D}_{\left( \mathbf{r},\varepsilon \right) -\limfunc{good}}^{%
\mathbf{\tau }}\equiv \left\{ K\in \Omega \mathcal{D}:\mathfrak{C}%
_{K}\subset \Omega \mathcal{D}_{\left( \mathbf{r},\varepsilon \right) -%
\limfunc{good}}\text{ and }\pi _{\Omega \mathcal{D}}^{\ell }K\in \Omega 
\mathcal{D}_{\left( \mathbf{r},\varepsilon \right) -\limfunc{good}}\text{
for all }0\leq \ell \leq \mathbf{\tau }\right\} ,  \label{extended good grid}
\end{equation}%
that are $\left( \mathbf{r},\varepsilon \right) $-$\limfunc{good}$ and whose
children are also $\left( \mathbf{r},\varepsilon \right) $-$\limfunc{good}$,
and whose $\ell $-parents up to level $\mathbf{\tau }$\ are also $\left( 
\mathbf{r},\varepsilon \right) $-$\limfunc{good}$. Here $\mathbf{\tau }>%
\mathbf{r}$ is a parameter to be fixed in Definition \ref{def parameters}
below. We may assume this restriction on the quasiHaar supports of $f$ and $%
g $ by choosing $\left( \mathbf{r},\varepsilon \right) $ appropriately and
using the following lemma.

\begin{lemma}
\label{better good}Given $\mathbf{s}\geq 1$, $\mathbf{t}\geq 2$ and $%
0<\varepsilon <1$, we have 
\begin{equation*}
\Omega \mathcal{D}_{\left( \mathbf{s}+\mathbf{t},\varepsilon \right) -%
\limfunc{good}}^{\mathbf{s}}\subset \Omega \mathcal{D}_{\left( \mathbf{t}%
,\delta \right) -\limfunc{good}}\ ,
\end{equation*}%
provided%
\begin{equation*}
\mathbf{s}\varepsilon <\mathbf{t}\left( 1-\varepsilon \right) -2\text{ and }%
\delta =\varepsilon +\frac{\mathbf{s}\varepsilon +1}{\mathbf{t}}.
\end{equation*}
\end{lemma}

\begin{proof}
Fix goodness parameters $\mathbf{r}=\mathbf{s}+\mathbf{t}$ and $\varepsilon $%
, and suppose that $\mathbf{s}<\mathbf{r}\left( 1-\varepsilon \right) -2$.
Choose a good quasicube $I$ and a superquasicube $K$ with $\ell \left(
I\right) \leq 2^{-\mathbf{r}}\ell \left( K\right) $. Set $J\equiv \pi ^{%
\mathbf{s}}I$. Then we have%
\begin{equation*}
J=\pi ^{s}I\subset K\text{ and }\ell \left( J\right) \leq 2^{-\mathbf{t}%
}\ell \left( K\right) .
\end{equation*}%
Because $I$ is good we have%
\begin{equation*}
\limfunc{dist}\left( I,K^{c}\right) \geq \frac{1}{2}\ell \left( I\right)
^{\varepsilon }\ell \left( K\right) ^{1-\varepsilon },
\end{equation*}%
and hence also%
\begin{eqnarray*}
\limfunc{dist}\left( J,K^{c}\right) &\geq &\limfunc{dist}\left(
I,K^{c}\right) -\ell \left( J\right) \geq \frac{1}{2}\ell \left( I\right)
^{\varepsilon }\ell \left( K\right) ^{1-\varepsilon }-2^{\mathbf{s}}\ell
\left( I\right) \\
&=&\frac{1}{2}\ell \left( I\right) ^{\varepsilon }\ell \left( K\right)
^{1-\varepsilon }\left\{ 1-2^{1+\mathbf{s}}\left( \frac{\ell \left( I\right) 
}{\ell \left( K\right) }\right) ^{1-\varepsilon }\right\} \geq \frac{1}{4}%
\ell \left( I\right) ^{\varepsilon }\ell \left( K\right) ^{1-\varepsilon }
\end{eqnarray*}%
which follows from $\ell \left( I\right) \leq 2^{-\mathbf{r}}\ell \left(
K\right) $ provided we take $2^{1+\mathbf{s}}2^{-\mathbf{r}\left(
1-\varepsilon \right) }\leq \frac{1}{2}$, i.e.%
\begin{equation*}
\mathbf{s}<\mathbf{r}\left( 1-\varepsilon \right) -2.
\end{equation*}%
Finally we choose $\delta >\varepsilon $ so that%
\begin{equation*}
\frac{1}{4}\ell \left( I\right) ^{\varepsilon }\ell \left( K\right)
^{1-\varepsilon }=2^{-\mathbf{s}\varepsilon -2}\ell \left( J\right)
^{\varepsilon }\ell \left( K\right) ^{1-\varepsilon }\geq \frac{1}{2}\ell
\left( J\right) ^{\delta }\ell \left( K\right) ^{1-\delta },\ \ \ \ \ \text{%
when }\ell \left( J\right) \leq 2^{-\mathbf{t}}\ell \left( K\right) ,
\end{equation*}%
which follows if we choose $\delta $ to satisfy%
\begin{eqnarray*}
2^{-\mathbf{s}\varepsilon -2}\left( 2^{-\mathbf{t}}\ell \left( K\right)
\right) ^{\varepsilon }\ell \left( K\right) ^{1-\varepsilon } &=&\frac{1}{2}%
\left( 2^{-\mathbf{t}}\ell \left( K\right) \right) ^{\delta }\ell \left(
K\right) ^{1-\delta }; \\
2^{-\mathbf{s}\varepsilon -2} &=&\frac{1}{2}\left( 2^{-\mathbf{t}}\right)
^{\delta -\varepsilon }; \\
-\mathbf{s}\varepsilon -1 &=&-\mathbf{t}\left( \delta -\varepsilon \right) ;
\\
\delta &=&\varepsilon +\frac{\mathbf{s}\varepsilon +1}{\mathbf{t}}.
\end{eqnarray*}
\end{proof}

For convenience in notation we will sometimes suppress the dependence on $%
\alpha $ in our nonlinear forms, but will retain it in the operators,
Poisson integrals and constants. More precisely, let $\Omega \mathcal{D}%
^{\sigma }=\Omega \mathcal{D}^{\omega }$ be an $\left( \mathbf{r}%
,\varepsilon \right) $-good quasigrid on $\mathbb{R}^{n}$, and let $\left\{
h_{I}^{\sigma ,a}\right\} _{I\in \Omega \mathcal{D}^{\sigma },\ a\in \Gamma
_{n}}$ and $\left\{ h_{J}^{\omega ,b}\right\} _{J\in \Omega \mathcal{D}%
^{\omega },\ b\in \Gamma _{n}}$ be corresponding quasiHaar bases as
described above, so that%
\begin{eqnarray*}
f &=&\sum_{I\in \Omega \mathcal{D}^{\sigma }}\bigtriangleup _{I}^{\sigma
}f=\sum_{I\in \Omega \mathcal{D}^{\sigma },\text{\ }a\in \Gamma _{n}\text{ }%
}\left\langle f,h_{I}^{\sigma ,a}\right\rangle \ h_{I}^{\sigma
,a}=\sum_{I\in \Omega \mathcal{D}^{\sigma },\text{\ }a\in \Gamma _{n}}%
\widehat{f}\left( I;a\right) \ h_{I}^{\sigma ,a}, \\
g &=&\sum_{J\in \Omega \mathcal{D}^{\omega }\text{ }}\bigtriangleup
_{J}^{\omega }g=\sum_{J\in \Omega \mathcal{D}^{\omega },\text{\ }b\in \Gamma
_{n}}\left\langle g,h_{J}^{\omega ,b}\right\rangle \ h_{J}^{\omega
,b}=\sum_{J\in \Omega \mathcal{D}^{\omega },\text{\ }b\in \Gamma _{n}}%
\widehat{g}\left( J;b\right) \ h_{J}^{\omega ,b},
\end{eqnarray*}%
where the appropriate measure is understood in the notation $\widehat{f}%
\left( I;a\right) $ and $\widehat{g}\left( J;b\right) $, and where these
quasiHaar coefficients $\widehat{f}\left( I;a\right) $ and $\widehat{g}%
\left( J;b\right) $ vanish if the quasicubes $I$ and $J$ are not good.
Inequality (\ref{two weight}) is equivalent to boundedness of the bilinear
form%
\begin{equation*}
\mathcal{T}^{\alpha }\left( f,g\right) \equiv \left\langle T_{\sigma
}^{\alpha }\left( f\right) ,g\right\rangle _{\omega }=\sum_{I\in \Omega 
\mathcal{D}^{\sigma }\text{ and }J\in \Omega \mathcal{D}^{\omega
}}\left\langle T_{\sigma }^{\alpha }\left( \bigtriangleup _{I}^{\sigma
}f\right) ,\bigtriangleup _{J}^{\omega }g\right\rangle _{\omega }
\end{equation*}%
on $L^{2}\left( \sigma \right) \times L^{2}\left( \omega \right) $, i.e.%
\begin{equation*}
\left\vert \mathcal{T}^{\alpha }\left( f,g\right) \right\vert \leq \mathfrak{%
N}_{T^{\alpha }}\left\Vert f\right\Vert _{L^{2}\left( \sigma \right)
}\left\Vert g\right\Vert _{L^{2}\left( \omega \right) },
\end{equation*}%
uniformly over all quasigrids. We may assume the two quasigrids $\Omega 
\mathcal{D}^{\sigma }$ and $\Omega \mathcal{D}^{\omega }$ are equal here,
and this we will do throughout the paper, although we sometimes continue to
use the measure as a superscript on $\Omega \mathcal{D}$ for clarity of
exposition. Roughly speaking, we analyze the form $\mathcal{T}^{\alpha
}\left( f,g\right) $ by splitting it in a nonlinear way into three main
pieces, following in part the approach in \cite{LaSaShUr2} and \cite%
{LaSaShUr3}. The first piece consists of quasicubes $I$ and $J$ that are
either disjoint or of comparable side length, and this piece is handled
using the section on preliminaries of NTV type. The second piece consists of
quasicubes $I$ and $J$ that overlap, but are `far apart' in a nonlinear way,
and this piece is handled using the sections on the Intertwining Proposition
and the control of the functional quasienergy condition by the quasienergy
condition. Finally, the remaining local piece where the overlapping
quasicubes are `close' is handled by generalizing methods of NTV as in \cite%
{LaSaShUr}, and then splitting the stopping form into two sublinear stopping
forms, one of which is handled using techniques of \cite{LaSaUr2}, and the
other using the stopping time and recursion of M. Lacey \cite{Lac}. See the
schematic diagram in Subsection 8.4 below.

\section{Necessity of the $\mathcal{A}_{2}^{\protect\alpha }$ conditions}

Because of the remarks made earlier showing the equivalence of various $%
\mathcal{A}_{2}^{\alpha }$ conditions, we can restrict our attention in this
section only to usual cubes, and then the results transfer automatically to
quasicubes. Here we prove in particular the necessity of the fractional $%
\mathcal{A}_{2}^{\alpha }$ condition when $0\leq \alpha <n$, for the $\alpha 
$-fractional Riesz vector transform $\mathbf{R}^{\alpha }$ defined by%
\begin{equation*}
\mathbf{R}^{\alpha }\left( f\sigma \right) \left( x\right) =\int_{\mathbb{R}%
^{n}}R_{j}^{\alpha }(x,y)f\left( y\right) d\sigma \left( y\right) ,\ \ \ \ \
K_{j}^{\alpha }\left( x,y\right) =\frac{x^{j}-y^{j}}{\left\vert
x-y\right\vert ^{n+1-\alpha }},
\end{equation*}%
whose kernel $K_{j}^{\alpha }\left( x,y\right) $ satisfies (\ref%
{sizeandsmoothness'}) for $0\leq \alpha <n$. Parts of the following argument
are taken from unpublished material obtained in joint work with M. Lacey.

\begin{lemma}
\label{necc frac A2}Suppose $0\leq \alpha <n$. Let $T^{\alpha }$ be any
collection of operators with $\alpha $-standard fractional kernel
satisfying\ the ellipticity condition (\ref{Ktalpha}), and in the case $%
\frac{n}{2}\leq \alpha <n$, we also assume the more restrictive condition (%
\ref{Ktalpha strong}). Then for $0\leq \alpha <n$ we have%
\begin{equation*}
\mathcal{A}_{2}^{\alpha }\lesssim \mathfrak{N}_{\alpha }\left( T^{\alpha
}\right) .
\end{equation*}
\end{lemma}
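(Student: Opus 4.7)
The strategy is to fix a cube $Q_{0}\in\mathcal{Q}^{n}$ and establish
$$
\frac{\mathcal{P}^{\alpha}(Q_{0},\sigma)\,|Q_{0}|_{\omega}}{|Q_{0}|^{1-\alpha/n}}\lesssim \mathfrak{N}_{\alpha}^{2}
$$
by dualizing and testing the adjoint operators $T_{j}^{\alpha,\ast}$ on $\mathbf{1}_{Q_{0}}\omega$. The $L^{2}(\sigma)\to L^{2}(\omega)$ boundedness yields
$$
\sum_{j=1}^{J}\int_{\mathbb{R}^{n}}\bigl|T_{j}^{\alpha,\ast}(\mathbf{1}_{Q_{0}}\omega)\bigr|^{2}\,d\sigma\leq J\,\mathfrak{N}_{\alpha}^{2}\,|Q_{0}|_{\omega},
$$
so the task is to bound this sum \emph{below} by a constant multiple of $\omega(Q_{0})^{2}\,\mathcal{P}^{\alpha}(Q_{0},\sigma)/|Q_{0}|^{(n-\alpha)/n}$. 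The quadratic form in $|T_{j}^{\alpha,\ast}|$ is what produces the squared denominator $(|Q_{0}|^{1/n}+|y-x_{Q_{0}}|)^{2(n-\alpha)}$ that characterizes $\mathcal{P}^{\alpha}$ rather than the single power appearing in $\mathrm{P}^{\alpha}$; this is the point where the vector nature of the collection $\{T_{j}^{\alpha}\}$ enters decisively.

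I would then split the integral into a near region $N=\{|y-x_{Q_{0}}|\leq C_{0}|Q_{0}|^{1/n}\}$ and its complement $F$. The near contribution to $\mathcal{P}^{\alpha}(Q_{0},\sigma)\,|Q_{0}|_{\omega}/|Q_{0}|^{1-\alpha/n}$ is dominated by the classical one-tailed quantity $\sigma(C_{0}Q_{0})\,\omega(Q_{0})/|C_{0}Q_{0}|^{2(1-\alpha/n)}$, and the necessity of this classical $A_{2}^{\alpha}$ bound from $\mathfrak{N}_{\alpha}$ follows from a routine $\alpha$-fractional extension of Lemma~\ref{weakdom}, using the weak-type bound and truncation argument of~\cite{LaSaUr1}. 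For the far region, smoothness of the kernel gives the pointwise expansion
$$
T_{j}^{\alpha,\ast}(\mathbf{1}_{Q_{0}}\omega)(y)=\omega(Q_{0})\,K_{j}^{\alpha}(x_{Q_{0}},y)+O\!\left(\frac{\omega(Q_{0})\,|Q_{0}|^{1/n}}{|y-x_{Q_{0}}|^{n+1-\alpha}}\right),
$$
whose error is smaller than the leading term by a factor $|Q_{0}|^{1/n}/|y-x_{Q_{0}}|\leq 1/C_{0}$, hence negligible after choosing $C_{0}$ large.

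For $0\leq\alpha<n/2$ the hypothesis (\ref{Ktalpha}) already gives, for each $y\in F$ with direction $u=(y-x_{Q_{0}})/|y-x_{Q_{0}}|$, some index $j(u)$ with $|K_{j(u)}^{\alpha}(x_{Q_{0}},y)|\geq c|y-x_{Q_{0}}|^{\alpha-n}$, whence $\sum_{j}|K_{j}^{\alpha}(x_{Q_{0}},y)|^{2}\geq c^{2}|y-x_{Q_{0}}|^{-2(n-\alpha)}$; combined with the expansion above this yields the desired pointwise lower bound on $\sum_{j}|T_{j}^{\alpha,\ast}(\mathbf{1}_{Q_{0}}\omega)|^{2}$. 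Integrating against $d\sigma$ over $F$ recognizes $\int_{F}|y-x_{Q_{0}}|^{-2(n-\alpha)}\,d\sigma\approx \mathcal{P}^{\alpha}(Q_{0},\sigma)/|Q_{0}|^{(n-\alpha)/n}$ and closes the argument. For $n/2\leq\alpha<n$ I would instead appeal to the stronger $n$-ant hypothesis (\ref{Ktalpha strong}): for each $m\in\{\pm 1\}^{n}$ the fixed signed combination $\Lambda^{m}\equiv \sum_{j}\lambda_{j}^{m}K_{j}^{\alpha}$ is uniformly bounded below in absolute value throughout $V_{m}$, so the pointwise bound $|\Lambda^{m}(x_{Q_{0}},y)|^{2}\geq c^{2}|y-x_{Q_{0}}|^{-2(n-\alpha)}$ holds on $F\cap(x_{Q_{0}}+V_{m})$, and the same Cauchy--Schwarz upper bound $\|\sum_{j}\lambda_{j}^{m}T_{j}^{\alpha,\ast}(\mathbf{1}_{Q_{0}}\omega)\|_{L^{2}(\sigma)}^{2}\lesssim \mathfrak{N}_{\alpha}^{2}|Q_{0}|_{\omega}$ gives the bound after summing over the $2^{n}$ $n$-ants.

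The main obstacle is precisely the higher range $n/2\leq\alpha<n$. There the squared Poisson $\mathcal{P}^{\alpha}$ integrates against a kernel of very slow decay $|y-x_{Q_{0}}|^{-2(n-\alpha)}$, and one cannot afford to lose the pointwise lower bound through sign cancellation between the $J$ kernels $K_{j}^{\alpha}$ as $y$ varies; the cruder $\sum_{j}|K_{j}^{\alpha}|^{2}$ bound from simple ellipticity is too lossy to survive the integration. The $n$-ant condition (\ref{Ktalpha strong}) is needed precisely to produce a \emph{fixed} linear combination whose sign is controlled throughout each $V_{m}$, making the approximation $T_{j}^{\alpha,\ast}(\mathbf{1}_{Q_{0}}\omega)\approx K_{j}^{\alpha}(x_{Q_{0}},y)\,\omega(Q_{0})$ usable on the full cone without oscillatory loss; the bookkeeping of the error term, the transition from near to far, and the fractional extension of Lemma~\ref{weakdom} are the remaining technical items.
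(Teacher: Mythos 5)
Your far-field argument is sound, and in fact it is a cleaner route to the tailed part of $\mathcal{A}_{2}^{\alpha }$ than the one the paper takes (the paper uses a positivity argument: the linear functional $L\zeta =m\cdot \zeta $ applied to the Riesz kernel is nonnegative on opposing $n$-ants, tested against the smooth weights $s_{I}^{n-\alpha }$, with thin cones replacing $n$-ants for general elliptic collections). Note, though, that your point-mass approximation plus the lower bound $\sum_{j}\left\vert K_{j}^{\alpha }(x_{Q_{0}},y)\right\vert ^{2}\geq c^{2}\left\vert y-x_{Q_{0}}\right\vert ^{-2\left( n-\alpha \right) }$ already works for \emph{all} $0\leq \alpha <n$ under (\ref{Ktalpha}) alone --- there is no sign-cancellation loss in a sum of squares --- so your diagnosis that the difficulty in the range $\frac{n}{2}\leq \alpha <n$ lies in the far region, and your deployment of (\ref{Ktalpha strong}) there, are both misplaced.

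The genuine gap is the step you dismiss as ``a routine $\alpha $-fractional extension of Lemma \ref{weakdom},'' namely the \emph{tailless} condition $\left\vert Q\right\vert _{\sigma }\left\vert Q\right\vert _{\omega }\lesssim \mathfrak{N}_{\alpha }^{2}\left\vert Q\right\vert ^{2\left( 1-\frac{\alpha }{n}\right) }$, which is exactly what your near-region contribution reduces to. The argument of \cite{LaSaUr1} decomposes the off-diagonal part of $Q_{0}\times Q_{0}$ into Whitney product cubes $\mathsf{Q}_{\zeta }$ (so that the two measures have separated supports and the kernel lower bound applies), and must then sum
\begin{equation*}
\sum_{\zeta }\left\vert \mathsf{Q}_{\zeta }\right\vert ^{1-\frac{\alpha }{n}}\approx \ell \left( Q_{0}\right) ^{n}\sum_{k:\ 2^{k}\leq \ell \left( Q_{0}\right) }2^{nk\left( 1-\frac{2\alpha }{n}\right) },
\end{equation*}
a geometric series that converges to the required $\left\vert \mathsf{Q}_{0}\right\vert ^{1-\frac{\alpha }{n}}$ only when $\alpha <\frac{n}{2}$; for $\alpha \geq \frac{n}{2}$ the small scales dominate and the argument collapses. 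This is precisely where the paper needs the stronger hypothesis (\ref{Ktalpha strong}): it runs a bisection construction producing two sub-rectangles $G,H\subset Q$ with $\left\vert G\right\vert _{\omega }\geq 2^{-n}\left\vert Q\right\vert _{\omega }$ and $\left\vert H\right\vert _{\sigma }\geq 2^{-n}\left\vert Q\right\vert _{\sigma }$ lying in \emph{opposing} $n$-ants at a common interior vertex $\theta $, so that the fixed combination $\sum_{j}\lambda _{j}^{m}T_{j}^{\alpha }\left( \mathbf{1}_{H}\sigma \right) $ is bounded below on all of $G$ without cancellation. Your proof as written has no substitute for this step, so the case $\frac{n}{2}\leq \alpha <n$ is not closed.
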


\begin{remark}
Cancellation properties of $T^{\alpha }$ play no role in the proof below.
Indeed the proof shows that $\mathcal{A}_{2}^{\alpha }$ is dominated by the
best constant $C$ in the restricted inequality%
\begin{equation*}
\left\Vert \chi _{E}T^{\alpha }(f\sigma )\right\Vert _{L^{2,\infty }\left(
\omega \right) }\leq C\left\Vert f\right\Vert _{L^{2}\left( \sigma \right)
},\ \ \ \ \ E=\mathbb{R}^{n}\setminus supp\ f.
\end{equation*}
\end{remark}

\begin{proof}
First we give the proof for the case when $T^{\alpha }$ is the $\alpha $%
-fractional Riesz transform $\mathbf{R}^{\alpha }$, whose kernel is $\mathbf{%
K}^{\alpha }\left( x,y\right) =\frac{x-y}{\left\vert x-y\right\vert
^{n+1-\alpha }}$ . Define the $2^{n}$ generalized $n$-ants $\mathcal{Q}_{m}$
for $m\in \left\{ -1,1\right\} ^{n}$, and their translates $\mathcal{Q}%
_{m}\left( w\right) $ for $w\in \mathbb{R}^{n}$ by 
\begin{eqnarray*}
\mathcal{Q}_{m} &=&\left\{ \left( x_{1},...,x_{n}\right)
:m_{k}x_{k}>0\right\} , \\
\mathcal{Q}_{m}\left( w\right) &=&\left\{ z:z-w\in \mathcal{Q}_{m}\right\}
,\ \ \ \ \ w\in \mathbb{R}^{n}.
\end{eqnarray*}%
Fix $m\in \left\{ -1,1\right\} ^{n}$ and a cube $I$. For $a\in \mathbb{R}%
^{n} $ and $r>0$ let 
\begin{eqnarray*}
s_{I}\left( x\right) &=&\frac{\ell \left( I\right) }{\ell \left( I\right)
+\left\vert x-\zeta _{I}\right\vert }, \\
f_{a,r}\left( y\right) &=&\mathbf{1}_{\mathcal{Q}_{-m}\left( a\right) \cap
B\left( 0,r\right) }\left( y\right) s_{I}\left( y\right) ^{n-\alpha },
\end{eqnarray*}%
where $\zeta _{I}$ is the center of the cube $I$. Now%
\begin{eqnarray*}
\ell \left( I\right) \left\vert x-y\right\vert &\leq &\ell \left( I\right)
\left\vert x-\zeta _{I}\right\vert +\ell \left( I\right) \left\vert \zeta
_{I}-y\right\vert \\
&\leq &\left[ \ell \left( I\right) +\left\vert x-\zeta _{I}\right\vert %
\right] \left[ \ell \left( I\right) +\left\vert \zeta _{I}-y\right\vert %
\right]
\end{eqnarray*}%
implies%
\begin{equation*}
\frac{1}{\left\vert x-y\right\vert }\geq \frac{1}{\ell \left( I\right) }%
s_{I}\left( x\right) s_{I}\left( y\right) ,\ \ \ \ \ x,y\in \mathbb{R}^{n}.
\end{equation*}%
Now the key observation is that with $L\zeta \equiv m\cdot \zeta $, we have%
\begin{equation*}
L\left( x-y\right) =m\cdot \left( x-y\right) \geq \left\vert x-y\right\vert
,\ \ \ \ \ x\in \mathcal{Q}_{m}\left( y\right) ,
\end{equation*}%
which yields%
\begin{eqnarray}
L\left( \mathbf{K}^{\alpha }\left( x,y\right) \right) &=&\frac{L\left(
x-y\right) }{\left\vert x-y\right\vert ^{n+1-\alpha }}
\label{key separation} \\
&\geq &\frac{1}{\left\vert x-y\right\vert ^{n-\alpha }}\geq \ell \left(
I\right) ^{\alpha -n}s_{I}\left( x\right) ^{n-\alpha }s_{I}\left( y\right)
^{n-\alpha },  \notag
\end{eqnarray}%
provided $x\in \mathcal{Q}_{+,+}\left( y\right) $. Now we note that $x\in 
\mathcal{Q}_{m}\left( y\right) $ when $x\in \mathcal{Q}_{m}\left( a\right) $
and $y\in \mathcal{Q}_{-m}\left( a\right) $ to obtain that for $x\in 
\mathcal{Q}_{m}\left( a\right) $, 
\begin{eqnarray*}
L\left( T^{\alpha }\left( f_{a,r}\sigma \right) \left( x\right) \right)
&=&\int_{\mathcal{Q}_{-m}\left( a\right) \cap B\left( 0,r\right) }\frac{%
L\left( x-y\right) }{\left\vert x-y\right\vert ^{n+1-\alpha }}s_{I}\left(
y\right) d\sigma \left( y\right) \\
&\geq &\ell \left( I\right) ^{\alpha -n}s_{I}\left( x\right) ^{n-\alpha
}\int_{\mathcal{Q}_{-m}\left( a\right) \cap B\left( 0,r\right) }s_{I}\left(
y\right) ^{2n-2\alpha }d\sigma \left( y\right) .
\end{eqnarray*}

Applying $\left\vert L\zeta \right\vert \leq \sqrt{n}\left\vert \zeta
\right\vert $ and our assumed two weight inequality for the fractional Riesz
transform, we see that for $r>0$ large, 
\begin{align*}
& \ell \left( I\right) ^{2\alpha -2n}\int_{\mathcal{Q}_{m}\left( a\right)
}s_{I}\left( x\right) ^{2n-2\alpha }\left( \int_{\mathcal{Q}_{-m}\left(
a\right) \cap B\left( 0,r\right) }s_{I}\left( y\right) ^{2n-2\alpha }d\sigma
\left( y\right) \right) ^{2}d\omega \left( x\right) \\
& \leq \left\Vert LT(\sigma f_{a,r})\right\Vert _{L^{2}(\omega
)}^{2}\lesssim \mathfrak{N}_{\alpha }\left( \mathbf{R}^{\alpha }\right)
^{2}\left\Vert f_{a,r}\right\Vert _{L^{2}(\sigma )}^{2}=\mathfrak{N}_{\alpha
}\left( \mathbf{R}^{\alpha }\right) ^{2}\int_{\mathcal{Q}_{-m}\left(
a\right) \cap B\left( 0,r\right) }s_{I}\left( y\right) ^{2n-2\alpha }d\sigma
\left( y\right) .
\end{align*}

Rearranging the last inequality, we obtain%
\begin{equation*}
\ell \left( I\right) ^{2\alpha -2n}\int_{\mathcal{Q}_{m}\left( a\right)
}s_{I}\left( x\right) ^{2n-2\alpha }d\omega \left( x\right) \int_{\mathcal{Q}%
_{-m}\left( a\right) \cap B\left( 0,r\right) }s_{I}\left( y\right)
^{2n-2\alpha }d\sigma \left( y\right) \lesssim \mathfrak{N}_{\alpha }\left( 
\mathbf{R}^{\alpha }\right) ^{2},
\end{equation*}%
and upon letting $r\rightarrow \infty $, 
\begin{equation*}
\int_{\mathcal{Q}_{m}\left( a\right) }\frac{\ell \left( I\right) ^{2-\alpha }%
}{\left( \ell \left( I\right) +\left\vert x-\zeta _{I}\right\vert \right)
^{4-2\alpha }}d\omega \left( x\right) \int_{\mathcal{Q}_{-m}\left( a\right) }%
\frac{\ell \left( I\right) ^{2-\alpha }}{\left( \ell \left( I\right)
+\left\vert y-\zeta _{I}\right\vert \right) ^{4-2\alpha }}d\sigma \left(
y\right) \lesssim \mathfrak{N}_{\alpha }\left( \mathbf{R}^{\alpha }\right)
^{2}.
\end{equation*}%
Note that the ranges of integration above are pairs of opposing $n$-ants.

Fix a cube $Q$, which without loss of generality can be taken to be centered
at the origin, $\zeta _{Q}=0$. Then choose $a=\left( 2\ell \left( Q\right)
,2\ell \left( Q\right) \right) $ and $I=Q$ so that we have%
\begin{eqnarray*}
&&\left( \int_{\mathcal{Q}_{m}\left( a\right) }\frac{\ell \left( Q\right)
^{n-\alpha }}{\left( \ell \left( Q\right) +\left\vert x\right\vert \right)
^{2n-2\alpha }}d\omega \left( x\right) \right) \left( \ell \left( Q\right)
^{\alpha -n}\int_{Q}d\sigma \right) \\
&\leq &C_{\alpha }\int_{\mathcal{Q}_{m}\left( a\right) }\frac{\ell \left(
Q\right) ^{n-\alpha }}{\left( \ell \left( Q\right) +\left\vert x\right\vert
\right) ^{2n-2\alpha }}d\omega \left( x\right) \int_{\mathcal{Q}_{-m}\left(
a\right) }\frac{\ell \left( Q\right) ^{n-\alpha }}{\left( \ell \left(
Q\right) +\left\vert y\right\vert \right) ^{2n-2\alpha }}d\sigma \left(
y\right) \lesssim \mathfrak{N}_{\alpha }\left( \mathbf{R}^{\alpha }\right)
^{2}.
\end{eqnarray*}%
Now fix $m=\left( 1,1,...,1\right) $ and note that there is a fixed $N$
(independent of $\ell \left( Q\right) $) and a fixed collection of rotations 
$\left\{ \rho _{k}\right\} _{k=1}^{N}$, such that the rotates $\rho _{k}%
\mathcal{Q}_{m}\left( a\right) $, $1\leq k\leq N$, of the $n$-ant $\mathcal{Q%
}_{m}\left( a\right) $ cover the complement of the ball $B\left( 0,4\sqrt{n}%
\ell \left( Q\right) \right) $: 
\begin{equation*}
B\left( 0,4\sqrt{n}\ell \left( Q\right) \right) ^{c}\subset
\bigcup_{k=1}^{N}\rho _{k}\mathcal{Q}_{m}\left( a\right) .
\end{equation*}%
Then we obtain, upon applying the same argument to these rotated pairs of $n$%
-ants, 
\begin{equation}
\left( \int_{B\left( 0,4\sqrt{n}\ell \left( Q\right) \right) ^{c}}\frac{\ell
\left( Q\right) ^{n-\alpha }}{\left( \ell \left( Q\right) +\left\vert
x\right\vert \right) ^{2n-2\alpha }}d\omega \left( x\right) \right) \left(
\ell \left( Q\right) ^{\alpha -n}\int_{Q}d\sigma \right) \lesssim \mathfrak{N%
}_{\alpha }\left( \mathbf{R}^{\alpha }\right) ^{2}.  \label{prelim A2}
\end{equation}

Now we assume for the moment the tailless $A_{2}^{\alpha }$ condition 
\begin{equation*}
\ell \left( Q^{\prime }\right) ^{2\left( \alpha -n\right) }\left(
\int_{Q^{\prime }}d\omega \right) \left( \int_{Q^{\prime }}d\sigma \right)
\leq A_{2}^{\alpha }.
\end{equation*}%
If we use this with $Q^{\prime }=4\sqrt{n}Q$, together with (\ref{prelim A2}%
), we obtain%
\begin{equation*}
\left( \int \frac{\ell \left( Q\right) ^{n-\alpha }}{\left( \ell \left(
Q\right) +\left\vert x\right\vert \right) ^{2n-2\alpha }}d\omega \left(
x\right) \right) ^{\frac{1}{2}}\left( \ell \left( Q\right) ^{\alpha
-n}\int_{Q}d\sigma \right) ^{\frac{1}{2}}\lesssim \mathfrak{N}_{\alpha
}\left( \mathbf{R}^{\alpha }\right)
\end{equation*}%
or%
\begin{equation*}
\ell \left( Q\right) ^{\alpha }\left( \frac{1}{\left\vert Q\right\vert }\int 
\frac{1}{\left( 1+\frac{\left\vert x-\zeta _{Q}\right\vert }{\ell \left(
Q\right) }\right) ^{2n-2\alpha }}d\omega \left( x\right) \right) ^{\frac{1}{2%
}}\left( \frac{1}{\left\vert Q\right\vert }\int_{Q}d\sigma \right) ^{\frac{1%
}{2}}\lesssim \mathfrak{N}_{\alpha }\left( \mathbf{R}^{\alpha }\right) .
\end{equation*}%
Clearly we can reverse the roles of the measures $\omega $ and $\sigma $ and
obtain 
\begin{equation*}
\mathcal{A}_{2}^{\alpha }\lesssim \mathfrak{N}_{\alpha }\left( \mathbf{R}%
^{\alpha }\right) +A_{2}^{\alpha }
\end{equation*}%
for the kernels $\mathbf{K}^{\alpha }$, $0\leq \alpha <n$.

More generally, to obtain the case when $T^{\alpha }$ is elliptic and the
tailless $A_{2}^{\alpha }$ condition holds, we note that the key estimate (%
\ref{key separation}) above extends to the kernel $\sum_{j=1}^{J}\lambda
_{j}^{m}K_{j}^{\alpha }$ of $\sum_{j=1}^{J}\lambda _{j}^{m}T_{j}^{\alpha }$
in (\ref{Ktalpha strong}) if the $n$-ants above are replaced by thin cones
of sufficently small aperture, and there is in addition sufficient
separation between opposing cones, which in turn may require a larger
constant than $4\sqrt{n}$ in the choice of $Q^{\prime }$ above.

Finally, we turn to showing that the tailless $A_{2}^{\alpha }$ condition is
implied by the norm inequality, i.e.%
\begin{eqnarray*}
&&A_{2}^{\alpha }\equiv \sup_{Q^{\prime }}\ell \left( Q^{\prime }\right)
^{\alpha }\left( \frac{1}{\left\vert Q^{\prime }\right\vert }\int_{Q^{\prime
}}d\omega \right) ^{\frac{1}{2}}\left( \frac{1}{\left\vert Q^{\prime
}\right\vert }\int_{Q^{\prime }}d\sigma \right) ^{\frac{1}{2}}\lesssim 
\mathfrak{N}_{\alpha }\left( \mathbf{R}^{\alpha }\right) ; \\
&&\text{i.e. }\left( \int_{Q^{\prime }}d\omega \right) \left(
\int_{Q^{\prime }}d\sigma \right) \lesssim \mathfrak{N}_{\gamma }\left( 
\mathbf{R}^{\gamma }\right) ^{2}\left\vert Q^{\prime }\right\vert ^{2-\frac{%
2\alpha }{n}}.
\end{eqnarray*}%
In the range $0\leq \alpha <\frac{n}{2}$ where we only assume (\ref{Ktalpha}%
), we invoke the corresponding argument in \cite{LaSaUr1}. Indeed, with
notation as in that proof, and suppressing some of the initial work there,
then $\mathcal{A}_{2}\left( \omega ,\sigma ;\mathsf{Q}\right) =\left\vert 
\mathsf{Q}\right\vert _{\omega \times \sigma }$ where $\omega \times \sigma $
denotes product measure on $\mathbb{R}^{n}\times \mathbb{R}^{n}$, and we have%
\begin{equation*}
\mathcal{A}_{2}\left( \omega ,\sigma ;\mathsf{Q}_{0}\right) =\sum_{\zeta }%
\mathcal{A}_{2}\left( \omega ,\sigma ;\mathsf{Q}_{\zeta }\right)
+\sum_{\beta }\mathcal{A}_{2}\left( \omega ,\sigma ;\mathsf{P}_{\beta
}\right) .
\end{equation*}%
Now we have%
\begin{equation*}
\sum_{\zeta }\mathcal{A}_{2}\left( \omega ,\sigma ;\mathsf{Q}_{\zeta
}\right) =\sum_{\zeta }\left\vert \mathsf{Q}_{\zeta }\right\vert _{\omega
\times \sigma }\leq \sum_{\zeta }\mathfrak{N}_{\alpha }\left( \mathbf{R}%
^{\alpha }\right) ^{2}\left\vert \mathsf{Q}_{\zeta }\right\vert ^{1-\frac{%
\alpha }{n}},
\end{equation*}%
and 
\begin{eqnarray*}
\sum_{\zeta }\left\vert \mathsf{Q}_{\zeta }\right\vert ^{1-\frac{\alpha }{n}%
} &=&\sum_{k\in \mathbb{Z}:\ 2^{k}\leq \ell \left( Q_{0}\right) }\sum_{\zeta
:\ \ell \left( Q_{\zeta }\right) =2^{k}}\left( 2^{2nk}\right) ^{1-\frac{%
\alpha }{n}} \\
&\approx &\sum_{k\in \mathbb{Z}:\ 2^{k}\leq \ell \left( Q_{0}\right) }\left( 
\frac{2^{k}}{\ell \left( Q_{0}\right) }\right) ^{-n}\left( 2^{2nk}\right)
^{1-\frac{\alpha }{n}}\ \ \ \text{(Whitney)} \\
&=&\ell \left( Q_{0}\right) ^{n}\sum_{k\in \mathbb{Z}:\ 2^{k}\leq \ell
\left( Q_{0}\right) }2^{nk\left( -1+2-\frac{2\alpha }{n}\right) } \\
&\leq &C_{\alpha }\ell \left( Q_{0}\right) ^{n}\ell \left( Q_{0}\right)
^{n\left( 1-\frac{2\alpha }{n}\right) }=C_{\alpha }\left\vert Q_{0}\times
Q_{0}\right\vert ^{2-\frac{2\alpha }{n}}=C_{\alpha }\left\vert \mathsf{Q}%
_{0}\right\vert ^{1-\frac{\alpha }{n}},
\end{eqnarray*}%
provided $0\leq \alpha <\frac{n}{2}$. Since $\omega $ and $\sigma $ have no
point masses in common, it is not hard to show, using that the side length
of $\mathsf{P}_{\beta }=P_{\beta }\times P_{\beta }^{\prime }$ is $2^{-N}$
and $dist\left( \mathsf{P}_{\beta },\Omega \mathcal{D}\right) \leq C2^{-N}$,
that we have the following limit,%
\begin{equation*}
\sum_{\beta }\mathcal{A}_{2}\left( \omega ,\sigma ;\mathsf{P}_{\beta
}\right) \rightarrow 0\text{ as }N\rightarrow \infty .
\end{equation*}%
Indeed, if $\sigma $ has no point masses at all, then%
\begin{eqnarray*}
\sum_{\beta }\mathcal{A}_{2}\left( \omega ,\sigma ;\mathsf{P}_{\beta
}\right) &=&\sum_{\beta }\left\vert P_{\beta }\right\vert _{\omega
}\left\vert P_{\beta }^{\prime }\right\vert _{\sigma } \\
&\leq &\left( \sum_{\beta }\left\vert P_{\beta }\right\vert _{\omega
}\right) \sup_{\beta }\left\vert P_{\beta }^{\prime }\right\vert _{\sigma }
\\
&\leq &C\left\vert Q_{0}\right\vert _{\omega }\sup_{\beta }\left\vert
P_{\beta }^{\prime }\right\vert _{\sigma }\rightarrow 0\text{ as }%
N\rightarrow \infty ,
\end{eqnarray*}%
while if $\sigma $ contains a point mass $c\delta _{x}$, then%
\begin{eqnarray*}
\sum_{\beta :\ x\in P_{\beta }^{\prime }}\mathcal{A}_{2}\left( \omega
,\sigma ;\mathsf{P}_{\beta }\right) &\leq &\left( \sum_{\beta :\ x\in
P_{\beta }^{\prime }}\left\vert P_{\beta }\right\vert _{\omega }\right)
\sup_{\beta :\ x\in P_{\beta }^{\prime }}\left\vert P_{\beta }^{\prime
}\right\vert _{\sigma } \\
&\leq &C\left( \sum_{\beta :\ x\in P_{\beta }^{\prime }}\left\vert P_{\beta
}\right\vert _{\omega }\right) \rightarrow 0\text{ as }N\rightarrow \infty
\end{eqnarray*}%
since $\omega $ has no point mass at $x$. This continues to hold if $\sigma $
contains finitely many point masses disjoint from those of $\omega $, and a
limiting argument finally applies. This completes the proof that $%
A_{2}^{\alpha }\lesssim \mathfrak{N}_{\alpha }\left( \mathbf{R}^{\alpha
}\right) $ for the range $0\leq \alpha <\frac{n}{2}$.

Now we turn to proving $A_{2}^{\alpha }\lesssim \mathfrak{N}_{\alpha }\left( 
\mathbf{R}^{\alpha }\right) $ for the range $\frac{n}{2}\leq \alpha <n$,
where we assume the stronger ellipticity condition (\ref{Ktalpha strong}).
So fix a cube $Q=\dprod\limits_{i=1}^{n}Q_{i}$ where $Q_{i}=\left[
a_{i},b_{i}\right] $. Choose $\theta _{1}\in \left[ a_{1},b_{1}\right] $ so
that both 
\begin{equation*}
\left\vert \left[ a_{1},\theta _{1}\right] \times
\dprod\limits_{i=2}^{n}Q_{i}\right\vert _{\omega },\ \ \ \left\vert \left[
\theta _{1},b_{1}\right] \times \dprod\limits_{i=2}^{n}Q_{i}\right\vert
_{\omega }\geq \frac{1}{2}\left\vert Q\right\vert _{\omega }.
\end{equation*}%
Now denote the two intervals $\left[ a_{1},\theta _{1}\right] $ and $\left[
\theta _{1},b_{1}\right] $ by $\left[ a_{1}^{\ast },b_{1}^{\ast }\right] $
and $\left[ a_{1}^{\ast \ast },b_{1}^{\ast \ast }\right] $ where the order
is chosen so that 
\begin{equation*}
\left\vert \left[ a_{1}^{\ast },b_{1}^{\ast }\right] \times
\dprod\limits_{i=2}^{n}Q_{i}\right\vert _{\sigma }\leq \left\vert \left[
a_{1}^{\ast \ast },b_{1}^{\ast \ast }\right] \times
\dprod\limits_{i=2}^{n}Q_{i}\right\vert _{\sigma }.
\end{equation*}%
Then we have both%
\begin{eqnarray*}
\left\vert \left[ a_{1}^{\ast },b_{1}^{\ast }\right] \times
\dprod\limits_{i=2}^{n}Q_{i}\right\vert _{\omega } &\geq &\frac{1}{2}%
\left\vert Q\right\vert _{\omega }\ , \\
\left\vert \left[ a_{1}^{\ast \ast },b_{1}^{\ast \ast }\right] \times
\dprod\limits_{i=2}^{n}Q_{i}\right\vert _{\sigma } &\geq &\frac{1}{2}%
\left\vert Q\right\vert _{\sigma }\ .
\end{eqnarray*}%
Now choose $\theta _{2}\in \left[ a_{2},b_{2}\right] $ so that both%
\begin{equation*}
\left\vert \left[ a_{1}^{\ast },b_{1}^{\ast }\right] \times \left[
a_{2},\theta _{2}\right] \times \dprod\limits_{i=3}^{n}Q_{i}\right\vert
_{\omega },\ \ \ \left\vert \left[ a_{1}^{\ast },b_{1}^{\ast }\right] \times %
\left[ \theta _{2},b_{2}\right] \times
\dprod\limits_{i=3}^{n}Q_{i}\right\vert _{\omega }\geq \frac{1}{4}\left\vert
Q\right\vert _{\omega },
\end{equation*}%
and denote the two intervals $\left[ a_{2},\theta _{2}\right] $ and $\left[
\theta _{2},b_{2}\right] $ by $\left[ a_{2}^{\ast },b_{2}^{\ast }\right] $
and $\left[ a_{2}^{\ast \ast },b_{2}^{\ast \ast }\right] $ where the order
is chosen so that%
\begin{equation*}
\left\vert \left[ a_{1}^{\ast \ast },b_{1}^{\ast \ast }\right] \times \left[
a_{2}^{\ast },b_{2}^{\ast }\right] \times
\dprod\limits_{i=2}^{n}Q_{i}\right\vert _{\sigma }\leq \left\vert \left[
a_{1}^{\ast \ast },b_{1}^{\ast \ast }\right] \times \left[ a_{2}^{\ast \ast
},b_{2}^{\ast \ast }\right] \times \dprod\limits_{i=2}^{n}Q_{i}\right\vert
_{\sigma }.
\end{equation*}%
Then we have both%
\begin{eqnarray*}
\left\vert \left[ a_{1}^{\ast },b_{1}^{\ast }\right] \times \left[
a_{2}^{\ast },b_{2}^{\ast }\right] \times
\dprod\limits_{i=3}^{n}Q_{i}\right\vert _{\omega } &\geq &\frac{1}{4}%
\left\vert Q\right\vert _{\omega }\ , \\
\left\vert \left[ a_{1}^{\ast \ast },b_{1}^{\ast \ast }\right] \times \left[
a_{2}^{\ast \ast },b_{2}^{\ast \ast }\right] \times
\dprod\limits_{i=3}^{n}Q_{i}\right\vert _{\sigma } &\geq &\frac{1}{4}%
\left\vert Q\right\vert _{\sigma }\ .
\end{eqnarray*}%
Then we choose $\theta _{3}\in \left[ a_{3},b_{3}\right] $ so that both%
\begin{eqnarray*}
\left\vert \left[ a_{1}^{\ast },b_{1}^{\ast }\right] \times \left[
a_{2}^{\ast },b_{2}^{\ast }\right] \times \left[ a_{3},\theta _{3}\right]
\times \dprod\limits_{i=4}^{n}Q_{i}\right\vert _{\omega } &\geq &\frac{1}{8}%
\left\vert Q\right\vert _{\omega }, \\
\left\vert \left[ a_{1}^{\ast },b_{1}^{\ast }\right] \times \left[
a_{2}^{\ast },b_{2}^{\ast }\right] \times \left[ \theta _{3},b_{3}\right]
\times \dprod\limits_{i=4}^{n}Q_{i}\right\vert _{\omega } &\geq &\frac{1}{8}%
\left\vert Q\right\vert _{\omega },
\end{eqnarray*}%
and continuing in this way we end up with two rectangles,%
\begin{eqnarray*}
G &\equiv &\left[ a_{1}^{\ast },b_{1}^{\ast }\right] \times \left[
a_{2}^{\ast },b_{2}^{\ast }\right] \times ...\left[ a_{n}^{\ast
},b_{n}^{\ast }\right] , \\
H &\equiv &\left[ a_{1}^{\ast \ast },b_{1}^{\ast \ast }\right] \times \left[
a_{2}^{\ast \ast },b_{2}^{\ast \ast }\right] \times ...\left[ a_{n}^{\ast
\ast },b_{n}^{\ast \ast }\right] ,
\end{eqnarray*}%
that satisfy%
\begin{eqnarray*}
\left\vert G\right\vert _{\omega } &=&\left\vert \left[ a_{1}^{\ast
},b_{1}^{\ast }\right] \times \left[ a_{2}^{\ast },b_{2}^{\ast }\right]
\times ...\left[ a_{n}^{\ast },b_{n}^{\ast }\right] \right\vert _{\omega
}\geq \frac{1}{2^{n}}\left\vert Q\right\vert _{\omega }, \\
\left\vert H\right\vert _{\sigma } &=&\left\vert \left[ a_{1}^{\ast \ast
},b_{1}^{\ast \ast }\right] \times \left[ a_{2}^{\ast \ast },b_{2}^{\ast
\ast }\right] \times ...\left[ a_{n}^{\ast \ast },b_{n}^{\ast \ast }\right]
\right\vert _{\sigma }\geq \frac{1}{2^{n}}\left\vert Q\right\vert _{\sigma }.
\end{eqnarray*}

However, the rectangles $G$ and $H$ lie in opposing $n$-ants at the vertex $%
\theta =\left( \theta _{1},\theta _{2},...,\theta _{n}\right) $, and so we
can apply (\ref{Ktalpha strong}) to obtain that for $x\in G$,%
\begin{eqnarray*}
\left\vert \sum_{j=1}^{J}\lambda _{j}^{m}T_{j}^{\alpha }\left( \mathbf{1}%
_{H}\sigma \right) \left( x\right) \right\vert &=&\left\vert
\int_{H}\sum_{j=1}^{J}\lambda _{j}^{m}K_{j}^{\alpha }\left( x,y\right)
d\sigma \left( y\right) \right\vert \\
&\gtrsim &\int_{H}\left\vert x-y\right\vert ^{\alpha -n}d\sigma \left(
y\right) \gtrsim \left\vert Q\right\vert ^{\frac{\alpha }{n}-1}\left\vert
H\right\vert _{\sigma }.
\end{eqnarray*}%
Then from the norm inequality we get%
\begin{eqnarray*}
\left\vert G\right\vert _{\omega }\left( \left\vert Q\right\vert ^{\frac{%
\alpha }{n}-1}\left\vert H\right\vert _{\sigma }\right) ^{2} &\lesssim
&\int_{G}\left\vert \sum_{j=1}^{J}\lambda _{j}^{m}T_{j}^{\alpha }\left( 
\mathbf{1}_{H}\sigma \right) \right\vert ^{2}d\omega \\
&\lesssim &\mathfrak{N}_{\sum_{j=1}^{J}\lambda _{j}^{m}T_{j}^{\alpha }}\int 
\mathbf{1}_{H}^{2}d\sigma =\mathfrak{N}_{\sum_{j=1}^{J}\lambda
_{j}^{m}T_{j}^{\alpha }}\left\vert H\right\vert _{\sigma },
\end{eqnarray*}%
from which we deduce that%
\begin{equation*}
\left\vert Q\right\vert ^{2\left( \frac{\alpha }{n}-1\right) }\left\vert
Q\right\vert _{\omega }\left\vert Q\right\vert _{\sigma }\lesssim
2^{2n}\left\vert Q\right\vert ^{2\left( \frac{\alpha }{n}-1\right)
}\left\vert G\right\vert _{\omega }\left\vert H\right\vert _{\sigma
}\lesssim 2^{2n}\mathfrak{N}_{\sum_{j=1}^{J}\lambda _{j}^{m}T_{j}^{\alpha }},
\end{equation*}%
and hence%
\begin{equation*}
A_{2}^{\alpha }\lesssim 2^{2n}\mathfrak{N}_{\sum_{j=1}^{J}\lambda
_{j}^{m}T_{j}^{\alpha }}\ .
\end{equation*}%
This completes the proof of Lemma \ref{necc frac A2}.
\end{proof}

\section{Monotonicity lemma and Energy lemma}

The Monotonicity Lemma below will be used to prove the Energy Lemma, which
is then used in several places in the proof of Theorem \ref{T1 theorem}. The
formulation of the Monotonicity Lemma with $m=2$ for cubes is due to M.
Lacey and B. Wick \cite{LaWi}, and corrects that used in early versions of
this paper. We now return to a fixed collection $\Omega \mathcal{P}^{n}$ of
quasicubes for the remainder of the paper.

\subsection{The monotonicity lemma}

For $0\leq \alpha <n$ and $m\in \mathbb{R}_{+}$, we recall the $m$-weighted
fractional Poisson integral%
\begin{equation*}
\mathrm{P}_{m}^{\alpha }\left( J,\mu \right) \equiv \int_{\mathbb{R}^{n}}%
\frac{\left\vert J\right\vert ^{\frac{m}{n}}}{\left( \left\vert J\right\vert
^{\frac{1}{n}}+\left\vert y-c_{J}\right\vert \right) ^{n+m-\alpha }}d\mu
\left( y\right) ,
\end{equation*}%
where $\mathrm{P}_{1}^{\alpha }\left( J,\mu \right) =\mathrm{P}^{\alpha
}\left( J,\mu \right) $ is the standard Poisson integral.

\begin{lemma}[Monotonicity]
\label{mono}Suppose that$\ I$, $J$ and $J^{\ast }$ are quasicubes in $%
\mathbb{R}^{n}$ such that $J\subset 2J\subset I$, and that $\mu $ is a
signed measure on $\mathbb{R}^{n}$ supported outside $I$. Finally suppose
that $T^{\alpha }$ is a standard fractional singular integral on $\mathbb{R}%
^{n}$ as defined in Definition \ref{def alpha standard} with $0<\alpha <n$.
Then we have the estimate%
\begin{equation}
\left\Vert \bigtriangleup _{J}^{\omega }T^{\alpha }\mu \right\Vert
_{L^{2}\left( \omega \right) }\lesssim \Phi ^{\alpha }\left( J,\left\vert
\mu \right\vert \right) ,  \label{estimate}
\end{equation}%
where for a positive measure $\nu $,%
\begin{eqnarray*}
\Phi ^{\alpha }\left( J,\nu \right) ^{2} &\equiv &\left( \frac{\mathrm{P}%
^{\alpha }\left( J,\nu \right) }{\left\vert J\right\vert ^{\frac{1}{n}}}%
\right) ^{2}\left\Vert \bigtriangleup _{J}^{\omega }\mathbf{x}\right\Vert
_{L^{2}\left( \omega \right) }^{2}+\left( \frac{\mathrm{P}_{1+\delta
}^{\alpha }\left( J,\nu \right) }{\left\vert J\right\vert ^{\frac{1}{n}}}%
\right) ^{2}\left\Vert \mathbf{x}-\mathbf{m}_{J}\right\Vert _{L^{2}\left( 
\mathbf{1}_{J}\omega \right) }^{2}\ , \\
\mathbf{m}_{J} &\equiv &\mathbb{E}_{J}^{\omega }\mathbf{x}=\frac{1}{%
\left\vert J\right\vert _{\omega }}\int_{J}\mathbf{x}d\omega .
\end{eqnarray*}
\end{lemma}

\begin{proof}
Let $\left\{ h_{J}^{\omega ,a}\right\} _{a\in \Gamma }$ be an orthonormal
basis of $L_{J}^{2}\left( \mu \right) $ as in a previous subsection. Now we
use the smoothness estimate (\ref{sizeandsmoothness'}), together with
Taylor's formula and the vanishing mean of the quasiHaar functions $%
h_{J}^{\omega ,a}$ and $\mathbf{m}_{J}\equiv \frac{1}{\left\vert
J\right\vert _{\mu }}\int_{J}\mathbf{x}d\mu \left( x\right) \in J$, to
obtain 
\begin{eqnarray*}
\left\vert \left\langle T^{\alpha }\mu ,h_{J}^{\omega ,a}\right\rangle
_{\omega }\right\vert &=&\left\vert \int \left\{ \int K^{\alpha }\left(
x,y\right) h_{J}^{\omega ,a}\left( x\right) d\omega \left( x\right) \right\}
d\mu \left( y\right) \right\vert =\left\vert \int \left\langle K_{y}^{\alpha
},h_{J}^{\omega ,a}\right\rangle _{\omega }d\mu \left( y\right) \right\vert
\\
&=&\left\vert \int \left\langle K_{y}^{\alpha }\left( x\right)
-K_{y}^{\alpha }\left( \mathbf{m}_{J}\right) ,h_{J}^{\omega ,a}\right\rangle
_{\omega }d\mu \left( y\right) \right\vert \\
&\leq &\left\vert \left\langle \left[ \int \nabla K_{y}^{\alpha }\left( 
\mathbf{m}_{J}\right) d\mu \left( y\right) \right] \left( \mathbf{x}-\mathbf{%
m}_{J}\right) ,h_{J}^{\omega ,a}\right\rangle _{\omega }\right\vert \\
&&+\left\langle \left[ \int \sup_{\mathbf{\theta }_{J}\in J}\left\vert
\nabla K_{y}^{\alpha }\left( \mathbf{\theta }_{J}\right) -\nabla
K_{y}^{\alpha }\left( \mathbf{m}_{J}\right) \right\vert d\mu \left( y\right) %
\right] \left\vert \mathbf{x}-\mathbf{m}_{J}\right\vert ,\left\vert
h_{J}^{\omega ,a}\right\vert \right\rangle _{\omega } \\
&\lesssim &C_{CZ}\frac{\mathrm{P}^{\alpha }\left( J,\left\vert \mu
\right\vert \right) }{\left\vert J\right\vert ^{\frac{1}{n}}}\left\Vert
\bigtriangleup _{J}^{\omega }\mathbf{x}\right\Vert _{L^{2}\left( \omega
\right) }+C_{CZ}\frac{\mathrm{P}_{1+\delta }^{\alpha }\left( J,\left\vert
\mu \right\vert \right) }{\left\vert J\right\vert ^{\frac{1}{n}}}\left\Vert 
\mathbf{x}-\mathbf{m}_{J}\right\Vert _{L^{2}\left( \mathbf{1}_{J}\omega
\right) }.
\end{eqnarray*}
\end{proof}

\subsection{The Energy Lemma}

Suppose now we are given a subset $\mathcal{H}$ of the dyadic quasigrid $%
\Omega \mathcal{D}^{\omega }$. Let $\mathsf{P}_{\mathcal{H}}^{\omega
}=\sum_{J\in \mathcal{H}}\bigtriangleup _{J}^{\omega }$ be the $\omega $%
-quasiHaar projection onto $\mathcal{H}$. For $\mu ,\omega $ positive
locally finite Borel measures on $\mathbb{R}^{n}$, and $\mathcal{H}$ a
subset of the dyadic quasigrid $\Omega \mathcal{D}^{\omega }$, we define $%
\mathcal{H}^{\ast }\equiv \dbigcup\limits_{J\in \mathcal{H}}\left\{
J^{\prime }\in \Omega \mathcal{D}^{\omega }:J^{\prime }\subset J\right\} $.

\begin{lemma}[\textbf{Energy Lemma}]
\label{ener}Let $J\ $be a quasicube in $\Omega \mathcal{D}^{\omega }$. Let $%
\Psi _{J}$ be an $L^{2}\left( \omega \right) $ function supported in $J$ and
with $\omega $-integral zero, and denote its quasiHaar support by $\mathcal{H%
}=\limfunc{supp}\widehat{\Psi _{J}}$. Let $\nu $ be a positive measure
supported in $\mathbb{R}^{n}\setminus \gamma J$ with $\gamma \geq 2$, and
for each $J^{\prime }\in \mathcal{H}$, let $d\nu _{J^{\prime }}=\varphi
_{J^{\prime }}d\nu $ with $\left\vert \varphi _{J^{\prime }}\right\vert \leq
1$. Let $T^{\alpha }$ be a standard $\alpha $-fractional Calder\'{o}%
n-Zygmund operator with $0\leq \alpha <n$. Then with $\delta ^{\prime }=%
\frac{\delta }{2}$ we have%
\begin{eqnarray*}
\left\vert \sum_{J^{\prime }\in \mathcal{H}}\left\langle T^{\alpha }\left(
\nu _{J^{\prime }}\right) ,\bigtriangleup _{J^{\prime }}^{\omega }\Psi
_{J}\right\rangle _{\omega }\right\vert &\lesssim &\left\Vert \Psi
_{J}\right\Vert _{L^{2}\left( \omega \right) }\left( \frac{\mathrm{P}%
^{\alpha }\left( J,\nu \right) }{\left\vert J\right\vert ^{\frac{1}{n}}}%
\right) \left\Vert \mathsf{P}_{\mathcal{H}}^{\omega }\mathbf{x}\right\Vert
_{L^{2}\left( \omega \right) } \\
&&+\left\Vert \Psi _{J}\right\Vert _{L^{2}\left( \omega \right) }\frac{1}{%
\gamma ^{\delta ^{\prime }}}\left( \frac{\mathrm{P}_{1+\delta ^{\prime
}}^{\alpha }\left( J,\nu \right) }{\left\vert J\right\vert ^{\frac{1}{n}}}%
\right) \left\Vert \mathsf{P}_{\mathcal{H}^{\ast }}^{\omega }\mathbf{x}%
\right\Vert _{L^{2}\left( \omega \right) } \\
&\lesssim &\left\Vert \Psi _{J}\right\Vert _{L^{2}\left( \omega \right)
}\left( \frac{\mathrm{P}^{\alpha }\left( J,\nu \right) }{\left\vert
J\right\vert ^{\frac{1}{n}}}\right) \left\Vert \mathsf{P}_{\mathcal{H}^{\ast
}}^{\omega }\mathbf{x}\right\Vert _{L^{2}\left( \omega \right) },
\end{eqnarray*}%
and in particular the `pivotal' bound%
\begin{equation*}
\left\vert \left\langle T^{\alpha }\left( \nu \right) ,\Psi
_{J}\right\rangle _{\omega }\right\vert \leq C\left\Vert \Psi
_{J}\right\Vert _{L^{2}\left( \omega \right) }\mathrm{P}^{\alpha }\left(
J,\left\vert \nu \right\vert \right) \sqrt{\left\vert J\right\vert _{\omega }%
}\ .
\end{equation*}
\end{lemma}

\begin{remark}
The first term on the right side of the energy inequality above is the `big'
Poisson integral $\mathrm{P}^{\alpha }$ times the `small' energy term $%
\left\Vert \mathsf{P}_{\mathcal{H}}^{\omega }\mathbf{x}\right\Vert
_{L^{2}\left( \omega \right) }^{2}$ that is additive in $\mathcal{H}$, while
the second term on the right is the `small' Poisson integral $\mathrm{P}%
_{1+\delta ^{\prime }}^{\alpha }$ times the `big' energy term $\left\Vert 
\mathsf{P}_{\mathcal{H}^{\ast }}^{\omega }\mathbf{x}\right\Vert
_{L^{2}\left( \omega \right) }$ that is no longer additive in $\mathcal{H}$.
The first term presents no problems in subsequent analysis due solely to the
additivity of\ the `small' energy term. It is the second term that must be
handled by special methods. For example, in the Intertwining Proposition
below, the interaction of the singular integral occurs with a pair of
quasicubes $J\subset I$ at \emph{highly separated} levels, where the
goodness of $J$ can exploit the decay $\delta ^{\prime }$ in the kernel of
the `small' Poisson integral $\mathrm{P}_{1+\delta ^{\prime }}^{\alpha }$
relative to the `big' Poisson integral $\mathrm{P}^{\alpha }$, and results
in a bound directly by the quasienergy condition. On the other hand, in the
local recursion of M. Lacey at the end of the \ paper, the separation of
levels in the pairs $J\subset I$ can be as \emph{little} as a fixed
parameter $\mathbf{\rho }$, and here we must first separate the stopping
form into two sublinear forms that involve the two estimates respectively.
The form corresponding to the smaller Poisson integral $\mathrm{P}_{1+\delta
^{\prime }}^{\alpha }$ is again handled using goodness and the decay $\delta
^{\prime }$ in the kernel, while the form corresponding to the larger
Poisson integral $\mathrm{P}^{\alpha }$ requires the stopping time and
recursion argument of M. Lacey.
\end{remark}

\begin{proof}
Using the Monotonicity Lemma \ref{mono}, followed by $\left\vert \nu
_{J^{\prime }}\right\vert \leq \nu $ and the Poisson equivalence 
\begin{equation}
\frac{\mathrm{P}_{m}^{\alpha }\left( J^{\prime },\nu \right) }{\left\vert
J^{\prime }\right\vert ^{\frac{m}{n}}}\approx \frac{\mathrm{P}_{m}^{\alpha
}\left( J,\nu \right) }{\left\vert J\right\vert ^{\frac{m}{n}}},\ \ \ \ \
J^{\prime }\subset J\subset 2J,\ \ \ \limfunc{supp}\nu \cap 2J=\emptyset ,
\label{Poisson equiv}
\end{equation}%
we have%
\begin{eqnarray*}
&&\left\vert \sum_{J^{\prime }\in \mathcal{H}}\left\langle T^{\alpha }\left(
\nu _{J^{\prime }}\right) ,\bigtriangleup _{J^{\prime }}^{\omega }\Psi
_{J}\right\rangle _{\omega }\right\vert =\left\vert \sum_{J^{\prime }\in 
\mathcal{H}}\left\langle \bigtriangleup _{J^{\prime }}^{\omega }T^{\alpha
}\left( \nu _{J^{\prime }}\right) ,\bigtriangleup _{J^{\prime }}^{\omega
}\Psi _{J}\right\rangle _{\omega }\right\vert \\
&\lesssim &\sum_{J^{\prime }\in \mathcal{H}}\Phi ^{\alpha }\left( J^{\prime
},\left\vert \nu _{J^{\prime }}\right\vert \right) \left\Vert \bigtriangleup
_{J^{\prime }}^{\omega }\Psi _{J}\right\Vert _{L^{2}\left( \omega \right) }
\\
&\lesssim &\left( \sum_{J^{\prime }\in \mathcal{H}}\left( \frac{\mathrm{P}%
^{\alpha }\left( J^{\prime },\nu \right) }{\left\vert J^{\prime }\right\vert
^{\frac{1}{n}}}\right) ^{2}\left\Vert \bigtriangleup _{J^{\prime }}^{\omega }%
\mathbf{x}\right\Vert _{L^{2}\left( \omega \right) }^{2}\right) ^{\frac{1}{2}%
}\left( \sum_{J^{\prime }\in \mathcal{H}}\left\Vert \bigtriangleup
_{J^{\prime }}^{\omega }\Psi _{J}\right\Vert _{L^{2}\left( \omega \right)
}^{2}\right) ^{\frac{1}{2}} \\
&&+\left( \sum_{J^{\prime }\in \mathcal{H}}\left( \frac{\mathrm{P}_{1+\delta
}^{\alpha }\left( J^{\prime },\nu \right) }{\left\vert J^{\prime
}\right\vert ^{\frac{1}{n}}}\right) ^{2}\sum_{J^{\prime \prime }\subset
J^{\prime }}\left\Vert \bigtriangleup _{J^{\prime \prime }}^{\omega }\mathbf{%
x}\right\Vert _{L^{2}\left( \omega \right) }^{2}\right) ^{\frac{1}{2}}\left(
\sum_{J^{\prime }\in \mathcal{H}}\left\Vert \bigtriangleup _{J^{\prime
}}^{\omega }\Psi _{J}\right\Vert _{L^{2}\left( \omega \right) }^{2}\right) ^{%
\frac{1}{2}} \\
&\lesssim &\left( \frac{\mathrm{P}^{\alpha }\left( J,\nu \right) }{%
\left\vert J\right\vert ^{\frac{1}{n}}}\right) \left\Vert \mathsf{P}_{%
\mathcal{H}}^{\omega }\mathbf{x}\right\Vert _{L^{2}\left( \omega \right)
}\left\Vert \Psi _{J}\right\Vert _{L^{2}\left( \omega \right) }+\frac{1}{%
\gamma ^{\delta ^{\prime }}}\left( \frac{\mathrm{P}_{1+\delta ^{\prime
}}^{\alpha }\left( J,\nu \right) }{\left\vert J\right\vert ^{\frac{1}{n}}}%
\right) \left\Vert \mathsf{P}_{\mathcal{H}^{\ast }}^{\omega }\mathbf{x}%
\right\Vert _{L^{2}\left( \omega \right) }\left\Vert \Psi _{J}\right\Vert
_{L^{2}\left( \omega \right) }\ .
\end{eqnarray*}%
The last inequality follows from

\begin{eqnarray*}
&&\sum_{J^{\prime }\in \mathcal{H}}\left( \frac{\mathrm{P}_{1+\delta
}^{\alpha }\left( J^{\prime },\nu \right) }{\left\vert J^{\prime
}\right\vert ^{\frac{1}{n}}}\right) ^{2}\sum_{J^{\prime \prime }\subset
J^{\prime }}\left\Vert \bigtriangleup _{J^{\prime \prime }}^{\omega }\mathbf{%
x}\right\Vert _{L^{2}\left( \omega \right) }^{2} \\
&=&\sum_{J^{\prime \prime }\subset J}\left\{ \sum_{J^{\prime }:\ J^{\prime
\prime }\subset J^{\prime }\subset J}\left( \frac{\mathrm{P}_{1+\delta
}^{\alpha }\left( J^{\prime },\nu \right) }{\left\vert J^{\prime
}\right\vert ^{\frac{1}{n}}}\right) ^{2}\right\} \left\Vert \bigtriangleup
_{J^{\prime \prime }}^{\omega }\mathbf{x}\right\Vert _{L^{2}\left( \omega
\right) }^{2} \\
&\lesssim &\frac{1}{\gamma ^{2\delta ^{\prime }}}\sum_{J^{\prime \prime }\in 
\mathcal{H}^{\ast }}\left( \frac{\mathrm{P}_{1+\delta ^{\prime }}^{\alpha
}\left( J^{\prime \prime },\nu \right) }{\left\vert J^{\prime \prime
}\right\vert ^{\frac{1}{n}}}\right) ^{2}\left\Vert \bigtriangleup
_{J^{\prime \prime }}^{\omega }\mathbf{x}\right\Vert _{L^{2}\left( \omega
\right) }^{2}\ ,
\end{eqnarray*}

which in turn follows from (recalling $\delta =2\delta ^{\prime }$ and using 
$\left\vert J^{\prime }\right\vert ^{\frac{1}{n}}+\left\vert y-c_{J^{\prime
}}\right\vert \approx \left\vert J\right\vert ^{\frac{1}{n}}+\left\vert
y-c_{J}\right\vert $ and $\frac{\left\vert J\right\vert ^{\frac{1}{n}}}{%
\left\vert J\right\vert ^{\frac{1}{n}}+\left\vert y-c_{J}\right\vert }\leq 
\frac{1}{\gamma }$ for $y\in \mathbb{R}^{n}\setminus \gamma J$)%
\begin{eqnarray*}
&&\sum_{J^{\prime }:\ J^{\prime \prime }\subset J^{\prime }\subset J}\left( 
\frac{\mathrm{P}_{1+\delta }^{\alpha }\left( J^{\prime },\nu \right) }{%
\left\vert J^{\prime }\right\vert ^{\frac{1}{n}}}\right) ^{2} \\
&=&\sum_{J^{\prime }:\ J^{\prime \prime }\subset J^{\prime }\subset
J}\left\vert J^{\prime }\right\vert ^{\frac{2\delta }{n}}\left( \int_{%
\mathbb{R}^{n}\setminus \gamma J}\frac{1}{\left( \left\vert J^{\prime
}\right\vert ^{\frac{1}{n}}+\left\vert y-c_{J^{\prime }}\right\vert \right)
^{n+1+\delta -\alpha }}d\nu \left( y\right) \right) ^{2} \\
&\lesssim &\sum_{J^{\prime }:\ J^{\prime \prime }\subset J^{\prime }\subset
J}\frac{1}{\gamma ^{2\delta ^{\prime }}}\frac{\left\vert J^{\prime
}\right\vert ^{\frac{2\delta }{n}}}{\left\vert J\right\vert ^{\frac{2\delta 
}{n}}}\left( \int_{\mathbb{R}^{n}\setminus \gamma J}\frac{\left\vert
J\right\vert ^{\frac{\delta ^{\prime }}{n}}}{\left( \left\vert J\right\vert
^{\frac{1}{n}}+\left\vert y-c_{J}\right\vert \right) ^{n+1+\delta ^{\prime
}-\alpha }}d\nu \left( y\right) \right) ^{2} \\
&=&\frac{1}{\gamma ^{2\delta ^{\prime }}}\left( \sum_{J^{\prime }:\
J^{\prime \prime }\subset J^{\prime }\subset J}\frac{\left\vert J^{\prime
}\right\vert ^{\frac{2\delta }{n}}}{\left\vert J\right\vert ^{\frac{2\delta 
}{n}}}\right) \left( \frac{\mathrm{P}_{1+\delta ^{\prime }}^{\alpha }\left(
J,\nu \right) }{\left\vert J\right\vert ^{\frac{1}{n}}}\right) ^{2}\lesssim 
\frac{1}{\gamma ^{2\delta ^{\prime }}}\left( \frac{\mathrm{P}_{1+\delta
^{\prime }}^{\alpha }\left( J,\nu \right) }{\left\vert J\right\vert ^{\frac{1%
}{n}}}\right) ^{2}.
\end{eqnarray*}%
Finally we have the `pivotal' bound from (\ref{Poisson equiv}) and%
\begin{equation*}
\sum_{J^{\prime \prime }\subset J}\left\Vert \bigtriangleup _{J^{\prime
\prime }}^{\omega }\mathbf{x}\right\Vert _{L^{2}\left( \omega \right)
}^{2}=\left\Vert \mathbf{x}-\mathbf{m}_{J}\right\Vert _{L^{2}\left( \mathbf{1%
}_{J}\omega \right) }^{2}\leq \left\vert J\right\vert ^{\frac{2}{n}%
}\left\vert J\right\vert _{\omega }\ .
\end{equation*}
\end{proof}

\section{Preliminaries of NTV type}

An important reduction of our theorem is delivered by the following two
lemmas, that in the case of one dimension are due to Nazarov, Treil and
Volberg (see \cite{NTV3} and \cite{Vol}). The proofs given there do not
extend in standard ways to higher dimensions, and we use the Quasiweak
Boundedness Property to handle the case of touching quasicubes, and an
application of Schur's Lemma to handle the case of separated quasicubes. The
first lemma below is Lemmas 8.1 and 8.7 in \cite{LaWi} but with the larger
constant $\mathcal{A}_{2}^{\alpha }$ there in place of $A_{2}^{\alpha }$.

\begin{lemma}
\label{standard delta}Suppose $T^{\alpha }$ is a standard fractional
singular integral with $0\leq \alpha <n$, and that all of the quasicubes $%
I\in \Omega \mathcal{D}^{\sigma },J\in \Omega \mathcal{D}^{\omega }$ below
are good with goodness parameters $\varepsilon $ and $\mathbf{r}$. Fix a
positive integer $\mathbf{\rho }>\mathbf{r}$. For $f\in L^{2}\left( \sigma
\right) $ and $g\in L^{2}\left( \omega \right) $ we have%
\begin{equation}
\sum_{\substack{ \left( I,J\right) \in \Omega \mathcal{D}^{\sigma }\times
\Omega \mathcal{D}^{\omega }  \\ 2^{-\mathbf{\rho }}\ell \left( I\right)
\leq \ell \left( J\right) \leq 2^{\mathbf{\rho }}\ell \left( I\right) }}%
\left\vert \left\langle T_{\sigma }^{\alpha }\left( \bigtriangleup
_{I}^{\sigma }f\right) ,\bigtriangleup _{J}^{\omega }g\right\rangle _{\omega
}\right\vert \lesssim \left( \mathfrak{T}_{\alpha }+\mathfrak{T}_{\alpha
}^{\ast }+\mathcal{WBP}_{T^{\alpha }}+\sqrt{A_{2}^{\alpha }}\right)
\left\Vert f\right\Vert _{L^{2}\left( \sigma \right) }\left\Vert
g\right\Vert _{L^{2}\left( \omega \right) }  \label{delta near}
\end{equation}%
and 
\begin{equation}
\sum_{\substack{ \left( I,J\right) \in \Omega \mathcal{D}^{\sigma }\times
\Omega \mathcal{D}^{\omega }  \\ I\cap J=\emptyset \text{ and }\frac{\ell
\left( J\right) }{\ell \left( I\right) }\notin \left[ 2^{-\mathbf{\rho }},2^{%
\mathbf{\rho }}\right] }}\left\vert \left\langle T_{\sigma }^{\alpha }\left(
\bigtriangleup _{I}^{\sigma }f\right) ,\bigtriangleup _{J}^{\omega
}g\right\rangle _{\omega }\right\vert \lesssim \sqrt{A_{2}^{\alpha }}%
\left\Vert f\right\Vert _{L^{2}\left( \sigma \right) }\left\Vert
g\right\Vert _{L^{2}\left( \omega \right) }.  \label{delta far}
\end{equation}
\end{lemma}

\begin{lemma}
\label{standard indicator}Suppose $T^{\alpha }$ is a standard fractional
singular integral with $0\leq \alpha <n$, that all of the quasicubes $I\in
\Omega \mathcal{D}^{\sigma },J\in \Omega \mathcal{D}^{\omega }$ below are
good, that $\mathbf{\rho }>\mathbf{r}$, that $f\in L^{2}\left( \sigma
\right) $ and $g\in L^{2}\left( \omega \right) $, that $\mathcal{F}\subset
\Omega \mathcal{D}^{\sigma }$ and $\mathcal{G}\subset \Omega \mathcal{D}%
^{\omega }$ are $\sigma $-Carleson and $\omega $-Carleson collections
respectively, i.e.,%
\begin{equation*}
\sum_{F^{\prime }\in \mathcal{F}:\ F^{\prime }\subset F}\left\vert F^{\prime
}\right\vert _{\sigma }\lesssim \left\vert F\right\vert _{\sigma },\ \ \ \ \
F\in \mathcal{F},\text{ and }\sum_{G^{\prime }\in \mathcal{G}:\ G^{\prime
}\subset G}\left\vert G^{\prime }\right\vert _{\omega }\lesssim \left\vert
G\right\vert _{\omega },\ \ \ \ \ G\in \mathcal{G},
\end{equation*}%
that there are numerical sequences $\left\{ \alpha _{\mathcal{F}}\left(
F\right) \right\} _{F\in \mathcal{F}}$ and $\left\{ \beta _{\mathcal{G}%
}\left( G\right) \right\} _{G\in \mathcal{G}}$ such that%
\begin{equation}
\sum_{F\in \mathcal{F}}\alpha _{\mathcal{F}}\left( F\right) ^{2}\left\vert
F\right\vert _{\sigma }\leq \left\Vert f\right\Vert _{L^{2}\left( \sigma
\right) }^{2}\text{ and }\sum_{G\in \mathcal{G}}\beta _{\mathcal{G}}\left(
G\right) ^{2}\left\vert G\right\vert _{\sigma }\leq \left\Vert g\right\Vert
_{L^{2}\left( \sigma \right) }^{2}\ ,  \label{qo}
\end{equation}%
and finally that for each pair of quasicubes $\left( I,J\right) \in \Omega 
\mathcal{D}^{\sigma }\times \Omega \mathcal{D}^{\omega }$, there are bounded
functions $\beta _{I,J}$ and $\gamma _{I,J}$ supported in $I\setminus 2J$
and $J\setminus 2I$ respectively, satisfying%
\begin{equation*}
\left\Vert \beta _{I,J}\right\Vert _{\infty },\left\Vert \gamma
_{I,J}\right\Vert _{\infty }\leq 1.
\end{equation*}%
Then%
\begin{eqnarray}
&&\sum_{\substack{ \left( F,J\right) \in \mathcal{F}\times \Omega \mathcal{D}%
^{\omega }  \\ F\cap J=\emptyset \text{ and }\ell \left( J\right) \leq 2^{-%
\mathbf{\rho }}\ell \left( F\right) }}\left\vert \left\langle T_{\sigma
}^{\alpha }\left( \beta _{F,J}\mathbf{1}_{F}\alpha _{\mathcal{F}}\left(
F\right) \right) ,\bigtriangleup _{J}^{\omega }g\right\rangle _{\omega
}\right\vert  \label{indicator far} \\
&&+\sum_{\substack{ \left( I,G\right) \in \Omega \mathcal{D}^{\sigma }\times 
\mathcal{G}  \\ I\cap G=\emptyset \text{ and }\ell \left( I\right) \leq 2^{-%
\mathbf{\rho }}\ell \left( G\right) }}\left\vert \left\langle T_{\sigma
}^{\alpha }\left( \bigtriangleup _{I}^{\sigma }f\right) ,\gamma _{I,G}%
\mathbf{1}_{G}\beta _{\mathcal{G}}\left( G\right) \right\rangle _{\omega
}\right\vert  \notag \\
&\lesssim &\sqrt{A_{2}^{\alpha }}\left\Vert f\right\Vert _{L^{2}\left(
\sigma \right) }\left\Vert g\right\Vert _{L^{2}\left( \omega \right) }. 
\notag
\end{eqnarray}
\end{lemma}

\begin{remark}
If $\mathcal{F}$ and $\mathcal{G}$ are $\sigma $-Carleson and $\omega $%
-Carleson collections respectively, and if $\alpha _{\mathcal{F}}\left(
F\right) =\mathbb{E}_{F}^{\sigma }\left\vert f\right\vert $ and $\beta _{%
\mathcal{G}}\left( G\right) =\mathbb{E}_{G}^{\omega }\left\vert g\right\vert 
$, then the `quasi' orthogonality condition (\ref{qo}) holds (here`quasi'
has a different meaining than quasi), and this special case of Lemma \ref%
{standard indicator} serves as a basic example.
\end{remark}

\begin{remark}
Lemmas \ref{standard delta} and \ref{standard indicator} differ mainly in
that an orthogonal collection of quasiHaar projections is replaced by a
`quasi' orthogonal collection of indicators $\left\{ \mathbf{1}_{F}\alpha _{%
\mathcal{F}}\left( F\right) \right\} _{F\in \mathcal{F}}$. More precisely,
the main difference between (\ref{delta far}) and (\ref{indicator far}) is
that a quasiHaar projection $\bigtriangleup _{I}^{\sigma }f$ or $%
\bigtriangleup _{J}^{\omega }g$ has been replaced with a constant multiple
of an indicator $\mathbf{1}_{F}\alpha _{\mathcal{F}}\left( F\right) $ or $%
\mathbf{1}_{G}\beta _{\mathcal{G}}\left( G\right) $, and in addition, a
bounded function is permitted to multiply the indicator of the quasicube
having larger sidelength.
\end{remark}

\begin{proof}
Note that in (\ref{delta near}) we have used the parameter $\mathbf{\rho }$
in the exponent rather than $\mathbf{r}$, and this is possible because the
arguments we use here only require that there are finitely many levels of
scale separating $I$ and $J$. To handle this term we first decompose it into%
\begin{eqnarray*}
&&\left\{ \sum_{\substack{ \left( I,J\right) \in \Omega \mathcal{D}^{\sigma
}\times \Omega \mathcal{D}^{\omega }:\ J\subset 3I  \\ 2^{-\mathbf{\rho }%
}\ell \left( I\right) \leq \ell \left( J\right) \leq 2^{\mathbf{\rho }}\ell
\left( I\right) }}+\sum_{\substack{ \left( I,J\right) \in \Omega \mathcal{D}%
^{\sigma }\times \Omega \mathcal{D}^{\omega }:\ I\subset 3J  \\ 2^{-\mathbf{%
\rho }}\ell \left( I\right) \leq \ell \left( J\right) \leq 2^{\mathbf{\rho }%
}\ell \left( I\right) }}+\sum_{\substack{ \left( I,J\right) \in \Omega 
\mathcal{D}^{\sigma }\times \Omega \mathcal{D}^{\omega }  \\ 2^{-\mathbf{%
\rho }}\ell \left( I\right) \leq \ell \left( J\right) \leq 2^{\mathbf{\rho }%
}\ell \left( I\right)  \\ J\not\subset 3I\text{ and }I\not\subset 3J}}%
\right\} \left\vert \left\langle T_{\sigma }^{\alpha }\left( \bigtriangleup
_{I}^{\sigma }f\right) ,\bigtriangleup _{J}^{\omega }g\right\rangle _{\omega
}\right\vert \\
&&\ \ \ \ \ \ \ \ \ \ \equiv A_{1}+A_{2}+A_{3}.
\end{eqnarray*}%
The proof of the bound for term $A_{3}$ is similar to that of the bound for
the left side of (\ref{delta far}), and so we will defer the bound for $%
A_{3} $ until after (\ref{delta far}) has been proved.

We now consider term $A_{1}$ as term $A_{2}$ is symmetric. To handle this
term we will write the quasiHaar functions $h_{I}^{\sigma }$ and $%
h_{J}^{\omega }$ as linear combinations of the indicators of the children of
their supporting quasicubes, denoted $I_{\theta }$ and $J_{\theta ^{\prime
}} $ respectively. Then we use the quasitesting condition on $I_{\theta }$
and $J_{\theta ^{\prime }}$ when they \emph{overlap}, i.e. their interiors
intersect; we use the quasiweak boundedness property on $I_{\theta }$ and $%
J_{\theta ^{\prime }}$ when they \emph{touch}, i.e. their interiors are
disjoint but their closures intersect (even in just a point); and finally we
use the $A_{2}^{\alpha }$ condition when $I_{\theta }$ and $J_{\theta
^{\prime }}$ are \emph{separated}, i.e. their closures are disjoint. We will
suppose initially that the side length of $J$ is at most the side length $I$%
, i.e. $\ell \left( J\right) \leq \ell \left( I\right) $, the proof for $%
J=\pi I$ being similar but for one point mentioned below. So suppose that $%
I_{\theta }$ is a child of $I$ and that $J_{\theta ^{\prime }}$ is a child
of $J$. If $J_{\theta ^{\prime }}\subset I_{\theta }$ we have from (\ref%
{useful Haar}) that, 
\begin{eqnarray*}
\left\vert \left\langle T_{\sigma }^{\alpha }\left( \mathbf{1}_{I_{\theta
}}\bigtriangleup _{I}^{\sigma }f\right) ,\mathbf{1}_{J_{\theta ^{\prime
}}}\bigtriangleup _{J}^{\omega }g\right\rangle _{\omega }\right\vert
&\lesssim &\sup_{a,a^{\prime }\in \Gamma _{n}}\frac{\left\vert \left\langle
f,h_{I}^{\sigma ,a}\right\rangle _{\sigma }\right\vert }{\sqrt{\left\vert
I_{\theta }\right\vert _{\sigma }}}\left\vert \left\langle T_{\sigma
}^{\alpha }\left( \mathbf{1}_{I_{\theta }}\right) ,\mathbf{1}_{J_{\theta
^{\prime }}}\right\rangle _{\omega }\right\vert \frac{\left\vert
\left\langle g,h_{J}^{\omega ,a^{\prime }}\right\rangle _{\omega
}\right\vert }{\sqrt{\left\vert J_{\theta ^{\prime }}\right\vert _{\omega }}}
\\
&\lesssim &\sup_{a,a^{\prime }\in \Gamma _{n}}\frac{\left\vert \left\langle
f,h_{I}^{\sigma ,a}\right\rangle _{\sigma }\right\vert }{\sqrt{\left\vert
I_{\theta }\right\vert _{\sigma }}}\left( \int_{J_{\theta ^{\prime
}}}\left\vert T_{\sigma }^{\alpha }\left( \mathbf{1}_{I_{\theta }}\right)
\right\vert ^{2}d\omega \right) ^{\frac{1}{2}}\left\vert \left\langle
g,h_{J}^{\omega ,a^{\prime }}\right\rangle _{\omega }\right\vert \\
&\lesssim &\sup_{a,a^{\prime }\in \Gamma _{n}}\frac{\left\vert \left\langle
f,h_{I}^{\sigma ,a}\right\rangle _{\sigma }\right\vert }{\sqrt{\left\vert
I_{\theta }\right\vert _{\sigma }}}\mathfrak{T}_{T_{\alpha }}\left\vert
I_{\theta }\right\vert _{\sigma }^{\frac{1}{2}}\left\vert \left\langle
g,h_{J}^{\omega ,a^{\prime }}\right\rangle _{\omega }\right\vert \\
&\lesssim &\sup_{a,a^{\prime }\in \Gamma _{n}}\mathfrak{T}_{T_{\alpha
}}\left\vert \left\langle f,h_{I}^{\sigma ,a}\right\rangle _{\sigma
}\right\vert \left\vert \left\langle g,h_{J}^{\omega ,a^{\prime
}}\right\rangle _{\omega }\right\vert .
\end{eqnarray*}%
The point referred to above is that when $J=\pi I$ we write $\left\langle
T_{\sigma }^{\alpha }\left( \mathbf{1}_{I_{\theta }}\right) ,\mathbf{1}%
_{J_{\theta ^{\prime }}}\right\rangle _{\omega }=\left\langle \mathbf{1}%
_{I_{\theta }},T_{\omega }^{\alpha ,\ast }\left( \mathbf{1}_{J_{\theta
^{\prime }}}\right) \right\rangle _{\sigma }$ and get the dual quasitesting
constant $\mathfrak{T}_{T_{\alpha }}^{\ast }$. If $J_{\theta ^{\prime }}$
and $I_{\theta }$ touch, then $\ell \left( J_{\theta ^{\prime }}\right) \leq
\ell \left( I_{\theta }\right) $ and we have $J_{\theta ^{\prime }}\subset
3I_{\theta }\setminus I_{\theta }$, and so%
\begin{eqnarray*}
\left\vert \left\langle T_{\sigma }^{\alpha }\left( \mathbf{1}_{I_{\theta
}}\bigtriangleup _{I}^{\sigma }f\right) ,\mathbf{1}_{J_{\theta ^{\prime
}}}\bigtriangleup _{J}^{\omega }g\right\rangle _{\omega }\right\vert
&\lesssim &\sup_{a,a^{\prime }\in \Gamma _{n}}\frac{\left\vert \left\langle
f,h_{I}^{\sigma ,a}\right\rangle _{\sigma }\right\vert }{\sqrt{\left\vert
I_{\theta }\right\vert _{\sigma }}}\left\vert \left\langle T_{\sigma
}^{\alpha }\left( \mathbf{1}_{I_{\theta }}\right) ,\mathbf{1}_{J_{\theta
^{\prime }}}\right\rangle _{\omega }\right\vert \frac{\left\vert
\left\langle g,h_{J}^{\omega ,a^{\prime }}\right\rangle _{\omega
}\right\vert }{\sqrt{\left\vert J_{\theta ^{\prime }}\right\vert _{\omega }}}
\\
&\lesssim &\sup_{a,a^{\prime }\in \Gamma _{n}}\frac{\left\vert \left\langle
f,h_{I}^{\sigma ,a}\right\rangle _{\sigma }\right\vert }{\sqrt{\left\vert
I_{\theta }\right\vert _{\sigma }}}\mathcal{WBP}_{T^{\alpha }}\sqrt{%
\left\vert I_{\theta }\right\vert _{\sigma }\left\vert J_{\theta ^{\prime
}}\right\vert _{\omega }}\frac{\left\vert \left\langle g,h_{J}^{\omega
,a^{\prime }}\right\rangle _{\omega }\right\vert }{\sqrt{\left\vert
J_{\theta ^{\prime }}\right\vert _{\omega }}} \\
&=&\sup_{a,a^{\prime }\in \Gamma _{n}}\mathcal{WBP}_{T^{\alpha }}\left\vert
\left\langle f,h_{I}^{\sigma ,a}\right\rangle _{\sigma }\right\vert
\left\vert \left\langle g,h_{J}^{\omega ,a^{\prime }}\right\rangle _{\omega
}\right\vert .
\end{eqnarray*}%
Finally, if $J_{\theta ^{\prime }}$ and $I_{\theta }$ are separated, and if $%
K$ is the smallest (not necessarily dyadic) quasicube containing both $%
J_{\theta ^{\prime }}$ and $I_{\theta }$, then $\limfunc{dist}\left(
I_{\theta },J_{\theta ^{\prime }}\right) \approx \left\vert K\right\vert ^{%
\frac{1}{n}}$ and we have%
\begin{eqnarray*}
\left\vert \left\langle T_{\sigma }^{\alpha }\left( \mathbf{1}_{I_{\theta
}}\bigtriangleup _{I}^{\sigma }f\right) ,\mathbf{1}_{J_{\theta ^{\prime
}}}\bigtriangleup _{J}^{\omega }g\right\rangle _{\omega }\right\vert
&\lesssim &\sup_{a,a^{\prime }\in \Gamma _{n}}\frac{\left\vert \left\langle
f,h_{I}^{\sigma ,a}\right\rangle _{\sigma }\right\vert }{\sqrt{\left\vert
I_{\theta }\right\vert _{\sigma }}}\left\vert \left\langle T_{\sigma
}^{\alpha }\left( \mathbf{1}_{I_{\theta }}\right) ,\mathbf{1}_{J_{\theta
^{\prime }}}\right\rangle _{\omega }\right\vert \frac{\left\vert
\left\langle g,h_{J}^{\omega ,a^{\prime }}\right\rangle _{\omega
}\right\vert }{\sqrt{\left\vert J_{\theta ^{\prime }}\right\vert _{\omega }}}
\\
&\lesssim &\sup_{a,a^{\prime }\in \Gamma _{n}}\frac{\left\vert \left\langle
f,h_{I}^{\sigma ,a}\right\rangle _{\sigma }\right\vert }{\sqrt{\left\vert
I_{\theta }\right\vert _{\sigma }}}\frac{1}{\limfunc{dist}\left( I_{\theta
},J_{\theta ^{\prime }}\right) ^{n-\alpha }}\left\vert I_{\theta
}\right\vert _{\sigma }\left\vert J_{\theta ^{\prime }}\right\vert _{\omega }%
\frac{\left\vert \left\langle g,h_{J}^{\omega ,a^{\prime }}\right\rangle
_{\omega }\right\vert }{\sqrt{\left\vert J_{\theta ^{\prime }}\right\vert
_{\omega }}} \\
&=&\sup_{a,a^{\prime }\in \Gamma _{n}}\frac{\sqrt{\left\vert I_{\theta
}\right\vert _{\sigma }\left\vert J_{\theta ^{\prime }}\right\vert _{\omega }%
}}{\limfunc{dist}\left( I_{\theta },J_{\theta ^{\prime }}\right) ^{n-\alpha }%
}\left\vert \left\langle f,h_{I}^{\sigma ,a}\right\rangle _{\sigma
}\right\vert \left\vert \left\langle g,h_{J}^{\omega ,a^{\prime
}}\right\rangle _{\omega }\right\vert \\
&\lesssim &\sup_{a,a^{\prime }\in \Gamma _{n}}\frac{\sqrt{\left\vert
K\right\vert _{\sigma }\left\vert K\right\vert _{\omega }}}{\left\vert
K\right\vert ^{\frac{1}{n}\left( n-\alpha \right) }}\left\vert \left\langle
f,h_{I}^{\sigma ,a}\right\rangle _{\sigma }\right\vert \left\vert
\left\langle g,h_{J}^{\omega ,a^{\prime }}\right\rangle _{\omega }\right\vert
\\
&\lesssim &\sqrt{A_{2}^{\alpha }}\sup_{a,a^{\prime }\in \Gamma
_{n}}\left\vert \left\langle f,h_{I}^{\sigma ,a}\right\rangle _{\sigma
}\right\vert \left\vert \left\langle g,h_{J}^{\omega ,a^{\prime
}}\right\rangle _{\omega }\right\vert .
\end{eqnarray*}%
Now we sum over all the children of $J$ and $I$ satisfying $2^{-\mathbf{\rho 
}}\ell \left( I\right) \leq \ell \left( J\right) \leq 2^{\mathbf{\rho }}\ell
\left( I\right) $ for which $J\subset 3I$ to obtain that%
\begin{equation*}
A_{1}\lesssim \left( \mathfrak{T}_{T_{\alpha }}+\mathfrak{T}_{T_{\alpha
}}^{\ast }+\mathcal{WBP}_{T^{\alpha }}+\sqrt{A_{2}^{\alpha }}\right)
\sup_{a,a^{\prime }\in \Gamma _{n}}\sum_{\substack{ \left( I,J\right) \in
\Omega \mathcal{D}^{\sigma }\times \Omega \mathcal{D}^{\omega }:\ J\subset
3I  \\ 2^{-\mathbf{\rho }}\ell \left( I\right) \leq \ell \left( J\right)
\leq 2^{\mathbf{\rho }}\ell \left( I\right) }}\left\vert \left\langle
f,h_{I}^{\sigma ,a}\right\rangle _{\sigma }\right\vert \left\vert
\left\langle g,h_{J}^{\omega ,a^{\prime }}\right\rangle _{\omega
}\right\vert \ .
\end{equation*}%
Now Cauchy-Schwarz gives the estimate%
\begin{eqnarray*}
&&\sup_{a,a^{\prime }\in \Gamma _{n}}\sum_{\substack{ \left( I,J\right) \in
\Omega \mathcal{D}^{\sigma }\times \Omega \mathcal{D}^{\omega }:\ J\subset
3I  \\ 2^{-\mathbf{\rho }}\ell \left( I\right) \leq \ell \left( J\right)
\leq 2^{\mathbf{\rho }}\ell \left( I\right) }}\left\vert \left\langle
f,h_{I}^{\sigma ,a}\right\rangle _{\sigma }\right\vert \left\vert
\left\langle g,h_{J}^{\omega ,a^{\prime }}\right\rangle _{\omega }\right\vert
\\
&\leq &\sup_{a,a^{\prime }\in \Gamma _{n}}\left( \sum_{\substack{ \left(
I,J\right) \in \Omega \mathcal{D}^{\sigma }\times \Omega \mathcal{D}^{\omega
}:\ J\subset 3I  \\ 2^{-\mathbf{\rho }}\ell \left( I\right) \leq \ell \left(
J\right) \leq 2^{\mathbf{\rho }}\ell \left( I\right) }}\left\vert
\left\langle f,h_{I}^{\sigma }\right\rangle _{\sigma }\right\vert
^{2}\right) ^{\frac{1}{2}}\left( \sum_{\substack{ \left( I,J\right) \in
\Omega \mathcal{D}^{\sigma }\times \Omega \mathcal{D}^{\omega }:\ J\subset
3I  \\ 2^{-\mathbf{\rho }}\ell \left( I\right) \leq \ell \left( J\right)
\leq 2^{\mathbf{\rho }}\ell \left( I\right) }}\left\vert \left\langle
g,h_{J}^{\omega }\right\rangle _{\omega }\right\vert ^{2}\right) ^{\frac{1}{2%
}} \\
&\lesssim &\left\Vert f\right\Vert _{L^{2}\left( \sigma \right) }\left\Vert
g\right\Vert _{L^{2}\left( \omega \right) }\ ,
\end{eqnarray*}%
This completes our proof of (\ref{delta near}) save for the deferral of term 
$A_{3}$, which we bound below.

\bigskip

Now we turn to the sum of separated cubes in (\ref{delta far}) and (\ref%
{indicator far}). In each of these inequalities we have either orthogonality
or `quasi' orthogonality, due either to the presence of a quasiHaar
projection such as $\bigtriangleup _{I}^{\sigma }f$, or the presence of an
appropriate Carleson indicator such as $\beta _{F,J}\mathbf{1}_{F}\alpha _{%
\mathcal{F}}\left( F\right) $. We will prove below the estimate for the
separated sum corresponding to (\ref{delta far}). The corresponding
estimates for (\ref{indicator far}) are handled in a similar way, the only
difference being that the `quasi' orthogonality of Carleson indicators such
as $\beta _{F,J}\mathbf{1}_{F}\alpha _{\mathcal{F}}\left( F\right) $ is used
in place of the orthogonality of quasiHaar functions such as $\bigtriangleup
_{I}^{\sigma }f$. The bounded functions $\beta _{F,J}$ are replaced with
constants after an application of the energy lemma, and then the arguments
proceed as below.

We split the pairs $\left( I,J\right) \in \Omega \mathcal{D}^{\sigma }\times
\Omega \mathcal{D}^{\omega }$ occurring in (\ref{delta far}) into two
groups, those with side length of $J$ smaller than side length of $I$, and
those with side length of $I$ smaller than side length of $J$, treating only
the former case, the latter being symmetric. Thus we prove the following
bound:%
\begin{eqnarray*}
\mathcal{A}\left( f,g\right) &\equiv &\sum_{\substack{ \left( I,J\right) \in
\Omega \mathcal{D}^{\sigma }\times \Omega \mathcal{D}^{\omega }  \\ I\cap
J=\emptyset \text{ and }\ell \left( J\right) \leq 2^{-\mathbf{\rho }}\ell
\left( I\right) }}\left\vert \left\langle T_{\sigma }^{\alpha }\left(
\bigtriangleup _{I}^{\sigma }f\right) ,\bigtriangleup _{J}^{\omega
}g\right\rangle _{\omega }\right\vert \\
&\lesssim &\sqrt{A_{2}^{\alpha }}\left\Vert f\right\Vert _{L^{2}\left(
\sigma \right) }\left\Vert g\right\Vert _{L^{2}\left( \omega \right) }.
\end{eqnarray*}%
We apply the `pivotal' bound from the Energy Lemma \ref{ener} to estimate
the inner product $\left\langle T_{\sigma }^{\alpha }\left( \bigtriangleup
_{I}^{\sigma }f\right) ,\bigtriangleup _{J}^{\omega }g\right\rangle _{\omega
}$ and obtain,%
\begin{equation*}
\left\vert \left\langle T_{\sigma }^{\alpha }\left( \bigtriangleup
_{I}^{\sigma }f\right) ,\bigtriangleup _{J}^{\omega }g\right\rangle _{\omega
}\right\vert \lesssim \left\Vert \bigtriangleup _{J}^{\omega }g\right\Vert
_{L^{2}\left( \omega \right) }\mathrm{P}^{\alpha }\left( J,\left\vert
\bigtriangleup _{I}^{\sigma }f\right\vert \sigma \right) \sqrt{\left\vert
J\right\vert _{\omega }}\,,
\end{equation*}%
Denote by $\limfunc{dist}$ the $\ell ^{\infty }$ distance in $\mathbb{R}^{n}$%
: $\limfunc{dist}\left( x,y\right) =\max_{1\leq j\leq n}\left\vert
x_{j}-y_{j}\right\vert $, and denote the corresponding quasidistance by $%
\limfunc{quasidist}\left( x,y\right) =\limfunc{dist}\left( \Omega
^{-1}x,\Omega ^{-1}y\right) $. We now estimate separately the long-range and
mid-range cases where $\limfunc{quasidist}\left( J,I\right) \geq \ell \left(
I\right) $ holds or not, and we decompose $\mathcal{A}$ accordingly:%
\begin{equation*}
\mathcal{A}\left( f,g\right) \equiv \mathcal{A}^{\limfunc{long}}\left(
f,g\right) +\mathcal{A}^{\limfunc{mid}}\left( f,g\right) .
\end{equation*}

\bigskip

\textbf{The long-range case}: We begin with the case where $\limfunc{%
quasidist}\left( J,I\right) $ is at least $\ell \left( I\right) $, i.e. $%
J\cap 3I=\emptyset $. Since $J$ and $I$ are separated by at least $\max
\left\{ \ell \left( J\right) ,\ell \left( I\right) \right\} $, we have the
inequality%
\begin{equation*}
\mathrm{P}^{\alpha }\left( J,\left\vert \bigtriangleup _{I}^{\sigma
}f\right\vert \sigma \right) \approx \int_{I}\frac{\ell \left( J\right) }{%
\left\vert y-c_{J}\right\vert ^{n+1-\alpha }}\left\vert \bigtriangleup
_{I}^{\sigma }f\left( y\right) \right\vert d\sigma \left( y\right) \lesssim
\left\Vert \bigtriangleup _{I}^{\sigma }f\right\Vert _{L^{2}\left( \sigma
\right) }\frac{\ell \left( J\right) \sqrt{\left\vert I\right\vert _{\sigma }}%
}{\limfunc{quasidist}\left( I,J\right) ^{n+1-\alpha }},
\end{equation*}%
since $\int_{I}\left\vert \bigtriangleup _{I}^{\sigma }f\left( y\right)
\right\vert d\sigma \left( y\right) \leq \left\Vert \bigtriangleup
_{I}^{\sigma }f\right\Vert _{L^{2}\left( \sigma \right) }\sqrt{\left\vert
I\right\vert _{\sigma }}$. Thus with $A\left( f,g\right) =\mathcal{A}^{%
\limfunc{long}}\left( f,g\right) $ we have%
\begin{eqnarray*}
A\left( f,g\right) &\lesssim &\sum_{I\in \Omega \mathcal{D}}\sum_{J\;:\;\ell
\left( J\right) \leq \ell \left( I\right) :\ \limfunc{quasidist}\left(
I,J\right) \geq \ell \left( I\right) }\left\Vert \bigtriangleup _{I}^{\sigma
}f\right\Vert _{L^{2}\left( \sigma \right) }\left\Vert \bigtriangleup
_{J}^{\omega }g\right\Vert _{L^{2}\left( \omega \right) } \\
&&\ \ \ \ \ \ \ \ \ \ \ \ \ \ \ \times \frac{\ell \left( J\right) }{\limfunc{%
quasidist}\left( I,J\right) ^{n+1-\alpha }}\sqrt{\left\vert I\right\vert
_{\sigma }}\sqrt{\left\vert J\right\vert _{\omega }} \\
&\equiv &\sum_{\left( I,J\right) \in \mathcal{P}}\left\Vert \bigtriangleup
_{I}^{\sigma }f\right\Vert _{L^{2}\left( \sigma \right) }\left\Vert
\bigtriangleup _{J}^{\omega }g\right\Vert _{L^{2}\left( \omega \right)
}A\left( I,J\right) ; \\
\text{with }A\left( I,J\right) &\equiv &\frac{\ell \left( J\right) }{%
\limfunc{quasidist}\left( I,J\right) ^{n+1-\alpha }}\sqrt{\left\vert
I\right\vert _{\sigma }}\sqrt{\left\vert J\right\vert _{\omega }}; \\
\text{ and }\mathcal{P} &\equiv &\left\{ \left( I,J\right) \in \Omega 
\mathcal{D}\times \Omega \mathcal{D}:\ell \left( J\right) \leq \ell \left(
I\right) \text{ and }\limfunc{quasidist}\left( I,J\right) \geq \ell \left(
I\right) \right\} .
\end{eqnarray*}%
Now let $\Omega \mathcal{D}_{N}\equiv \left\{ K\in \Omega \mathcal{D}:\ell
\left( K\right) =2^{N}\right\} $ for each $N\in \mathbb{Z}$. For $N\in 
\mathbb{Z}$ and $s\in \mathbb{Z}_{+}$, we further decompose $A\left(
f,g\right) $ by pigeonholing the side lengths of $I$ and $J$ by $2^{N}$ and $%
2^{N-s}$ respectively: 
\begin{eqnarray*}
A\left( f,g\right) &=&\sum_{s=0}^{\infty }\sum_{N\in \mathbb{Z}%
}A_{N}^{s}\left( f,g\right) ; \\
A_{N}^{s}\left( f,g\right) &\equiv &\sum_{\left( I,J\right) \in \mathcal{P}%
_{N}^{s}}\left\Vert \bigtriangleup _{I}^{\sigma }f\right\Vert _{L^{2}\left(
\sigma \right) }\left\Vert \bigtriangleup _{J}^{\omega }g\right\Vert
_{L^{2}\left( \omega \right) }A\left( I,J\right) \\
\text{where }\mathcal{P}_{N}^{s} &\equiv &\left\{ \left( I,J\right) \in
\Omega \mathcal{D}_{N}\times \Omega \mathcal{D}_{N-s}:\limfunc{quasidist}%
\left( I,J\right) \geq \ell \left( I\right) \right\} .
\end{eqnarray*}%
Now $A_{N}^{s}\left( f,g\right) =A_{N}^{s}\left( \mathsf{P}_{N}^{\sigma }f,%
\mathsf{P}_{N-s}^{\omega }g\right) $ where $\mathsf{P}_{M}^{\mu
}=\dsum\limits_{K\in \Omega \mathcal{D}_{M}}\bigtriangleup _{K}^{\mu }$
denotes quasiHaar projection onto $\limfunc{Span}\left\{ h_{K}^{\mu
,a}\right\} _{K\in \Omega \mathcal{D}_{M},a\in \Gamma _{n}}$, and so by
orthogonality of the projections $\left\{ \mathsf{P}_{M}^{\mu }\right\}
_{M\in \mathbb{Z}}$ we have%
\begin{eqnarray*}
\left\vert \sum_{N\in \mathbb{Z}}A_{N}^{s}\left( f,g\right) \right\vert
&=&\sum_{N\in \mathbb{Z}}\left\vert A_{N}^{s}\left( \mathsf{P}_{N}^{\sigma
}f,\mathsf{P}_{N-s}^{\omega }g\right) \right\vert \leq \sum_{N\in \mathbb{Z}%
}\left\Vert A_{N}^{s}\right\Vert \left\Vert \mathsf{P}_{N}^{\sigma
}f\right\Vert _{L^{2}\left( \sigma \right) }\left\Vert \mathsf{P}%
_{N-s}^{\omega }g\right\Vert _{L^{2}\left( \omega \right) } \\
&\leq &\left\{ \sup_{N\in \mathbb{Z}}\left\Vert A_{N}^{s}\right\Vert
\right\} \left( \sum_{N\in \mathbb{Z}}\left\Vert \mathsf{P}_{N}^{\sigma
}f\right\Vert _{L^{2}\left( \sigma \right) }^{2}\right) ^{\frac{1}{2}}\left(
\sum_{N\in \mathbb{Z}}\left\Vert \mathsf{P}_{N-s}^{\omega }g\right\Vert
_{L^{2}\left( \omega \right) }^{2}\right) ^{\frac{1}{2}} \\
&\leq &\left\{ \sup_{N\in \mathbb{Z}}\left\Vert A_{N}^{s}\right\Vert
\right\} \left\Vert f\right\Vert _{L^{2}\left( \sigma \right) }\left\Vert
g\right\Vert _{L^{2}\left( \omega \right) }.
\end{eqnarray*}%
Thus it suffices to show an estimate uniform in $N$ with geometric decay in $%
s$, and we will show%
\begin{equation}
\left\vert A_{N}^{s}\left( f,g\right) \right\vert \leq C2^{-s}\sqrt{%
A_{2}^{\alpha }}\left\Vert f\right\Vert _{L^{2}\left( \sigma \right)
}\left\Vert g\right\Vert _{L^{2}\left( \omega \right) },\ \ \ \ \ \text{for }%
s\geq 0\text{ and }N\in \mathbb{Z}.  \label{AsN}
\end{equation}

We now pigeonhole the distance between $I$ and $J$:%
\begin{eqnarray*}
A_{N}^{s}\left( f,g\right) &=&\dsum\limits_{\ell =0}^{\infty }A_{N,\ell
}^{s}\left( f,g\right) ; \\
A_{N,\ell }^{s}\left( f,g\right) &\equiv &\sum_{\left( I,J\right) \in 
\mathcal{P}_{N,\ell }^{s}}\left\Vert \bigtriangleup _{I}^{\sigma
}f\right\Vert _{L^{2}\left( \sigma \right) }\left\Vert \bigtriangleup
_{J}^{\omega }g\right\Vert _{L^{2}\left( \omega \right) }A\left( I,J\right)
\\
\text{where }\mathcal{P}_{N,\ell }^{s} &\equiv &\left\{ \left( I,J\right)
\in \Omega \mathcal{D}_{N}\times \Omega \mathcal{D}_{N-s}:\limfunc{quasidist}%
\left( I,J\right) \approx 2^{N+\ell }\right\} .
\end{eqnarray*}%
If we define $\mathcal{H}\left( A_{N,\ell }^{s}\right) $ to be the bilinear
form on $\ell ^{2}\times \ell ^{2}$ with matrix $\left[ A\left( I,J\right) %
\right] _{\left( I,J\right) \in \mathcal{P}_{N,\ell }^{s}}$, then it remains
to show that the norm $\left\Vert \mathcal{H}\left( A_{N,\ell }^{s}\right)
\right\Vert _{\ell ^{2}\rightarrow \ell ^{2}}$ of $\mathcal{H}\left(
A_{N,\ell }^{s}\right) $ on the sequence space $\ell ^{2}$ is bounded by $%
C2^{-s-\ell }\sqrt{A_{2}^{\alpha }}$. In turn, this is equivalent to showing
that the norm $\left\Vert \mathcal{H}\left( B_{N,\ell }^{s}\right)
\right\Vert _{\ell ^{2}\rightarrow \ell ^{2}}$ of the bilinear form $%
\mathcal{H}\left( B_{N,\ell }^{s}\right) \equiv \mathcal{H}\left( A_{N,\ell
}^{s}\right) ^{\limfunc{tr}}\mathcal{H}\left( A_{N,\ell }^{s}\right) $ on
the sequence space $\ell ^{2}$ is bounded by $C^{2}2^{-2s-2\ell
}A_{2}^{\alpha }$. Here $\mathcal{H}\left( B_{N,\ell }^{s}\right) $ is the
quadratic form with matrix kernel $\left[ B_{N,\ell }^{s}\left( J,J^{\prime
}\right) \right] _{J,J^{\prime }\in \Omega \mathcal{D}_{N-s}}$ having
entries:%
\begin{equation*}
B_{N,\ell }^{s}\left( J,J^{\prime }\right) \equiv \sum_{I\in \Omega \mathcal{%
D}_{N}:\ \limfunc{quasidist}\left( I,J\right) \approx \limfunc{quasidist}%
\left( I,J^{\prime }\right) \approx 2^{N+\ell }}A\left( I,J\right) A\left(
I,J^{\prime }\right) ,\ \ \ \ \ \text{for }J,J^{\prime }\in \Omega \mathcal{D%
}_{N-s}.
\end{equation*}

We are reduced to showing,%
\begin{equation*}
\left\Vert \mathcal{H}\left( B_{N,\ell }^{s}\right) \right\Vert _{\ell
^{2}\rightarrow \ell ^{2}}\leq C2^{-2s-2\ell }A_{2}^{\alpha }\ \ \ \ \text{%
for }s\geq 0\text{, }\ell \geq 0\text{ and }N\in \mathbb{Z}.
\end{equation*}%
For this we begin by computing $B_{N,\ell }^{s}\left( J,J^{\prime }\right) $:%
\begin{eqnarray*}
B_{N,\ell }^{s}\left( J,J^{\prime }\right) &=&\sum_{\substack{ I\in \Omega 
\mathcal{D}_{N}  \\ \limfunc{quasidist}\left( I,J\right) \approx \limfunc{%
quasidist}\left( I,J^{\prime }\right) \approx 2^{N+\ell }}}\frac{\ell \left(
J\right) }{\limfunc{quasidist}\left( I,J\right) ^{n+1-\alpha }}\sqrt{%
\left\vert I\right\vert _{\sigma }}\sqrt{\left\vert J\right\vert _{\omega }}
\\
&&\ \ \ \ \ \ \ \ \ \ \times \frac{\ell \left( J^{\prime }\right) }{\limfunc{%
quasidist}\left( I,J^{\prime }\right) ^{n+1-\alpha }}\sqrt{\left\vert
I\right\vert _{\sigma }}\sqrt{\left\vert J^{\prime }\right\vert _{\omega }}
\\
&=&\left\{ \sum_{\substack{ I\in \Omega \mathcal{D}_{N}  \\ \limfunc{%
quasidist}\left( I,J\right) \approx \limfunc{quasidist}\left( I,J^{\prime
}\right) \approx 2^{N+\ell }}}\left\vert I\right\vert _{\sigma }\frac{1}{%
\limfunc{quasidist}\left( I,J\right) ^{n+1-\alpha }\limfunc{quasidist}\left(
I,J^{\prime }\right) ^{n+1-\alpha }}\right\} \\
&&\ \ \ \ \ \ \ \ \ \ \times \ell \left( J\right) \ell \left( J^{\prime
}\right) \sqrt{\left\vert J\right\vert _{\omega }}\sqrt{\left\vert J^{\prime
}\right\vert _{\omega }}.
\end{eqnarray*}%
Now we show that%
\begin{equation}
\left\Vert B_{N,\ell }^{s}\right\Vert _{\ell ^{2}\rightarrow \ell
^{2}}\lesssim 2^{-2s-2\ell }A_{2}^{\alpha }\ ,  \label{Schur s}
\end{equation}%
by applying the proof of Schur's lemma. Fix $\ell \geq 0$ and $s\geq 0$.
Choose the Schur function $\beta \left( K\right) =\frac{1}{\sqrt{\left\vert
K\right\vert _{\omega }}}$. Fix $J\in \Omega \mathcal{D}_{N-s}$. We have%
\begin{eqnarray*}
&&\sum_{J^{\prime }\in \Omega \mathcal{D}_{N-s}}\frac{\beta \left( J\right) 
}{\beta \left( J^{\prime }\right) }B_{N,\ell }^{s}\left( J,J^{\prime }\right)
\\
&\lesssim &\sum_{\substack{ J^{\prime }\in \Omega \mathcal{D}_{N-s}  \\ 
\limfunc{quasidist}\left( J,J^{\prime }\right) \leq 2^{N+\ell +2}}}\left\{
\sum_{\substack{ I\in \Omega \mathcal{D}_{N}  \\ \limfunc{quasidist}\left(
I,J\right) \approx 2^{N+\ell }}}\left\vert I\right\vert _{\sigma }\right\} \ 
\frac{2^{2\left( N-s\right) }}{2^{2\left( \ell +N\right) \left( n+1-\alpha
\right) }}\left\vert J^{\prime }\right\vert _{\omega } \\
&\lesssim &2^{-2s-2\ell }\frac{\left\vert 2^{10+\ell +s}J\right\vert
_{\sigma }}{2^{\left( \ell +N\right) \left( n-\alpha \right) }}\frac{%
\left\vert 2^{12+\ell +s}J\right\vert _{\omega }}{2^{\left( \ell +N\right)
\left( n-\alpha \right) }}\lesssim 2^{-2s-2\ell }A_{2}^{\alpha },
\end{eqnarray*}%
since $I\in \Omega \mathcal{D}_{N}$ and $\limfunc{quasidist}\left(
I,J\right) \approx 2^{N+\ell }$ imply that $I\subset 2^{10+\ell +s}J$ which
has side length comparable to $2^{\left( \ell +N\right) }$, and similarly $%
J^{\prime }\subset 2^{12+\ell +s}J$. Thus we can now apply Schur's argument
with $\sum_{J}\left( a_{J}\right) ^{2}=\sum_{J^{\prime }}\left( b_{J^{\prime
}}\right) ^{2}=1$ to obtain%
\begin{eqnarray*}
&&\sum_{J,J^{\prime }\in \Omega \mathcal{D}_{N-s}}a_{J}b_{J^{\prime
}}B_{N,\ell }^{s}\left( J,J^{\prime }\right) \\
&=&\sum_{J,J^{\prime }\in \Omega \mathcal{D}_{N-s}}a_{J}\beta \left(
J\right) b_{J^{\prime }}\beta \left( J^{\prime }\right) \frac{B_{N,\ell
}^{s}\left( J,J^{\prime }\right) }{\beta \left( J\right) \beta \left(
J^{\prime }\right) } \\
&\leq &\sum_{J}\left( a_{J}\beta \left( J\right) \right) ^{2}\sum_{J^{\prime
}}\frac{B_{N,\ell }^{s}\left( J,J^{\prime }\right) }{\beta \left( J\right)
\beta \left( J^{\prime }\right) }+\sum_{J^{\prime }}\left( b_{J^{\prime
}}\beta \left( J^{\prime }\right) \right) ^{2}\frac{B_{N,\ell }^{s}\left(
J,J^{\prime }\right) }{\beta \left( J\right) \beta \left( J^{\prime }\right) 
} \\
&=&\sum_{J}\left( a_{J}\right) ^{2}\left\{ \sum_{J^{\prime }}\frac{\beta
\left( J\right) }{\beta \left( J^{\prime }\right) }B_{N,\ell }^{s}\left(
J,J^{\prime }\right) \right\} +\sum_{J^{\prime }}\left( b_{J^{\prime
}}\right) ^{2}\left\{ \sum_{J}\frac{\beta \left( J^{\prime }\right) }{\beta
\left( J\right) }B_{N,\ell }^{s}\left( J,J^{\prime }\right) \right\} \\
&\lesssim &2^{-2s-2\ell }A_{2}^{\alpha }\left( \sum_{J}\left( a_{J}\right)
^{2}+\sum_{J^{\prime }}\left( b_{J^{\prime }}\right) ^{2}\right)
=2^{1-2s-2\ell }A_{2}^{\alpha }.
\end{eqnarray*}%
This completes the proof of (\ref{Schur s}). We can now sum in $\ell $ to
get (\ref{AsN}) and we are done. This completes our proof of the long-range
estimate%
\begin{equation*}
\mathcal{A}^{\limfunc{long}}\left( f,g\right) \lesssim \sqrt{A_{2}^{\alpha }}%
\left\Vert f\right\Vert _{L^{2}\left( \sigma \right) }\left\Vert
g\right\Vert _{L^{2}\left( \omega \right) }\ .
\end{equation*}

\bigskip

At this point we pause to complete the proof of (\ref{delta near}). Indeed,
the deferred term $A_{3}$ can be handled using the above argument since $%
3J\cap I=\emptyset =J\cap 3I$ implies that we can use the Energy Lemma \ref%
{ener} as we did above.

\bigskip

\textbf{The mid range case}: Let%
\begin{equation*}
\mathcal{P}\equiv \left\{ \left( I,J\right) \in \Omega \mathcal{D}\times
\Omega \mathcal{D}:J\text{ is good},\ \ell \left( J\right) \leq 2^{-\mathbf{%
\rho }}\ell \left( I\right) ,\text{ }J\subset 3I\setminus I\right\} .
\end{equation*}%
For $\left( I,J\right) \in \mathcal{P}$, the `pivotal' estimate from the
Energy Lemma \ref{ener} gives%
\begin{equation*}
\left\vert \left\langle T_{\sigma }^{\alpha }\left( \bigtriangleup
_{I}^{\sigma }f\right) ,\bigtriangleup _{J}^{\omega }g\right\rangle _{\omega
}\right\vert \lesssim \left\Vert \bigtriangleup _{J}^{\omega }g\right\Vert
_{L^{2}\left( \omega \right) }\mathrm{P}^{\alpha }\left( J,\left\vert
\bigtriangleup _{I}^{\sigma }f\right\vert \sigma \right) \sqrt{\left\vert
J\right\vert _{\omega }}\,.
\end{equation*}%
Now we pigeonhole the lengths of $I$ and $J$ and the distance between them
by defining%
\begin{equation*}
\mathcal{P}_{N,d}^{s}\equiv \left\{ \left( I,J\right) \in \Omega \mathcal{D}%
\times \Omega \mathcal{D}:J\text{ is good},\ \ell \left( I\right) =2^{N},\
\ell \left( J\right) =2^{N-s},\text{ }J\subset 3I\setminus I,\ 2^{d-1}\leq 
\limfunc{quasidist}\left( I,J\right) \leq 2^{d}\right\} .
\end{equation*}%
Note that the closest a good quasicube $J$ can come to $I$ is determined by
the goodness inequality, which gives this bound for $2^{d}\geq \limfunc{%
quasidist}\left( I,J\right) $: 
\begin{eqnarray*}
&&2^{d}\geq \frac{1}{2}\ell \left( I\right) ^{1-\varepsilon }\ell \left(
J\right) ^{\varepsilon }=\frac{1}{2}2^{N\left( 1-\varepsilon \right)
}2^{\left( N-s\right) \varepsilon }=\frac{1}{2}2^{N-\varepsilon s}; \\
&&\text{which implies }N-\varepsilon s-1\leq d\leq N,
\end{eqnarray*}%
where the last inequality holds because we are in the case of the mid-range
term. Thus we have%
\begin{eqnarray*}
&&\dsum\limits_{\left( I,J\right) \in \mathcal{P}}\left\vert \left\langle
T_{\sigma }^{\alpha }\left( \bigtriangleup _{I}^{\sigma }f\right)
,\bigtriangleup _{J}^{\omega }g\right\rangle _{\omega }\right\vert \lesssim
\dsum\limits_{\left( I,J\right) \in \mathcal{P}}\left\Vert \bigtriangleup
_{J}^{\omega }g\right\Vert _{L^{2}\left( \omega \right) }\mathrm{P}^{\alpha
}\left( J,\left\vert \bigtriangleup _{I}^{\sigma }f\right\vert \sigma
\right) \sqrt{\left\vert J\right\vert _{\omega }} \\
&&\ \ \ \ \ =\dsum\limits_{s=\mathbf{\rho }}^{\infty }\ \sum_{N\in \mathbb{Z}%
}\ \sum_{d=N-\varepsilon s-1}^{N}\ \sum_{\left( I,J\right) \in \mathcal{P}%
_{N,d}^{s}}\ \left\Vert \bigtriangleup _{J}^{\omega }g\right\Vert
_{L^{2}\left( \omega \right) }\mathrm{P}^{\alpha }\left( J,\left\vert
\bigtriangleup _{I}^{\sigma }f\right\vert \sigma \right) \sqrt{\left\vert
J\right\vert _{\omega }}.
\end{eqnarray*}%
Now we use%
\begin{eqnarray*}
\mathrm{P}^{\alpha }\left( J,\left\vert \bigtriangleup _{I}^{\sigma
}f\right\vert \sigma \right) &=&\int_{I}\frac{\ell \left( J\right) }{\left(
\ell \left( J\right) +\left\vert y-c_{J}\right\vert \right) ^{n+1-\alpha }}%
\left\vert \bigtriangleup _{I}^{\sigma }f\left( y\right) \right\vert d\sigma
\left( y\right) \\
&\lesssim &\frac{2^{N-s}}{2^{d\left( n+1-\alpha \right) }}\left\Vert
\bigtriangleup _{I}^{\sigma }f\right\Vert _{L^{2}\left( \sigma \right) }%
\sqrt{\left\vert I\right\vert _{\sigma }}
\end{eqnarray*}%
and apply Cauchy-Schwarz in $J$ and use $J\subset 3I$ to get%
\begin{eqnarray*}
&&\dsum\limits_{\left( I,J\right) \in \mathcal{P}}\left\vert \left\langle
T_{\sigma }^{\alpha }\left( \bigtriangleup _{I}^{\sigma }f\right)
,\bigtriangleup _{J}^{\omega }g\right\rangle _{\omega }\right\vert \\
&\lesssim &\dsum\limits_{s=\mathbf{\rho }}^{\infty }\ \sum_{N\in \mathbb{Z}%
}\ \sum_{d=N-\varepsilon s-1}^{N}\ \sum_{I\in \Omega \mathcal{D}_{N}}\frac{%
2^{N-s}2^{N\left( n-\alpha \right) }}{2^{d\left( n+1-\alpha \right) }}%
\left\Vert \bigtriangleup _{I}^{\sigma }f\right\Vert _{L^{2}\left( \sigma
\right) }\frac{\sqrt{\left\vert I\right\vert _{\sigma }}\sqrt{\left\vert
3I\right\vert _{\omega }}}{2^{N\left( n-\alpha \right) }} \\
&&\ \ \ \ \ \ \ \ \ \ \ \ \ \ \ \ \ \ \ \ \ \ \ \ \ \ \ \ \ \ \times \sqrt{%
\sum_{\substack{ J\in \Omega \mathcal{D}_{N-s}  \\ J\subset 3I\setminus I%
\text{ and }\limfunc{quasidist}\left( I,J\right) \approx 2^{d}}}\left\Vert
\bigtriangleup _{J}^{\omega }g\right\Vert _{L^{2}\left( \omega \right) }^{2}}
\\
&\lesssim &\dsum\limits_{s=\mathbf{\rho }}^{\infty }\ \sum_{N\in \mathbb{Z}}%
\frac{2^{N-s}2^{N\left( n-\alpha \right) }}{2^{\left( N-\varepsilon s\right)
\left( n+1-\alpha \right) }}\sqrt{A_{2}^{\alpha }}\sum_{I\in \Omega \mathcal{%
D}_{N}}\left\Vert \bigtriangleup _{I}^{\sigma }f\right\Vert _{L^{2}\left(
\sigma \right) }\sqrt{\sum_{\substack{ J\in \Omega \mathcal{D}_{N-s}  \\ %
J\subset 3I\setminus I}}\left\Vert \bigtriangleup _{J}^{\omega }g\right\Vert
_{L^{2}\left( \omega \right) }^{2}} \\
&\lesssim &\dsum\limits_{s=\mathbf{\rho }}^{\infty }2^{-s\left[
1-\varepsilon \left( n+1-\alpha \right) \right] }\sqrt{A_{2}^{\alpha }}%
\left\Vert f\right\Vert _{L^{2}\left( \sigma \right) }\left\Vert
g\right\Vert _{L^{2}\left( \omega \right) }\lesssim \sqrt{A_{2}^{\alpha }}%
\left\Vert f\right\Vert _{L^{2}\left( \sigma \right) }\left\Vert
g\right\Vert _{L^{2}\left( \omega \right) },
\end{eqnarray*}%
where in the third line above we have used $\sum_{d=N-\varepsilon s-1}^{N}%
\frac{1}{2^{d\left( n+1-\alpha \right) }}\approx \frac{1}{2^{\left(
N-\varepsilon s\right) \left( n+1-\alpha \right) }}$, and in the last line $%
\frac{2^{N-s}2^{N\left( n-\alpha \right) }}{2^{\left( N-\varepsilon s\right)
\left( n+1-\alpha \right) }}=2^{-s\left[ 1-\varepsilon \left( n+1-\alpha
\right) \right] }$ followed by Cauchy-Schwarz in $I$ and $N$, using that we
have bounded overlap in the triples of $I$ for $I\in \Omega \mathcal{D}_{N}$%
. More precisely, if we define $f_{k}\equiv \sum_{I\in \Omega \mathcal{D}%
_{k}}\bigtriangleup _{I}^{\sigma }fh_{I}^{\sigma }$ and $g_{k}\equiv
\sum_{I\in \Omega \mathcal{D}_{k}}\bigtriangleup _{J}^{\omega
}gh_{J}^{\omega }$, then we have the orthogonality inequality 
\begin{eqnarray*}
\sum_{N\in \mathbb{Z}}\left\Vert f_{N}\right\Vert _{L^{2}\left( \sigma
\right) }\left\Vert g_{N-s}\right\Vert _{L^{2}\left( \omega \right) } &\leq
&\left( \sum_{N\in \mathbb{Z}}\left\Vert f_{N}\right\Vert _{L^{2}\left(
\sigma \right) }^{2}\right) ^{\frac{1}{2}}\left( \sum_{N\in \mathbb{Z}%
}\left\Vert g_{N-s}\right\Vert _{L^{2}\left( \omega \right) }^{2}\right) ^{%
\frac{1}{2}} \\
&=&\left\Vert f\right\Vert _{L^{2}\left( \sigma \right) }\left\Vert
g\right\Vert _{L^{2}\left( \omega \right) }.
\end{eqnarray*}%
We have assumed that $0<\varepsilon <\frac{1}{n+1-\alpha }$ in the
calculations above, and this completes the proof of Lemma \ref{standard
delta}.
\end{proof}

\section{Corona Decompositions and splittings}

We will use two different corona constructions, namely a Calder\'{o}%
n-Zygmund decomposition and an energy decomposition of NTV type, to reduce
matters to the stopping form, the main part of which is handled by Lacey's
recursion argument. We will then iterate these coronas into a double corona.
We first recall our basic setup. For convenience in notation we will
sometimes suppress the dependence on $\alpha $ in our nonlinear forms, but
will retain it in the operators, Poisson integrals and constants. We will
assume that the good/bad quasicube machinery of Nazarov, Treil and Volberg 
\cite{Vol} is in force here. Let $\Omega \mathcal{D}^{\sigma }=\Omega 
\mathcal{D}^{\omega }$ be an $\left( \mathbf{r},\varepsilon \right) $-good
quasigrid on $\mathbb{R}^{n}$, and let $\left\{ h_{I}^{\sigma ,a}\right\}
_{I\in \Omega \mathcal{D}^{\sigma },\ a\in \Gamma _{n}}$ and $\left\{
h_{J}^{\omega ,b}\right\} _{J\in \Omega \mathcal{D}^{\omega },\ b\in \Gamma
_{n}}$ be corresponding quasiHaar bases as described above, so that%
\begin{equation*}
f=\sum_{I\in \Omega \mathcal{D}^{\sigma }}\bigtriangleup _{I}^{\sigma }f%
\text{ and }g=\sum_{J\in \Omega \mathcal{D}^{\omega }\text{ }}\bigtriangleup
_{J}^{\omega }g\ ,
\end{equation*}%
where the quasiHaar projections $\bigtriangleup _{I}^{\sigma }f$ and $%
\bigtriangleup _{J}^{\omega }g$ vanish if the quasicubes $I$ and $J$ are not
good. Inequality (\ref{two weight}) is equivalent to boundedness of the
bilinear form%
\begin{equation*}
\mathcal{T}^{\alpha }\left( f,g\right) \equiv \left\langle T_{\sigma
}^{\alpha }\left( f\right) ,g\right\rangle _{\omega }=\sum_{I\in \Omega 
\mathcal{D}^{\sigma }\text{ and }J\in \Omega \mathcal{D}^{\omega
}}\left\langle T_{\sigma }^{\alpha }\left( \bigtriangleup _{I}^{\sigma
}f\right) ,\bigtriangleup _{J}^{\omega }g\right\rangle _{\omega }
\end{equation*}%
on $L^{2}\left( \sigma \right) \times L^{2}\left( \omega \right) $, i.e.%
\begin{equation*}
\left\vert \mathcal{T}^{\alpha }\left( f,g\right) \right\vert \leq \mathfrak{%
N}_{T^{\alpha }}\left\Vert f\right\Vert _{L^{2}\left( \sigma \right)
}\left\Vert g\right\Vert _{L^{2}\left( \omega \right) }.
\end{equation*}

\subsection{The Calder\'{o}n-Zygmund corona}

We now introduce a stopping tree $\mathcal{F}$ for the function $f\in
L^{2}\left( \sigma \right) $. Let $\mathcal{F}$ be a collection of Calder%
\'{o}n-Zygmund stopping quasicubes for $f$, and let $\Omega \mathcal{D}%
^{\sigma }=\dbigcup\limits_{F\in \mathcal{F}}\mathcal{C}_{F}$ be the
associated corona decomposition of the dyadic quasigrid $\Omega \mathcal{D}%
^{\sigma }$.

For a quasicube $I\in \Omega \mathcal{D}^{\sigma }$ let $\pi _{\Omega 
\mathcal{D}^{\sigma }}I$ be the $\Omega \mathcal{D}^{\sigma }$-parent of $I$
in the quasigrid $\Omega \mathcal{D}^{\sigma }$, and let $\pi _{\mathcal{F}%
}I $ be the smallest member of $\mathcal{F}$ that contains $I$. For $%
F,F^{\prime }\in \mathcal{F}$, we say that $F^{\prime }$ is an $\mathcal{F}$%
-child of $F$ if $\pi _{\mathcal{F}}\left( \pi _{\Omega \mathcal{D}^{\sigma
}}F^{\prime }\right) =F$ (it could be that $F=\pi _{\Omega \mathcal{D}%
^{\sigma }}F^{\prime }$), and we denote by $\mathfrak{C}_{\mathcal{F}}\left(
F\right) $ the set of $\mathcal{F}$-children of $F$. For $F\in \mathcal{F}$,
define the projection $\mathsf{P}_{\mathcal{C}_{F}}^{\sigma }$ onto the
linear span of the quasiHaar functions $\left\{ h_{I}^{\sigma ,a}\right\}
_{I\in \mathcal{C}_{F},\ a\in \Gamma _{n}}$ by%
\begin{equation*}
\mathsf{P}_{\mathcal{C}_{F}}^{\sigma }f=\sum_{I\in \mathcal{C}%
_{F}}\bigtriangleup _{I}^{\sigma }f=\sum_{I\in \mathcal{C}_{F},\ a\in \Gamma
_{n}}\left\langle f,h_{I}^{\sigma ,a}\right\rangle _{\sigma }h_{I}^{\sigma
,a}.
\end{equation*}%
The standard properties of these projections are%
\begin{equation*}
f=\sum_{F\in \mathcal{F}}\mathsf{P}_{\mathcal{C}_{F}}^{\sigma }f,\ \ \ \ \
\int \left( \mathsf{P}_{\mathcal{C}_{F}}^{\sigma }f\right) \sigma =0,\ \ \ \
\ \left\Vert f\right\Vert _{L^{2}\left( \sigma \right) }^{2}=\sum_{F\in 
\mathcal{F}}\left\Vert \mathsf{P}_{\mathcal{C}_{F}}^{\sigma }f\right\Vert
_{L^{2}\left( \sigma \right) }^{2}.
\end{equation*}

\subsection{The energy corona}

We must also impose a quasienergy corona decomposition as in \cite{NTV3} and 
\cite{LaSaUr2}.

\begin{definition}
\label{def energy corona 3}Given a quasicube $S_{0}$, define $\mathcal{S}%
\left( S_{0}\right) $ to be the maximal subquasicubes $I\subset S_{0}$ such
that%
\begin{equation}
\sum_{J\in \mathcal{M}_{\mathbf{\tau }-\limfunc{deep}}\left( I\right)
}\left( \frac{\mathrm{P}^{\alpha }\left( J,\mathbf{1}_{S_{0}\setminus \gamma
J}\sigma \right) }{\left\vert J\right\vert ^{\frac{1}{n}}}\right)
^{2}\left\Vert \mathsf{P}_{J}^{\limfunc{subgood},\omega }\mathbf{x}%
\right\Vert _{L^{2}\left( \omega \right) }^{2}\geq C_{\limfunc{energy}}\left[
\left( \mathcal{E}_{\alpha }^{\limfunc{deep}}\right) ^{2}+A_{2}^{\alpha }%
\right] \ \left\vert I\right\vert _{\sigma },  \label{def stop 3}
\end{equation}%
where $\mathcal{E}_{\alpha }^{\limfunc{deep}}$ is the constant in the deep
quasienergy condition defined in Definition \ref{energy condition}, and $C_{%
\limfunc{energy}}$ is a sufficiently large positive constant depending only
on $\mathbf{\tau }\geq \mathbf{r},n$ and $\alpha $. Then define the $\sigma $%
-energy stopping quasicubes of $S_{0}$ to be the collection 
\begin{equation*}
\mathcal{S}=\left\{ S_{0}\right\} \cup \dbigcup\limits_{n=0}^{\infty }%
\mathcal{S}_{n}
\end{equation*}%
where $\mathcal{S}_{0}=\mathcal{S}\left( S_{0}\right) $ and $\mathcal{S}%
_{n+1}=\dbigcup\limits_{S\in \mathcal{S}_{n}}\mathcal{S}\left( S\right) $
for $n\geq 0$.
\end{definition}

From the quasienergy condition in Definition \ref{energy condition} we
obtain the $\sigma $-Carleson estimate%
\begin{equation}
\sum_{S\in \mathcal{S}:\ S\subset I}\left\vert S\right\vert _{\sigma }\leq
2\left\vert I\right\vert _{\sigma },\ \ \ \ \ I\in \Omega \mathcal{D}%
^{\sigma }.  \label{sigma Carleson 3}
\end{equation}%
Indeed, using the deep quasienergy condition, the first generation satisfies%
\begin{eqnarray}
&&\ \ \ \ \ \ \ \ \ \ \ \ \ \ \ \ \ \ \ \ \sum_{S\in \mathcal{S}%
_{1}}\left\vert S\right\vert _{\sigma }  \label{first gen} \\
&\leq &\frac{1}{C_{\limfunc{energy}}\left[ \left( \mathcal{E}_{\alpha }^{%
\limfunc{deep}}\right) ^{2}+A_{2}^{\alpha }\right] }\sum_{S\in \mathcal{S}%
_{1}}\sum_{J\in \mathcal{M}_{\mathbf{\tau }-\limfunc{deep}}\left( S\right)
}\left( \frac{\mathrm{P}^{\alpha }\left( J,\mathbf{1}_{S_{0}\setminus \gamma
J}\sigma \right) }{\left\vert J\right\vert ^{\frac{1}{n}}}\right)
^{2}\left\Vert \mathsf{P}_{J}^{\limfunc{subgood},\omega }\mathbf{x}%
\right\Vert _{L^{2}\left( \omega \right) }^{2}  \notag \\
&\leq &\frac{1}{C_{\limfunc{energy}}\left[ \left( \mathcal{E}_{\alpha }^{%
\limfunc{deep}}\right) ^{2}+A_{2}^{\alpha }\right] }\sum_{S\in \mathcal{S}%
_{1}}\sum_{J\in \mathcal{M}_{\mathbf{\tau }-\limfunc{deep}}\left( S\right)
}\left( \frac{\mathrm{P}^{\alpha }\left( J,\mathbf{1}_{S_{0}}\sigma \right) 
}{\left\vert J\right\vert ^{\frac{1}{n}}}\right) ^{2}\left\Vert \mathsf{P}%
_{J}^{\limfunc{subgood},\omega }\mathbf{x}\right\Vert _{L^{2}\left( \omega
\right) }^{2}  \notag \\
&\leq &\frac{C_{\mathbf{\tau },\mathbf{r},n,\alpha }}{C_{\limfunc{energy}}%
\left[ \left( \mathcal{E}_{\alpha }^{\limfunc{deep}}\right)
^{2}+A_{2}^{\alpha }\right] }\sum_{S\in \mathcal{S}_{1}}\sum_{J\in \mathcal{M%
}_{\mathbf{r}-\limfunc{deep}}\left( S\right) }\left( \frac{\mathrm{P}%
^{\alpha }\left( J,\mathbf{1}_{S_{0}}\sigma \right) }{\left\vert
J\right\vert ^{\frac{1}{n}}}\right) ^{2}\left\Vert \mathsf{P}_{J}^{\limfunc{%
subgood},\omega }\mathbf{x}\right\Vert _{L^{2}\left( \omega \right) }^{2} 
\notag \\
&\leq &\frac{C_{\mathbf{\tau },\mathbf{r},n,\alpha }}{C_{\limfunc{energy}}%
\left[ \left( \mathcal{E}_{\alpha }^{\limfunc{deep}}\right)
^{2}+A_{2}^{\alpha }\right] }\left( \mathcal{E}_{\alpha }^{\limfunc{deep}%
\limfunc{plug}}\right) ^{2}\ \left\vert S_{0}\right\vert _{\sigma }=\frac{1}{%
2}\left\vert S_{0}\right\vert _{\sigma }\ ,  \notag
\end{eqnarray}%
provided we take $C_{\limfunc{energy}}=2C_{\mathbf{\tau },\mathbf{r}%
,n,\alpha }\frac{\left( \mathcal{E}_{\alpha }^{\limfunc{deep}\limfunc{plug}%
}\right) ^{2}}{\left( \mathcal{E}_{\alpha }^{\limfunc{deep}}\right)
^{2}+A_{2}^{\alpha }}$. The third inequality above, in which $\mathbf{\tau }$
is replaced by $\mathbf{r}$ (but the goodness parameter $\varepsilon >0$ is
unchanged), follows because if $J_{1}\in \mathcal{M}_{\mathbf{\tau }-%
\limfunc{deep}}\left( S\right) $, then $J_{1}\subset J_{2}$ for a unique $%
J_{2}\in \mathcal{M}_{\mathbf{r}-\limfunc{deep}}\left( S\right) $ and we
have $\ell \left( J_{2}\right) \leq 2^{\mathbf{\tau }-\mathbf{r}}\ell \left(
J_{1}\right) $ from the definitions of $\mathcal{M}_{\mathbf{\tau }-\limfunc{%
deep}}\left( S\right) $ and $\mathcal{M}_{\mathbf{r}-\limfunc{deep}}\left(
S\right) $, hence $\frac{\mathrm{P}^{\alpha }\left( J_{1},\mathbf{1}%
_{S_{0}}\sigma \right) }{\left\vert J_{1}\right\vert ^{\frac{1}{n}}}\leq C_{%
\mathbf{\tau },\mathbf{r},n,\alpha }\frac{\mathrm{P}^{\alpha }\left( J_{2},%
\mathbf{1}_{S_{0}}\sigma \right) }{\left\vert J_{2}\right\vert ^{\frac{1}{n}}%
}$. Subsequent generations satisfy a similar estimate, which then easily
gives (\ref{sigma Carleson 3}). We emphasize that this collection of
stopping times depends only on $S_{0}$ and the weight pair $\left( \sigma
,\omega \right) $, and not on any functions at hand.

Finally, we record the reason for introducing quasienergy stopping times. If 
\begin{equation}
X_{\alpha }\left( \mathcal{C}_{S}\right) ^{2}\equiv \sup_{I\in \mathcal{C}%
_{S}}\frac{1}{\left\vert I\right\vert _{\sigma }}\sum_{J\in \mathcal{M}_{%
\mathbf{\tau }-\limfunc{deep}}\left( I\right) }\left( \frac{\mathrm{P}%
^{\alpha }\left( J,\mathbf{1}_{S\setminus \gamma J}\sigma \right) }{%
\left\vert J\right\vert ^{\frac{1}{n}}}\right) ^{2}\left\Vert \mathsf{P}%
_{J}^{\limfunc{subgood},\omega }\mathbf{x}\right\Vert _{L^{2}\left( \omega
\right) }^{2}  \label{def stopping energy 3}
\end{equation}%
is (the square of) the $\alpha $\emph{-stopping quasienergy} of the weight
pair $\left( \sigma ,\omega \right) $ with respect to the corona $\mathcal{C}%
_{S}$, then we have the \emph{stopping quasienergy bounds}%
\begin{equation}
X_{\alpha }\left( \mathcal{C}_{S}\right) \leq \sqrt{C_{\limfunc{energy}}}%
\sqrt{\left( \mathcal{E}_{\alpha }^{\limfunc{deep}}\right)
^{2}+A_{2}^{\alpha }},\ \ \ \ \ S\in \mathcal{S},
\label{def stopping bounds 3}
\end{equation}%
where $A_{2}^{\alpha }$ and the the deep quasienergy constant $\mathcal{E}%
_{\alpha }^{\limfunc{deep}}$ are controlled by assumption.

\subsection{General stopping data}

It is useful to extend our notion of corona decomposition to more general
stopping data. Our general definition of stopping data will use a positive
constant $C_{0}\geq 4$.

\begin{definition}
\label{general stopping data}Suppose we are given a positive constant $%
C_{0}\geq 4$, a subset $\mathcal{F}$ of the dyadic quasigrid $\Omega 
\mathcal{D}^{\sigma }$ (called the stopping times), and a corresponding
sequence $\alpha _{\mathcal{F}}\equiv \left\{ \alpha _{\mathcal{F}}\left(
F\right) \right\} _{F\in \mathcal{F}}$ of nonnegative numbers $\alpha _{%
\mathcal{F}}\left( F\right) \geq 0$ (called the stopping data). Let $\left( 
\mathcal{F},\prec ,\pi _{\mathcal{F}}\right) $ be the tree structure on $%
\mathcal{F}$ inherited from $\Omega \mathcal{D}^{\sigma }$, and for each $%
F\in \mathcal{F}$ denote by $\mathcal{C}_{F}=\left\{ I\in \Omega \mathcal{D}%
^{\sigma }:\pi _{\mathcal{F}}I=F\right\} $ the corona associated with $F$: 
\begin{equation*}
\mathcal{C}_{F}=\left\{ I\in \Omega \mathcal{D}^{\sigma }:I\subset F\text{
and }I\not\subset F^{\prime }\text{ for any }F^{\prime }\prec F\right\} .
\end{equation*}%
We say the triple $\left( C_{0},\mathcal{F},\alpha _{\mathcal{F}}\right) $
constitutes \emph{stopping data} for a function $f\in L_{loc}^{1}\left(
\sigma \right) $ if

\begin{enumerate}
\item $\mathbb{E}_{I}^{\sigma }\left\vert f\right\vert \leq \alpha _{%
\mathcal{F}}\left( F\right) $ for all $I\in \mathcal{C}_{F}$ and $F\in 
\mathcal{F}$,

\item $\sum_{F^{\prime }\preceq F}\left\vert F^{\prime }\right\vert _{\sigma
}\leq C_{0}\left\vert F\right\vert _{\sigma }$ for all $F\in \mathcal{F}$,

\item $\sum_{F\in \mathcal{F}}\alpha _{\mathcal{F}}\left( F\right)
^{2}\left\vert F\right\vert _{\sigma }\mathbf{\leq }C_{0}^{2}\left\Vert
f\right\Vert _{L^{2}\left( \sigma \right) }^{2}$,

\item $\alpha _{\mathcal{F}}\left( F\right) \leq \alpha _{\mathcal{F}}\left(
F^{\prime }\right) $ whenever $F^{\prime },F\in \mathcal{F}$ with $F^{\prime
}\subset F$.
\end{enumerate}
\end{definition}

\begin{definition}
If $\left( C_{0},\mathcal{F},\alpha _{\mathcal{F}}\right) $ constitutes
(general) \emph{stopping data} for a function $f\in L_{loc}^{1}\left( \sigma
\right) $, we refer to the othogonal decomposition%
\begin{equation*}
f=\sum_{F\in \mathcal{F}}\mathsf{P}_{\mathcal{C}_{F}}^{\sigma }f;\ \ \ \ \ 
\mathsf{P}_{\mathcal{C}_{F}}^{\sigma }f\equiv \sum_{I\in \mathcal{C}%
_{F}}\bigtriangleup _{I}^{\sigma }f,
\end{equation*}%
as the (general) \emph{corona decomposition} of $f$ associated with the
stopping times $\mathcal{F}$.
\end{definition}

Property (1) says that $\alpha _{\mathcal{F}}\left( F\right) $ bounds the
quasiaverages of $f$ in the corona $\mathcal{C}_{F}$, and property (2) says
that the quasicubes at the tops of the coronas satisfy a Carleson condition
relative to the weight $\sigma $. Note that a standard `maximal quasicube'
argument extends the Carleson condition in property (2) to the inequality%
\begin{equation*}
\sum_{F^{\prime }\in \mathcal{F}:\ F^{\prime }\subset A}\left\vert F^{\prime
}\right\vert _{\sigma }\leq C_{0}\left\vert A\right\vert _{\sigma }\text{
for all open sets }A\subset \mathbb{R}^{n}.
\end{equation*}%
Property (3) is the `quasi' orthogonality condition that says the sequence
of functions $\left\{ \alpha _{\mathcal{F}}\left( F\right) \mathbf{1}%
_{F}\right\} _{F\in \mathcal{F}}$ is in the vector-valued space $L^{2}\left(
\ell ^{2};\sigma \right) $, and property (4) says that the control on
quasiaverages is nondecreasing on the stopping tree $\mathcal{F}$. We
emphasize that we are \emph{not} assuming in this definition the stronger
property that there is $C>1$ such that $\alpha _{\mathcal{F}}\left(
F^{\prime }\right) >C\alpha _{\mathcal{F}}\left( F\right) $ whenever $%
F^{\prime },F\in \mathcal{F}$ with $F^{\prime }\subsetneqq F$. Instead, the
properties (2) and (3) substitute for this lack. Of course the stronger
property \emph{does} hold for the familiar \emph{Calder\'{o}n-Zygmund}
stopping data determined by the following requirements for $C>1$,%
\begin{eqnarray*}
\mathbb{E}_{F^{\prime }}^{\sigma }\left\vert f\right\vert &>&C\mathbb{E}%
_{F}^{\sigma }\left\vert f\right\vert \text{ whenever }F^{\prime },F\in 
\mathcal{F}\text{ with }F^{\prime }\subsetneqq F, \\
\mathbb{E}_{I}^{\sigma }\left\vert f\right\vert &\leq &C\mathbb{E}%
_{F}^{\sigma }\left\vert f\right\vert \text{ for }I\in \mathcal{C}_{F},
\end{eqnarray*}%
which are themselves sufficiently strong to automatically force properties
(2) and (3) with $\alpha _{\mathcal{F}}\left( F\right) =\mathbb{E}%
_{F}^{\sigma }\left\vert f\right\vert $.

We have the following useful consequence of (2) and (3) that says the
sequence $\left\{ \alpha _{\mathcal{F}}\left( F\right) \mathbf{1}%
_{F}\right\} _{F\in \mathcal{F}}$ has a \emph{`quasi' orthogonal} property
relative to $f$ with a constant $C_{0}^{\prime }$ depending only on $C_{0}$:%
\begin{equation}
\left\Vert \sum_{F\in \mathcal{F}}\alpha _{\mathcal{F}}\left( F\right) 
\mathbf{1}_{F}\right\Vert _{L^{2}\left( \sigma \right) }^{2}\leq
C_{0}^{\prime }\left\Vert f\right\Vert _{L^{2}\left( \sigma \right) }^{2}.
\label{q orth}
\end{equation}%
Indeed, the Carleson condition (2) implies a geometric decay in levels of
the tree $\mathcal{F}$, namely that there are positive constants $C_{1}$ and 
$\varepsilon $, depending on $C_{0}$, such that if $\mathfrak{C}_{\mathcal{F}%
}^{\left( n\right) }\left( F\right) $ denotes the set of $n^{th}$ generation
children of $F$ in $\mathcal{F}$,%
\begin{equation*}
\sum_{F^{\prime }\in \mathfrak{C}_{\mathcal{F}}^{\left( n\right) }\left(
F\right) :\ }\left\vert F^{\prime }\right\vert _{\sigma }\leq \left(
C_{1}2^{-\varepsilon n}\right) ^{2}\left\vert F\right\vert _{\sigma },\ \ \
\ \ \text{for all }n\geq 0\text{ and }F\in \mathcal{F}.
\end{equation*}%
From this we obtain that%
\begin{eqnarray*}
\sum_{n=0}^{\infty }\sum_{F^{\prime }\in \mathfrak{C}_{\mathcal{F}}^{\left(
n\right) }\left( F\right) :\ }\alpha _{\mathcal{F}}\left( F^{\prime }\right)
\left\vert F^{\prime }\right\vert _{\sigma } &\leq &\sum_{n=0}^{\infty }%
\sqrt{\sum_{F^{\prime }\in \mathfrak{C}_{\mathcal{F}}^{\left( n\right)
}\left( F\right) }\alpha _{\mathcal{F}}\left( F^{\prime }\right)
^{2}\left\vert F^{\prime }\right\vert _{\sigma }}C_{1}2^{-\varepsilon n}%
\sqrt{\left\vert F\right\vert _{\sigma }} \\
&\leq &C_{1}\sqrt{\left\vert F\right\vert _{\sigma }}C_{\varepsilon }\sqrt{%
\sum_{n=0}^{\infty }2^{-\varepsilon n}\sum_{F^{\prime }\in \mathfrak{C}_{%
\mathcal{F}}^{\left( n\right) }\left( F\right) }\alpha _{\mathcal{F}}\left(
F^{\prime }\right) ^{2}\left\vert F^{\prime }\right\vert _{\sigma }},
\end{eqnarray*}%
and hence that$\ $%
\begin{eqnarray*}
&&\sum_{F\in \mathcal{F}}\alpha _{\mathcal{F}}\left( F\right) \left\{
\sum_{n=0}^{\infty }\sum_{F^{\prime }\in \mathfrak{C}_{\mathcal{F}}^{\left(
n\right) }\left( F\right) }\alpha _{\mathcal{F}}\left( F^{\prime }\right)
\left\vert F^{\prime }\right\vert _{\sigma }\right\} \\
&\lesssim &\sum_{F\in \mathcal{F}}\alpha _{\mathcal{F}}\left( F\right) \sqrt{%
\left\vert F\right\vert _{\sigma }}\sqrt{\sum_{n=0}^{\infty }2^{-\varepsilon
n}\sum_{F^{\prime }\in \mathfrak{C}_{\mathcal{F}}^{\left( n\right) }\left(
F\right) }\alpha _{\mathcal{F}}\left( F^{\prime }\right) ^{2}\left\vert
F^{\prime }\right\vert _{\sigma }} \\
&\lesssim &\left( \sum_{F\in \mathcal{F}}\alpha _{\mathcal{F}}\left(
F\right) ^{2}\left\vert F\right\vert _{\sigma }\right) ^{\frac{1}{2}}\left(
\sum_{n=0}^{\infty }2^{-\varepsilon n}\sum_{F\in \mathcal{F}}\sum_{F^{\prime
}\in \mathfrak{C}_{\mathcal{F}}^{\left( n\right) }\left( F\right) }\alpha _{%
\mathcal{F}}\left( F^{\prime }\right) ^{2}\left\vert F^{\prime }\right\vert
_{\sigma }\right) ^{\frac{1}{2}} \\
&\lesssim &\left\Vert f\right\Vert _{L^{2}\left( \sigma \right) }\left(
\sum_{F^{\prime }\in \mathcal{F}}\alpha _{\mathcal{F}}\left( F^{\prime
}\right) ^{2}\left\vert F^{\prime }\right\vert _{\sigma }\right) ^{\frac{1}{2%
}}\lesssim \left\Vert f\right\Vert _{L^{2}\left( \sigma \right) }^{2}.
\end{eqnarray*}%
This proves (\ref{q orth}) since $\left\Vert \sum_{F\in \mathcal{F}}\alpha _{%
\mathcal{F}}\left( F\right) \mathbf{1}_{F}\right\Vert _{L^{2}\left( \sigma
\right) }^{2}$ is dominated by twice the left hand side above.

We will use a construction that permits \emph{iteration} of general corona
decompositions.

\begin{lemma}
\label{iterating coronas}Suppose that $\left( C_{0},\mathcal{F},\alpha _{%
\mathcal{F}}\right) $ constitutes \emph{stopping data} for a function $f\in
L_{loc}^{1}\left( \sigma \right) $, and that for each $F\in \mathcal{F}$, $%
\left( C_{0},\mathcal{K}\left( F\right) ,\alpha _{\mathcal{K}\left( F\right)
}\right) $ constitutes \emph{stopping data} for the corona projection $%
\mathsf{P}_{\mathcal{C}_{F}}^{\sigma }f$, where in addition $F\in \mathcal{K}%
\left( F\right) $. There is a positive constant $C_{1}$, depending only on $%
C_{0}$, such that if%
\begin{eqnarray*}
\mathcal{K}^{\ast }\left( F\right) &\equiv &\left\{ K\in \mathcal{K}\left(
F\right) \cap \mathcal{C}_{F}:\alpha _{\mathcal{K}\left( F\right) }\left(
K\right) \geq \alpha _{\mathcal{F}}\left( F\right) \right\} \\
\mathcal{K} &\equiv &\mathop{\displaystyle \bigcup }\limits_{F\in \mathcal{F}%
}\mathcal{K}^{\ast }\left( F\right) \cup \left\{ F\right\} , \\
\alpha _{\mathcal{K}}\left( K\right) &\equiv &%
\begin{array}{ccc}
\alpha _{\mathcal{K}\left( F\right) }\left( K\right) & \text{ for } & K\in 
\mathcal{K}^{\ast }\left( F\right) \setminus \left\{ F\right\} \\ 
\max \left\{ \alpha _{\mathcal{F}}\left( F\right) ,\alpha _{\mathcal{K}%
\left( F\right) }\left( F\right) \right\} & \text{ for } & K=F%
\end{array}%
,\ \ \ \ \ \text{for }F\in \mathcal{F},
\end{eqnarray*}%
the triple $\left( C_{1},\mathcal{K},\alpha _{\mathcal{K}}\right) $
constitutes \emph{stopping data} for $f$. We refer to the collection of
quasicubes $\mathcal{K}$ as the \emph{iterated} stopping times, and to the
orthogonal decomposition $f=\sum_{K\in \mathcal{K}}P_{\mathcal{C}_{K}^{%
\mathcal{K}}}f$ as the \emph{iterated} corona decomposition of $f$, where 
\begin{equation*}
\mathcal{C}_{K}^{\mathcal{K}}\equiv \left\{ I\in \Omega \mathcal{D}:I\subset
K\text{ and }I\not\subset K^{\prime }\text{ for }K^{\prime }\prec _{\mathcal{%
K}}K\right\} .
\end{equation*}
\end{lemma}

Note that in our definition of $\left( C_{1},\mathcal{K},\alpha _{\mathcal{K}%
}\right) $ we have `discarded' from $\mathcal{K}\left( F\right) $ all of
those $K\in \mathcal{K}\left( F\right) $ that are not in the corona $%
\mathcal{C}_{F}$, and also all of those $K\in \mathcal{K}\left( F\right) $
for which $\alpha _{\mathcal{K}\left( F\right) }\left( K\right) $ is
strictly less than $\alpha _{\mathcal{F}}\left( F\right) $. Then the union
of over $F$ of what remains is our new collection of stopping times. We then
define stopping data $\alpha _{\mathcal{K}}\left( K\right) $ according to
whether or not $K\in \mathcal{F}$: if $K\notin \mathcal{F}$ but $K\in 
\mathcal{C}_{F}$ then $\alpha _{\mathcal{K}}\left( K\right) $ equals $\alpha
_{\mathcal{K}\left( F\right) }\left( K\right) $, while if $K\in \mathcal{F}$%
, then $\alpha _{\mathcal{K}}\left( K\right) $ is the larger of $\alpha _{%
\mathcal{K}\left( F\right) }\left( F\right) $ and $\alpha _{\mathcal{F}%
}\left( K\right) $.

\begin{proof}
The monotonicity property (4) for the triple $\left( C_{1},\mathcal{K}%
,\alpha _{\mathcal{K}}\right) $ is obvious from the construction of $%
\mathcal{K}$ and $\alpha _{\mathcal{K}}\left( K\right) $. To establish
property (1), we must distinguish between the various coronas $\mathcal{C}%
_{K}^{\mathcal{K}}$, $\mathcal{C}_{K}^{\mathcal{K}\left( F\right) }$ and $%
\mathcal{C}_{K}^{\mathcal{F}}$ that could be associated with $K\in \mathcal{K%
}$, when $K$ belongs to any of the stopping trees $\mathcal{K}$, $\mathcal{K}%
\left( F\right) $ or $\mathcal{F}$. Suppose now that $I\in \mathcal{C}_{K}^{%
\mathcal{K}}$ for some $K\in \mathcal{K}$. Then there is a unique $F\in 
\mathcal{F}$ such that $\mathcal{C}_{K}^{\mathcal{K}}\subset \mathcal{C}%
_{K}^{\mathcal{K}\left( F\right) }\subset C_{F}^{\mathcal{F}}$, and so $%
\mathbb{E}_{I}^{\sigma }\left\vert f\right\vert \leq \alpha _{\mathcal{F}%
}\left( F\right) $ by property (1) for the triple $\left( C_{0},\mathcal{F}%
,\alpha _{\mathcal{F}}\right) $. Then $\alpha _{\mathcal{F}}\left( F\right)
\leq \alpha _{\mathcal{K}}\left( K\right) $ follows from the definition of $%
\alpha _{\mathcal{K}}\left( K\right) $, and we have property (1) for the
triple $\left( C_{1},\mathcal{K},\alpha _{\mathcal{K}}\right) $. Property
(2) holds for the triple $\left( C_{1},\mathcal{K},\alpha _{\mathcal{K}%
}\right) $ since if $K\in \mathcal{C}_{F}^{\mathcal{F}}$, then 
\begin{eqnarray*}
\sum_{K^{\prime }\preceq _{\mathcal{K}}K}\left\vert K^{\prime }\right\vert
_{\sigma } &=&\sum_{K^{\prime }\in \mathcal{K}\left( F\right) :\ K^{\prime
}\subset K}\left\vert K^{\prime }\right\vert _{\sigma }+\sum_{F^{\prime
}\prec _{\mathcal{F}}F:\ F^{\prime }\subset K}\sum_{K^{\prime }\in \mathcal{K%
}\left( F^{\prime }\right) }\left\vert K^{\prime }\right\vert _{\sigma } \\
&\leq &C_{0}\left\vert K\right\vert _{\sigma }+\sum_{F^{\prime }\prec _{%
\mathcal{F}}F:\ F^{\prime }\subset K}C_{0}\left\vert F^{\prime }\right\vert
_{\sigma }\leq 2C_{0}^{2}\left\vert K\right\vert _{\sigma }.
\end{eqnarray*}%
Finally, property (3) holds for the triple $\left( C_{1},\mathcal{K},\alpha
_{\mathcal{K}}\right) $ since 
\begin{eqnarray*}
\sum_{K\in \mathcal{K}}\alpha _{\mathcal{K}}\left( K\right) ^{2}\left\vert
K\right\vert _{\sigma } &\leq &\sum_{F\in \mathcal{F}}\sum_{K\in \mathcal{K}%
\left( F\right) }\alpha _{\mathcal{K}\left( F\right) }\left( K\right)
^{2}\left\vert K\right\vert _{\sigma }+\sum_{F\in \mathcal{F}}\alpha _{%
\mathcal{F}}\left( F\right) ^{2}\left\vert F\right\vert _{\sigma } \\
&\leq &\sum_{F\in \mathcal{F}}C_{0}^{2}\left\Vert \mathsf{P}_{\mathcal{C}%
_{F}}^{\sigma }f\right\Vert _{L^{2}\left( \sigma \right)
}^{2}+C_{0}^{2}\left\Vert f\right\Vert _{L^{2}\left( \sigma \right)
}^{2}\leq 2C_{0}^{2}\left\Vert f\right\Vert _{L^{2}\left( \sigma \right)
}^{2}.
\end{eqnarray*}
\end{proof}

\subsection{Doubly iterated coronas and the NTV quasicube size splitting}

Here is a brief schematic diagram of the decompositions, with bounds in $%
\fbox{}$, used in this subsection:%
\begin{equation*}
\fbox{$%
\begin{array}{ccccccc}
\left\langle T_{\sigma }^{\alpha }f,g\right\rangle _{\omega } &  &  &  &  & 
&  \\ 
\downarrow &  &  &  &  &  &  \\ 
\mathsf{B}_{\Subset _{\mathbf{\rho }}}\left( f,g\right) & + & \mathsf{B}_{_{%
\mathbf{\rho }}\Supset }\left( f,g\right) & + & \mathsf{B}_{\cap }\left(
f,g\right) & + & \mathsf{B}_{\diagup }\left( f,g\right) \\ 
\downarrow &  & \fbox{dual} &  & \fbox{$\mathcal{NTV}_{\alpha }$} &  & \fbox{%
$\mathcal{NTV}_{\alpha }$} \\ 
\downarrow &  &  &  &  &  &  \\ 
\mathsf{T}_{\limfunc{diagonal}}\left( f,g\right) & + & \mathsf{T}_{\limfunc{%
far}\limfunc{below}}\left( f,g\right) & + & \mathsf{T}_{\limfunc{far}%
\limfunc{above}}\left( f,g\right) & + & \mathsf{T}_{\limfunc{disjoint}%
}\left( f,g\right) \\ 
\downarrow &  & \downarrow &  & \fbox{$\emptyset $} &  & \fbox{$\emptyset $}
\\ 
\downarrow &  & \downarrow &  &  &  &  \\ 
\mathsf{B}_{\Subset _{\mathbf{\rho }}}^{A}\left( f,g\right) &  & \mathsf{T}_{%
\limfunc{far}\limfunc{below}}^{1}\left( f,g\right) & + & \mathsf{T}_{%
\limfunc{far}\limfunc{below}}^{2}\left( f,g\right) &  &  \\ 
\downarrow &  & \fbox{$\mathcal{NTV}_{\alpha }+\mathcal{E}_{\alpha }$} &  & 
\fbox{$\mathcal{NTV}_{\alpha }$} &  &  \\ 
\downarrow &  &  &  &  &  &  \\ 
\mathsf{B}_{stop}^{A}\left( f,g\right) & + & \mathsf{B}_{paraproduct}^{A}%
\left( f,g\right) & + & \mathsf{B}_{neighbour}^{A}\left( f,g\right) &  &  \\ 
\fbox{$\mathcal{E}_{\alpha }^{\limfunc{deep}}+\sqrt{A_{2}^{\alpha }}$} &  & 
\fbox{$\mathfrak{T}_{T^{\alpha }}$} &  & \fbox{$\sqrt{A_{2}^{\alpha }}$} & 
& 
\end{array}%
$}
\end{equation*}

We begin with the NTV \emph{quasicube size splitting} of the inner product $%
\left\langle T_{\sigma }^{\alpha }f,g\right\rangle _{\omega }$ - and later
apply the iterated corona construction - that splits the pairs of quasicubes 
$\left( I,J\right) $ in a simultaneous quasiHaar decomposition of $f$ and $g$
into four groups, namely those pairs that:

\begin{enumerate}
\item are below the size diagonal and $\mathbf{\rho }$-deeply embedded,

\item are above the size diagonal and $\mathbf{\rho }$-deeply embedded,

\item are disjoint, and

\item are of $\mathbf{\rho }$-comparable size.
\end{enumerate}

More precisely we have%
\begin{eqnarray*}
\left\langle T_{\sigma }^{\alpha }f,g\right\rangle _{\omega }
&=&\dsum\limits_{I\in \Omega \mathcal{D}^{\sigma },\ J\in \Omega \mathcal{D}%
^{\omega }}\left\langle T_{\sigma }^{\alpha }\left( \bigtriangleup
_{I}^{\sigma }f\right) ,\left( \bigtriangleup _{I}^{\omega }g\right)
\right\rangle _{\omega } \\
&=&\dsum\limits_{\substack{ I\in \Omega \mathcal{D}^{\sigma },\ J\in \Omega 
\mathcal{D}^{\omega }  \\ J\Subset _{\mathbf{\rho }}I}}\left\langle
T_{\sigma }^{\alpha }\left( \bigtriangleup _{I}^{\sigma }f\right) ,\left(
\bigtriangleup _{J}^{\omega }g\right) \right\rangle _{\omega }+\dsum\limits 
_{\substack{ I\in \Omega \mathcal{D}^{\sigma },\ J\in \Omega \mathcal{D}%
^{\omega }  \\ J_{\mathbf{\rho }}\Supset I}}\left\langle T_{\sigma }^{\alpha
}\left( \bigtriangleup _{I}^{\sigma }f\right) ,\left( \bigtriangleup
_{J}^{\omega }g\right) \right\rangle _{\omega } \\
&&+\dsum\limits_{\substack{ I\in \Omega \mathcal{D}^{\sigma },\ J\in \Omega 
\mathcal{D}^{\omega }  \\ J\cap I=\emptyset }}\left\langle T_{\sigma
}^{\alpha }\left( \bigtriangleup _{I}^{\sigma }f\right) ,\left(
\bigtriangleup _{J}^{\omega }g\right) \right\rangle _{\omega }+\dsum\limits 
_{\substack{ I\in \Omega \mathcal{D}^{\sigma },\ J\in \Omega \mathcal{D}%
^{\omega }  \\ 2^{-\mathbf{\rho }}\leq \ell \left( J\right) \diagup \ell
\left( I\right) \leq 2^{\mathbf{\rho }}}}\left\langle T_{\sigma }^{\alpha
}\left( \bigtriangleup _{I}^{\sigma }f\right) ,\left( \bigtriangleup
_{J}^{\omega }g\right) \right\rangle _{\omega } \\
&=&\mathsf{B}_{\Subset _{\mathbf{\rho }}}\left( f,g\right) +\mathsf{B}_{_{%
\mathbf{\rho }}\Supset }\left( f,g\right) +\mathsf{B}_{\cap }\left(
f,g\right) +\mathsf{B}_{\diagup }\left( f,g\right) .
\end{eqnarray*}%
Lemma \ref{standard delta} in the section on NTV peliminaries show that the 
\emph{disjoint} and \emph{comparable} forms $\mathsf{B}_{\cap }\left(
f,g\right) $ and $\mathsf{B}_{\diagup }\left( f,g\right) $ are both bounded
by the $\mathcal{A}_{2}^{\alpha }$, quasitesting and quasiweak boundedness
property constants. The \emph{below} and \emph{above} forms are clearly
symmetric, so we need only consider the form $\mathsf{B}_{\Subset _{\mathbf{%
\rho }}}\left( f,g\right) $, to which we turn for the remainder of the proof.

In order to bound the below form $\mathsf{B}_{\Subset _{\mathbf{\rho }%
}}\left( f,g\right) $, we will apply two different corona decompositions in
succession to the function $f\in L^{2}\left( \sigma \right) $, gaining
structure with each application; first to a boundedness property for $f$,
and then to a regularizing property of the weight $\sigma $. We first apply
the Calder\'{o}n-Zygmund corona decomposition to the function $f\in
L^{2}\left( \sigma \right) $ obtain%
\begin{equation*}
f=\sum_{F\in \mathcal{F}}\mathsf{P}_{\mathcal{C}_{F}^{\sigma }}^{\sigma }f.
\end{equation*}%
Then for each fixed $F\in \mathcal{F}$, construct the \emph{quasienergy}
corona decomposition $\left\{ \mathcal{C}_{S}^{\sigma }\right\} _{S\in 
\mathcal{S}\left( F\right) }$\ corresponding to the weight pair $\left(
\sigma ,\omega \right) $ with top quasicube $S_{0}=F$, as given in
Definition \ref{def energy corona 3}. At this point we apply Lemma \ref%
{iterating coronas} to obtain iterated stopping times $\mathcal{S}$ and
iterated stopping data $\left\{ \alpha _{\mathcal{S}}\left( S\right)
\right\} _{S\in \mathcal{S}}$. This gives us the following \emph{double
corona decomposition} of $f$,%
\begin{equation}
f=\sum_{F\in \mathcal{F}}\mathsf{P}_{\mathcal{C}_{F}^{\sigma }}^{\sigma
}f=\sum_{F\in \mathcal{F}}\sum_{S\in \mathcal{S}^{\ast }\left( F\right) \cup
\left\{ F\right\} }\mathsf{P}_{\mathcal{C}_{S}^{\sigma }}^{\sigma }\mathsf{P}%
_{\mathcal{C}_{F}^{\sigma }}^{\sigma }f=\sum_{S\in \mathcal{S}}\mathsf{P}_{%
\mathcal{C}_{S}^{\sigma }}^{\sigma }f\equiv \sum_{A\in \mathcal{A}}\mathsf{P}%
_{\mathcal{C}_{A}}^{\sigma }f,  \label{double corona}
\end{equation}%
where $\mathcal{A}\equiv \mathcal{S}\left( \mathcal{F}\right) $ is the
double stopping collection for $f$. We are relabeling the double corona as $%
\mathcal{A}$ here so as to minimize confusion. We now record the main facts
proved above for the double corona.

\begin{lemma}
The data $\mathcal{A}$ and $\left\{ \alpha _{\mathcal{A}}\left( A\right)
\right\} _{A\in \mathcal{A}}$ satisfy properties (1), (2), (3) and (4) in
Definition \ref{general stopping data}.
\end{lemma}

To bound $\mathsf{B}_{\Subset _{\mathbf{\rho }}}\left( f,g\right) $ we fix
the stopping data $\mathcal{A}$ and $\left\{ \alpha _{\mathcal{A}}\left(
A\right) \right\} _{A\in \mathcal{A}}$ constructed above with the double
iterated corona. We now consider the following \emph{canonical splitting} of
the form $\mathsf{B}_{\Subset _{\mathbf{\rho }}}\left( f,g\right) $ that
involves the quasiHaar corona projections $\mathsf{P}_{\mathcal{C}%
_{A}}^{\sigma }$ acting on $f$ and the $\mathbf{\tau }$\emph{-shifted}
quasiHaar corona projections $\mathsf{P}_{\mathcal{C}_{B}^{\mathbf{\tau }-%
\limfunc{shift}}}^{\omega }$ acting on $g$. Here the $\mathbf{\tau }$%
-shifted corona $\mathcal{C}_{B}^{\mathbf{\tau }-\limfunc{shift}}$ is
defined to include only those quasicubes $J\in \mathcal{C}_{B}$ that are 
\emph{not} $\mathbf{\tau }$-nearby $B$, and to include also such quasicubes $%
J$ which in addition \emph{are} $\mathbf{\tau }$-nearby in the children $%
B^{\prime }$ of $B$.

\begin{definition}
\label{def parameters}The parameters $\mathbf{\tau }$ and $\mathbf{\rho }$
are now fixed to satisfy 
\begin{equation*}
\mathbf{\tau }>\mathbf{r}\text{ and }\mathbf{\rho }>\mathbf{r}+\mathbf{\tau }%
,
\end{equation*}%
where $\mathbf{r}$ is the goodness parameter already fixed.
\end{definition}

\begin{definition}
\label{shifted corona}For $B\in \mathcal{A}$ we define%
\begin{equation*}
\mathcal{C}_{B}^{\mathbf{\tau }-\limfunc{shift}}=\left\{ J\in \mathcal{C}%
_{B}:J\Subset _{\mathbf{\tau }}B\right\} \cup \dbigcup\limits_{B^{\prime
}\in \mathfrak{C}_{\mathcal{A}}\left( B\right) }\left\{ J\in \Omega \mathcal{%
D}:J\Subset _{\mathbf{\tau }}B\text{ and }J\text{ \emph{is} }\mathbf{\tau }%
\text{-nearby in }B^{\prime }\right\} .
\end{equation*}
\end{definition}

We will use repeatedly the fact that the $\mathbf{\tau }$-shifted coronas $%
\mathcal{C}_{B}^{\mathbf{\tau }-\limfunc{shift}}$ have overlap bounded by $%
\mathbf{\tau }$:%
\begin{equation}
\sum_{B\in \mathcal{A}}\mathbf{1}_{\mathcal{C}_{B}^{\mathbf{\tau }-\limfunc{%
shift}}}\left( J\right) \leq \mathbf{\tau },\ \ \ \ \ J\in \Omega \mathcal{D}%
.  \label{tau overlap}
\end{equation}%
The forms $\mathsf{B}_{\Subset _{\mathbf{\rho }}}\left( f,g\right) $ are no
longer linear in $f$ and $g$ as the `cut' is determined by the coronas $%
\mathcal{C}_{F}$ and $\mathcal{C}_{G}^{\mathbf{\tau }-\limfunc{shift}}$,
which depend on $f$ as well as the measures $\sigma $ and $\omega $.
However, if the coronas are held fixed, then the forms can be considered
bilinear in $f$ and $g$. It is convenient at this point to introduce the
following shorthand notation:%
\begin{equation*}
\left\langle T_{\sigma }^{\alpha }\left( \mathsf{P}_{\mathcal{C}%
_{A}}^{\sigma }f\right) ,\mathsf{P}_{\mathcal{C}_{B}^{\mathbf{\tau }-%
\limfunc{shift}}}^{\omega }g\right\rangle _{\omega }^{\Subset _{\mathbf{\rho 
}}}\equiv \sum_{\substack{ I\in \mathcal{C}_{A}\text{ and }J\in \mathcal{C}%
_{B}^{\mathbf{\tau }-\limfunc{shift}}  \\ J\Subset _{\mathbf{\rho }}I}}%
\left\langle T_{\sigma }^{\alpha }\left( \bigtriangleup _{I}^{\sigma
}f\right) ,\left( \bigtriangleup _{J}^{\omega }g\right) \right\rangle
_{\omega }\ .
\end{equation*}%
We then have the canonical splitting,%
\begin{eqnarray}
&&\mathsf{B}_{\Subset _{\mathbf{\rho }}}\left( f,g\right)
\label{parallel corona decomp'} \\
&=&\sum_{A,B\in \mathcal{A}}\left\langle T_{\sigma }^{\alpha }\left( \mathsf{%
P}_{\mathcal{C}_{A}}^{\sigma }f\right) ,\mathsf{P}_{\mathcal{C}_{B}^{\mathbf{%
\tau }-\limfunc{shift}}}^{\omega }g\right\rangle _{\omega }^{\Subset _{%
\mathbf{\rho }}}  \notag \\
&=&\sum_{A\in \mathcal{A}}\left\langle T_{\sigma }^{\alpha }\left( \mathsf{P}%
_{\mathcal{C}_{A}}^{\sigma }f\right) ,\mathsf{P}_{\mathcal{C}_{A}^{\mathbf{%
\tau }-\limfunc{shift}}}^{\omega }g\right\rangle _{\omega }^{\Subset _{%
\mathbf{\rho }}}+\sum_{\substack{ A,B\in \mathcal{A}  \\ B\subsetneqq A}}%
\left\langle T_{\sigma }^{\alpha }\left( \mathsf{P}_{\mathcal{C}%
_{A}}^{\sigma }f\right) ,\mathsf{P}_{\mathcal{C}_{B}^{\mathbf{\tau }-%
\limfunc{shift}}}^{\omega }g\right\rangle _{\omega }^{\Subset _{\mathbf{\rho 
}}}  \notag \\
&&+\sum_{\substack{ A,B\in \mathcal{A}  \\ B\supsetneqq A}}\left\langle
T_{\sigma }^{\alpha }\left( \mathsf{P}_{\mathcal{C}_{A}}^{\sigma }f\right) ,%
\mathsf{P}_{\mathcal{C}_{B}^{\mathbf{\tau }-\limfunc{shift}}}^{\omega
}g\right\rangle _{\omega }^{\Subset _{\mathbf{\rho }}}+\sum_{\substack{ %
A,B\in \mathcal{A}  \\ A\cap B=\emptyset }}\left\langle T_{\sigma }^{\alpha
}\left( \mathsf{P}_{\mathcal{C}_{A}}^{\sigma }f\right) ,\mathsf{P}_{\mathcal{%
C}_{B}^{\mathbf{\tau }-\limfunc{shift}}}^{\omega }g\right\rangle _{\omega
}^{\Subset _{\mathbf{\rho }}}  \notag \\
&\equiv &\mathsf{T}_{\limfunc{diagonal}}\left( f,g\right) +\mathsf{T}_{%
\limfunc{far}\limfunc{below}}\left( f,g\right) +\mathsf{T}_{\limfunc{far}%
\limfunc{above}}\left( f,g\right) +\mathsf{T}_{\limfunc{disjoint}}\left(
f,g\right) .  \notag
\end{eqnarray}%
Now the final two terms $\mathsf{T}_{\limfunc{far}\limfunc{above}}\left(
f,g\right) $ and $\mathsf{T}_{\limfunc{disjoint}}\left( f,g\right) $ each
vanish since there are no pairs $\left( I,J\right) \in \mathcal{C}_{A}\times 
\mathcal{C}_{B}^{\mathbf{\tau }-\limfunc{shift}}$ with both (\textbf{i}) $%
J\Subset _{\mathbf{\rho }}I$ and (\textbf{ii}) either $B\subsetneqq A$ or $%
B\cap A=\emptyset $.

The \emph{far below} term $\mathsf{T}_{\limfunc{far}\limfunc{below}}\left(
f,g\right) $ is bounded using the Intertwining Proposition and the control
of functional energy condition by the energy condition given in the next two
sections. Indeed, assuming these two results, we have from $\mathbf{\tau }<%
\mathbf{\rho }$ that%
\begin{eqnarray*}
\mathsf{T}_{\limfunc{far}\limfunc{below}}\left( f,g\right) &=&\sum 
_{\substack{ A,B\in \mathcal{A}  \\ B\subsetneqq A}}\sum_{\substack{ I\in 
\mathcal{C}_{A}\text{ and }J\in \mathcal{C}_{B}^{\mathbf{\tau }-\limfunc{%
shift}}  \\ J\Subset _{\mathbf{\rho }}I}}\left\langle T_{\sigma }^{\alpha
}\left( \bigtriangleup _{I}^{\sigma }f\right) ,\left( \bigtriangleup
_{J}^{\omega }g\right) \right\rangle _{\omega } \\
&=&\sum_{B\in \mathcal{A}}\sum_{A\in \mathcal{A}:\ B\subsetneqq A}\sum 
_{\substack{ I\in \mathcal{C}_{A}\text{ and }J\in \mathcal{C}_{B}^{\mathbf{%
\tau }-\limfunc{shift}}  \\ J\Subset _{\mathbf{\rho }}I}}\left\langle
T_{\sigma }^{\alpha }\left( \bigtriangleup _{I}^{\sigma }f\right) ,\left(
\bigtriangleup _{J}^{\omega }g\right) \right\rangle _{\omega } \\
&=&\sum_{B\in \mathcal{A}}\sum_{A\in \mathcal{A}:\ B\subsetneqq A}\sum_{I\in 
\mathcal{C}_{A}\text{ and }J\in \mathcal{C}_{B}^{\mathbf{\tau }-\limfunc{%
shift}}}\left\langle T_{\sigma }^{\alpha }\left( \bigtriangleup _{I}^{\sigma
}f\right) ,\left( \bigtriangleup _{J}^{\omega }g\right) \right\rangle
_{\omega } \\
&&-\sum_{B\in \mathcal{A}}\sum_{A\in \mathcal{A}:\ B\subsetneqq A}\sum 
_{\substack{ I\in \mathcal{C}_{A}\text{ and }J\in \mathcal{C}_{B}^{\mathbf{%
\tau }-\limfunc{shift}}  \\ J\not\Subset _{\mathbf{\rho }}I}}\left\langle
T_{\sigma }^{\alpha }\left( \bigtriangleup _{I}^{\sigma }f\right) ,\left(
\bigtriangleup _{J}^{\omega }g\right) \right\rangle _{\omega } \\
&=&\mathsf{T}_{\limfunc{far}\limfunc{below}}^{1}\left( f,g\right) -\mathsf{T}%
_{\limfunc{far}\limfunc{below}}^{2}\left( f,g\right) .
\end{eqnarray*}%
Now $\mathsf{T}_{\limfunc{far}\text{\ }\limfunc{below}}^{2}\left( f,g\right) 
$ is bounded by $\mathcal{NTV}_{\alpha }$ by Lemma \ref{standard delta}
since $J$ is good if $\bigtriangleup _{J}^{\omega }g\neq 0$.

The form $\mathsf{T}_{\limfunc{far}\limfunc{below}}^{1}\left( f,g\right) $
can be written as%
\begin{eqnarray*}
\mathsf{T}_{\limfunc{far}\limfunc{below}}^{1}\left( f,g\right) &=&\sum_{B\in 
\mathcal{A}}\sum_{I\in \Omega \mathcal{D}:\ B\subsetneqq I}\left\langle
T_{\sigma }^{\alpha }\left( \bigtriangleup _{I}^{\sigma }f\right)
,g_{B}\right\rangle _{\omega }; \\
\text{where }g_{B} &\equiv &\sum_{J\in \mathcal{C}_{B}^{\mathbf{\tau }-%
\limfunc{shift}}}\bigtriangleup _{J}^{\omega }g\ .
\end{eqnarray*}%
The Intertwining Proposition \ref{strongly adapted} applies to this latter
form and shows that it is bounded by $\mathcal{NTV}_{\alpha }+\mathfrak{F}%
_{\alpha }$. Then Proposition \ref{func ener control} shows that $\mathfrak{F%
}_{\alpha }\lesssim \mathcal{A}_{2}^{\alpha }+\mathcal{E}_{\alpha }$, which
completes the proof that%
\begin{equation}
\left\vert \mathsf{T}_{\limfunc{far}\limfunc{below}}\left( f,g\right)
\right\vert \lesssim \left( \mathcal{NTV}_{\alpha }+\mathcal{E}_{\alpha
}\right) \left\Vert f\right\Vert _{L^{2}\left( \sigma \right) }\left\Vert
g\right\Vert _{L^{2}\left( \omega \right) }\ .  \label{far below bound}
\end{equation}

The boundedness of the diagonal term $\mathsf{T}_{\limfunc{diagonal}}\left(
f,g\right) $ will then be reduced to the forms in the
paraproduct/neighbour/stopping form decomposition of NTV. The stopping form
is then further split into two sublinear forms in (\ref{def split}) below,
where the boundedness of the more difficult of the two is treated by
adapting the stopping time and recursion of M. Lacey \cite{Lac}. More
precisely, to handle the diagonal term $\mathsf{T}_{\limfunc{diagonal}%
}\left( f,g\right) $, it is enough to consider the individual corona pieces 
\begin{equation*}
\mathsf{B}_{\Subset _{\mathbf{\rho }}}^{A}\left( f,g\right) \equiv
\left\langle T_{\sigma }^{\alpha }\left( \mathsf{P}_{\mathcal{C}%
_{A}}^{\sigma }f\right) ,\mathsf{P}_{\mathcal{C}_{A}^{\mathbf{\tau }-%
\limfunc{shift}}}^{\omega }g\right\rangle _{\omega }^{\Subset }\ ,
\end{equation*}%
and to prove the following estimate:%
\begin{equation*}
\left\vert \mathsf{B}_{\Subset _{\mathbf{\rho }}}^{A}\left( f,g\right)
\right\vert \lesssim \left( \mathcal{NTV}_{\alpha }+\mathcal{E}_{\alpha
}\right) \ \left( \alpha _{\mathcal{A}}\left( A\right) \sqrt{\left\vert
A\right\vert _{\sigma }}+\left\Vert \mathsf{P}_{\mathcal{C}_{A}}^{\sigma
}f\right\Vert _{L^{2}\left( \sigma \right) }\right) \ \left\Vert \mathsf{P}_{%
\mathcal{C}_{A}^{\mathbf{\tau }-\limfunc{shift}}}^{\omega }g\right\Vert
_{L^{2}\left( \omega \right) }\ .
\end{equation*}%
Indeed, we then have from Cauchy-Schwarz that%
\begin{eqnarray*}
&&\sum_{A\in \mathcal{A}}\left\vert \mathsf{B}_{\Subset _{\mathbf{\rho }%
}}^{A}\left( f,g\right) \right\vert =\sum_{A\in \mathcal{A}}\left\vert 
\mathsf{B}_{\Subset _{\mathbf{\rho }}}^{A}\left( \mathsf{P}_{\mathcal{C}%
_{A}}^{\sigma }f,\mathsf{P}_{\mathcal{C}_{A}^{\mathbf{\tau }-\limfunc{shift}%
}}^{\omega }g\right) \right\vert \\
&\lesssim &\left( \mathcal{NTV}_{\alpha }+\mathcal{E}_{\alpha }\right) \
\left( \sum_{A\in \mathcal{A}}\alpha _{\mathcal{A}}\left( A\right)
^{2}\left\vert A\right\vert _{\sigma }+\left\Vert \mathsf{P}_{\mathcal{C}%
_{A}}^{\sigma }f\right\Vert _{L^{2}\left( \sigma \right) }^{2}\right) ^{%
\frac{1}{2}}\ \left( \sum_{A\in \mathcal{A}}\left\Vert \mathsf{P}_{\mathcal{C%
}_{A}^{\mathbf{\tau }-\limfunc{shift}}}^{\omega }g\right\Vert _{L^{2}\left(
\omega \right) }^{2}\right) ^{\frac{1}{2}} \\
&\lesssim &\left( \mathcal{NTV}_{\alpha }+\mathcal{E}_{\alpha }\right) \
\left\Vert f\right\Vert _{L^{2}\left( \sigma \right) }\left\Vert
g\right\Vert _{L^{2}\left( \omega \right) }\ ,
\end{eqnarray*}%
where the last line uses `quasi' orthogonality in $f$ and orthogonality in
both $f$ and $g$.

Following arguments in \cite{NTV3}, \cite{Vol} and \cite{LaSaShUr}, we now
use the paraproduct / neighbour / stopping splitting of NTV to reduce
boundedness of $\mathsf{B}_{\Subset _{\mathbf{\rho }}}^{A}\left( f,g\right) $
to boundedness of the associated stopping form 
\begin{equation}
\mathsf{B}_{stop}^{A}\left( f,g\right) \equiv \sum_{I\in \limfunc{supp}%
\widehat{f}}\sum_{J:\ J\Subset _{\mathbf{\rho }}I\text{ and }I_{J}\notin 
\mathcal{A}}\left( \mathbb{E}_{I_{J}}^{\sigma }\bigtriangleup _{I}^{\sigma
}f\right) \ \left\langle T_{\sigma }^{\alpha }\mathbf{1}_{A\setminus
I_{J}},\bigtriangleup _{J}^{\omega }g\right\rangle _{\omega }\ ,
\label{bounded stopping form}
\end{equation}%
where $f$ is supported in the quasicube $A$ and its expectations $\mathbb{E}%
_{I}^{\sigma }\left\vert f\right\vert $ are bounded by $\alpha _{\mathcal{A}%
}\left( A\right) $ for $I\in \mathcal{C}_{A}^{\sigma }$, the quasiHaar
support of $f$ is contained in the corona $\mathcal{C}_{A}^{\sigma }$, and
the quasiHaar support of $g$\ is contained in $\mathcal{C}_{A}^{\mathbf{\tau 
}-\limfunc{shift}}$, and where $I_{J}$ is the $\Omega \mathcal{D}$-child of $%
I$ that contains $J$. Indeed, to see this, we note that $\bigtriangleup
_{I}^{\sigma }f=\mathbf{1}_{I}\bigtriangleup _{I}^{\sigma }f$ and write both%
\begin{eqnarray*}
\mathbf{1}_{I} &=&\mathbf{1}_{I_{J}}+\sum_{\theta \left( I_{J}\right) \in 
\mathfrak{C}_{\Omega \mathcal{D}}\left( I\right) \setminus \left\{
I_{J}\right\} }\mathbf{1}_{\theta \left( I_{J}\right) }\ , \\
\mathbf{1}_{I_{J}} &=&\mathbf{1}_{A}-\mathbf{1}_{A\setminus I_{J}}\ ,
\end{eqnarray*}%
where $\theta \left( I_{J}\right) \in \mathfrak{C}_{\Omega \mathcal{D}%
}\left( I\right) \setminus \left\{ I_{J}\right\} $ ranges over the $2^{n}-1$ 
$\Omega \mathcal{D}$-children of $I$ other than the child $I_{J}$ that
contains $J$. Then we obtain%
\begin{eqnarray*}
\left\langle T_{\sigma }^{\alpha }\bigtriangleup _{I}^{\sigma
}f,\bigtriangleup _{J}^{\omega }g\right\rangle _{\omega } &=&\left\langle
T_{\sigma }^{\alpha }\left( \mathbf{1}_{I_{J}}\bigtriangleup _{I}^{\sigma
}f\right) ,\bigtriangleup _{J}^{\omega }g\right\rangle _{\omega
}+\sum_{\theta \left( I_{J}\right) \in \mathfrak{C}_{\Omega \mathcal{D}%
}\left( I\right) \setminus \left\{ I_{J}\right\} }\left\langle T_{\sigma
}^{\alpha }\left( \mathbf{1}_{\theta \left( I_{J}\right) }\bigtriangleup
_{I}^{\sigma }f\right) ,\bigtriangleup _{J}^{\omega }g\right\rangle _{\omega
} \\
&=&\left( \mathbb{E}_{I_{J}}^{\sigma }\bigtriangleup _{I}^{\sigma }f\right)
\left\langle T_{\sigma }^{\alpha }\left( \mathbf{1}_{I_{J}}\right)
,\bigtriangleup _{J}^{\omega }g\right\rangle _{\omega }+\sum_{\theta \left(
I_{J}\right) \in \mathfrak{C}_{\Omega \mathcal{D}}\left( I\right) \setminus
\left\{ I_{J}\right\} }\left\langle T_{\sigma }^{\alpha }\left( \mathbf{1}%
_{\theta \left( I_{J}\right) }\bigtriangleup _{I}^{\sigma }f\right)
,\bigtriangleup _{J}^{\omega }g\right\rangle _{\omega } \\
&=&\left( \mathbb{E}_{I_{J}}^{\sigma }\bigtriangleup _{I}^{\sigma }f\right)
\left\langle T_{\sigma }^{\alpha }\mathbf{1}_{A},\bigtriangleup _{J}^{\omega
}g\right\rangle _{\omega } \\
&&-\left( \mathbb{E}_{I_{J}}^{\sigma }\bigtriangleup _{I}^{\sigma }f\right)
\left\langle T_{\sigma }^{\alpha }\mathbf{1}_{A\setminus
I_{J}},\bigtriangleup _{J}^{\omega }g\right\rangle _{\omega } \\
&&+\sum_{\theta \left( I_{J}\right) \in \mathfrak{C}_{\Omega \mathcal{D}%
}\left( I\right) \setminus \left\{ I_{J}\right\} }\left\langle T_{\sigma
}^{\alpha }\left( \mathbf{1}_{\theta \left( I_{J}\right) }\bigtriangleup
_{I}^{\sigma }f\right) ,\bigtriangleup _{J}^{\omega }g\right\rangle _{\omega
}\ ,
\end{eqnarray*}%
and the corresponding NTV splitting of $\mathsf{B}_{\Subset _{\mathbf{\rho }%
}}^{A}\left( f,g\right) $:%
\begin{eqnarray*}
\mathsf{B}_{\Subset _{\mathbf{\rho }}}^{A}\left( f,g\right) &=&\left\langle
T_{\sigma }^{\alpha }\left( \mathsf{P}_{\mathcal{C}_{A}}^{\sigma }f\right) ,%
\mathsf{P}_{\mathcal{C}_{A}^{\mathbf{\tau }-\limfunc{shift}}}^{\omega
}g\right\rangle _{\omega }^{\Subset _{\mathbf{\rho }}}=\sum_{\substack{ I\in 
\mathcal{C}_{A}\text{ and }J\in \mathcal{C}_{A}^{\mathbf{\tau }-\limfunc{%
shift}}  \\ J\Subset _{\mathbf{\rho }}I}}\left\langle T_{\sigma }^{\alpha
}\left( \bigtriangleup _{I}^{\sigma }f\right) ,\bigtriangleup _{J}^{\omega
}g\right\rangle _{\omega } \\
&=&\sum_{\substack{ I\in \mathcal{C}_{A}\text{ and }J\in \mathcal{C}_{A}^{%
\mathbf{\tau }-\limfunc{shift}}  \\ J\Subset _{\mathbf{\rho }}I}}\left( 
\mathbb{E}_{I_{J}}^{\sigma }\bigtriangleup _{I}^{\sigma }f\right)
\left\langle T_{\sigma }^{\alpha }\mathbf{1}_{A},\bigtriangleup _{J}^{\omega
}g\right\rangle _{\omega } \\
&&-\sum_{\substack{ I\in \mathcal{C}_{A}\text{ and }J\in \mathcal{C}_{A}^{%
\mathbf{\tau }-\limfunc{shift}}  \\ J\Subset _{\mathbf{\rho }}I}}\left( 
\mathbb{E}_{I_{J}}^{\sigma }\bigtriangleup _{I}^{\sigma }f\right)
\left\langle T_{\sigma }^{\alpha }\mathbf{1}_{A\setminus
I_{J}},\bigtriangleup _{J}^{\omega }g\right\rangle _{\omega } \\
&&+\sum_{\substack{ I\in \mathcal{C}_{A}\text{ and }J\in \mathcal{C}_{A}^{%
\mathbf{\tau }-\limfunc{shift}}  \\ J\Subset _{\mathbf{\rho }}I}}%
\sum_{\theta \left( I_{J}\right) \in \mathfrak{C}_{\Omega \mathcal{D}}\left(
I\right) \setminus \left\{ I_{J}\right\} }\left\langle T_{\sigma }^{\alpha
}\left( \mathbf{1}_{\theta \left( I_{J}\right) }\bigtriangleup _{I}^{\sigma
}f\right) ,\bigtriangleup _{J}^{\omega }g\right\rangle _{\omega } \\
&\equiv &\mathsf{B}_{paraproduct}^{A}\left( f,g\right) -\mathsf{B}%
_{stop}^{A}\left( f,g\right) +\mathsf{B}_{neighbour}^{A}\left( f,g\right) .
\end{eqnarray*}%
The paraproduct form $\mathsf{B}_{paraproduct}^{A}\left( f,g\right) $ is
easily controlled by the testing condition for $T^{\alpha }$. Indeed, we have%
\begin{eqnarray*}
\mathsf{B}_{paraproduct}^{A}\left( f,g\right) &=&\sum_{\substack{ I\in 
\mathcal{C}_{A}\text{ and }J\in \mathcal{C}_{A}^{\mathbf{\tau }-\limfunc{%
shift}}  \\ J\Subset _{\mathbf{\rho }}I}}\left( \mathbb{E}_{I_{J}}^{\sigma
}\bigtriangleup _{I}^{\sigma }f\right) \left\langle T_{\sigma }^{\alpha }%
\mathbf{1}_{A},\bigtriangleup _{J}^{\omega }g\right\rangle _{\omega } \\
&=&\sum_{J\in \mathcal{C}_{A}^{\mathbf{\tau }-\limfunc{shift}}}\left\langle
T_{\sigma }^{\alpha }\mathbf{1}_{A},\bigtriangleup _{J}^{\omega
}g\right\rangle _{\omega }\left\{ \sum_{I\in \mathcal{C}_{A}\text{:\ }%
J\Subset _{\mathbf{\rho }}I}\left( \mathbb{E}_{I_{J}}^{\sigma
}\bigtriangleup _{I}^{\sigma }f\right) \right\} \\
&=&\sum_{J\in \mathcal{C}_{A}^{\mathbf{\tau }-\limfunc{shift}}}\left\langle
T_{\sigma }^{\alpha }\mathbf{1}_{A},\bigtriangleup _{J}^{\omega
}g\right\rangle _{\omega }\left\{ \mathbb{E}_{I^{\natural }\left( J\right)
_{J}}^{\sigma }f-\mathbb{E}_{A}^{\sigma }f\right\} \\
&=&\left\langle T_{\sigma }^{\alpha }\mathbf{1}_{A},\sum_{J\in \mathcal{C}%
_{A}^{\mathbf{\tau }-\limfunc{shift}}}\left\{ \mathbb{E}_{I^{\natural
}\left( J\right) _{J}}^{\sigma }f-\mathbb{E}_{A}^{\sigma }f\right\}
\bigtriangleup _{J}^{\omega }g\right\rangle _{\omega }\ ,
\end{eqnarray*}%
where $I^{\natural }\left( J\right) $ denotes the smallest quasicube $I\in 
\mathcal{C}_{A}$ such that $J\Subset _{\mathbf{\rho }}I$, and of course $%
I^{\natural }\left( J\right) _{J}$ denotes its child containing $J$. We
claim that by construction of the corona we have $I^{\natural }\left(
J\right) _{J}\notin \mathcal{A}$, and so $\left\vert \mathbb{E}_{I^{\natural
}\left( J\right) _{J}}^{\sigma }f\right\vert \lesssim \mathbb{E}_{A}^{\sigma
}\left\vert f\right\vert \leq \alpha _{\mathcal{A}}\left( A\right) $.
Indeed, in our application of the stopping form we have $f=\mathsf{P}_{%
\mathcal{C}_{A}}^{\sigma }f$ and $g=\mathsf{P}_{\mathcal{C}_{A}^{\mathbf{%
\tau }-\limfunc{shift}}}^{\omega }g$, and the definitions of the coronas $%
\mathcal{C}_{A}$ and $\mathcal{C}_{A}^{\mathbf{\tau }-\limfunc{shift}}$
together with $\mathbf{r}<\mathbf{\tau }<\mathbf{\rho }$ imply that $%
I^{\natural }\left( J\right) _{J}\notin \mathcal{A}$ for $J\in \mathcal{C}%
_{A}^{\mathbf{\tau }-\limfunc{shift}}$.

Thus from the orthogonality of the quasiHaar projections $\bigtriangleup
_{J}^{\omega }g$ and the bound on the coefficients $\left\vert \mathbb{E}%
_{I^{\natural }\left( J\right) _{J}}^{\sigma }f-\mathbb{E}_{A}^{\sigma
}f\right\vert \lesssim \alpha _{\mathcal{A}}\left( A\right) $ we have%
\begin{eqnarray*}
\left\vert \mathsf{B}_{paraproduct}^{A}\left( f,g\right) \right\vert
&=&\left\vert \left\langle T_{\sigma }^{\alpha }\mathbf{1}_{A},\sum_{J\in 
\mathcal{C}_{A}^{\mathbf{\tau }-\limfunc{shift}}}\left\{ \mathbb{E}%
_{I^{\natural }\left( J\right) _{J}}^{\sigma }f-\mathbb{E}_{A}^{\sigma
}f\right\} \bigtriangleup _{J}^{\omega }g\right\rangle _{\omega }\right\vert
\\
&\lesssim &\alpha _{\mathcal{A}}\left( A\right) \ \left\Vert \mathbf{1}%
_{A}T_{\sigma }^{\alpha }\mathbf{1}_{A}\right\Vert _{L^{2}\left( \omega
\right) }\ \left\Vert \mathsf{P}_{\mathcal{C}_{A}^{\mathbf{\tau }-\limfunc{%
shift}}}^{\omega }g\right\Vert _{L^{2}\left( \omega \right) } \\
&\leq &\mathfrak{T}_{T^{\alpha }}\ \alpha _{\mathcal{A}}\left( A\right) \ 
\sqrt{\left\vert A\right\vert _{\sigma }}\ \left\Vert \mathsf{P}_{\mathcal{C}%
_{A}^{\mathbf{\tau }-\limfunc{shift}}}^{\omega }g\right\Vert _{L^{2}\left(
\omega \right) },
\end{eqnarray*}%
because $\left\Vert \sum_{J\in \mathcal{C}_{A}^{\mathbf{\tau }-\limfunc{shift%
}}}\lambda _{J}\bigtriangleup _{J}^{\omega }g\right\Vert _{L^{2}\left(
\omega \right) }\leq \left( \sup_{J}\left\vert \lambda _{J}\right\vert
\right) \left\Vert \sum_{J\in \mathcal{C}_{A}^{\mathbf{\tau }-\limfunc{shift}%
}}\bigtriangleup _{J}^{\omega }g\right\Vert _{L^{2}\left( \omega \right) }$.

Next, the neighbour form $\mathsf{B}_{neighbour}^{A}\left( f,g\right) $ is
easily controlled by the $A_{2}^{\alpha }$ condition using the Energy Lemma %
\ref{ener} and the fact that the quasicubes $J$ are good. In particular, the
information encoded in the stopping tree $\mathcal{A}$ plays no role here.
We have%
\begin{equation*}
\mathsf{B}_{neighbour}^{A}\left( f,g\right) =\sum_{\substack{ I\in \mathcal{C%
}_{A}\text{ and }J\in \mathcal{C}_{A}^{\mathbf{\tau }-\limfunc{shift}}  \\ %
J\Subset _{\mathbf{\rho }}I}}\sum_{\theta \left( I_{J}\right) \in \mathfrak{C%
}_{\Omega \mathcal{D}}\left( I\right) \setminus \left\{ I_{J}\right\}
}\left\langle T_{\sigma }^{\alpha }\left( \mathbf{1}_{\theta \left(
I_{J}\right) }\bigtriangleup _{I}^{\sigma }f\right) ,\bigtriangleup
_{J}^{\omega }g\right\rangle _{\omega }.
\end{equation*}%
Recall that $I_{J}$ is the child of $I$ that contains $J$. Fix $\theta
\left( I_{J}\right) \in \mathfrak{C}_{\Omega \mathcal{D}}\left( I\right)
\setminus \left\{ I_{J}\right\} $ momentarily, and an integer $s\geq \mathbf{%
r}$. The inner product to be estimated is 
\begin{equation*}
\left\langle T_{\sigma }^{\alpha }(\mathbf{1}_{\theta \left( I_{J}\right)
}\sigma \Delta _{I}^{\sigma }f),\Delta _{J}^{\omega }g\right\rangle _{\omega
},
\end{equation*}%
i.e. 
\begin{equation*}
\left\langle T_{\sigma }^{\alpha }\left( \mathbf{1}_{\theta \left(
I_{J}\right) }\bigtriangleup _{I}^{\sigma }f\right) ,\bigtriangleup
_{J}^{\omega }g\right\rangle _{\omega }=\mathbb{E}_{\theta \left(
I_{J}\right) }^{\sigma }\Delta _{I}^{\sigma }f\cdot \left\langle T_{\sigma
}^{\alpha }\left( \mathbf{1}_{\theta \left( I_{J}\right) }\right)
,\bigtriangleup _{J}^{\omega }g\right\rangle _{\omega }.
\end{equation*}%
Thus we can write%
\begin{equation}
\mathsf{B}_{neighbour}^{A}\left( f,g\right) =\sum_{\substack{ I\in \mathcal{C%
}_{A}\text{ and }J\in \mathcal{C}_{A}^{\mathbf{\tau }-\limfunc{shift}}  \\ %
J\Subset _{\mathbf{\rho }}I}}\sum_{\theta \left( I_{J}\right) \in \mathfrak{C%
}_{\Omega \mathcal{D}}\left( I\right) \setminus \left\{ I_{J}\right\}
}\left( \mathbb{E}_{\theta \left( I_{J}\right) }^{\sigma }\Delta
_{I}^{\sigma }f\right) \ \left\langle T_{\sigma }^{\alpha }\left( \mathbf{1}%
_{\theta \left( I_{J}\right) }\sigma \right) ,\Delta _{J}^{\omega
}g\right\rangle _{\omega }  \label{neighbour term}
\end{equation}

Now we will use the following fractional analogue of the Poisson inequality
in \cite{Vol}. We remind the reader that there are absolute positive
constants $c,C\,$\ such that $c\left\vert J\right\vert ^{\frac{1}{n}}\leq
\ell \left( J\right) \leq C\left\vert J\right\vert ^{\frac{1}{n}}$ for all
quasicubes $J$, and that we defined the quasidistance $\limfunc{quasidist}%
\left( E,F\right) $ between two sets $E$ and $F$ to be the Euclidean
distance $\limfunc{dist}\left( \Omega ^{-1}E,\Omega ^{-1}F\right) $ between
the preimages under $\Omega $ of the sets $E$ and $F$.

\begin{lemma}
\label{Poisson inequality}Suppose that $J\subset I\subset K$ and that $%
\limfunc{quasidist}\left( J,\partial I\right) >\tfrac{1}{2}\ell \left(
J\right) ^{\varepsilon }\ell \left( I\right) ^{1-\varepsilon }$. Then%
\begin{equation}
\mathrm{P}^{\alpha }(J,\sigma \mathbf{1}_{K\setminus I})\lesssim \left( 
\frac{\ell \left( J\right) }{\ell \left( I\right) }\right) ^{1-\varepsilon
\left( n+1-\alpha \right) }\mathrm{P}^{\alpha }(I,\sigma \mathbf{1}%
_{K\setminus I}).  \label{e.Jsimeq}
\end{equation}
\end{lemma}

\begin{proof}
We have%
\begin{equation*}
\mathrm{P}^{\alpha }\left( J,\sigma \chi _{K\setminus I}\right) \approx
\sum_{k=0}^{\infty }2^{-k}\frac{1}{\left\vert 2^{k}J\right\vert ^{1-\frac{%
\alpha }{n}}}\int_{\left( 2^{k}J\right) \cap \left( K\setminus I\right)
}d\sigma ,
\end{equation*}%
and $\left( 2^{k}J\right) \cap \left( K\setminus I\right) \neq \emptyset $
requires%
\begin{equation*}
\limfunc{quasidist}\left( J,K\setminus I\right) \leq c2^{k}\ell \left(
J\right) ,
\end{equation*}%
for some dimensional constant $c>0$. Let $k_{0}$ be the smallest such $k$.
By our distance assumption we must then have%
\begin{equation*}
\tfrac{1}{2}\ell \left( J\right) ^{\varepsilon }\ell \left( I\right)
^{1-\varepsilon }\leq \limfunc{quasidist}\left( J,\partial I\right) \leq
c2^{k_{0}}\ell \left( J\right) ,
\end{equation*}%
or%
\begin{equation*}
2^{-k_{0}-1}\leq c\left( \frac{\ell \left( J\right) }{\ell \left( I\right) }%
\right) ^{1-\varepsilon }.
\end{equation*}%
Now let $k_{1}$ be defined by $2^{k_{1}}\equiv \frac{\ell \left( I\right) }{%
\ell \left( J\right) }$. Then assuming $k_{1}>k_{0}$ (the case $k_{1}\leq
k_{0}$ is similar) we have%
\begin{eqnarray*}
\mathrm{P}^{\alpha }\left( J,\sigma \chi _{K\setminus I}\right) &\approx
&\left\{ \sum_{k=k_{0}}^{k_{1}}+\sum_{k=k_{1}}^{\infty }\right\} 2^{-k}\frac{%
1}{\left\vert 2^{k}J\right\vert ^{1-\frac{\alpha }{n}}}\int_{\left(
2^{k}J\right) \cap \left( K\setminus I\right) }d\sigma \\
&\lesssim &2^{-k_{0}}\frac{\left\vert I\right\vert ^{1-\frac{\alpha }{n}}}{%
\left\vert 2^{k_{0}}J\right\vert ^{1-\frac{\alpha }{n}}}\left( \frac{1}{%
\left\vert I\right\vert ^{1-\frac{\alpha }{n}}}\int_{\left(
2^{k_{1}}J\right) \cap \left( K\setminus I\right) }d\sigma \right)
+2^{-k_{1}}\mathrm{P}^{\alpha }\left( I,\sigma \chi _{\setminus I}\right) \\
&\lesssim &\left( \frac{\ell \left( J\right) }{\ell \left( I\right) }\right)
^{\left( 1-\varepsilon \right) \left( n+1-\alpha \right) }\left( \frac{\ell
\left( I\right) }{\ell \left( J\right) }\right) ^{n-\alpha }\mathrm{P}%
^{\alpha }\left( I,\sigma \chi _{K\setminus I}\right) +\frac{\ell \left(
J\right) }{\ell \left( I\right) }\mathrm{P}^{\alpha }\left( I,\sigma \chi
_{K\setminus I}\right) ,
\end{eqnarray*}%
which is the inequality (\ref{e.Jsimeq}).
\end{proof}

Now fix $I_{0},I_{\theta }\in \mathfrak{C}_{\Omega \mathcal{D}}\left(
I\right) $ with $I_{0}\neq I_{\theta }$ and assume that $J\Subset _{\mathbf{r%
}}I_{0}$. Let $\frac{\ell \left( J\right) }{\ell \left( I_{0}\right) }%
=2^{-s} $ in the pivotal estimate in the Energy Lemma \ref{ener} with $%
J\subset I_{0}\subset I$ to obtain 
\begin{align*}
\left\vert \langle T_{\sigma }^{\alpha }\left( \mathbf{1}_{I_{\theta
}}\sigma \right) ,\Delta _{J}^{\omega }g\rangle _{\omega }\right\vert &
\lesssim \left\Vert \Delta _{J}^{\omega }g\right\Vert _{L^{2}\left( \omega
\right) }\sqrt{\left\vert J\right\vert _{\omega }}\mathrm{P}^{\alpha }\left(
J,\mathbf{1}_{I_{\theta }}\sigma \right) \\
& \lesssim \left\Vert \Delta _{J}^{\omega }g\right\Vert _{L^{2}\left( \omega
\right) }\sqrt{\left\vert J\right\vert _{\omega }}\cdot 2^{-\left(
1-\varepsilon \left( n+1-\alpha \right) \right) s}\mathrm{P}^{\alpha }\left(
I_{0},\mathbf{1}_{I_{\theta }}\sigma \right) .
\end{align*}%
Here we are using (\ref{e.Jsimeq}), which applies since $J\subset I_{0}$.

In the sum below, we keep the side lengths of the quasicubes $J$ fixed at $%
2^{-s}$ times that of $I$, and of course take $J\subset I_{0}$. We estimate 
\begin{align*}
A(I,I_{0},I_{\theta },s)& \equiv \sum_{J\;:\;2^{s}\ell \left( J\right) =\ell
\left( I\right) :J\subset I_{0}}\left\vert \langle T_{\sigma }^{\alpha
}\left( \mathbf{1}_{I_{\theta }}\sigma \Delta _{I}^{\sigma }f\right) ,\Delta
_{J}^{\omega }g\rangle _{\omega }\right\vert \\
& \leq 2^{-\left( 1-\varepsilon \left( n+1-\alpha \right) \right) s}|\mathbb{%
E}_{I_{\theta }}^{\sigma }\Delta _{I}^{\sigma }f|\ \mathrm{P}^{\alpha
}(I_{0},\mathbf{1}_{I_{\theta }}\sigma )\sum_{J\;:\;2^{s}\ell \left(
J\right) =\ell \left( I\right) :\ J\subset I_{0}}\left\Vert \Delta
_{J}^{\omega }g\right\Vert _{L^{2}\left( \omega \right) }\sqrt{\left\vert
J\right\vert _{\omega }} \\
& \leq 2^{-\left( 1-\varepsilon \left( n+1-\alpha \right) \right) s}|\mathbb{%
E}_{I_{\theta }}^{\sigma }\Delta _{I}^{\sigma }f|\ \mathrm{P}^{\alpha
}(I_{0},\mathbf{1}_{I_{\theta }}\sigma )\sqrt{\left\vert I_{0}\right\vert
_{\omega }}\Lambda (I,I_{0},I_{\theta },s), \\
\Lambda (I,I_{0},I_{\theta },s)^{2}& \equiv \sum_{J\in \mathcal{C}_{A}^{%
\mathbf{\tau }-\limfunc{shift}}:\;2^{s}\ell \left( J\right) =\ell \left(
I\right) :\ J\subset I_{0}}\left\Vert \Delta _{J}^{\omega }g\right\Vert
_{L^{2}\left( \omega \right) }^{2}\,.
\end{align*}%
The last line follows upon using the Cauchy-Schwarz inequality and the fact
that $\Delta _{J}^{\omega }g=0$ if $J\notin \mathcal{C}_{A}^{\mathbf{\tau }-%
\limfunc{shift}}$. We also note that since $2^{s+1}\ell \left( J\right)
=\ell \left( I\right) $, 
\begin{eqnarray}
\sum_{I_{0}\in \mathfrak{C}_{\Omega \mathcal{D}}\left( I\right) }\Lambda
(I,I_{0},I_{\theta },s)^{2} &\equiv &\sum_{J\in \mathcal{C}_{A}^{\mathbf{%
\tau }-\limfunc{shift}}:\;2^{s+1}\ell \left( J\right) =\ell \left( I\right)
:\ J\subset I}\left\Vert \Delta _{J}^{\omega }g\right\Vert _{L^{2}\left(
\omega \right) }^{2}\ ;  \label{g} \\
\sum_{I\in \mathcal{C}_{A}}\sum_{I_{0}\in \mathfrak{C}_{\Omega \mathcal{D}%
}\left( I\right) }\Lambda (I,I_{0},I_{\theta },s)^{2} &\leq &\left\Vert 
\mathsf{P}_{\mathcal{C}_{A}^{\mathbf{\tau }-\limfunc{shift}}}^{\omega
}g\right\Vert _{L^{2}(\omega )}^{2}\ .  \notag
\end{eqnarray}

Using 
\begin{equation}
\left\vert \mathbb{E}_{I_{\theta }}^{\sigma }\Delta _{I}^{\sigma
}f\right\vert \leq \sqrt{\mathbb{E}_{I_{\theta }}^{\sigma }\left\vert \Delta
_{I}^{\sigma }f\right\vert ^{2}}\leq \left\Vert \Delta _{I}^{\sigma
}f\right\Vert _{L^{2}\left( \sigma \right) }\ \left\vert I_{\theta
}\right\vert _{\sigma }^{-\frac{1}{2}},  \label{e.haarAvg}
\end{equation}%
we can thus estimate $A(I,I_{0},I_{\theta },s)$ as follows, in which we use
the $A_{2}^{\alpha }$ hypothesis $\sup_{I}\frac{\left\vert I\right\vert
_{\sigma }\left\vert I\right\vert _{\omega }}{\left\vert I\right\vert
^{2\left( 1-\frac{\alpha }{n}\right) }}=A_{2}^{\alpha }<\infty $: 
\begin{eqnarray*}
A(I,I_{0},I_{\theta },s) &\lesssim &2^{-\left( 1-\varepsilon \left(
n+1-\alpha \right) \right) s}\left\Vert \Delta _{I}^{\sigma }f\right\Vert
_{L^{2}\left( \sigma \right) }\Lambda (I,I_{0},I_{\theta },s)\cdot
\left\vert I_{\theta }\right\vert _{\sigma }^{-\frac{1}{2}}\mathrm{P}%
^{\alpha }(I_{0},\mathbf{1}_{I_{\theta }}\sigma )\sqrt{\left\vert
I_{0}\right\vert _{\omega }} \\
&\lesssim &\sqrt{A_{2}^{\alpha }}2^{-\left( 1-\varepsilon \left( n+1-\alpha
\right) \right) s}\left\Vert \Delta _{I}^{\sigma }f\right\Vert _{L^{2}\left(
\sigma \right) }\Lambda (I,I_{0},I_{\theta },s)\,,
\end{eqnarray*}%
since $\mathrm{P}^{\alpha }(I_{0},\mathbf{1}_{I_{\theta }}\sigma )\lesssim 
\frac{\left\vert I_{\theta }\right\vert _{\sigma }}{\left\vert I_{\theta
}\right\vert ^{1-\frac{\alpha }{n}}}$ shows that 
\begin{equation*}
\left\vert I_{\theta }\right\vert _{\sigma }^{-\frac{1}{2}}\mathrm{P}%
^{\alpha }(I_{0},\mathbf{1}_{I_{\theta }}\sigma )\ \sqrt{\left\vert
I_{0}\right\vert _{\omega }}\lesssim \frac{\sqrt{\left\vert I_{\theta
}\right\vert _{\sigma }}\sqrt{\left\vert I_{0}\right\vert _{\omega }}}{%
\left\vert I\right\vert ^{1-\frac{\alpha }{n}}}\lesssim \sqrt{A_{2}^{\alpha }%
}.
\end{equation*}

An application of Cauchy-Schwarz to the sum over $I$ using (\ref{g}) then
shows that 
\begin{eqnarray*}
&&\sum_{I\in \mathcal{C}_{A}}\sum_{\substack{ I_{0},I_{\theta }\in \mathfrak{%
C}_{\Omega \mathcal{D}}\left( I\right)  \\ I_{0}\neq I_{\theta }}}%
A(I,I_{0},I_{\theta },s) \\
&\lesssim &\sqrt{A_{2}^{\alpha }}2^{-\left( 1-\varepsilon \left( n+1-\alpha
\right) \right) s}\sqrt{\sum_{I\in \mathcal{C}_{A}}\left\Vert \Delta
_{I}^{\sigma }f\right\Vert _{L^{2}\left( \sigma \right) }^{2}}\sqrt{%
\sum_{I\in \mathcal{C}_{A}}\left( \sum_{\substack{ I_{0},I_{\theta }\in 
\mathfrak{C}_{\Omega \mathcal{D}}\left( I\right)  \\ I_{0}\neq I_{\theta }}}%
\Lambda (I,I_{0},I_{\theta },s)\right) ^{2}} \\
&\lesssim &\sqrt{A_{2}^{\alpha }}2^{-\left( 1-\varepsilon \left( n+1-\alpha
\right) \right) s}\sqrt{2^{n}}\lVert \mathsf{P}_{\mathcal{C}_{A}}^{\sigma
}f\rVert _{L^{2}(\sigma )}\sqrt{\sum_{I\in \mathcal{C}_{A}}\left( \sum 
_{\substack{ I_{0}\in \mathfrak{C}_{\Omega \mathcal{D}}\left( I\right)  \\ %
I_{0}\neq I_{\theta }}}\Lambda (I,I_{0},I_{\theta },s)\right) ^{2}} \\
&\lesssim &\sqrt{A_{2}^{\alpha }}2^{-\left( 1-\varepsilon \left( n+1-\alpha
\right) \right) s}\lVert \mathsf{P}_{\mathcal{C}_{A}}^{\sigma }f\rVert
_{L^{2}(\sigma )}\left\Vert \mathsf{P}_{\mathcal{C}_{A}^{\mathbf{\tau }-%
\limfunc{shift}}}^{\omega }g\right\Vert _{L^{2}(\omega )}\,.
\end{eqnarray*}%
This estimate is summable in $s\geq \mathbf{r}$, and so the proof of 
\begin{eqnarray*}
\left\vert \mathsf{B}_{neighbour}^{A}\left( f,g\right) \right\vert
&=&\left\vert \sum_{\substack{ I\in \mathcal{C}_{A}\text{ and }J\in \mathcal{%
C}_{A}^{\mathbf{\tau }-\limfunc{shift}}  \\ J\Subset _{\mathbf{\rho }}I}}%
\sum_{\theta \left( I_{J}\right) \in \mathfrak{C}_{\Omega \mathcal{D}}\left(
I\right) \setminus \left\{ I_{J}\right\} }\left\langle T_{\sigma }^{\alpha
}\left( \mathbf{1}_{\theta \left( I_{J}\right) }\bigtriangleup _{I}^{\sigma
}f\right) ,\bigtriangleup _{J}^{\omega }g\right\rangle _{\omega }\right\vert
\\
&=&\left\vert \sum_{I\in \mathcal{C}_{A}}\sum_{\substack{ I_{0},I_{\theta
}\in \mathfrak{C}_{\Omega \mathcal{D}}\left( I\right)  \\ I_{0}\neq
I_{\theta }}}\sum_{s=\mathbf{r}}^{\infty }A(I,I_{0},I_{\theta },s)\right\vert
\\
&\lesssim &\sqrt{A_{2}^{\alpha }}\left\Vert \mathsf{P}_{\mathcal{C}%
_{A}}^{\sigma }f\right\Vert _{L^{2}(\sigma )}\left\Vert \mathsf{P}_{\mathcal{%
C}_{A}^{\mathbf{\tau }-\limfunc{shift}}}^{\omega }g\right\Vert
_{L^{2}(\omega )}
\end{eqnarray*}%
is complete.

\bigskip

It is to the sublinear form on the left side of (\ref{First inequality})
below, derived from the stopping form $\mathsf{B}_{stop}^{A}\left(
f,g\right) $, that the argument of M. Lacey in \cite{Lac} will be adapted.
This will result in the inequality%
\begin{equation}
\left\vert \mathsf{B}_{stop}^{A}\left( f,g\right) \right\vert \lesssim
\left( \mathcal{E}_{\alpha }^{\limfunc{deep}}+\sqrt{A_{2}^{\alpha }}\right)
\ \left( \alpha _{\mathcal{A}}\left( A\right) \sqrt{\left\vert A\right\vert
_{\sigma }}+\left\Vert f\right\Vert _{L^{2}\left( \sigma \right) }\right) \
\left\Vert g\right\Vert _{L^{2}\left( \omega \right) }\ ,\ \ \ \ \ A\in 
\mathcal{A},  \label{B stop form 3}
\end{equation}%
where the bounded quasiaverages of $f$ in $\mathsf{B}_{stop}^{A}\left(
f,g\right) $ will prove crucial. But first we turn to completing the proof
of the bound (\ref{far below bound}) for the far below form $\mathsf{T}_{%
\limfunc{far}\limfunc{below}}\left( f,g\right) $ using the Intertwining
Proposition and the control of functional energy by the $\mathcal{A}%
_{2}^{\alpha }$ condition and the energy condition $\mathcal{E}_{\alpha }$.

\section{Intertwining proposition}

Here we generalize the Intertwining Proposition (see e.g. \cite{Saw}) to
higher dimensions. The main principle here says that, modulo terms that are
controlled by the functional quasienergy constant $\mathfrak{F}_{\alpha }$
and the quasiNTV constant $\mathcal{NTV}_{\alpha }$ (see below), we can pass
the shifted $\omega $-corona projection $\mathsf{P}_{\mathcal{C}_{B}^{%
\mathbf{\tau }-\limfunc{shift}}}^{\omega }$ through the operator $T^{\alpha
} $ to become the shifted corona projection $\sigma $-corona projection $%
\mathsf{P}_{\mathcal{C}_{B}^{\mathbf{\tau }-\limfunc{shift}}}^{\sigma }$.
More precisely, the idea is that with $T_{\sigma }^{\alpha }f\equiv
T^{\alpha }\left( f\sigma \right) $, the intertwining operator 
\begin{equation*}
\mathsf{P}_{\mathcal{C}_{B}^{\mathbf{\tau }-\limfunc{shift}}}^{\omega }\left[
\mathsf{P}_{\mathcal{C}_{B}^{\mathbf{\tau }-\limfunc{shift}}}^{\omega
}T_{\sigma }^{\alpha }-T_{\sigma }^{\alpha }\mathsf{P}_{\mathcal{C}_{B}^{%
\mathbf{\tau }-\limfunc{shift}}}^{\sigma }\right] \mathsf{P}_{\mathcal{C}%
_{A}}^{\sigma }
\end{equation*}%
is bounded with constant $\mathfrak{F}_{\alpha }+\mathcal{NTV}_{\alpha }$.
In those cases where the coronas $\mathcal{C}_{B}^{\mathbf{\tau }-\limfunc{%
shift}}$ and $\mathcal{C}_{A}$ are (almost) disjoint, the intertwining
operator reduces (essentially) to $\mathsf{P}_{\mathcal{C}_{B}^{\mathbf{\tau 
}-\limfunc{shift}}}^{\omega }T_{\sigma }^{\alpha }\mathsf{P}_{\mathcal{C}%
_{A}}^{\sigma }$, and then combined with the control of the functional
quasienergy constant $\mathfrak{F}_{\alpha }$ by the quasienergy condition
constant $\mathcal{E}_{\alpha }$ and $\mathcal{A}_{2}^{\alpha }+\mathcal{A}%
_{2}^{\alpha ,\ast }$, we obtain the required bound (\ref{far below bound})
for $\mathsf{T}_{\limfunc{far}\limfunc{below}}\left( f,g\right) $ above.

To describe the quantities we use to bound these forms, we need to adapt to
higher dimensions three definitions used for the Hilbert transform that are
relevant to functional energy.

\begin{definition}
\label{sigma carleson n}A collection $\mathcal{F}$ of dyadic quasicubes is $%
\sigma $\emph{-Carleson} if%
\begin{equation*}
\sum_{F\in \mathcal{F}:\ F\subset S}\left\vert F\right\vert _{\sigma }\leq
C_{\mathcal{F}}\left\vert S\right\vert _{\sigma },\ \ \ \ \ S\in \mathcal{F}.
\end{equation*}%
The constant $C_{\mathcal{F}}$ is referred to as the Carleson norm of $%
\mathcal{F}$.
\end{definition}

\begin{definition}
Let $\mathcal{F}$ be a collection of dyadic quasicubes. The good $\tau $%
-shifted corona corresponding to $F$ is defined by%
\begin{equation*}
\mathcal{C}_{F}^{\limfunc{good},\mathbf{\tau }-\limfunc{shift}}\equiv
\left\{ J\in \Omega \mathcal{D}_{\limfunc{good}}^{\omega }:J\Subset _{%
\mathbf{\tau }}F\text{ and }J\not\Subset _{\mathbf{\tau }}F^{\prime }\text{
for any }F^{\prime }\in \mathfrak{C}_{\mathcal{F}}\left( F\right) \right\} .
\end{equation*}
\end{definition}

Note that $\mathcal{C}_{F}^{\limfunc{good},\mathbf{\tau }-\limfunc{shift}}=%
\mathcal{C}_{F}^{\mathbf{\tau }-\limfunc{shift}}\cap \Omega \mathcal{D}_{%
\limfunc{good}}^{\omega }$, and that the collections $\mathcal{C}_{F}^{%
\limfunc{good},\mathbf{\tau }-\limfunc{shift}}$ have bounded overlap $%
\mathbf{\tau }$ since for fixed $J$, there are at most $\mathbf{\tau }$
quasicubes $F\in \mathcal{F}$ with the property that $J$ is good and $%
J\Subset _{\mathbf{\tau }}F$ and $J\not\Subset _{\mathbf{\tau }}F^{\prime }$
for any $F^{\prime }\in \mathfrak{C}_{\mathcal{F}}\left( F\right) $. Here $%
\mathfrak{C}_{\mathcal{F}}\left( F\right) $ denotes the set of $\mathcal{F}$%
-children of $F$. Given any collection $\mathcal{H}\subset \Omega \mathcal{D}
$ of quasicubes, and a dyadic quasicube $J$, we define the corresponding
quasiHaar projection $\mathsf{P}_{\mathcal{H}}^{\omega }$ and its
localization $\mathsf{P}_{\mathcal{H};J}^{\omega }$ to $J$ by%
\begin{equation}
\mathsf{P}_{\mathcal{H}}^{\omega }=\sum_{H\in \mathcal{H}}\bigtriangleup
_{H}^{\omega }\text{ and }\mathsf{P}_{\mathcal{H};J}^{\omega }=\sum_{H\in 
\mathcal{H}:\ H\subset J}\bigtriangleup _{H}^{\omega }\ .
\label{def localization}
\end{equation}

\begin{definition}
\label{functional energy n}Let $\mathfrak{F}_{\alpha }$ be the smallest
constant in the `\textbf{f}unctional quasienergy' inequality below, holding
for all $h\in L^{2}\left( \sigma \right) $ and all $\sigma $-Carleson
collections $\mathcal{F}$ with Carleson norm $C_{\mathcal{F}}$ bounded by a
fixed constant $C$: 
\begin{equation}
\sum_{F\in \mathcal{F}}\sum_{J\in \mathcal{M}_{\mathbf{r}-\limfunc{deep}%
}\left( F\right) }\left( \frac{\mathrm{P}^{\alpha }\left( J,h\sigma \right) 
}{\left\vert J\right\vert ^{\frac{1}{n}}}\right) ^{2}\left\Vert \mathsf{P}_{%
\mathcal{C}_{F}^{\limfunc{good},\mathbf{\tau }-\limfunc{shift}};J}^{\omega }%
\mathbf{x}\right\Vert _{L^{2}\left( \omega \right) }^{2}\leq \mathfrak{F}%
_{\alpha }\lVert h\rVert _{L^{2}\left( \sigma \right) }\,.
\label{e.funcEnergy n}
\end{equation}
\end{definition}

This definition of $\mathfrak{F}_{\alpha }$ depends on the choice of the
fixed constant $C$, but it will be clear from the arguments below that $C$
may be taken to depend only on $n$ and $\alpha $, and we do not compute its
value here. There is a similar definition of the dual constant $\mathfrak{F}%
_{\alpha }^{\ast }$.

\begin{remark}
If in (\ref{e.funcEnergy n}), we take $h=\mathbf{1}_{I}$ and $\mathcal{F}$
to be the trivial Carleson collection $\left\{ I_{r}\right\} _{r=1}^{\infty
} $ where the quasicubes $I_{r}$ are pairwise disjoint in $I$, then we
obtain the deep quasienergy condition in Definition \ref{energy condition},
but with $\mathsf{P}_{\mathcal{C}_{F}^{\limfunc{good},\mathbf{\tau }-%
\limfunc{shift}};J}^{\omega }$ in place of $\mathsf{P}_{J}^{\limfunc{subgood}%
,\omega } $. However, the projection $\mathsf{P}_{J}^{\limfunc{subgood}%
,\omega }$ is larger than $\mathsf{P}_{\mathcal{C}_{F}^{\limfunc{good},%
\mathbf{\tau }-\limfunc{shift}};J}^{\omega }$ by the finite projection $%
\sum_{2^{-\mathbf{\tau }}\ell \left( J\right) \leq \ell \left( J^{\prime
}\right) \leq 2^{-\mathbf{r}}\ell \left( J\right) }\bigtriangleup
_{J^{\prime }}^{\omega }$, and so we just miss obtaining the deep
quasienergy condition as a consequence of the functional quasienergy
condition. Nevertheless, this near miss with $h=\mathbf{1}_{I}$ explains the
terminology `functional' quasienergy.
\end{remark}

We now show that the functional quasienergy inequality (\ref{e.funcEnergy n}%
) suffices to prove an $\alpha $-fractional $n$-dimensional analogue of the
Intertwining Proposition (see e.g. \cite{Saw}). Let $\mathcal{F}$ be any
subset of $\Omega \mathcal{D}$. For any $J\in \Omega \mathcal{D}$, we define 
$\pi _{\mathcal{F}}^{0}J$ to be the smallest $F\in \mathcal{F}$ that
contains $J$. Then for $s\geq 1$, we recursively define $\pi _{\mathcal{F}%
}^{s}J$ to be the smallest $F\in \mathcal{F}$ that \emph{strictly} contains $%
\pi _{\mathcal{F}}^{s-1}J$. This definition satisfies $\pi _{\mathcal{F}%
}^{s+t}J=\pi _{\mathcal{F}}^{s}\pi _{\mathcal{F}}^{t}J$ for all $s,t\geq 0$
and $J\in \Omega \mathcal{D}$. In particular $\pi _{\mathcal{F}}^{s}J=\pi _{%
\mathcal{F}}^{s}F$ where $F=\pi _{\mathcal{F}}^{0}J$. In the special case $%
\mathcal{F}=\Omega \mathcal{D}$ we often suppress the subscript $\mathcal{F}$
and simply write $\pi ^{s}$ for $\pi _{\Omega \mathcal{D}}^{s}$. Finally,
for $F\in \mathcal{F}$, we write $\mathfrak{C}_{\mathcal{F}}\left( F\right)
\equiv \left\{ F^{\prime }\in \mathcal{F}:\pi _{\mathcal{F}}^{1}F^{\prime
}=F\right\} $ for the collection of $\mathcal{F}$-children of $F$. Let 
\begin{equation*}
\mathcal{NTV}_{\alpha }\equiv \sqrt{\mathcal{A}_{2}^{\alpha }}+\mathfrak{T}%
_{\alpha }+\mathcal{WBP}_{\alpha },
\end{equation*}%
where we remind the reader that $\mathfrak{T}_{\alpha }$ and $\mathcal{WBP}%
_{\alpha }$ refer to the quasitesting condition and quasiweak boundedness
property respectively.

\begin{proposition}[The Intertwining Proposition]
\label{strongly adapted}Suppose that $\mathcal{F}$ is $\sigma $-Carleson.
Then%
\begin{equation*}
\left\vert \sum_{F\in \mathcal{F}}\ \sum_{I:\ I\supsetneqq F}\ \left\langle
T_{\sigma }^{\alpha }\bigtriangleup _{I}^{\sigma }f,\mathsf{P}_{\mathcal{C}%
_{F}^{\limfunc{good},\mathbf{\tau }-\limfunc{shift}}}^{\omega
}g\right\rangle _{\omega }\right\vert \lesssim \left( \mathfrak{F}_{\alpha }+%
\mathcal{E}_{\alpha }+\mathcal{NTV}_{\alpha }\right) \ \left\Vert
f\right\Vert _{L^{2}\left( \sigma \right) }\left\Vert g\right\Vert
_{L^{2}\left( \omega \right) }.
\end{equation*}
\end{proposition}

\begin{proof}
We write the left hand side of the display above as%
\begin{equation*}
\sum_{F\in \mathcal{F}}\ \sum_{I:\ I\supsetneqq F}\ \left\langle T_{\sigma
}^{\alpha }\bigtriangleup _{I}^{\sigma }f,g_{F}\right\rangle _{\omega
}=\sum_{F\in \mathcal{F}}\ \left\langle T_{\sigma }^{\alpha }\left(
\sum_{I:\ I\supsetneqq F}\bigtriangleup _{I}^{\sigma }f\right)
,g_{F}\right\rangle _{\omega }\equiv \sum_{F\in \mathcal{F}}\ \left\langle
T_{\sigma }^{\alpha }f_{F},g_{F}\right\rangle _{\omega }\ ,
\end{equation*}%
where%
\begin{equation*}
g_{F}=\mathsf{P}_{\mathcal{C}_{F}^{\limfunc{good},\mathbf{\tau }-\limfunc{%
shift}}}^{\omega }g\text{ and }f_{F}\equiv \sum_{I:\ I\supsetneqq
F}\bigtriangleup _{I}^{\sigma }f\ .
\end{equation*}%
Note that $g_{F}$ is supported in $F$, and that $f_{F}$ is constant on $F$.
We note that the quasicubes $I$ occurring in this sum are linearly and
consecutively ordered by inclusion, along with the quasicubes $F^{\prime
}\in \mathcal{F}$ that contain $F$. More precisely, we can write%
\begin{equation*}
F\equiv F_{0}\subsetneqq F_{1}\subsetneqq F_{2}\subsetneqq ...\subsetneqq
F_{n}\subsetneqq F_{n+1}\subsetneqq ...F_{N}
\end{equation*}%
where $F_{m}=\pi _{\mathcal{F}}^{m}F$ for all $m\geq 1$. We can also write%
\begin{equation*}
F=F_{0}\subsetneqq I_{1}\subsetneqq I_{2}\subsetneqq ...\subsetneqq
I_{k}\subsetneqq I_{k+1}\subsetneqq ...\subsetneqq I_{K}=F_{N}
\end{equation*}%
where $I_{k}=\pi _{\Omega \mathcal{D}}^{k}F$ for all $k\geq 1$. There is a
(unique) subsequence $\left\{ k_{m}\right\} _{m=1}^{N}$ such that%
\begin{equation*}
F_{m}=I_{k_{m}},\ \ \ \ \ 1\leq m\leq N.
\end{equation*}

Define%
\begin{equation*}
f_{F}\left( x\right) =\sum_{\ell =1}^{\infty }\bigtriangleup _{I_{\ell
}}^{\sigma }f\left( x\right) .
\end{equation*}%
Assume now that $k_{m}\leq k<k_{m+1}$. We denote the $2^{n}-1$ siblings of $%
I $ by $\theta \left( I\right) $, $\theta \in \Theta $, i.e. $\left\{ \theta
\left( I\right) \right\} _{\theta \in \Theta }=\mathfrak{C}_{\Omega \mathcal{%
D}}\left( \pi _{\Omega \mathcal{D}}I\right) \setminus \left\{ I\right\} $.
There are two cases to consider here:%
\begin{equation*}
\theta \left( I_{k}\right) \notin \mathcal{F}\text{ and }\theta \left(
I_{k}\right) \in \mathcal{F}.
\end{equation*}%
Suppose first that $\theta \left( I_{k}\right) \notin \mathcal{F}$. Then $%
\theta \left( I_{k}\right) \in \mathcal{C}_{F_{m+1}}^{\sigma }$ and using a
telescoping sum, we compute that for 
\begin{equation*}
x\in \theta \left( I_{k}\right) \subset I_{k+1}\setminus I_{k}\subset
F_{m+1}\setminus F_{m},
\end{equation*}%
we have 
\begin{equation*}
\left\vert f_{F}\left( x\right) \right\vert =\left\vert \sum_{\ell
=k}^{\infty }\bigtriangleup _{I_{\ell }}^{\sigma }f\left( x\right)
\right\vert =\left\vert \mathbb{E}_{\theta \left( I_{k}\right) }^{\sigma }f-%
\mathbb{E}_{I_{K}}^{\sigma }f\right\vert \lesssim \mathbb{E}%
_{F_{m+1}}^{\sigma }\left\vert f\right\vert \ .
\end{equation*}%
On the other hand, if $\theta \left( I_{k}\right) \in \mathcal{F}$, then $%
I_{k+1}\in \mathcal{C}_{F_{m+1}}^{\sigma }$ and we have%
\begin{equation*}
\left\vert f_{F}\left( x\right) -\bigtriangleup _{\theta \left( I_{k}\right)
}^{\sigma }f\left( x\right) \right\vert =\left\vert \sum_{\ell =k+1}^{\infty
}\bigtriangleup _{I_{\ell }}^{\sigma }f\left( x\right) \right\vert
=\left\vert \mathbb{E}_{I_{k+1}}^{\sigma }f-\mathbb{E}_{I_{K}}^{\sigma
}f\right\vert \lesssim \mathbb{E}_{F_{m+1}}^{\sigma }\left\vert f\right\vert
\ .
\end{equation*}%
Now we write%
\begin{eqnarray*}
f_{F} &=&\varphi _{F}+\psi _{F}, \\
\varphi _{F} &\equiv &\sum_{k,\theta :\ \theta \left( I_{k}\right) \in 
\mathcal{F}}^{\infty }\mathbf{1}_{\theta \left( I_{k}\right) }\bigtriangleup
_{I_{k}}^{\sigma }f\text{ and }\psi _{F}=f_{F}-\varphi _{F}\ ; \\
\sum_{F\in \mathcal{F}}\ \left\langle T_{\sigma }^{\alpha
}f_{F},g_{F}\right\rangle _{\omega } &=&\sum_{F\in \mathcal{F}}\
\left\langle T_{\sigma }^{\alpha }\varphi _{F},g_{F}\right\rangle _{\omega
}+\sum_{F\in \mathcal{F}}\ \left\langle T_{\sigma }^{\alpha }\psi
_{F},g_{F}\right\rangle _{\omega }\ ,
\end{eqnarray*}%
and note that both $\varphi _{F}$ and $\psi _{F}$ are constant on $F$. We
can apply (\ref{indicator far}) using $\theta \left( I_{k}\right) \in 
\mathcal{F}$ to the first sum here to obtain%
\begin{eqnarray*}
\left\vert \sum_{F\in \mathcal{F}}\ \left\langle T_{\sigma }^{\alpha
}\varphi _{F},g_{F}\right\rangle _{\omega }\right\vert &\lesssim &\mathcal{%
NTV}_{\alpha }\ \left\Vert \sum_{F\in \mathcal{F}}\varphi _{F}\right\Vert
_{L^{2}\left( \sigma \right) }\left\Vert \sum_{F\in \mathcal{F}%
}g_{F}\right\Vert _{L^{2}\left( \omega \right) }^{2} \\
&\lesssim &\mathcal{NTV}_{\alpha }\ \left\Vert f\right\Vert _{L^{2}\left(
\sigma \right) }\left[ \sum_{F\in \mathcal{F}}\left\Vert g_{F}\right\Vert
_{L^{2}\left( \omega \right) }^{2}\right] ^{\frac{1}{2}}.
\end{eqnarray*}

Turning to the second sum we note that%
\begin{eqnarray*}
\left\vert \psi _{F}\right\vert &\leq &\sum_{m=0}^{N}\left( \mathbb{E}%
_{F_{m+1}}^{\sigma }\left\vert f\right\vert \right) \ \mathbf{1}%
_{F_{m+1}\setminus F_{m}}=\left( \mathbb{E}_{F}^{\sigma }\left\vert
f\right\vert \right) \ \mathbf{1}_{F}+\sum_{m=0}^{N}\left( \mathbb{E}_{\pi _{%
\mathcal{F}}^{m+1}F}^{\sigma }\left\vert f\right\vert \right) \ \mathbf{1}%
_{\pi _{\mathcal{F}}^{m+1}F\setminus \pi _{\mathcal{F}}^{m}F} \\
&=&\left( \mathbb{E}_{F}^{\sigma }\left\vert f\right\vert \right) \ \mathbf{1%
}_{F}+\sum_{F^{\prime }\in \mathcal{F}:\ F\subset F^{\prime }}\left( \mathbb{%
E}_{\pi _{\mathcal{F}}F^{\prime }}^{\sigma }\left\vert f\right\vert \right)
\ \mathbf{1}_{\pi _{\mathcal{F}}F^{\prime }\setminus F^{\prime }} \\
&\leq &\alpha _{\mathcal{F}}\left( F\right) \ \mathbf{1}_{F}+\sum_{F^{\prime
}\in \mathcal{F}:\ F\subset F^{\prime }}\alpha _{\mathcal{F}}\left( \pi _{%
\mathcal{F}}F^{\prime }\right) \ \mathbf{1}_{\pi _{\mathcal{F}}F^{\prime
}\setminus F^{\prime }} \\
&\leq &\alpha _{\mathcal{F}}\left( F\right) \ \mathbf{1}_{F}+\sum_{F^{\prime
}\in \mathcal{F}:\ F\subset F^{\prime }}\alpha _{\mathcal{F}}\left( \pi _{%
\mathcal{F}}F^{\prime }\right) \ \mathbf{1}_{\pi _{\mathcal{F}}F^{\prime }}\ 
\mathbf{1}_{F^{c}} \\
&=&\alpha _{\mathcal{F}}\left( F\right) \ \mathbf{1}_{F}+\Phi \ \mathbf{1}%
_{F^{c}}\ ,\ \ \ \ \ \text{\ for all }F\in \mathcal{F},
\end{eqnarray*}%
where%
\begin{equation*}
\Phi \equiv \sum_{F^{\prime \prime }\in \mathcal{F}}\ \alpha _{\mathcal{F}%
}\left( F^{\prime \prime }\right) \ \mathbf{1}_{F^{\prime \prime }}\ .
\end{equation*}

Now we write%
\begin{equation*}
\sum_{F\in \mathcal{F}}\ \left\langle T_{\sigma }^{\alpha }\psi
_{F},g_{F}\right\rangle _{\omega }=\sum_{F\in \mathcal{F}}\ \left\langle
T_{\sigma }^{\alpha }\left( \mathbf{1}_{F}\psi _{F}\right)
,g_{F}\right\rangle _{\omega }+\sum_{F\in \mathcal{F}}\ \left\langle
T_{\sigma }^{\alpha }\left( \mathbf{1}_{F^{c}}\psi _{F}\right)
,g_{F}\right\rangle _{\omega }\equiv I+II.
\end{equation*}%
Then quasicube testing $\left\vert \left\langle T_{\sigma }^{\alpha }\mathbf{%
1}_{F},g_{F}\right\rangle _{\omega }\right\vert =\left\vert \left\langle 
\mathbf{1}_{F}T_{\sigma }^{\alpha }\mathbf{1}_{F},g_{F}\right\rangle
_{\omega }\right\vert \leq \mathfrak{T}_{T^{\alpha }}\sqrt{\left\vert
F\right\vert _{\sigma }}\left\Vert g_{F}\right\Vert _{L^{2}\left( \omega
\right) }$ and `quasi' orthogonality, together with the fact that $\psi _{F}$
is a constant on $F$ bounded by $\alpha _{\mathcal{F}}\left( F\right) $, give%
\begin{eqnarray*}
&&\left\vert I\right\vert \leq \sum_{F\in \mathcal{F}}\ \left\vert
\left\langle T_{\sigma }^{\alpha }\mathbf{1}_{F}\psi _{F},g_{F}\right\rangle
_{\omega }\right\vert \lesssim \sum_{F\in \mathcal{F}}\ \alpha _{\mathcal{F}%
}\left( F\right) \ \left\vert \left\langle T_{\sigma }^{\alpha }\mathbf{1}%
_{F},g_{F}\right\rangle _{\omega }\right\vert \\
&\lesssim &\sum_{F\in \mathcal{F}}\ \alpha _{\mathcal{F}}\left( F\right) 
\mathcal{NTV}_{\alpha }\sqrt{\left\vert F\right\vert _{\sigma }}\left\Vert
g_{F}\right\Vert _{L^{2}\left( \omega \right) }\lesssim \mathcal{NTV}%
_{\alpha }\left\Vert f\right\Vert _{L^{2}\left( \sigma \right) }\left[
\sum_{F\in \mathcal{F}}\left\Vert g_{F}\right\Vert _{L^{2}\left( \omega
\right) }^{2}\right] ^{\frac{1}{2}}.
\end{eqnarray*}%
Now $\mathbf{1}_{F^{c}}\psi _{F}$ is supported outside $F$, and each $J$ in
the quasiHaar support of $g_{F}$ is $\mathbf{r}$-deeply embedded in $F$,
i.e. $J\Subset _{\mathbf{r}}F$. Thus we can apply the Energy Lemma \ref{ener}
to obtain%
\begin{eqnarray*}
\left\vert II\right\vert &=&\left\vert \sum_{F\in \mathcal{F}}\left\langle
T_{\sigma }^{\alpha }\left( \mathbf{1}_{F^{c}}\psi _{F}\right)
,g_{F}\right\rangle _{\omega }\right\vert \\
&\lesssim &\sum_{F\in \mathcal{F}}\sum_{J\in \mathcal{M}_{\mathbf{r}-%
\limfunc{deep}}\left( F\right) }\frac{\mathrm{P}^{\alpha }\left( J,\mathbf{1}%
_{F^{c}}\Phi \sigma \right) }{\left\vert J\right\vert ^{\frac{1}{n}}}%
\left\Vert \mathsf{P}_{\mathcal{C}_{F}^{\limfunc{good},\mathbf{\tau }-%
\limfunc{shift}};J}^{\omega }\mathbf{x}\right\Vert _{L^{2}\left( \omega
\right) }\left\Vert \mathsf{P}_{J}^{\omega }g_{F}\right\Vert _{L^{2}\left(
\omega \right) } \\
&&+\sum_{F\in \mathcal{F}}\sum_{J\in \mathcal{M}_{\mathbf{r}-\limfunc{deep}%
}\left( F\right) }\frac{\mathrm{P}_{1+\delta ^{\prime }}^{\alpha }\left( J,%
\mathbf{1}_{F^{c}}\Phi \sigma \right) }{\left\vert J\right\vert ^{\frac{1}{n}%
}}\left\Vert \mathsf{P}_{\left( \mathcal{C}_{F}^{\limfunc{good},\mathbf{\tau 
}-\limfunc{shift}}\right) ^{\ast };J}^{\omega }\mathbf{x}\right\Vert
_{L^{2}\left( \omega \right) }\left\Vert \mathsf{P}_{J}^{\omega
}g_{F}\right\Vert _{L^{2}\left( \omega \right) } \\
&\equiv &II_{G}+II_{B}\ .
\end{eqnarray*}

Then\ from Cauchy-Schwarz, the functional quasienergy condition, and $%
\left\Vert \Phi \right\Vert _{L^{2}\left( \sigma \right) }\lesssim
\left\Vert f\right\Vert _{L^{2}\left( \sigma \right) }$ we obtain%
\begin{eqnarray*}
\left\vert II_{G}\right\vert &\leq &\left( \sum_{F\in \mathcal{F}}\sum_{J\in 
\mathcal{M}_{\mathbf{r}-\limfunc{deep}}\left( F\right) }\left( \frac{\mathrm{%
P}^{\alpha }\left( J,\mathbf{1}_{F^{c}}\Phi \sigma \right) }{\left\vert
J\right\vert ^{\frac{1}{n}}}\right) ^{2}\left\Vert \mathsf{P}_{\mathcal{C}%
_{F}^{\limfunc{good},\mathbf{\tau }-\limfunc{shift}};J}^{\omega }\mathbf{x}%
\right\Vert _{L^{2}\left( \omega \right) }^{2}\right) ^{\frac{1}{2}} \\
&&\times \left( \sum_{F\in \mathcal{F}}\sum_{J\in \mathcal{M}_{\mathbf{r}-%
\limfunc{deep}}\left( F\right) }\left\Vert \mathsf{P}_{J}^{\omega
}g_{F}\right\Vert _{L^{2}\left( \omega \right) }^{2}\right) ^{\frac{1}{2}} \\
&\lesssim &\mathfrak{F}_{\alpha }\left\Vert \Phi \right\Vert _{L^{2}\left(
\sigma \right) }\left[ \sum_{F\in \mathcal{F}}\left\Vert g_{F}\right\Vert
_{L^{2}\left( \omega \right) }^{2}\right] ^{\frac{1}{2}}\lesssim \mathfrak{F}%
_{\alpha }\left\Vert f\right\Vert _{L^{2}\left( \sigma \right) }\left\Vert
g\right\Vert _{L^{2}\left( \omega \right) },
\end{eqnarray*}%
by the bounded overlap by $\mathbf{\tau }$ of the shifted coronas $\mathcal{C%
}_{F}^{\limfunc{good},\mathbf{\tau }-\limfunc{shift}}$.

In term $II_{B}$ the projections $\mathsf{P}_{\left( \mathcal{C}_{F}^{%
\limfunc{good},\mathbf{\tau }-\limfunc{shift}}\right) ^{\ast };J}^{\omega }$
are no longer almost orthogonal, and we must instead exploit the decay in
the Poisson integral $\mathrm{P}_{1+\delta ^{\prime }}^{\alpha }$ along with
goodness of the quasicubes $J$. This idea was already used by M. Lacey and
B. Wick in \cite{LaWi} in a similar situation. As a consequence of this
decay we will be able to bound $II_{B}$ \emph{directly} by the quasienergy
condition, without having to invoke the more difficult functional
quasienergy condition. For the decay we compute%
\begin{eqnarray*}
\frac{\mathrm{P}_{1+\delta ^{\prime }}^{\alpha }\left( J,\Phi \sigma \right) 
}{\left\vert J\right\vert ^{\frac{1}{n}}} &=&\int_{F^{c}}\frac{\left\vert
J\right\vert ^{\frac{\delta ^{\prime }}{n}}}{\left\vert y-c_{J}\right\vert
^{n+1+\delta -\alpha }}\Phi \left( y\right) d\sigma \left( y\right) \\
&\leq &\sum_{t=0}^{\infty }\int_{\pi _{\mathcal{F}}^{t+1}F\setminus \pi _{%
\mathcal{F}}^{t}F}\left( \frac{\left\vert J\right\vert ^{\frac{1}{n}}}{%
\limfunc{dist}\left( c_{J},\left( \pi _{\mathcal{F}}^{t}F\right) ^{c}\right) 
}\right) ^{\delta ^{\prime }}\frac{1}{\left\vert y-c_{J}\right\vert
^{n+1-\alpha }}\Phi \left( y\right) d\sigma \left( y\right) \\
&\leq &\sum_{t=0}^{\infty }\left( \frac{\left\vert J\right\vert ^{\frac{1}{n}%
}}{\limfunc{dist}\left( c_{J},\left( \pi _{\mathcal{F}}^{t}F\right)
^{c}\right) }\right) ^{\delta ^{\prime }}\frac{\mathrm{P}^{\alpha }\left( J,%
\mathbf{1}_{\pi _{\mathcal{F}}^{t+1}F\setminus \pi _{\mathcal{F}}^{t}F}\Phi
\sigma \right) }{\left\vert J\right\vert ^{\frac{1}{n}}},
\end{eqnarray*}%
and then use the goodness inequality%
\begin{equation*}
\limfunc{dist}\left( c_{J},\left( \pi _{\mathcal{F}}^{t}F\right) ^{c}\right)
\geq \frac{1}{2}\ell \left( \pi _{\mathcal{F}}^{t}F\right) ^{1-\varepsilon
}\ell \left( J\right) ^{\varepsilon }\geq \frac{1}{2}2^{t\left(
1-\varepsilon \right) }\ell \left( F\right) ^{1-\varepsilon }\ell \left(
J\right) ^{\varepsilon }\geq 2^{t\left( 1-\varepsilon \right) -1}\ell \left(
J\right) ,
\end{equation*}%
to conclude that%
\begin{eqnarray}
\left( \frac{\mathrm{P}_{1+\delta ^{\prime }}^{\alpha }\left( J,\mathbf{1}%
_{F^{c}}\Phi \sigma \right) }{\left\vert J\right\vert ^{\frac{1}{n}}}\right)
^{2} &\lesssim &\left( \sum_{t=0}^{\infty }2^{-t\delta ^{\prime }\left(
1-\varepsilon \right) }\frac{\mathrm{P}^{\alpha }\left( J,\mathbf{1}_{\pi _{%
\mathcal{F}}^{t+1}F\setminus \pi _{\mathcal{F}}^{t}F}\Phi \sigma \right) }{%
\left\vert J\right\vert ^{\frac{1}{n}}}\right) ^{2}  \label{decay in t} \\
&\lesssim &\sum_{t=0}^{\infty }2^{-t\delta ^{\prime }\left( 1-\varepsilon
\right) }\left( \frac{\mathrm{P}^{\alpha }\left( J,\mathbf{1}_{\pi _{%
\mathcal{F}}^{t+1}F\setminus \pi _{\mathcal{F}}^{t}F}\Phi \sigma \right) }{%
\left\vert J\right\vert ^{\frac{1}{n}}}\right) ^{2}.  \notag
\end{eqnarray}%
Now we apply Cauchy-Schwarz to obtain%
\begin{eqnarray*}
II_{B} &=&\sum_{F\in \mathcal{F}}\sum_{J\in \mathcal{M}_{\mathbf{r}-\limfunc{%
deep}}\left( F\right) }\frac{\mathrm{P}_{1+\delta ^{\prime }}^{\alpha
}\left( J,\mathbf{1}_{F^{c}}\Phi \sigma \right) }{\left\vert J\right\vert ^{%
\frac{1}{n}}}\left\Vert \mathsf{P}_{\left( \mathcal{C}_{F}^{\limfunc{good},%
\mathbf{\tau }-\limfunc{shift}}\right) ^{\ast };J}^{\omega }\mathbf{x}%
\right\Vert _{L^{2}\left( \omega \right) }\left\Vert \mathsf{P}_{J}^{\omega
}g_{F}\right\Vert _{L^{2}\left( \omega \right) } \\
&\leq &\left( \sum_{F\in \mathcal{F}}\sum_{J\in \mathcal{M}_{\mathbf{r}-%
\limfunc{deep}}\left( F\right) }\left( \frac{\mathrm{P}_{1+\delta ^{\prime
}}^{\alpha }\left( J,\mathbf{1}_{F^{c}}\Phi \sigma \right) }{\left\vert
J\right\vert ^{\frac{1}{n}}}\right) ^{2}\left\Vert \mathsf{P}_{\left( 
\mathcal{C}_{F}^{\limfunc{good},\mathbf{\tau }-\limfunc{shift}}\right)
^{\ast };J}^{\omega }\mathbf{x}\right\Vert _{L^{2}\left( \omega \right)
}^{2}\right) ^{\frac{1}{2}}\left[ \sum_{F}\left\Vert g_{F}\right\Vert
_{L^{2}\left( \omega \right) }^{2}\right] ^{\frac{1}{2}} \\
&\equiv &\sqrt{II_{\limfunc{energy}}}\left[ \sum_{F}\left\Vert
g_{F}\right\Vert _{L^{2}\left( \omega \right) }^{2}\right] ^{\frac{1}{2}},
\end{eqnarray*}%
and it remains to estimate $II_{\limfunc{energy}}$. From (\ref{decay in t})
and the plugged deep quasienergy condition we have%
\begin{eqnarray*}
&&II_{\limfunc{energy}} \\
&\leq &\sum_{F\in \mathcal{F}}\sum_{J\in \mathcal{M}_{\mathbf{r}-\limfunc{%
deep}}\left( F\right) }\sum_{t=0}^{\infty }2^{-t\delta ^{\prime }\left(
1-\varepsilon \right) }\left( \frac{\mathrm{P}^{\alpha }\left( J,\mathbf{1}%
_{\pi _{\mathcal{F}}^{t+1}F\setminus \pi _{\mathcal{F}}^{t}F}\Phi \sigma
\right) }{\left\vert J\right\vert ^{\frac{1}{n}}}\right) ^{2}\left\Vert 
\mathsf{P}_{\left( \mathcal{C}_{F}^{\limfunc{good},\mathbf{\tau }-\limfunc{%
shift}}\right) ^{\ast };J}^{\omega }\mathbf{x}\right\Vert _{L^{2}\left(
\omega \right) }^{2} \\
&=&\sum_{t=0}^{\infty }2^{-t\delta ^{\prime }\left( 1-\varepsilon \right)
}\sum_{G\in \mathcal{F}}\sum_{F\in \mathfrak{C}_{\mathcal{F}}^{\left(
t+1\right) }\left( G\right) }\sum_{J\in \mathcal{M}_{\mathbf{r}-\limfunc{deep%
}}\left( F\right) }\left( \frac{\mathrm{P}^{\alpha }\left( J,\mathbf{1}%
_{G\setminus \pi _{\mathcal{F}}^{t}F}\Phi \sigma \right) }{\left\vert
J\right\vert ^{\frac{1}{n}}}\right) ^{2}\left\Vert \mathsf{P}_{\left( 
\mathcal{C}_{F}^{\limfunc{good},\mathbf{\tau }-\limfunc{shift}}\right)
^{\ast };J}^{\omega }\mathbf{x}\right\Vert _{L^{2}\left( \omega \right) }^{2}
\\
&\lesssim &\sum_{t=0}^{\infty }2^{-t\delta ^{\prime }\left( 1-\varepsilon
\right) }\sum_{G\in \mathcal{F}}\alpha _{\mathcal{F}}\left( G\right)
^{2}\sum_{F\in \mathfrak{C}_{\mathcal{F}}^{\left( t+1\right) }\left(
G\right) }\sum_{J\in \mathcal{M}_{\mathbf{r}-\limfunc{deep}}\left( F\right)
}\left( \frac{\mathrm{P}^{\alpha }\left( J,\mathbf{1}_{G\setminus \pi _{%
\mathcal{F}}^{t}F}\sigma \right) }{\left\vert J\right\vert ^{\frac{1}{n}}}%
\right) ^{2}\left\Vert \mathsf{P}_{J}^{\limfunc{subgood},\omega }\mathbf{x}%
\right\Vert _{L^{2}\left( \omega \right) }^{2} \\
&\lesssim &\sum_{t=0}^{\infty }2^{-t\delta ^{\prime }\left( 1-\varepsilon
\right) }\sum_{G\in \mathcal{F}}\alpha _{\mathcal{F}}\left( G\right)
^{2}\left( \mathcal{E}_{\alpha }^{2}+A_{2}^{\alpha }\right) \left\vert
G\right\vert _{\sigma }\lesssim \left( \mathcal{E}_{\alpha
}^{2}+A_{2}^{\alpha }\right) \left\Vert f\right\Vert _{L^{2}\left( \sigma
\right) }^{2}.
\end{eqnarray*}

This completes the proof of the Intertwining Proposition \ref{strongly
adapted}.
\end{proof}

\section{Control of functional energy by energy modulo $\mathcal{A}_{2}^{%
\protect\alpha }\label{equiv}$}

Now we show that the functional quasienergy constants $\mathfrak{F}_{\alpha
} $ are controlled by $\mathcal{A}_{2}^{\alpha }$ and both the \emph{deep}
and \emph{refined} quasienergy constants $\mathcal{E}_{\alpha }^{\limfunc{%
deep}}$ and $\mathcal{E}_{\alpha }^{\limfunc{refined}}$\ defined in
Definition \ref{energy condition}. Recall $\left( \mathcal{E}_{\alpha
}\right) ^{2}=\left( \mathcal{E}_{\alpha }^{\limfunc{deep}}\right)
^{2}+\left( \mathcal{E}_{\alpha }^{\limfunc{refined}}\right) ^{2}$.

\begin{proposition}
\label{func ener control}%
\begin{equation*}
\mathfrak{F}_{\alpha }\lesssim \mathcal{E}_{\alpha }+\sqrt{\mathcal{A}%
_{2}^{\alpha }}+\sqrt{\mathcal{A}_{2}^{\alpha ,\ast }}\text{ and }\mathfrak{F%
}_{\alpha }^{\ast }\lesssim \mathcal{E}_{\alpha }^{\ast }+\sqrt{\mathcal{A}%
_{2}^{\alpha }}+\sqrt{\mathcal{A}_{2}^{\alpha ,\ast }}\ .
\end{equation*}
\end{proposition}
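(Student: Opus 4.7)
My plan is to mirror the structure of the proof of Proposition~\ref{func ener control} from Part 1, but with two substantive modifications that reflect the higher-dimensional setting. Fix a $\sigma$-Carleson collection $\mathcal{F}$ and an $\mathcal{F}$-adapted family $\{g_F\}_{F\in\mathcal{F}}$, and define the measure in the upper half space
\begin{equation*}
\mu \;\equiv\; \sum_{F\in\mathcal{F}}\sum_{J^{\ast}\in\mathcal{J}^{\ast}(F)}
\left\Vert \mathsf{P}_{F,J^{\ast}}^{\omega}\frac{\mathbf{x}}{|J^{\ast}|^{1/n}}\right\Vert_{L^2(\omega)}^{2}\cdot
\delta_{\left(c(J^{\ast}),\,|J^{\ast}|^{1/n}\right)},
\end{equation*}
where $\mathsf{P}_{F,J^{\ast}}^{\omega}\equiv \sum_{J\subset J^{\ast},\,J\in\mathcal{J}(F)}\bigtriangleup_J^{\omega}$. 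By Cauchy--Schwarz the inequality \eqref{e.funcEnergy n} reduces to the two-weight embedding
\begin{equation*}
\Vert \mathbb{P}^{\alpha}(f\sigma)\Vert_{L^2(\mathbb{R}_+^{n+1},\mu)}\;\lesssim\; \Vert f\Vert_{L^2(\sigma)},
\end{equation*}
for the $\alpha$-fractional Poisson extension $\mathbb{P}^{\alpha}$. I would then appeal to the two-weight characterization of the fractional Poisson operator (the $\alpha$-analogue of the Sawyer theorem used in Part~1), which says it suffices to verify the pair of dual testing conditions
\begin{equation*}
\int_{\widehat{I}}\mathbb{P}^{\alpha}(\mathbf{1}_I\sigma)^{2}\,d\mu \;\lesssim\; \bigl(\mathcal{A}_2^{\alpha}+\mathcal{E}_{\alpha}^{2}\bigr)|I|_\sigma, \qquad
\int_{\mathbb{R}^n}\bigl[\mathbb{P}^{\alpha,\ast}(t\mathbf{1}_{\widehat{I}}\mu)\bigr]^{2}d\sigma \;\lesssim\; \mathcal{A}_2^{\alpha}\int_{\widehat{I}}t^{2}d\mu,
\end{equation*}
over dyadic cubes $I\in\mathcal{D}^{\omega}$.

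For the first (direct) testing condition, since $J^{\ast}\in\mathcal{J}^{\ast}(F)$ satisfies $J^{\ast}\Subset F$, the Dirac mass at $(c(J^{\ast}),|J^{\ast}|^{1/n})$ lies in $\widehat{I}$ if and only if $J^{\ast}\subset I$. I would split the resulting sum into three pieces according to the relative position of $F$ and $I$: cubes $F\subset I$, cubes $F\supset I$ for which $J^{\ast}\Subset I$ (bounded overlap by the adapted property~(3)), and cubes $F\supset I$ for which $J^{\ast}\not\Subset I$ (only finitely many, by goodness). The crucial step is the piece with $F\subset I$, where after `creating holes' by restricting the Poisson integral to $\mathbf{1}_{F\cap I}\sigma$, what remains is exactly
\begin{equation*}
\sum_{F\in\mathcal{F},\,F\subset I}\sum_{J^{\ast}\in\mathcal{J}^{\ast}(F)}\left(\frac{\mathrm{P}^{\alpha}(J^{\ast},\mathbf{1}_{I\setminus F}\sigma)}{|J^{\ast}|^{1/n}}\right)^{2}\bigl\Vert \mathsf{P}_{F,J^{\ast}}^{\omega}\mathbf{x}\bigr\Vert_{L^2(\omega)}^{2}.
\end{equation*}
Here lies the main innovation relative to Part~1: in the one-dimensional Hilbert-transform case this residual term was dualized and controlled by the testing constant $\mathfrak{T}$ via the pointwise reversal of the Monotonicity Lemma (the ``extra room'' identity \eqref{extra room}), which is known to \emph{fail} for Riesz transforms in dimensions $n\geq 2$. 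However, the higher-dimensional energy condition $\mathcal{E}_{\alpha}$ is stated directly in the form with holes $\mathbf{1}_{Q\setminus Q_r}\sigma$, so I may bound this local ``hole'' term immediately by $\mathcal{E}_{\alpha}^{2}\sum_{F\subset I}|F|_\sigma$ and then invoke the $\sigma$-Carleson property of $\mathcal{F}$ to dominate by $\mathcal{E}_{\alpha}^{2}|I|_\sigma$. The remaining pieces (``plugging'' with the full $\mathbf{1}_I\sigma$ near $F$, and the upper pieces with $F\supset I$) are controlled by $\mathcal{A}_2^{\alpha}$ via the bound $\Vert \mathsf{P}_{F,J^{\ast}}^{\omega}\mathbf{x}\Vert^{2}\leq |J^{\ast}|^{2/n}|J^{\ast}|_\omega$ and Lemma~\ref{Poisson inequality} applied to shift Poisson integrals from $J^{\ast}$ to $I$ using goodness.

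For the dual testing condition, I would follow the scheme from Part~1 almost verbatim: split the squared Poisson on the left into diagonal and off-diagonal terms, exploiting that the ``energy'' bound $\Vert \mathsf{P}_{F,J^{\ast}}^{\omega}\mathbf{x}/|J^{\ast}|^{1/n}\Vert_{L^2(\omega)}^{2}\leq |J^{\ast}|_\omega$ reduces each term to a fractional $\mathcal{A}_2^{\alpha}$ estimate. The diagonal contribution is controlled termwise by $\mathcal{A}_2^{\alpha}$, while off-diagonal terms summed over scale difference $s$ give a geometrically decaying factor $2^{-s(1-4\epsilon)}$; the case of distant pairs uses the full two-tailed form of $\mathcal{A}_2^{\alpha}$ (this is the only place where the two-tailed strength is needed, exactly as in Part~1). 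The remaining global decomposition of the integral over $\mathbb{R}^{n+1}_+\setminus \widehat{I}$ into annular pieces and ancestor pieces is routine and handled by $\mathcal{A}_2^{\alpha}$. The dual estimate $\mathfrak{F}_{\alpha}^{\ast}\lesssim \mathcal{E}_{\alpha}^{\ast}+\sqrt{\mathcal{A}_2^{\alpha,\ast}}$ then follows by interchanging the roles of $\sigma$ and $\omega$. The main obstacle is the one flagged above: we cannot use the testing constant as in Part~1, so it is essential that the higher-dimensional energy condition be formulated with the Poisson integral over $Q\setminus Q_r$ (rather than over $Q$), which is precisely how $\mathcal{E}_{\alpha}$ is defined in the statement.
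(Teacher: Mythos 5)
Your overall architecture is the paper's: the same measure $\mu $, the same Cauchy--Schwarz reduction to a two-weight embedding for the $\alpha $-fractional Poisson extension, the same two testing conditions, and the same local/global and $A+B$ splittings. But there is a genuine gap at exactly the point you flag as the main innovation: the local term with holes,
\begin{equation*}
\sum_{F\in \mathcal{F}:\ F\subset I}\ \sum_{J^{\ast }\in \mathcal{J}^{\ast
}\left( F\right) }\left( \frac{\mathrm{P}^{\alpha }\left( J^{\ast },\mathbf{1
}_{I\setminus F}\sigma \right) }{\left\vert J^{\ast }\right\vert ^{1/n}}
\right) ^{2}\left\Vert \mathsf{P}_{F,J^{\ast }}^{\omega }\mathbf{x}
\right\Vert _{L^{2}\left( \omega \right) }^{2},
\end{equation*}
cannot be bounded ``immediately'' by $\mathcal{E}_{\alpha }^{2}\sum_{F\subset I}\left\vert F\right\vert _{\sigma }$. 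The energy condition applied inside $F$ (with $Q=F$ and $\left\{ Q_{r}\right\} $ a subpartition of $F$) only controls $\mathrm{P}^{\alpha }\left( J^{\ast },\mathbf{1}_{F\setminus J^{\ast }}\sigma \right) $, i.e.\ $\sigma $ restricted to $F$ --- that is the term you already disposed of by ``creating holes.'' The residual term sees $\sigma $ on $I\setminus F$, which lies \emph{outside} $F$, so no application of the energy condition at the level of $F$ reaches it. One would like instead to apply the energy condition with $Q=I$, but the cubes $J^{\ast }$ ranging over \emph{all} $F\subset I$ do not form a subpartition of $I$: since the $F$'s are nested, the $J^{\ast }$'s from different $F$'s can overlap with unbounded multiplicity (property (3) of $\mathcal{F}$-adaptedness only bounds the overlap of pairs straddling $I$ from above, i.e.\ with $F\supset I$). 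The paper closes this gap by telescoping $\mathbf{1}_{I\setminus F}=\sum_{F^{\prime }:\ F\subset F^{\prime }\subsetneqq I}\mathbf{1}_{\pi _{\mathcal{F}}F^{\prime }\setminus F^{\prime }}$, dualizing against an $\ell ^{2}$ sequence, using the Poisson inequalities to pass from $J^{\ast }$ up to the maximal cubes $K^{\ast }\in \mathcal{M}\left( F^{\prime }\right) $ (which \emph{are} pairwise disjoint for each fixed $F^{\prime }$, so the energy condition applies there), and then summing over the generations of the tree $\mathcal{F}$ using the geometric decay $\sum_{d\left( F^{\prime }\right) =k}\left\vert F^{\prime }\right\vert _{\sigma }\lesssim 2^{-\delta k}\left\vert I\right\vert _{\sigma }$ furnished by the Carleson condition. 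Some argument of this kind is unavoidable; your one-line bound skips it.

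A second, smaller issue: the dual Poisson testing condition is not ``routine and handled by $\mathcal{A}_{2}^{\alpha }$'' following Part 1 almost verbatim. In dimension one the two Poisson kernels coincide, but for $n\geq 2$ the summation of Poisson tails in the off-diagonal terms cannot be closed with $\mathcal{A}_{2}^{\alpha }$ alone (this is precisely the problem acknowledged in the preface). The paper's local dual testing bound is $\left( \mathcal{A}_{2}^{\alpha }+\mathcal{E}_{\alpha }\sqrt{\mathcal{A}_{2}^{\alpha }}\right) \int_{\widehat{I}}t^{2}d\mu $, and obtaining it requires retaining the energies $\left\Vert \mathsf{P}_{F^{\prime },J^{\prime }}^{\omega }z\right\Vert _{L^{2}\left( \omega \right) }^{2}$ in the near terms together with an extra decomposition $U_{s}=T_{s}+T_{s,close}$ according to whether $\limfunc{dist}\left( J,J^{\prime }\right) \geq 2^{s\left( 1+\varepsilon \right) }\left\vert J^{\prime }\right\vert ^{1/n}$, the close part being handled by a dual argument. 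Your plan as written would stall here as well.
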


To prove this proposition, we fix $\mathcal{F}$ as in (\ref{e.funcEnergy n})
and set 
\begin{equation}
\mu \equiv \sum_{F\in \mathcal{F}}\sum_{J\in \mathcal{M}_{\mathbf{r}-%
\limfunc{deep}}\left( F\right) }\left\Vert \mathsf{P}_{F,J}^{\omega }\frac{x%
}{\left\vert J\right\vert ^{\frac{1}{n}}}\right\Vert _{L^{2}\left( \omega
\right) }^{2}\cdot \delta _{\left( c\left( J\right) ,\ell \left( J\right)
\right) }\ ,  \label{def mu n}
\end{equation}%
where $\mathcal{M}_{\mathbf{r}-\limfunc{deep}}\left( F\right) $ consists of
the maximal $\mathbf{r}$-deeply embedded subquasicubes of $F$, and where $%
\delta _{\left( c\left( J\right) ,\ell \left( J\right) \right) }$ denotes
the Dirac unit mass at the point $\left( c\left( J\right) ,\ell \left(
J\right) \right) $ in the upper half-space $\mathbb{R}_{+}^{n+1}$. Here $J$
is a dyadic quasicube with center $c\left( J\right) $ and side length $\ell
\left( J\right) $. For convenience in notation, we denote for any dyadic
quasicube $J$ the localized projection $\mathsf{P}_{\mathcal{C}_{F}^{%
\limfunc{good},\mathbf{\tau }-\limfunc{shift}};J}^{\omega }$ given in (\ref%
{def localization}) by%
\begin{equation*}
\mathsf{P}_{F,J}^{\omega }\equiv \mathsf{P}_{\mathcal{C}_{F}^{\limfunc{good},%
\mathbf{\tau }-\limfunc{shift}};J}^{\omega }=\sum_{J^{\prime }\subset J:\
J^{\prime }\in \mathcal{C}_{F}^{\limfunc{good},\mathbf{\tau }-\limfunc{shift}%
}}\bigtriangleup _{J^{\prime }}^{\omega }.
\end{equation*}%
We emphasize that the quasicubes $J\in \mathcal{M}_{\mathbf{r}-\limfunc{deep}%
}\left( F\right) $ are not necessarily good, but that the subquasicubes $%
J^{\prime }\subset J$ arising in the projection $\mathsf{P}_{F,J}^{\omega }$
are good. We can replace $x$ by $x-c$ inside the projection for any choice
of $c$ we wish; the projection is unchanged. More generally, $\delta _{q}$
denotes a Dirac unit mass at a point $q$ in the upper half-space $\mathbb{R}%
_{+}^{n+1}$.

We prove the two-weight inequality 
\begin{equation}
\lVert \mathbb{P}^{\alpha }(f\sigma )\rVert _{L^{2}(\mathbb{R}_{+}^{n+1},\mu
)}\lesssim \left( \mathcal{E}_{\alpha }+\sqrt{\mathcal{A}_{2}^{\alpha }}+%
\sqrt{\mathcal{A}_{2}^{\alpha ,\ast }}\right) \lVert f\rVert _{L^{2}\left(
\sigma \right) }\,,  \label{two weight Poisson n}
\end{equation}%
for all nonnegative $f$ in $L^{2}\left( \sigma \right) $, noting that $%
\mathcal{F}$ and $f$ are \emph{not} related here. Above, $\mathbb{P}^{\alpha
}(\cdot )$ denotes the $\alpha $-fractional Poisson extension to the upper
half-space $\mathbb{R}_{+}^{n+1}$,

\begin{equation*}
\mathbb{P}^{\alpha }\nu \left( x,t\right) \equiv \int_{\mathbb{R}^{n}}\frac{t%
}{\left( t^{2}+\left\vert x-y\right\vert ^{2}\right) ^{\frac{n+1-\alpha }{2}}%
}d\nu \left( y\right) ,
\end{equation*}%
so that in particular 
\begin{equation*}
\left\Vert \mathbb{P}^{\alpha }(f\sigma )\right\Vert _{L^{2}(\mathbb{R}%
_{+}^{n+1},\mu )}^{2}=\sum_{F\in \mathcal{F}}\sum_{J\in \mathcal{M}_{\mathbf{%
r}-\limfunc{deep}}\left( F\right) }\mathbb{P}^{\alpha }\left( f\sigma
\right) (c(J),\ell \left( J\right) )^{2}\left\Vert \mathsf{P}_{F,J}^{\omega }%
\frac{x}{\left\vert J\right\vert ^{\frac{1}{n}}}\right\Vert _{L^{2}\left(
\omega \right) }^{2}\,,
\end{equation*}%
and so (\ref{two weight Poisson n}) proves the first line in Proposition \ref%
{func ener control} upon inspecting (\ref{e.funcEnergy n}).

The two-weight inequality for fractional integrals in \cite{Saw3} was
extended to homogeneous spaces in \cite{SaWh}, and the same arguments,
involving only positive operators and their estimates, extend the two-weight
inequality for Poisson integrals to the upper half-space $\mathbb{R}%
_{+}^{n+1}$ where $\mathbb{R}^{n}$ is now given the homogeneous space
product structure corresponding to the quasicubes $\times $ intervals used
here. Using this extended theorem for the two-weight Poisson inequality, we
see that inequality (\ref{two weight Poisson n}) requires checking these two
inequalities for dyadic quasicubes $I\in \Omega \mathcal{D}$ and quasiboxes $%
\widehat{I}=I\times \left[ 0,\ell \left( I\right) \right) $ in the upper
half-space$\mathbb{R}_{+}^{n+1}$: 
\begin{equation}
\int_{\mathbb{R}_{+}^{n+1}}\mathbb{P}^{\alpha }\left( \mathbf{1}_{I}\sigma
\right) \left( x,t\right) ^{2}d\mu \left( x,t\right) \equiv \left\Vert 
\mathbb{P}^{\alpha }\left( \mathbf{1}_{I}\sigma \right) \right\Vert _{L^{2}(%
\widehat{I},\mu )}^{2}\lesssim \left( \mathcal{A}_{2}^{\alpha ,\ast }+%
\mathcal{E}_{\alpha }^{2}\right) \sigma (I)\,,  \label{e.t1 n}
\end{equation}%
\begin{equation}
\int_{\mathbb{R}}[\mathbb{P}^{\alpha \ast }(t\mathbf{1}_{\widehat{I}}\mu
)]^{2}d\sigma (x)\lesssim \left( \mathcal{A}_{2}^{\alpha }+\mathcal{E}%
_{\alpha }\sqrt{\mathcal{A}_{2}^{\alpha }}\right) \int_{\widehat{I}%
}t^{2}d\mu (x,t),  \label{e.t2 n}
\end{equation}%
for all \emph{dyadic} quasicubes $I\in \Omega \mathcal{D}$, and where the
dual Poisson operator is given by 
\begin{equation*}
\mathbb{P}^{\alpha \ast }(t\mathbf{1}_{\widehat{I}}\mu )\left( x\right)
=\int_{\widehat{I}}\frac{t^{2}}{\left( t^{2}+\lvert x-y\rvert ^{2}\right) ^{%
\frac{n+1-\alpha }{2}}}d\mu \left( y,t\right) \,.
\end{equation*}%
It is important to note that we can choose for $\Omega \mathcal{D}$ any
fixed dyadic quasigrid, the compensating point being that the integrations
on the left sides of (\ref{e.t1 n}) and (\ref{e.t2 n}) are taken over the
entire spaces $\mathbb{R}_{+}^{n}$ and $\mathbb{R}^{n}$ respectively.

\begin{remark}
There is a gap in the proof of the Poisson inequality at the top of page 542
in \cite{Saw3}. However, this gap can be fixed as in \cite{SaWh} or \cite%
{LaSaUr1}.
\end{remark}

The following elementary Poisson inequalities will be used extensively.

\begin{lemma}
\label{Poisson inequalities}Suppose that $J,K,I$ are quasicubes satisfying $%
J\subset K\subset 2K\subset I$, and that $\mu $ is a positive measure
supported in $\mathbb{R}^{n}\setminus I$. Then%
\begin{equation*}
\frac{\mathrm{P}^{\alpha }\left( J,\mu \right) }{\left\vert J\right\vert ^{%
\frac{1}{n}}}\lesssim \frac{\mathrm{P}^{\alpha }\left( K,\mu \right) }{%
\left\vert K\right\vert ^{\frac{1}{n}}}\lesssim \frac{\mathrm{P}^{\alpha
}\left( J,\mu \right) }{\left\vert J\right\vert ^{\frac{1}{n}}}.
\end{equation*}
\end{lemma}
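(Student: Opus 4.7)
The plan is to show that for $x$ in the support of $\mu$, the quantity $|J|^{1/n}+|x-x_J|$ is comparable (with constants depending only on the dimension $n$) to $|K|^{1/n}+|x-x_K|$, and then the desired two-sided estimate follows by integrating against $d\mu$, since
\[
\frac{\mathrm{P}^{\alpha}(J,\mu)}{|J|^{1/n}}
=\int_{\mathbb{R}^{n}}\frac{d\mu(x)}{\bigl(|J|^{1/n}+|x-x_J|\bigr)^{n+1-\alpha}},
\]
and likewise for $K$.

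The key geometric fact I would isolate first is that, since $2K\subset I$ and $\operatorname{supp}\mu\subset\mathbb{R}^{n}\setminus I$, one has
\[
\operatorname{dist}(K,\operatorname{supp}\mu)\;\geq\;\tfrac{1}{2}|K|^{1/n},
\]
because $2K$ is the cube concentric with $K$ of twice the side length, so the $\ell^{\infty}$-distance from $K$ to the complement of $2K$ equals $\tfrac{1}{2}|K|^{1/n}$. Consequently, for every $x\in\operatorname{supp}\mu$ and every $y\in K$,
\[
|x-y|\;\geq\;\tfrac{1}{2}|K|^{1/n}.
\]

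Now I would verify the comparison $|K|^{1/n}+|x-x_K|\approx|x-y|$ for any $y\in K$ and any $x\in\operatorname{supp}\mu$. The upper bound is straightforward: $|x-y|\leq|x-x_K|+|x_K-y|\leq|x-x_K|+\tfrac{\sqrt{n}}{2}|K|^{1/n}\lesssim|K|^{1/n}+|x-x_K|$. The lower bound splits into two cases according to whether $|x-x_K|\leq\sqrt{n}\,|K|^{1/n}$ (in which case the right-hand side is $\approx|K|^{1/n}\lesssim|x-y|$ by the distance estimate above) or $|x-x_K|>\sqrt{n}\,|K|^{1/n}$ (in which case $|x-y|\geq|x-x_K|-\tfrac{\sqrt{n}}{2}|K|^{1/n}\geq\tfrac{1}{2}|x-x_K|\approx|K|^{1/n}+|x-x_K|$). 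Specialising once to $y=x_J\in K$ gives $|K|^{1/n}+|x-x_K|\approx|x-x_J|$, and then using $|J|^{1/n}\leq|K|^{1/n}$ together with $|x-x_J|\gtrsim|K|^{1/n}$ yields $|J|^{1/n}+|x-x_J|\approx|K|^{1/n}+|x-x_K|$.

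With this pointwise equivalence in hand, raising to the power $-(n+1-\alpha)$ and integrating against $d\mu$ over $\mathbb{R}^{n}\setminus I$ gives both inequalities in the statement simultaneously. There is no serious obstacle; the only thing to watch is keeping the constants independent of $\alpha\in[0,n)$ and dependent only on $n$, which is automatic since all comparisons above come from the inclusion $J\subset K\subset\tfrac12 I$ and the dimensional constant $\sqrt{n}$.
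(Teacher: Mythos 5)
Your proposal is correct and follows essentially the same route as the paper: the paper's (very terse) proof also writes $\mathrm{P}^{\alpha}(J,\mu)/|J|^{1/n}$ as $\int (|J|^{1/n}+|x-c_J|)^{-(n+1-\alpha)}\,d\mu(x)$ and invokes the pointwise equivalence of the kernels for $x\in\mathbb{R}^{n}\setminus I$, which rests on exactly the separation $\operatorname{dist}(K,\operatorname{supp}\mu)\gtrsim|K|^{1/n}$ coming from $2K\subset I$. Your write-up merely supplies the case analysis the paper leaves implicit (including the shift of center from $x_J$ to $x_K$), and your observation that the implied constants stay dimensional because the exponent $n+1-\alpha$ is bounded by $n+1$ is accurate.
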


\begin{proof}
We have%
\begin{equation*}
\frac{\mathrm{P}^{\alpha }\left( J,\mu \right) }{\left\vert J\right\vert ^{%
\frac{1}{n}}}=\frac{1}{\left\vert J\right\vert ^{\frac{1}{n}}}\int \frac{%
\left\vert J\right\vert ^{\frac{1}{n}}}{\left( \left\vert J\right\vert ^{%
\frac{1}{n}}+\left\vert x-c_{J}\right\vert \right) ^{n+1-\alpha }}d\mu
\left( x\right) ,
\end{equation*}%
where $J\subset K\subset 2K\subset I$ implies that%
\begin{equation*}
\left\vert J\right\vert ^{\frac{1}{n}}+\left\vert x-c_{J}\right\vert \approx
\left\vert K\right\vert ^{\frac{1}{n}}+\left\vert x-c_{K}\right\vert ,\ \ \
\ \ x\in \mathbb{R}^{n}\setminus I.
\end{equation*}
\end{proof}

Now we record the bounded overlap of the projections $\mathsf{P}%
_{F,J}^{\omega }$.

\begin{lemma}
\label{tau ovelap}Suppose $\mathsf{P}_{F,J}^{\omega }$ is as above and fix
any $I_{0}\in \Omega \mathcal{D}$. If $J\in \mathcal{M}_{\mathbf{r}-\limfunc{%
deep}}\left( F\right) $ for some $F\in \mathcal{F}$ with $F\supsetneqq I_{0}$
and $\mathsf{P}_{F,J}^{\omega }\neq 0$, then 
\begin{equation*}
F=\pi _{\mathcal{F}}^{\left( \ell \right) }I_{0}\text{ for some }0\leq \ell
\leq \mathbf{\tau }.
\end{equation*}%
As a consequence we have the bounded overlap,%
\begin{equation*}
\#\left\{ F\in \mathcal{F}:J\subset I_{0}\subsetneqq F\text{ for some }J\in 
\mathcal{M}_{\mathbf{r}-\limfunc{deep}}\left( F\right) \text{ with }\mathsf{P%
}_{F,J}^{\omega }\neq 0\right\} \leq \mathbf{\tau }.
\end{equation*}
\end{lemma}

\begin{proof}
Indeed, if $J^{\prime }\in \mathcal{C}_{\pi _{\mathcal{F}}^{\left( \ell
\right) }I_{0}}^{\limfunc{good},\mathbf{\tau }-\limfunc{shift}}$ for some $%
\ell >\mathbf{\tau }$, then either $J^{\prime }\cap \pi _{\mathcal{F}%
}^{\left( 0\right) }I_{0}=\emptyset $ or $J^{\prime }\supset \pi _{\mathcal{F%
}}^{\left( 0\right) }I_{0}$. Since $J\subset I_{0}\subset \pi _{\mathcal{F}%
}^{\left( 0\right) }I_{0}$, we cannot have $J^{\prime }$ contained in $J$,
and this shows that $\mathsf{P}_{\pi _{\mathcal{F}}^{\left( \ell \right)
}I_{0},J}^{\omega }=0$.
\end{proof}

Finally we record the only place in the proof where the refined quasienergy
condition is used. This lemma will be used in bounding both of the Poisson
testing conditions. Recall that $\mathcal{S}\Omega \mathcal{D}$ consists of
all shifted $\Omega \mathcal{D}$-dyadic quasicubes where $K$ is shifted if
it is a union of $2^{n}$ $\Omega \mathcal{D}$-dyadic quasicubes $K^{\prime }$
with $\ell \left( K^{\prime }\right) =\frac{1}{2}\ell \left( K\right) $.

\begin{lemma}
\label{refined lemma}Let $\Omega \mathcal{D},\mathcal{F}$ and $\left\{ 
\mathsf{P}_{F,J}^{\omega }\right\} _{\substack{ F\in \mathcal{F}  \\ J\in 
\mathcal{M}_{\mathbf{r}-\limfunc{deep}}\left( F\right) }}$ be as above with $%
J,F$ in the dyadic quasigrid $\Omega \mathcal{D}$. For any shifted quasicube 
$I_{0}\in \mathcal{S}\Omega \mathcal{D}$ define%
\begin{equation}
B\left( I_{0}\right) \equiv \sum_{F\in \mathcal{F}:\ F\supsetneqq
I_{0}^{\prime }\text{ for some }I_{0}^{\prime }\in \mathfrak{C}\left(
I_{0}\right) }\sum_{J\in \mathcal{M}_{\mathbf{r}-\limfunc{deep}}\left(
F\right) :\ J\subset I_{0}}\left( \frac{\mathrm{P}^{\alpha }\left( J,\mathbf{%
1}_{I_{0}}\sigma \right) }{\left\vert J\right\vert ^{\frac{1}{n}}}\right)
^{2}\left\Vert \mathsf{P}_{F,J}^{\omega }\mathbf{x}\right\Vert _{L^{2}\left(
\omega \right) }^{2}\ .  \label{term B}
\end{equation}%
Then%
\begin{equation}
B\left( I_{0}\right) \lesssim \mathbf{\tau }\left( \left( \mathcal{E}%
_{\alpha }^{\limfunc{refined}\limfunc{plug}}\right) ^{2}+\left( \mathcal{E}%
_{\alpha }^{\limfunc{deep}\limfunc{plug}}\right) ^{2}\right) \left\vert
I_{0}\right\vert _{\sigma }\lesssim \mathbf{\tau }\left( \left( \mathcal{E}%
_{\alpha }\right) ^{2}+\beta A_{2}^{\alpha }\right) \left\vert
I_{0}\right\vert _{\sigma }\ .  \label{B bound}
\end{equation}
\end{lemma}

\begin{proof}
Suppose first that $I_{0}$ is $\Omega \mathcal{D}$-dyadic. Define 
\begin{equation*}
\Lambda \left( I_{0}\right) \equiv \left\{ J\subset I_{0}:J\in \mathcal{M}_{%
\mathbf{r}-\limfunc{deep}}\left( F\right) \text{ for some }F\supsetneqq I_{0}%
\text{ with }\mathsf{P}_{F,J}^{\omega }\neq 0\right\} .
\end{equation*}%
By Lemma \ref{tau ovelap} we may pigeonhole the quasicubes $J$ in $\Lambda
\left( I_{0}\right) $ as follows:%
\begin{equation*}
\Lambda \left( I_{0}\right) =\dbigcup\limits_{\ell =0}^{\mathbf{\tau }%
}\Lambda _{\ell }\left( I_{0}\right) ;\ \ \ \ \ \Lambda _{\ell }\left(
I_{0}\right) \equiv \left\{ J\subset I_{0}:J\in \mathcal{M}_{\mathbf{r}-%
\limfunc{deep}}\left( \pi _{\mathcal{F}}^{\left( \ell \right) }I_{0}\right)
\right\} .
\end{equation*}%
Now fix $\ell $, and for each $J$ in the pairwise disjoint decomposition $%
\Lambda _{\ell }\left( I_{0}\right) $ of $I_{0}$, note that \emph{either} $J$
must contain some $K\in \mathcal{M}_{\mathbf{r}-\limfunc{deep}}\left(
I_{0}\right) $ or $J\subset K$ for some $K\in \mathcal{M}_{\mathbf{r}-%
\limfunc{deep}}\left( I_{0}\right) $;%
\begin{eqnarray*}
\Lambda _{\ell }\left( I_{0}\right) &=&\Lambda _{\ell }^{\limfunc{big}%
}\left( I_{0}\right) \cup \Lambda _{\ell }^{\limfunc{small}}\left(
I_{0}\right) ; \\
\Lambda _{\ell }^{\limfunc{small}}\left( I_{0}\right) &\equiv &\left\{ J\in
\Lambda _{\ell }\left( I_{0}\right) :J\subset K\text{ for some }K\in 
\mathcal{M}_{\mathbf{r}-\limfunc{deep}}\left( I_{0}\right) \right\} ,
\end{eqnarray*}%
and we make the corresponding decomposition of $B\left( I_{0}\right) $;%
\begin{eqnarray*}
B\left( I_{0}\right) &=&B^{\limfunc{big}}\left( I_{0}\right) +B^{\limfunc{%
small}}\left( I_{0}\right) ; \\
B^{\limfunc{big}/\limfunc{small}}\left( I_{0}\right) &\equiv
&\dsum\limits_{\ell =0}^{\mathbf{\tau }}\sum_{J\in \Lambda _{\ell }^{%
\limfunc{big}/\limfunc{small}}\left( I_{0}\right) }\left( \frac{\mathrm{P}%
^{\alpha }\left( J,\mathbf{1}_{I_{0}}\sigma \right) }{\left\vert
J\right\vert ^{\frac{1}{n}}}\right) ^{2}\sum_{F\in \mathcal{F}:\
F\supsetneqq I_{0}\text{ and }J\in \mathcal{M}_{\mathbf{r}-\limfunc{deep}%
}\left( F\right) }\left\Vert \mathsf{P}_{F,J}^{\omega }\mathbf{x}\right\Vert
_{L^{2}\left( \omega \right) }^{2}\ .
\end{eqnarray*}

Turning first to $B^{\limfunc{small}}\left( I_{0}\right) $, we use the $%
\mathbf{\tau }$-overlap (\ref{tau overlap}) of the projections $\mathsf{P}%
_{F,J}^{\omega }$, together with Lemma \ref{Poisson inequalities}, to obtain%
\begin{eqnarray}
B^{\limfunc{small}}\left( I_{0}\right) &\leq &\mathbf{\tau }%
\dsum\limits_{\ell =0}^{\mathbf{\tau }}\sum_{J\in \Lambda _{\ell }^{\limfunc{%
small}}\left( I_{0}\right) }\left( \frac{\mathrm{P}^{\alpha }\left( J,%
\mathbf{1}_{I_{0}}\sigma \right) }{\left\vert J\right\vert ^{\frac{1}{n}}}%
\right) ^{2}\left\Vert \mathsf{P}_{J}^{\limfunc{good},\omega }\mathbf{x}%
\right\Vert _{L^{2}\left( \omega \right) }^{2}  \label{B small} \\
&\lesssim &\mathbf{\tau }^{2}\left( \mathcal{E}_{\alpha }^{\limfunc{refined}%
\limfunc{plug}}\right) ^{2}\left\vert I_{0}\right\vert _{\sigma }\lesssim 
\mathbf{\tau }^{2}\left[ \left( \mathcal{E}_{\alpha }\right) ^{2}+\beta
A_{2}^{\alpha }\right] \left\vert I_{0}\right\vert _{\sigma }\ ,  \notag
\end{eqnarray}%
where the final estimate follows from (\ref{plug the hole refined}) since
each quasicube $\pi _{\mathcal{F}}^{\left( \ell \right) }I_{0}$ equals $\pi
_{\Omega \mathcal{D}}^{\left( \ell ^{\prime }\right) }I_{0}$ for some $\ell
^{\prime }$ and it is with this $\ell ^{\prime }$ that we apply the plugged
refined quasienergy condition.

This is the only point in the proof of Theorem \ref{T1 theorem} that the
refined quasienergy condition is used. Note that while we only consider here
the parents $\pi _{\mathcal{F}}^{\left( \ell \right) }I_{0}$ for $0\leq \ell
\leq \mathbf{\tau }$ , these $\mathcal{F}$-parents may occur as $\Omega 
\mathcal{D}$-parents $\pi _{\Omega \mathcal{D}}^{\left( \ell ^{\prime
}\right) }I_{0}$ for arbitrarily large $\ell ^{\prime }$, and this accounts
for why we take the supremum over all $\Omega \mathcal{D}$-parents $\pi
_{\Omega \mathcal{D}}^{\left( \ell \right) }I_{0}$ in the definition of the
refined quasienergy constant $\mathcal{E}_{\alpha }^{\limfunc{refined}}$.

Turning now to the more delicate term $B^{\limfunc{big}}\left( I_{0}\right) $%
, we write for $J\in \Lambda _{\ell }^{\limfunc{big}}\left( I_{0}\right) $,%
\begin{eqnarray*}
\left\Vert \mathsf{P}_{J}^{\limfunc{good},\omega }\mathbf{x}\right\Vert
_{L^{2}\left( \omega \right) }^{2} &=&\sum_{J^{\prime }\subset J\text{:\ }%
J^{\prime }\text{ good}}\left\Vert \bigtriangleup _{J^{\prime }}^{\omega }%
\mathbf{x}\right\Vert _{L^{2}\left( \omega \right) }^{2} \\
&=&\sum_{J^{\prime }\in \mathcal{N}_{\mathbf{r}}\left( I\right) \text{:\ }%
J^{\prime }\subset J}\left\Vert \bigtriangleup _{J^{\prime }}^{\omega }%
\mathbf{x}\right\Vert _{L^{2}\left( \omega \right) }^{2}+\sum_{K\in \mathcal{%
M}_{\mathbf{r}-\limfunc{deep}}\left( I_{0}\right) \text{:\ }K\subset
J}\left\Vert \mathsf{P}_{K}^{\limfunc{good},\omega }\mathbf{x}\right\Vert
_{L^{2}\left( \omega \right) }^{2}\ ,
\end{eqnarray*}%
where $\mathcal{N}_{\mathbf{r}}\left( I\right) \equiv \left\{ J^{\prime
}\subset I:\ell \left( J^{\prime }\right) \geq 2^{-\mathbf{r}}\ell \left(
I\right) \right\} $ denotes the set of $r$-near quasicubes in $I$, and then
using the $\mathbf{\tau }$-overlap (\ref{tau overlap}) of the projections $%
\mathsf{P}_{F,J}^{\omega }$, we estimate%
\begin{eqnarray*}
B^{\limfunc{big}}\left( I_{0}\right) &=&\dsum\limits_{\ell =0}^{\mathbf{\tau 
}}\sum_{J\in \Lambda _{\ell }^{\limfunc{big}}\left( I_{0}\right) }\left( 
\frac{\mathrm{P}^{\alpha }\left( J,\mathbf{1}_{I_{0}}\sigma \right) }{%
\left\vert J\right\vert ^{\frac{1}{n}}}\right) ^{2}\sum_{F\in \mathcal{F}:\
F\supsetneqq I_{0}\text{ and }J\in \mathcal{M}_{\mathbf{r}-\limfunc{deep}%
}\left( F\right) }\left\Vert \mathsf{P}_{F,J}^{\omega }\mathbf{x}\right\Vert
_{L^{2}\left( \omega \right) }^{2} \\
&\leq &\mathbf{\tau }\dsum\limits_{\ell =0}^{\mathbf{\tau }}\sum_{J\in
\Lambda _{\ell }^{\limfunc{big}}\left( I_{0}\right) }\left( \frac{\mathrm{P}%
^{\alpha }\left( J,\mathbf{1}_{I_{0}}\sigma \right) }{\left\vert
J\right\vert ^{\frac{1}{n}}}\right) ^{2}\left\Vert \mathsf{P}_{J}^{\limfunc{%
good},\omega }\mathbf{x}\right\Vert _{L^{2}\left( \omega \right) }^{2} \\
&=&\mathbf{\tau }\dsum\limits_{\ell =0}^{\mathbf{\tau }}\sum_{J\in \Lambda
_{\ell }^{\limfunc{big}}\left( I_{0}\right) }\left( \frac{\mathrm{P}^{\alpha
}\left( J,\mathbf{1}_{I_{0}}\sigma \right) }{\left\vert J\right\vert ^{\frac{%
1}{n}}}\right) ^{2}\sum_{J^{\prime }\in \mathcal{N}_{\mathbf{r}}\left(
I_{0}\right) \text{:\ }J^{\prime }\subset J}\left\Vert \bigtriangleup
_{J^{\prime }}^{\omega }\mathbf{x}\right\Vert _{L^{2}\left( \omega \right)
}^{2} \\
&&+\mathbf{\tau }\dsum\limits_{\ell =0}^{\mathbf{\tau }}\sum_{J\in \Lambda
_{\ell }^{\limfunc{big}}\left( I_{0}\right) }\left( \frac{\mathrm{P}^{\alpha
}\left( J,\mathbf{1}_{I_{0}}\sigma \right) }{\left\vert J\right\vert ^{\frac{%
1}{n}}}\right) ^{2}\sum_{K\in \mathcal{M}_{\mathbf{r}-\limfunc{deep}}\left(
I_{0}\right) \text{:\ }K\subset J}\left\Vert \mathsf{P}_{K}^{\limfunc{subgood%
},\omega }\mathbf{x}\right\Vert _{L^{2}\left( \omega \right) L^{2}\left(
\omega \right) }^{2} \\
&\equiv &\mathbf{\tau }B_{1}^{\limfunc{big}}\left( I_{0}\right) +\mathbf{%
\tau }B_{2}^{\limfunc{big}}\left( I_{0}\right) \ .
\end{eqnarray*}%
Now using that the quasicubes $J$ in $\Lambda _{\ell }^{\limfunc{big}}\left(
I_{0}\right) $ are pairwise disjoint, we have%
\begin{eqnarray*}
B_{1}^{\limfunc{big}}\left( I_{0}\right) &\approx &\dsum\limits_{\ell =0}^{%
\mathbf{\tau }}\left( \frac{\mathrm{P}^{\alpha }\left( I_{0},\mathbf{1}%
_{I_{0}}\sigma \right) }{\left\vert I_{0}\right\vert ^{\frac{1}{n}}}\right)
^{2}\sum_{J^{\prime }\in \mathcal{N}_{\mathbf{r}}\left( I_{0}\right)
}\left\Vert \bigtriangleup _{J^{\prime }}^{\omega }\mathbf{x}\right\Vert
_{L^{2}\left( \omega \right) }^{2} \\
&\lesssim &\dsum\limits_{\ell =0}^{\mathbf{\tau }}\left( \frac{\mathrm{P}%
^{\alpha }\left( I_{0},\mathbf{1}_{I_{0}}\sigma \right) }{\left\vert
I_{0}\right\vert ^{\frac{1}{n}}}\right) ^{2}\left\vert I_{0}\right\vert ^{%
\frac{2}{n}}\left\vert I_{0}\right\vert _{\omega }\lesssim \mathbf{\tau }%
A_{2}^{\alpha }\left\vert I_{0}\right\vert _{\sigma }\ .
\end{eqnarray*}%
Using $\mathrm{P}^{\alpha }\left( J,\mathbf{1}_{I_{0}}\sigma \right) =%
\mathrm{P}^{\alpha }\left( J,\mathbf{1}_{J}\sigma \right) +\mathrm{P}%
^{\alpha }\left( J,\mathbf{1}_{I_{0}\setminus J}\sigma \right) $, we have 
\begin{eqnarray*}
B_{2}^{\limfunc{big}}\left( I_{0}\right) &\approx &\dsum\limits_{\ell =0}^{%
\mathbf{\tau }}\sum_{J\in \Lambda _{\ell }^{\limfunc{big}}\left(
I_{0}\right) }\left( \frac{\mathrm{P}^{\alpha }\left( J,\mathbf{1}_{J}\sigma
\right) }{\left\vert J\right\vert ^{\frac{1}{n}}}\right) ^{2}\sum_{K\in 
\mathcal{M}_{\mathbf{r}-\limfunc{deep}}\left( I_{0}\right) \text{:\ }%
K\subset J}\left\Vert \mathsf{P}_{K}^{\limfunc{subgood},\omega }\mathbf{x}%
\right\Vert _{L^{2}\left( \omega \right) }^{2} \\
&&+\dsum\limits_{\ell =0}^{\mathbf{\tau }}\sum_{J\in \Lambda _{\ell }^{%
\limfunc{big}}\left( I_{0}\right) }\left( \frac{\mathrm{P}^{\alpha }\left( J,%
\mathbf{1}_{I_{0}\setminus J}\sigma \right) }{\left\vert J\right\vert ^{%
\frac{1}{n}}}\right) ^{2}\sum_{K\in \mathcal{M}_{\mathbf{r}-\limfunc{deep}%
}\left( I_{0}\right) \text{:\ }K\subset J}\left\Vert \mathsf{P}_{K}^{%
\limfunc{subgood},\omega }\mathbf{x}\right\Vert _{L^{2}\left( \omega \right)
}^{2} \\
&\equiv &B_{3}^{\limfunc{big}}\left( I_{0}\right) +B_{4}^{\limfunc{big}%
}\left( I_{0}\right) \ .
\end{eqnarray*}%
Now since the quasicubes $J$ in $\Lambda _{\ell }^{\limfunc{big}}\left(
I_{0}\right) $ are pairwise disjoint, 
\begin{equation*}
B_{3}^{\limfunc{big}}\left( I_{0}\right) \lesssim \dsum\limits_{\ell =0}^{%
\mathbf{\tau }}\sum_{J\in \Lambda _{\ell }^{\limfunc{big}}\left(
I_{0}\right) }\left( \frac{\left\vert J\right\vert _{\sigma }}{\left(
\left\vert J\right\vert ^{\frac{1}{n}}\right) ^{n+1-\alpha }}\right)
^{2}\left\vert J\right\vert ^{\frac{2}{n}}\left\vert J\right\vert _{\omega
}\lesssim \mathbf{\tau }A_{2}^{\alpha }\left\vert I_{0}\right\vert _{\sigma
}\ ,
\end{equation*}%
and since $\frac{\mathrm{P}^{\alpha }\left( J,\mathbf{1}_{I_{0}\setminus
J}\sigma \right) }{\left\vert J\right\vert ^{\frac{1}{n}}}\lesssim \frac{%
\mathrm{P}^{\alpha }\left( K,\mathbf{1}_{I_{0}\setminus J}\sigma \right) }{%
\left\vert K\right\vert ^{\frac{1}{n}}}$ for $K\subset J$, we have%
\begin{eqnarray*}
B_{4}^{\limfunc{big}}\left( I_{0}\right) &=&\dsum\limits_{\ell =0}^{\mathbf{%
\tau }}\sum_{J\in \Lambda _{\ell }^{\limfunc{big}}\left( I_{0}\right)
}\sum_{K\in \mathcal{M}_{\mathbf{r}-\limfunc{deep}}\left( I_{0}\right) \text{%
:\ }K\subset J}\left( \frac{\mathrm{P}^{\alpha }\left( J,\mathbf{1}%
_{I_{0}\setminus J}\sigma \right) }{\left\vert J\right\vert ^{\frac{1}{n}}}%
\right) ^{2}\left\Vert \mathsf{P}_{K}^{\limfunc{subgood},\omega }\mathbf{x}%
\right\Vert _{L^{2}\left( \omega \right) }^{2} \\
&\lesssim &\dsum\limits_{\ell =0}^{\mathbf{\tau }}\sum_{J\in \Lambda _{\ell
}^{\limfunc{big}}\left( I_{0}\right) }\sum_{K\in \mathcal{M}_{\mathbf{r}-%
\limfunc{deep}}\left( I_{0}\right) \text{:\ }K\subset J}\left( \frac{\mathrm{%
P}^{\alpha }\left( K,\mathbf{1}_{I_{0}\setminus J}\sigma \right) }{%
\left\vert K\right\vert ^{\frac{1}{n}}}\right) ^{2}\left\Vert \mathsf{P}%
_{K}^{\limfunc{subgood},\omega }\mathbf{x}\right\Vert _{L^{2}\left( \omega
\right) }^{2} \\
&\leq &\dsum\limits_{\ell =0}^{\mathbf{\tau }}\sum_{J\in \Lambda _{\ell }^{%
\limfunc{big}}\left( I_{0}\right) }\sum_{K\in \mathcal{M}_{\mathbf{r}-%
\limfunc{deep}}\left( I_{0}\right) \text{:\ }K\subset J}\left( \frac{\mathrm{%
P}^{\alpha }\left( K,\mathbf{1}_{I_{0}\setminus K}\sigma \right) }{%
\left\vert K\right\vert ^{\frac{1}{n}}}\right) ^{2}\left\Vert \mathsf{P}%
_{K}^{\limfunc{subgood},\omega }\mathbf{x}\right\Vert _{L^{2}\left( \omega
\right) }^{2} \\
&\leq &\dsum\limits_{\ell =0}^{\mathbf{\tau }}\sum_{K\in \mathcal{M}_{%
\mathbf{r}-\limfunc{deep}}\left( I_{0}\right) }\left( \frac{\mathrm{P}%
^{\alpha }\left( K,\mathbf{1}_{I_{0}\setminus K}\sigma \right) }{\left\vert
K\right\vert ^{\frac{1}{n}}}\right) ^{2}\left\Vert \mathsf{P}_{K}^{\limfunc{%
subgood},\omega }\mathbf{x}\right\Vert _{L^{2}\left( \omega \right) }^{2} \\
&\lesssim &\mathbf{\tau }\left( \mathcal{E}_{\alpha }^{\limfunc{deep}%
\limfunc{plug}}\right) ^{2}\left\vert I_{0}\right\vert _{\sigma }\lesssim 
\mathbf{\tau }\left( \left( \mathcal{E}_{\alpha }^{\limfunc{deep}}\right)
^{2}+\beta A_{2}^{\alpha }\right) \left\vert I_{0}\right\vert _{\sigma }\ ,
\end{eqnarray*}%
where the final line follows from (\ref{plug the hole deep}).

Now suppose that $I_{0}\in \mathcal{S}\Omega \mathcal{D}$ is a shifted $%
\Omega \mathcal{D}$-dyadic quasicube. Define 
\begin{equation*}
\Lambda \left( I_{0}\right) \equiv \left\{ J\subset I_{0}:J\in \mathcal{M}_{%
\mathbf{r}-\limfunc{deep}}\left( F\right) \text{ for some }F\supsetneqq
I_{0}^{\prime }\text{, }I_{0}^{\prime }\in \mathfrak{C}\left( I_{0}\right) 
\text{ with }\mathsf{P}_{F,J}^{\omega }\neq 0\right\} ,
\end{equation*}%
and pigeonhole the $J^{\prime }s$ in $\Lambda \left( I_{0}\right) $ by
setting $\Lambda \left( I_{0}\right) =\dbigcup\limits_{\ell =0}^{\mathbf{%
\tau }}\Lambda _{\ell }\left( I_{0}\right) $ where $\Lambda _{\ell }\left(
I_{0}\right) $ consists of those $J\subset I_{0}$ such that $J\in \mathcal{M}%
_{\mathbf{r}-\limfunc{deep}}\left( \pi _{\mathcal{F}}^{\left( \ell \right)
}I_{0}^{\prime }\right) $ for some child $I_{0}^{\prime }\in \mathfrak{C}%
\left( I_{0}\right) $. The above argument is now easily adapted to the case
at hand by simply noting that (\ref{B small}) continues to hold in this case
by the definition of the plugged refined energy constant $\mathcal{E}%
_{\alpha }^{\limfunc{refined}\limfunc{plug}}$. This completes the proof of
Lemma \ref{refined lemma}.
\end{proof}

\subsection{The Poisson testing inequality}

Fix $I\in \Omega \mathcal{D}$. We split the integration on the left side of (%
\ref{e.t1 n}) into a local and global piece:%
\begin{equation*}
\int_{\mathbb{R}_{+}^{n+1}}\mathbb{P}^{\alpha }\left( \mathbf{1}_{I}\sigma
\right) ^{2}d\mu =\int_{\widehat{I}}\mathbb{P}^{\alpha }\left( \mathbf{1}%
_{I}\sigma \right) ^{2}d\mu +\int_{\mathbb{R}_{+}^{n+1}\setminus \widehat{I}}%
\mathbb{P}^{\alpha }\left( \mathbf{1}_{I}\sigma \right) ^{2}d\mu \equiv 
\mathbf{Local}\left( I\right) +\mathbf{Global}\left( I\right) .
\end{equation*}%
Here is a brief schematic diagram of the decompositions, with bounds in $%
\fbox{}$, used in this subsection:%
\begin{equation*}
\fbox{$%
\begin{array}{ccc}
\mathbf{Local}\left( I\right) &  &  \\ 
\downarrow &  &  \\ 
\mathbf{Local}^{\limfunc{plug}}\left( I\right) & + & \mathbf{Local}^{%
\limfunc{hole}}\left( I\right) \\ 
\downarrow &  & \fbox{$\left( \mathcal{E}_{\alpha }^{\limfunc{deep}}\right)
^{2}$} \\ 
\downarrow &  &  \\ 
A & + & B \\ 
\fbox{$\left( \mathcal{E}_{\alpha }^{\limfunc{deep}}\right)
^{2}+A_{2}^{\alpha }$} &  & \fbox{$\left( \mathcal{E}_{\alpha }\right)
^{2}+A_{2}^{\alpha }$}%
\end{array}%
$}\text{ and }\fbox{$%
\begin{array}{ccccccc}
\mathbf{Global}\left( I\right) &  &  &  &  &  &  \\ 
\downarrow &  &  &  &  &  &  \\ 
A & + & B & + & C & + & D \\ 
\fbox{$A_{2}^{\alpha }$} &  & \fbox{$A_{2}^{\alpha }$} &  & \fbox{$\mathcal{A%
}_{2}^{\alpha ,\ast }$} &  & \fbox{$\mathcal{A}_{2}^{\alpha ,\ast }$}%
\end{array}%
$}.
\end{equation*}%
We turn first to estimating the local term $\mathbf{Local}\left( I\right) $.

An important consequence of the fact that $I$ and $J$ lie in the same
quasigrid $\Omega \mathcal{D}=\Omega \mathcal{D}^{\omega }$, is that $\left(
c\left( J\right) ,\left\vert J\right\vert \right) \in \widehat{I}$ if and
only if $J\subset I$. Thus we have%
\begin{eqnarray*}
&&\mathbf{Local}\left( I\right) =\int_{\widehat{I}}\mathbb{P}^{\alpha
}\left( \mathbf{1}_{I}\sigma \right) \left( x,t\right) ^{2}d\mu \left(
x,t\right) \\
&=&\sum_{F\in \mathcal{F}}\sum_{J\in \mathcal{M}_{\mathbf{r}-\limfunc{deep}%
}(F):\ J\subset I}\mathbb{P}^{\alpha }\left( \mathbf{1}_{I}\sigma \right)
\left( c_{J},\left\vert J\right\vert ^{\frac{1}{n}}\right) ^{2}\left\Vert 
\mathsf{P}_{F,J}^{\omega }\frac{\mathbf{x}}{\left\vert J\right\vert ^{\frac{1%
}{n}}}\right\Vert _{L^{2}\left( \omega \right) }^{2} \\
&=&\sum_{F\in \mathcal{F}}\sum_{J\in \mathcal{M}_{\mathbf{r}-\limfunc{deep}%
}\left( F\right) :\ J\subset I}\mathrm{P}^{\alpha }\left( J,\mathbf{1}%
_{I}\sigma \right) ^{2}\lVert \mathsf{P}_{F,J}^{\omega }\frac{\mathbf{x}}{%
\left\vert J\right\vert ^{\frac{1}{n}}}\rVert _{L^{2}\left( \omega \right)
}^{2} \\
&=&\mathbf{Local}^{\limfunc{plug}}\left( I\right) +\mathbf{Local}^{\func{hole%
}}\left( I\right) ,
\end{eqnarray*}%
where the `plugged' local sum $\mathbf{Local}^{\limfunc{plug}}\left(
I\right) $ is given by 
\begin{align*}
& \mathbf{Local}^{\limfunc{plug}}\left( I\right) \equiv \sum_{F\in \mathcal{F%
}}\sum_{J\in \mathcal{M}_{\mathbf{r}-\limfunc{deep}}\left( F\right) :\
J\subset I}\left( \frac{\mathrm{P}^{\alpha }\left( J,\mathbf{1}_{F\cap
I}\sigma \right) }{\left\vert J\right\vert ^{\frac{1}{n}}}\right)
^{2}\left\Vert \mathsf{P}_{F,J}^{\omega }\mathbf{x}\right\Vert _{L^{2}\left(
\omega \right) }^{2} \\
& =\left\{ \sum_{F\in \mathcal{F}:\ F\subset I}+\sum_{F\in \mathcal{F}:\
F\supsetneqq I}\right\} \sum_{J\in \mathcal{M}_{\mathbf{r}-\limfunc{deep}%
}\left( F\right) :\ J\subset I}\left( \frac{\mathrm{P}^{\alpha }\left( J,%
\mathbf{1}_{F\cap I}\sigma \right) }{\left\vert J\right\vert ^{\frac{1}{n}}}%
\right) ^{2}\left\Vert \mathsf{P}_{F,J}^{\omega }\mathbf{x}\right\Vert
_{L^{2}\left( \omega \right) }^{2} \\
& =A+B.
\end{align*}%
Then a \emph{trivial} application of the deep quasienergy condition (where
`trivial' means that the outer decomposition is just a single quasicube)
gives 
\begin{eqnarray*}
A &\leq &\sum_{F\in \mathcal{F}:\ F\subset I}\sum_{J\in \mathcal{M}_{\mathbf{%
r}-\limfunc{deep}}\left( F\right) }\left( \frac{\mathrm{P}^{\alpha }\left( J,%
\mathbf{1}_{F}\sigma \right) }{\left\vert J\right\vert ^{\frac{1}{n}}}%
\right) ^{2}\left\Vert \mathsf{P}_{F,J}^{\omega }\mathbf{x}\right\Vert
_{L^{2}\left( \omega \right) }^{2} \\
&\leq &\sum_{F\in \mathcal{F}:\ F\subset I}\left( \mathcal{E}_{\alpha }^{%
\limfunc{deep}\limfunc{plug}}\right) ^{2}\left\vert F\right\vert _{\sigma
}\lesssim \left( \mathcal{E}_{\alpha }^{2}+A_{2}^{\alpha }\right) \left\vert
I\right\vert _{\sigma }\,,
\end{eqnarray*}%
since $\left\Vert \mathsf{P}_{F,J}^{\omega }x\right\Vert _{L^{2}\left(
\omega \right) }^{2}\leq \left\Vert \mathsf{P}_{J}^{\omega }\mathbf{x}%
\right\Vert _{L^{2}\left( \omega \right) }^{2}$, where we recall that the
quasienergy constant $\mathcal{E}_{\alpha }^{\limfunc{deep}\limfunc{plug}}$
is defined in (\ref{plug}). We also used that the stopping quasicubes $%
\mathcal{F}$ satisfy a $\sigma $-Carleson measure estimate, 
\begin{equation*}
\sum_{F\in \mathcal{F}:\ F\subset F_{0}}\left\vert F\right\vert _{\sigma
}\lesssim \left\vert F_{0}\right\vert _{\sigma }.
\end{equation*}%
Lemma \ref{refined lemma} applies with $I_{0}=I$ to the remaining term $B$
to obtain the bound%
\begin{equation*}
B\leq \mathbf{\tau }\left( \left( \mathcal{E}_{\alpha }\right) ^{2}+\beta
A_{2}^{\alpha }\right) \left\vert I\right\vert _{\sigma }\ .
\end{equation*}

It remains then to show the inequality with `holes', where the support of $%
\sigma $ is restricted to the complement of the quasicube $F$.

\begin{lemma}
\label{local hole}We have 
\begin{equation}
\mathbf{Local}^{\func{hole}}\left( I\right) =\sum_{F\in \mathcal{F}:\
F\varsubsetneqq I}\sum_{J\in \mathcal{M}_{\mathbf{r}-\limfunc{deep}}\left(
F\right) }\left( \frac{\mathrm{P}^{\alpha }\left( J,\mathbf{1}_{I\setminus
F}\sigma \right) }{\left\vert J\right\vert ^{\frac{1}{n}}}\right)
^{2}\left\Vert \mathsf{P}_{F,J}^{\omega }x\right\Vert _{L^{2}\left( \omega
\right) }^{2}\lesssim \left( \mathcal{E}_{\alpha }^{\limfunc{deep}}\right)
^{2}\left\vert I\right\vert _{\sigma }\,.  \label{RTS n}
\end{equation}
\end{lemma}

\begin{proof}
Fix $I\in \Omega \mathcal{D}$ and define%
\begin{equation*}
\mathcal{F}_{I}\equiv \left\{ F\in \mathcal{F}:F\varsubsetneqq I\right\}
\cup \left\{ I\right\} ,
\end{equation*}%
and denote by $\pi F$ the parent of $F$ in the tree $\mathcal{F}_{I}$. We
consider the space $\ell _{\digamma }^{2}$ of square summable sequences on
the index set $\digamma $ where 
\begin{equation*}
\digamma \equiv \left\{ \left( F,J\right) :F\in \mathcal{F}_{I}\text{ and }%
J\in \mathcal{M}_{\mathbf{r}-\limfunc{deep}}\left( F\right) \right\}
\end{equation*}%
is the index set of pairs $\left( F,J\right) $ occurring in the sum in (\ref%
{RTS n}). We now take a sequence $a=\left\{ a_{F,J}\right\} _{\left(
F,J\right) \in \digamma }\in \ell _{\digamma }^{2}$ with $a_{F,J}\geq 0$ and
estimate%
\begin{equation*}
S\equiv \sum_{F\in \mathcal{F}_{I}}\sum_{J\in \mathcal{M}_{\mathbf{r}-%
\limfunc{deep}}\left( F\right) }\frac{\mathrm{P}^{\alpha }\left( J,\mathbf{1}%
_{I\setminus F}\sigma \right) }{\left\vert J\right\vert ^{\frac{1}{n}}}%
\left\Vert \mathsf{P}_{F,J}^{\omega }\mathbf{x}\right\Vert _{L^{2}\left(
\omega \right) }a_{F,J}
\end{equation*}%
by 
\begin{eqnarray*}
S &=&\sum_{F\in \mathcal{F}_{I}}\sum_{J\in \mathcal{M}_{\mathbf{r}-\limfunc{%
deep}}\left( F\right) }\sum_{F^{\prime }\in \mathcal{F}:\ F\subset F^{\prime
}\subsetneqq I}\frac{\mathrm{P}^{\alpha }\left( J,\mathbf{1}_{\pi _{\mathcal{%
F}}F^{\prime }\setminus F^{\prime }}\sigma \right) }{\left\vert J\right\vert
^{\frac{1}{n}}}\left\Vert \mathsf{P}_{F,J}^{\omega }\mathbf{x}\right\Vert
_{L^{2}\left( \omega \right) }a_{F,J} \\
&=&\sum_{F^{\prime }\in \mathcal{F}_{I}}\sum_{F\in \mathcal{F}:\ F\subset
F^{\prime }}\sum_{J\in \mathcal{M}_{\mathbf{r}-\limfunc{deep}}\left(
F\right) }\frac{\mathrm{P}^{\alpha }\left( J,\mathbf{1}_{\pi _{\mathcal{F}%
}F^{\prime }\setminus F^{\prime }}\sigma \right) }{\left\vert J\right\vert ^{%
\frac{1}{n}}}\left\Vert \mathsf{P}_{F,J}^{\omega }\mathbf{x}\right\Vert
_{L^{2}\left( \omega \right) }a_{F,J} \\
&=&\sum_{F^{\prime }\in \mathcal{F}_{I}}\sum_{K\in \mathcal{M}_{\mathbf{r}-%
\limfunc{deep}}\left( F^{\prime }\right) }\sum_{F\in \mathcal{F}:\ F\subset
F^{\prime }}\sum_{J\in \mathcal{M}_{\mathbf{r}-\limfunc{deep}}\left(
F\right) :\ J\subset K}\frac{\mathrm{P}^{\alpha }\left( J,\mathbf{1}_{\pi _{%
\mathcal{F}}F^{\prime }\setminus F^{\prime }}\sigma \right) }{\left\vert
J\right\vert ^{\frac{1}{n}}}\left\Vert \mathsf{P}_{F,J}^{\omega }\mathbf{x}%
\right\Vert _{L^{2}\left( \omega \right) }a_{F,J} \\
&\lesssim &\sum_{F^{\prime }\in \mathcal{F}_{I}}\sum_{K\in \mathcal{M}_{%
\mathbf{r}-\limfunc{deep}}\left( F^{\prime }\right) }\frac{\mathrm{P}%
^{\alpha }\left( K,\mathbf{1}_{\pi _{\mathcal{F}}F^{\prime }\setminus
F^{\prime }}\sigma \right) }{\left\vert K\right\vert ^{\frac{1}{n}}}%
\sum_{F\in \mathcal{F}:\ F\subset F^{\prime }}\sum_{J\in \mathcal{M}_{%
\mathbf{r}-\limfunc{deep}}\left( F\right) :\ J\subset K}\left\Vert \mathsf{P}%
_{F,J}^{\omega }\mathbf{x}\right\Vert _{L^{2}\left( \omega \right) }a_{F,J},
\end{eqnarray*}%
by the Poisson inequalities in Lemma \ref{Poisson inequalities}. We now
invoke%
\begin{eqnarray*}
&&\sum_{F\in \mathcal{F}:\ F\subset F^{\prime }}\sum_{J\in \mathcal{M}_{%
\mathbf{r}-\limfunc{deep}}\left( F\right) :\ J\subset K}\left\Vert \mathsf{P}%
_{F,J}^{\omega }\mathbf{x}\right\Vert _{L^{2}\left( \omega \right) }a_{F,J}
\\
&\lesssim &\left( \sum_{F\in \mathcal{F}:\ F\subset F^{\prime }}\sum_{J\in 
\mathcal{M}_{\mathbf{r}-\limfunc{deep}}\left( F\right) :\ J\subset
K}\left\Vert \mathsf{P}_{F,J}^{\omega }\mathbf{x}\right\Vert _{L^{2}\left(
\omega \right) }^{2}\right) ^{\frac{1}{2}} \\
&&\times \left( \sum_{F\in \mathcal{F}:\ F\subset F^{\prime }}\sum_{J\in 
\mathcal{M}_{\mathbf{r}-\limfunc{deep}}\left( F\right) :\ J\subset
K}a_{F,J}^{2}\right) ^{\frac{1}{2}} \\
&\lesssim &\left\Vert \widehat{\mathsf{P}}_{F^{\prime },K}^{\omega }\mathbf{x%
}\right\Vert _{L^{2}\left( \omega \right) }\left\Vert \mathsf{Q}_{F^{\prime
},K}^{\omega }a\right\Vert _{\ell _{\digamma }^{2}}\ ,
\end{eqnarray*}%
where for $K\in \mathcal{M}_{\mathbf{r}-\limfunc{deep}}\left( F^{\prime
}\right) $ and $f\in L^{2}\left( \omega \right) $,%
\begin{equation*}
\widehat{\mathsf{P}}_{F^{\prime },K}^{\omega }\equiv \sum_{F\in \mathcal{F}%
:\ F\subset F^{\prime }}\sum_{J\in \mathcal{M}_{\mathbf{r}-\limfunc{deep}%
}\left( F\right) :\ J\subset K}\mathsf{P}_{F,J}^{\omega },
\end{equation*}%
while for $K\in \mathcal{M}_{\mathbf{r}-\limfunc{deep}}\left( F^{\prime
}\right) $ and $a=\left\{ a_{G,L}\right\} _{\left( G,L\right) \in \digamma
}\in \ell _{\digamma }^{2}$,%
\begin{eqnarray*}
\mathsf{Q}_{F^{\prime },K}^{\omega }a &\equiv &\sum_{F\in \mathcal{F}:\
F\subset F^{\prime }}\sum_{J\in \mathcal{M}_{\mathbf{r}-\limfunc{deep}%
}\left( F\right) :\ J\subset K}\mathsf{Q}_{F,J}^{\omega }a; \\
\mathsf{Q}_{F,J}^{\omega }a &\equiv &\left\{ \mathbf{1}_{\left( F,J\right)
}a_{G,L}\right\} _{\left( G,L\right) \in \digamma }.
\end{eqnarray*}%
Thus $\mathsf{Q}_{F,J}^{\omega }$ acts on the sequence $a$ by projecting
onto the coordinate in $\digamma $ indexed by $\left( F,J\right) $.

Now denote by $d\left( F\right) \equiv d_{\mathcal{F}_{I}}\left( F,I\right) $
the distance from $F$ to the root $I$ in the tree $\mathcal{F}_{I}$. Since
the collection $\mathcal{F}$ satisfies a Carleson condition, we have
geometric decay in generations:%
\begin{equation*}
\sum_{F\in \mathcal{F}_{I}:\ d\left( F\right) =k}\left\vert F\right\vert
_{\sigma }\lesssim 2^{-\delta k}\left\vert I\right\vert _{\sigma }\ ,\ \ \ \
\ k\geq 0.
\end{equation*}%
Thus we can write%
\begin{eqnarray*}
\left\vert S\right\vert &\lesssim &\sum_{F^{\prime }\in \mathcal{F}%
_{I}}\sum_{K\in \mathcal{M}_{\mathbf{r}-\limfunc{deep}}\left( F^{\prime
}\right) }\frac{\mathrm{P}^{\alpha }\left( K,\mathbf{1}_{\pi _{\mathcal{F}%
}F^{\prime }\setminus F^{\prime }}\sigma \right) }{\left\vert K\right\vert ^{%
\frac{1}{n}}}\left\Vert \widehat{\mathsf{P}}_{F^{\prime },K}^{\omega }%
\mathbf{x}\right\Vert _{L^{2}\left( \omega \right) }\left\Vert \mathsf{Q}%
_{F^{\prime },K}^{\omega }a\right\Vert _{\ell _{\digamma }^{2}} \\
&=&\sum_{k=0}^{\infty }\sum_{F^{\prime }\in \mathcal{F}_{I}:\ d\left(
F^{\prime }\right) =k}\sum_{K\in \mathcal{M}_{\mathbf{r}-\limfunc{deep}%
}\left( F^{\prime }\right) }\frac{\mathrm{P}^{\alpha }\left( K,\mathbf{1}%
_{\pi _{\mathcal{F}}F^{\prime }\setminus F^{\prime }}\sigma \right) }{%
\left\vert K\right\vert ^{\frac{1}{n}}}\left\Vert \widehat{\mathsf{P}}%
_{F^{\prime },K}^{\omega }\mathbf{x}\right\Vert _{L^{2}\left( \omega \right)
}\left\Vert \mathsf{Q}_{F^{\prime },K}^{\omega }a\right\Vert _{\ell
_{\digamma }^{2}}\equiv \sum_{k=1}^{\infty }A_{k}.
\end{eqnarray*}%
Now by the $\mathbf{\tau }$-overlap (\ref{tau overlap}) of the projections $%
\mathsf{P}_{F,J}^{\omega }$, we have $\left\Vert \widehat{\mathsf{P}}%
_{F^{\prime },K}^{\omega }\mathbf{x}\right\Vert _{L^{2}\left( \omega \right)
}^{2}\leq \mathbf{\tau }\left\Vert \mathsf{P}_{K}^{\limfunc{good},\omega }%
\mathbf{x}\right\Vert _{L^{2}\left( \omega \right) }^{2}$, and hence by the
deep energy condition,%
\begin{eqnarray*}
A_{k} &\lesssim &\left( \sum_{F^{\prime }\in \mathcal{F}_{I}:\ d\left(
F^{\prime }\right) =k}\sum_{K\in \mathcal{M}_{\mathbf{r}-\limfunc{deep}%
}\left( F^{\prime }\right) }\left( \frac{\mathrm{P}^{\alpha }\left( K,%
\mathbf{1}_{\pi _{\mathcal{F}}F^{\prime }\setminus F^{\prime }}\sigma
\right) }{\left\vert K\right\vert ^{\frac{1}{n}}}\right) ^{2}\left\Vert 
\widehat{\mathsf{P}}_{F^{\prime },K}^{\omega }\mathbf{x}\right\Vert
_{L^{2}\left( \omega \right) }^{2}\right) ^{\frac{1}{2}} \\
&&\times \left( \sum_{F^{\prime }\in \mathcal{F}_{I}:\ d\left( F^{\prime
}\right) =k}\sum_{K\in \mathcal{M}_{\mathbf{r}-\limfunc{deep}}\left(
F^{\prime }\right) }\left\Vert \mathsf{Q}_{F^{\prime },K}^{\omega
}a\right\Vert _{\ell _{\digamma }^{2}}^{2}\right) ^{\frac{1}{2}} \\
&\lesssim &\left( \left( \mathcal{E}_{\alpha }^{\limfunc{deep}}\right)
^{2}\sum_{F^{\prime \prime }\in \mathcal{F}_{I}:\ d\left( F^{\prime \prime
}\right) =k-1}\left\vert F^{\prime \prime }\right\vert _{\sigma }\right) ^{%
\frac{1}{2}}\left\Vert a\right\Vert _{\ell _{\digamma }^{2}} \\
&\lesssim &\mathcal{E}_{\alpha }^{\limfunc{deep}}\left( 2^{-\delta
k}\left\vert I\right\vert _{\sigma }\right) ^{\frac{1}{2}}\left\Vert
a\right\Vert _{\ell _{\digamma }^{2}}\ ,
\end{eqnarray*}%
where we have applied the deep energy condition for each $F^{\prime \prime
}\in \mathcal{F}_{I}$ with $d\left( F^{\prime \prime }\right) =k-1$ to obtain%
\begin{equation}
\sum_{F^{\prime }\in \mathcal{F}_{I}:\ F^{\prime }\subset F^{\prime \prime
}}\sum_{K\in \mathcal{M}_{\mathbf{r}-\limfunc{deep}}\left( F^{\prime
}\right) }\left( \frac{\mathrm{P}^{\alpha }\left( K,\mathbf{1}_{F^{\prime
\prime }\setminus F^{\prime }}\sigma \right) }{\left\vert K\right\vert ^{%
\frac{1}{n}}}\right) ^{2}\left\Vert \widehat{\mathsf{P}}_{F^{\prime
},K}^{\omega }\mathbf{x}\right\Vert _{L^{2}\left( \omega \right) }^{2}\leq
\left( \mathcal{E}_{\alpha }^{\limfunc{deep}}\right) ^{2}\left\vert
F^{\prime \prime }\right\vert _{\sigma }\ .  \label{to obtain}
\end{equation}%
Finally then we obtain%
\begin{equation*}
\left\vert S\right\vert \lesssim \sum_{k=1}^{\infty }\mathcal{E}_{\alpha }^{%
\limfunc{deep}}\left( 2^{-\delta k}\left\vert I\right\vert _{\sigma }\right)
^{\frac{1}{2}}\left\Vert a\right\Vert _{\ell _{\digamma }^{2}}\lesssim 
\mathcal{E}_{\alpha }^{\limfunc{deep}}\sqrt{\left\vert I\right\vert _{\sigma
}}\left\Vert a\right\Vert _{\ell _{\digamma }^{2}}.
\end{equation*}%
By duality of $\ell _{\digamma }^{2}$ we now conclude that%
\begin{equation*}
\sum_{F\in \mathcal{F}_{I}}\sum_{J\in \mathcal{M}_{\mathbf{r}-\limfunc{deep}%
}\left( F\right) }\left( \frac{\mathrm{P}^{\alpha }\left( J,\mathbf{1}%
_{I\setminus F}\sigma \right) }{\left\vert J\right\vert ^{\frac{1}{n}}}%
\right) ^{2}\left\Vert \mathsf{P}_{F,J}^{\omega }\mathbf{x}\right\Vert
_{L^{2}\left( \omega \right) }^{2}\lesssim \left( \mathcal{E}_{\alpha }^{%
\limfunc{deep}}\right) ^{2}\left\vert I\right\vert _{\sigma },
\end{equation*}%
which is (\ref{RTS n}).
\end{proof}

This completes the proof of the estimate $\mathbf{Local}\left( I\right)
\lesssim \left( \left( \mathcal{E}_{\alpha }\right) ^{2}+A_{2}^{\alpha
}\right) \left\vert I\right\vert _{\sigma }$, i.e. for every dyadic
quasicube $L\in \Omega \mathcal{D}$,%
\begin{eqnarray}
\mathbf{Local}\left( L\right) &=&\sum_{F\in \mathcal{F}}\sum_{J\in \mathcal{M%
}_{\mathbf{r}-\limfunc{deep}}\left( F\right) :\ J\subset L}\left\Vert 
\mathsf{P}_{F,J}^{\omega }\mathbf{x}\right\Vert _{L^{2}\left( \omega \right)
}^{2}\left( \frac{\mathrm{P}^{\alpha }\left( J,\mathbf{1}_{L}\sigma \right) 
}{\left\vert J\right\vert ^{\frac{1}{n}}}\right) ^{2}  \label{local} \\
&\lesssim &\left( \left( \mathcal{E}_{\alpha }\right) ^{2}+A_{2}^{\alpha
}\right) \left\vert L\right\vert _{\sigma },\ \ \ L\in \Omega \mathcal{D}. 
\notag
\end{eqnarray}

\subsubsection{The shifted local estimate}

For future use, we prove a strengthening of this estimate to shifted
quasicubes $M\in \mathcal{S}\Omega \mathcal{D}$.

\begin{lemma}
\label{shifted}With notation as above and $M\in \mathcal{S}\Omega \mathcal{D}
$ a shifted quasicube,%
\begin{eqnarray}
\mathbf{Local}\left( M\right) &\equiv &\sum_{F\in \mathcal{F}}\sum_{J\in 
\mathcal{M}_{\mathbf{r}-\limfunc{deep}}\left( F\right) :\ J\subset
M}\left\Vert \mathsf{P}_{F,J}^{\omega }\mathbf{x}\right\Vert _{L^{2}\left(
\omega \right) }^{2}\left( \frac{\mathrm{P}^{\alpha }\left( J,\mathbf{1}%
_{M}\sigma \right) }{\left\vert J\right\vert ^{\frac{1}{n}}}\right) ^{2}
\label{shifted local} \\
&\lesssim &\left( \left( \mathcal{E}_{\alpha }\right) ^{2}+A_{2}^{\alpha
}\right) \left\vert M\right\vert _{\sigma },\ \ \ M\in \mathcal{S}\Omega 
\mathcal{D}.  \notag
\end{eqnarray}
\end{lemma}

\begin{proof}
We prove (\ref{shifted local}) by repeating the above proof of (\ref{local})
and noting the points requiring change. First we decompose 
\begin{equation*}
\mathbf{Local}\left( M\right) =\mathbf{Local}^{\limfunc{plug}}\left(
M\right) +\mathbf{Local}^{\func{hole}}\left( M\right)
\end{equation*}%
as above where%
\begin{align*}
& \mathbf{Local}^{\limfunc{plug}}\left( M\right) \equiv \sum_{F\in \mathcal{F%
}}\sum_{J\in \mathcal{M}_{\mathbf{r}-\limfunc{deep}}\left( F\right) :\
J\subset M}\left( \frac{\mathrm{P}^{\alpha }\left( J,\mathbf{1}_{F\cap
M}\sigma \right) }{\left\vert J\right\vert ^{\frac{1}{n}}}\right)
^{2}\left\Vert \mathsf{P}_{F,J}^{\omega }\mathbf{x}\right\Vert _{L^{2}\left(
\omega \right) }^{2} \\
& =\left\{ \sum_{F\in \mathcal{F}:\ F\subset \text{ some }M^{\prime }\in 
\mathfrak{C}_{\Omega \mathcal{D}}\left( M\right) }+\sum_{F\in \mathcal{F}:\
F\supsetneqq \text{ some }M^{\prime }\in \mathfrak{C}_{\Omega \mathcal{D}%
}\left( M\right) }\right\} \\
& \ \ \ \ \ \ \ \ \ \ \ \ \ \ \ \ \ \ \ \ \ \ \ \ \ \times \sum_{J\in 
\mathcal{M}_{\mathbf{r}-\limfunc{deep}}\left( F\right) :\ J\subset M}\left( 
\frac{\mathrm{P}^{\alpha }\left( J,\mathbf{1}_{F\cap M}\sigma \right) }{%
\left\vert J\right\vert ^{\frac{1}{n}}}\right) ^{2}\left\Vert \mathsf{P}%
_{F,J}^{\omega }\mathbf{x}\right\Vert _{L^{2}\left( \omega \right) }^{2} \\
& =A+B.
\end{align*}%
Term $A$ satisfies%
\begin{equation*}
A\lesssim \left( \left( \mathcal{E}_{\alpha }\right) ^{2}+A_{2}^{\alpha
}\right) \left\vert M\right\vert _{\sigma }\ ,
\end{equation*}%
just as above using $\left\Vert \mathsf{P}_{F,J}^{\omega }x\right\Vert
_{L^{2}\left( \omega \right) }^{2}\leq \left\Vert \mathsf{P}_{J}^{\omega }%
\mathbf{x}\right\Vert _{L^{2}\left( \omega \right) }^{2}$, and the fact that
the stopping quasicubes $\mathcal{F}$ satisfy a $\sigma $-Carleson measure
estimate, 
\begin{equation*}
\sum_{F\in \mathcal{F}:\ F\subset M}\left\vert F\right\vert _{\sigma
}\lesssim \left\vert M\right\vert _{\sigma }.
\end{equation*}%
Term $B$ is handled directly by Lemma \ref{refined lemma} with the shifted
quasicube $I_{0}=M$ to obtain%
\begin{equation*}
B\lesssim \left( \left( \mathcal{E}_{\alpha }\right) ^{2}+A_{2}^{\alpha
}\right) \left\vert M\right\vert _{\sigma }\ .
\end{equation*}%
Finally, to extend Lemma \ref{local hole} to shifted quasicubes $M\in 
\mathcal{S}\Omega \mathcal{D}$, we define%
\begin{equation*}
\mathcal{F}_{M}\equiv \left\{ F\in \mathcal{F}:F\varsubsetneqq M\right\}
\cup \left\{ M\right\} ,
\end{equation*}%
and follow along the proof there with only trivial changes. The analogue of (%
\ref{to obtain}) is now%
\begin{equation*}
\sum_{F^{\prime }\in \mathcal{F}_{M}:\ F^{\prime }\subset F^{\prime \prime
}}\sum_{K\in \mathcal{M}_{\mathbf{r}-\limfunc{deep}}\left( F^{\prime
}\right) }\left( \frac{\mathrm{P}^{\alpha }\left( K,\mathbf{1}_{F^{\prime
\prime }\setminus F^{\prime }}\sigma \right) }{\left\vert K\right\vert ^{%
\frac{1}{n}}}\right) ^{2}\left\Vert \widehat{\mathsf{P}}_{F^{\prime
},K}^{\omega }\mathbf{x}\right\Vert _{L^{2}\left( \omega \right) }^{2}\leq
\left( \mathcal{E}_{\alpha }^{\limfunc{deep}}\right) ^{2}\left\vert
F^{\prime \prime }\right\vert _{\sigma }\ ,
\end{equation*}%
the only change being that $\mathcal{F}_{M}$ now appears in place of $%
\mathcal{F}_{I}$, so that the deep energy condition still applies. We
conclude that%
\begin{equation*}
\mathbf{Local}^{\func{hole}}\left( M\right) \lesssim \left( \left( \mathcal{E%
}_{\alpha }\right) ^{2}+A_{2}^{\alpha }\right) \left\vert M\right\vert
_{\sigma }\ ,
\end{equation*}%
and this completes the proof of the estimate (\ref{shifted local}) in Lemma %
\ref{shifted}.
\end{proof}

\subsubsection{The global estimate}

Now we turn to proving the following estimate for the global part of the
first testing condition \eqref{e.t1 n}:%
\begin{equation*}
\int_{\mathbb{R}_{+}^{n+1}\setminus \widehat{I}}\mathbb{P}^{\alpha }\left( 
\mathbf{1}_{I}\sigma \right) ^{2}d\mu \lesssim \mathcal{A}_{2}^{\alpha ,\ast
}\left\vert I\right\vert _{\sigma }.
\end{equation*}%
We begin by decomposing the integral on the left into four pieces where we
use $F\sim J$ to denote the sum over those $F\in \mathcal{F}$ such that $%
J\in \mathcal{M}_{\mathbf{r}-\limfunc{deep}}\left( F\right) $. As a
particular consequence of Lemma \ref{tau ovelap}, we note that given $J$,
there are at most a fixed number $C$ of $F\in \mathcal{F}$ such that $F\sim
J $. We have:%
\begin{eqnarray*}
&&\int_{\mathbb{R}_{+}^{n+1}\setminus \widehat{I}}\mathbb{P}^{\alpha }\left( 
\mathbf{1}_{I}\sigma \right) ^{2}d\mu =\sum_{J:\ \left( c_{J},\ell \left(
J\right) \right) \in \mathbb{R}_{+}^{n+1}\setminus \widehat{I}}\mathbb{P}%
^{\alpha }\left( \mathbf{1}_{I}\sigma \right) \left( c_{J},\ell \left(
J\right) \right) ^{2}\sum_{\substack{ F\in \mathcal{F}  \\ J\in \mathcal{M}_{%
\mathbf{r}-\limfunc{deep}}\left( F\right) }}\left\Vert \mathsf{P}%
_{F,J}^{\omega }\frac{\mathbf{x}}{\left\vert J\right\vert ^{\frac{1}{n}}}%
\right\Vert _{L^{2}\left( \omega \right) }^{2} \\
&=&\left\{ \sum_{\substack{ J\cap 3I=\emptyset  \\ \ell \left( J\right) \leq
\ell \left( I\right) }}+\sum_{J\subset 3I\setminus I}+\sum_{\substack{ J\cap
I=\emptyset  \\ \ell \left( J\right) >\ell \left( I\right) }}%
+\sum_{J\supsetneqq I}\right\} \mathbb{P}^{\alpha }\left( \mathbf{1}%
_{I}\sigma \right) \left( c_{J},\ell \left( J\right) \right) ^{2}\sum 
_{\substack{ F\in \mathcal{F}:  \\ J\in \mathcal{M}_{\mathbf{r}-\limfunc{deep%
}}\left( F\right) }}\left\Vert \mathsf{P}_{F,J}^{\omega }\frac{\mathbf{x}}{%
\left\vert J\right\vert ^{\frac{1}{n}}}\right\Vert _{L^{2}\left( \omega
\right) }^{2} \\
&=&A+B+C+D.
\end{eqnarray*}

We further decompose term $A$ according to the length of $J$ and its
distance from $I$, and then use Lemma \ref{tau ovelap} with $I_{0}=J$ to
obtain:%
\begin{eqnarray*}
A &\lesssim &\sum_{m=0}^{\infty }\sum_{k=1}^{\infty }\sum_{\substack{ %
J\subset 3^{k+1}I\setminus 3^{k}I  \\ \ell \left( J\right) =2^{-m}\ell
\left( I\right) }}\left( \frac{2^{-m}\left\vert I\right\vert ^{\frac{1}{n}}}{%
\limfunc{quasidist}\left( J,I\right) ^{n+1-\alpha }}\left\vert I\right\vert
_{\sigma }\right) ^{2}\mathbf{\tau }\left\vert J\right\vert _{\omega } \\
&\lesssim &\sum_{m=0}^{\infty }2^{-2m}\sum_{k=1}^{\infty }\frac{\left\vert
I\right\vert ^{\frac{2}{n}}\left\vert I\right\vert _{\sigma }\left\vert
3^{k+1}I\setminus 3^{k}I\right\vert _{\omega }}{\left\vert 3^{k}I\right\vert
^{2\left( 1+\frac{1}{n}-\frac{\alpha }{n}\right) }}\left\vert I\right\vert
_{\sigma } \\
&\lesssim &\sum_{m=0}^{\infty }2^{-2m}\sum_{k=1}^{\infty }3^{-2k}\left\{ 
\frac{\left\vert 3^{k+1}I\right\vert _{\sigma }\left\vert
3^{k+1}I\right\vert _{\omega }}{\left\vert 3^{k}I\right\vert ^{2\left( 1-%
\frac{\alpha }{n}\right) }}\right\} \left\vert I\right\vert _{\sigma
}\lesssim A_{2}^{\alpha }\left\vert I\right\vert _{\sigma }.
\end{eqnarray*}

For term $B$ we let $\mathcal{J}^{\ast }\equiv \dbigcup\limits_{F\in 
\mathcal{F}}\dbigcup\limits_{J\in \mathcal{M}_{r-\limfunc{deep}}\left(
F\right) }\left\{ K\in \mathcal{C}_{F}^{\limfunc{good},\mathbf{\tau }-%
\limfunc{shift}}:K\subset J\right\} $, which is the union of all quasicubes $%
K$ occurring in the projections $\mathsf{P}_{F,J}^{\omega }$. We further
decompose term $B$ according to the length of $J$ and use the fractional
Poisson inequality (\ref{e.Jsimeq}) in Lemma \ref{Poisson inequality} on the
neighbour $I^{\prime }$ of $I$ containing $K$,%
\begin{equation*}
\mathrm{P}^{\alpha }\left( K,\mathbf{1}_{I}\sigma \right) ^{2}\lesssim
\left( \frac{\ell \left( K\right) }{\ell \left( I\right) }\right)
^{2-2\left( n+1-\alpha \right) \varepsilon }\mathrm{P}^{\alpha }\left( I,%
\mathbf{1}_{I}\sigma \right) ^{2},\ \ \ \ \ K\in \mathcal{J}^{\ast
},K\subset 3I\setminus I,
\end{equation*}%
where we have used that $\mathrm{P}^{\alpha }\left( I^{\prime },\mathbf{1}%
_{I}\sigma \right) \approx \mathrm{P}^{\alpha }\left( I,\mathbf{1}_{I}\sigma
\right) $ and that the quasicubes $K\in \mathcal{J}^{\ast }$ are good. We
then obtain from Lemma \ref{tau ovelap} with $I_{0}=J$,%
\begin{eqnarray*}
B &=&\sum_{J\subset 3I\setminus I}\left( \frac{\mathrm{P}^{\alpha }\left( J,%
\mathbf{1}_{I}\sigma \right) }{\left\vert J\right\vert ^{\frac{1}{n}}}%
\right) ^{2}\sum_{\substack{ F\in \mathcal{F}:  \\ J\in \mathcal{M}_{\mathbf{%
r}-\limfunc{deep}}\left( F\right) }}\left\Vert \mathsf{P}_{F,J}^{\omega
}x\right\Vert _{L^{2}\left( \omega \right) }^{2} \\
&\lesssim &\sum_{m=0}^{\infty }\sum_{\substack{ K\subset 3I\setminus I  \\ %
\ell \left( K\right) =2^{-m}\ell \left( I\right) }}\left( 2^{-m}\right)
^{2-2\left( n+1-\alpha \right) \varepsilon }\left( \frac{\left\vert
I\right\vert _{\sigma }}{\left\vert I\right\vert ^{1-\frac{\alpha }{n}}}%
\right) ^{2}\mathbf{\tau \ }\left\vert K\right\vert _{\omega } \\
&\lesssim &\mathbf{\tau }\sum_{m=0}^{\infty }\left( 2^{-m}\right)
^{2-2\left( n+1-\alpha \right) \varepsilon }\frac{\left\vert 3I\right\vert
_{\sigma }\left\vert 3I\right\vert _{\omega }}{\left\vert 3I\right\vert
^{2\left( 1-\frac{\alpha }{n}\right) }}\left\vert I\right\vert _{\sigma
}\lesssim \mathbf{\tau }A_{2}^{\alpha }\left\vert I\right\vert _{\sigma }.
\end{eqnarray*}

For term $C$ we will have to group the quasicubes $J$ into blocks $B_{i}$,
and then exploit Lemma \ref{tau ovelap}. We first split the sum according to
whether or not $I$ intersects the triple of $J$:%
\begin{eqnarray*}
C &\approx &\left\{ \sum_{\substack{ J:\ I\cap 3J=\emptyset  \\ \ell \left(
J\right) >\ell \left( I\right) }}+\sum_{\substack{ J:\ I\subset 3J\setminus
J  \\ \ell \left( J\right) >\ell \left( I\right) }}\right\} \left( \frac{%
\left\vert J\right\vert ^{\frac{1}{n}}}{\left( \left\vert J\right\vert ^{%
\frac{1}{n}}+\limfunc{dist}\left( J,I\right) \right) ^{n+1-\alpha }}%
\left\vert I\right\vert _{\sigma }\right) ^{2}\sum_{\substack{ F\in \mathcal{%
F}:  \\ J\in \mathcal{M}_{\mathbf{r}-\limfunc{deep}}\left( F\right) }}%
\left\Vert \mathsf{P}_{F,J}^{\omega }\frac{\mathbf{x}}{\left\vert
J\right\vert ^{\frac{1}{n}}}\right\Vert _{L^{2}\left( \omega \right) }^{2} \\
&=&C_{1}+C_{2}.
\end{eqnarray*}%
We first consider $C_{1}$. Let $\mathcal{M}$ be the maximal dyadic
quasicubes in $\left\{ Q:3Q\cap I=\emptyset \right\} $, and then let $%
\left\{ B_{i}\right\} _{i=1}^{\infty }$ be an enumeration of those $Q\in 
\mathcal{M}$ whose side length is at least $\ell \left( I\right) $. Now we
further decompose the sum in $C_{1}$ by grouping the quasicubes $J$ into the
Whitney quasicubes $B_{i}$, and then using Lemma \ref{tau ovelap} with $%
I_{0}=J$: 
\begin{eqnarray*}
C_{1} &\leq &\sum_{i=1}^{\infty }\sum_{J:\ J\subset B_{i}}\left( \frac{1}{%
\left( \left\vert J\right\vert ^{\frac{1}{n}}+\limfunc{dist}\left(
J,I\right) \right) ^{n+1-\alpha }}\left\vert I\right\vert _{\sigma }\right)
^{2}\sum_{\substack{ F\in \mathcal{F}:  \\ J\in \mathcal{M}_{\mathbf{r}-%
\limfunc{deep}}\left( F\right) }}\left\Vert \mathsf{P}_{F,J}^{\omega }%
\mathbf{x}\right\Vert _{L^{2}\left( \omega \right) }^{2} \\
&\lesssim &\sum_{i=1}^{\infty }\left( \frac{1}{\left( \left\vert
B_{i}\right\vert ^{\frac{1}{n}}+\limfunc{dist}\left( B_{i},I\right) \right)
^{n+1-\alpha }}\left\vert I\right\vert _{\sigma }\right) ^{2}\sum_{J:\
J\subset B_{i}}\sum_{\substack{ F\in \mathcal{F}:  \\ J\in \mathcal{M}_{%
\mathbf{r}-\limfunc{deep}}\left( F\right) }}\left\Vert \mathsf{P}%
_{F,J}^{\omega }\mathbf{x}\right\Vert _{L^{2}\left( \omega \right) }^{2} \\
&\lesssim &\sum_{i=1}^{\infty }\left( \frac{1}{\left( \left\vert
B_{i}\right\vert ^{\frac{1}{n}}+\limfunc{dist}\left( B_{i},I\right) \right)
^{n+1-\alpha }}\left\vert I\right\vert _{\sigma }\right) ^{2}\sum_{J:\
J\subset B_{i}}\mathbf{\tau \;}\left\vert J\right\vert ^{\frac{2}{n}%
}\left\vert J\right\vert _{\omega } \\
&\lesssim &\sum_{i=1}^{\infty }\left( \frac{1}{\left( \left\vert
B_{i}\right\vert ^{\frac{1}{n}}+\limfunc{dist}\left( B_{i},I\right) \right)
^{n+1-\alpha }}\left\vert I\right\vert _{\sigma }\right) ^{2}\mathbf{\tau \ }%
\left\vert B_{i}\right\vert ^{\frac{2}{n}}\left\vert B_{i}\right\vert
_{\omega } \\
&\lesssim &\mathbf{\tau }\left\{ \sum_{i=1}^{\infty }\frac{\left\vert
B_{i}\right\vert _{\omega }\left\vert I\right\vert _{\sigma }}{\left\vert
B_{i}\right\vert ^{2\left( 1-\frac{\alpha }{n}\right) }}\right\} \left\vert
I\right\vert _{\sigma }\ ,
\end{eqnarray*}%
and 
\begin{eqnarray*}
\sum_{i=1}^{\infty }\frac{\left\vert B_{i}\right\vert _{\omega }\left\vert
I\right\vert _{\sigma }}{\left\vert B_{i}\right\vert ^{2\left( 1-\frac{%
\alpha }{n}\right) }} &=&\frac{\left\vert I\right\vert _{\sigma }}{%
\left\vert I\right\vert ^{1-\frac{\alpha }{n}}}\sum_{i=1}^{\infty }\frac{%
\left\vert I\right\vert ^{1-\frac{\alpha }{n}}}{\left\vert B_{i}\right\vert
^{2\left( 1-\frac{\alpha }{n}\right) }}\left\vert B_{i}\right\vert _{\omega }
\\
&\approx &\frac{\left\vert I\right\vert _{\sigma }}{\left\vert I\right\vert
^{1-\frac{\alpha }{n}}}\sum_{i=1}^{\infty }\int_{B_{i}}\frac{\left\vert
I\right\vert ^{1-\frac{\alpha }{n}}}{\limfunc{quasidist}\left( x,I\right)
^{2\left( n-\alpha \right) }}d\omega \left( x\right) \\
&\approx &\frac{\left\vert I\right\vert _{\sigma }}{\left\vert I\right\vert
^{1-\frac{\alpha }{n}}}\sum_{i=1}^{\infty }\int_{B_{i}}\left( \frac{%
\left\vert I\right\vert ^{\frac{1}{n}}}{\left[ \left\vert I\right\vert ^{%
\frac{1}{n}}+\limfunc{quasidist}\left( x,I\right) \right] ^{2}}\right)
^{n-\alpha }d\omega \left( x\right) \\
&\leq &\frac{\left\vert I\right\vert _{\sigma }}{\left\vert I\right\vert ^{1-%
\frac{\alpha }{n}}}\mathcal{P}^{\alpha }\left( I,\omega \right) \leq 
\mathcal{A}_{2}^{\alpha ,\ast },
\end{eqnarray*}%
we obtain%
\begin{equation*}
C_{1}\lesssim \mathbf{\tau }\mathcal{A}_{2}^{\alpha ,\ast }\left\vert
I\right\vert _{\sigma }
\end{equation*}

Next we turn to estimating term $C_{2}$ where the triple of $J$ contains $I$
but $J$ itself does not. Note that there are at most $2^{n}$ such quasicubes 
$J$ of a given side length, one in each `generalized octant' relative to $I$%
. So with this in mind we sum over the quasicubes $J$ according to their
lengths to obtain%
\begin{eqnarray*}
C_{2} &=&\sum_{m=0}^{\infty }\sum_{\substack{ J:\ I\subset 3J\setminus J  \\ %
\ell \left( J\right) =2^{m}\ell \left( I\right) }}\left( \frac{\left\vert
J\right\vert ^{\frac{1}{n}}}{\left( \left\vert J\right\vert ^{\frac{1}{n}}+%
\limfunc{dist}\left( J,I\right) \right) ^{n+1-\alpha }}\left\vert
I\right\vert _{\sigma }\right) ^{2}\sum_{\substack{ F\in \mathcal{F}:  \\ %
J\in \mathcal{M}_{\mathbf{r}-\limfunc{deep}}\left( F\right) }}\left\Vert 
\mathsf{P}_{F,J}^{\omega }\frac{\mathbf{x}}{\left\vert J\right\vert ^{\frac{1%
}{n}}}\right\Vert _{L^{2}\left( \omega \right) }^{2} \\
&\lesssim &\sum_{m=0}^{\infty }\left( \frac{\left\vert I\right\vert _{\sigma
}}{\left\vert 2^{m}I\right\vert ^{1-\frac{\alpha }{n}}}\right) ^{2}\mathbf{%
\tau \ }\left\vert 3\cdot 2^{m}I\right\vert _{\omega }=\mathbf{\tau }\left\{ 
\frac{\left\vert I\right\vert _{\sigma }}{\left\vert I\right\vert ^{1-\frac{%
\alpha }{n}}}\sum_{m=0}^{\infty }\frac{\left\vert I\right\vert ^{1-\frac{%
\alpha }{n}}\left\vert 3\cdot 2^{m}I\right\vert _{\omega }}{\left\vert
2^{m}I\right\vert ^{2\left( 1-\frac{\alpha }{n}\right) }}\right\} \left\vert
I\right\vert _{\sigma } \\
&\lesssim &\mathbf{\tau }\left\{ \frac{\left\vert I\right\vert _{\sigma }}{%
\left\vert I\right\vert ^{1-\frac{\alpha }{n}}}\mathcal{P}^{\alpha }\left(
I,\omega \right) \right\} \left\vert I\right\vert _{\sigma }\leq \mathbf{%
\tau }\mathcal{A}_{2}^{\alpha ,\ast }\left\vert I\right\vert _{\sigma },
\end{eqnarray*}%
since in analogy with the corresponding estimate above,%
\begin{equation*}
\sum_{m=0}^{\infty }\frac{\left\vert I\right\vert ^{1-\frac{\alpha }{n}%
}\left\vert 3\cdot 2^{n}I\right\vert _{\omega }}{\left\vert
2^{m}I\right\vert ^{2\left( 1-\frac{\alpha }{n}\right) }}=\int
\sum_{m=0}^{\infty }\frac{\left\vert I\right\vert ^{1-\frac{\alpha }{n}}}{%
\left\vert 2^{m}I\right\vert ^{2\left( 1-\frac{\alpha }{n}\right) }}\mathbf{1%
}_{3\cdot 2^{m}I}\left( x\right) \ d\omega \left( x\right) \lesssim \mathcal{%
P}^{\alpha }\left( I,\omega \right) .
\end{equation*}

Finally, we turn to term $D$, which is handled in the same way as term $%
C_{2} $. The quasicubes $J$ occurring here are included in the set of
ancestors $A_{k}\equiv \pi _{\Omega \mathcal{D}}^{\left( k\right) }I$ of $I$%
, $1\leq k<\infty $. We thus have from Lemma \ref{tau ovelap} again,%
\begin{eqnarray*}
D &=&\sum_{k=1}^{\infty }\mathbb{P}^{\alpha }\left( \mathbf{1}_{I}\sigma
\right) \left( c\left( A_{k}\right) ,\left\vert A_{k}\right\vert ^{\frac{1}{n%
}}\right) ^{2}\sum_{\substack{ F\in \mathcal{F}:  \\ A_{k}\in \mathcal{M}_{%
\mathbf{r}-\limfunc{deep}}\left( F\right) }}\left\Vert \mathsf{P}%
_{F,A_{k}}^{\omega }\frac{\mathbf{x}}{\lvert A_{k}\rvert ^{\frac{1}{n}}}%
\right\Vert _{L^{2}\left( \omega \right) }^{2} \\
&\lesssim &\sum_{k=1}^{\infty }\left( \frac{\left\vert I\right\vert _{\sigma
}\left\vert A_{k}\right\vert ^{\frac{1}{n}}}{\left\vert A_{k}\right\vert ^{1+%
\frac{1-\alpha }{n}}}\right) ^{2}\mathbf{\tau \;}\left\vert A_{k}\right\vert
_{\omega }=\mathbf{\tau }\left\{ \frac{\left\vert I\right\vert _{\sigma }}{%
\left\vert I\right\vert ^{1-\frac{\alpha }{n}}}\sum_{k=1}^{\infty }\frac{%
\left\vert I\right\vert ^{1-\frac{\alpha }{n}}}{\left\vert A_{k}\right\vert
^{2\left( 1-\frac{\alpha }{n}\right) }}\left\vert A_{k}\right\vert _{\omega
}\right\} \left\vert I\right\vert _{\sigma } \\
&\lesssim &\left\{ \frac{\left\vert I\right\vert _{\sigma }}{\left\vert
I\right\vert ^{1-\frac{\alpha }{n}}}\mathcal{P}^{\alpha }\left( I,\omega
\right) \right\} \left\vert I\right\vert _{\sigma }\lesssim \mathcal{A}%
_{2}^{\alpha ,\ast }\left\vert I\right\vert _{\sigma },
\end{eqnarray*}%
since%
\begin{eqnarray*}
\sum_{k=1}^{\infty }\frac{\left\vert I\right\vert ^{1-\frac{\alpha }{n}}}{%
\left\vert A_{k}\right\vert ^{2\left( 1-\frac{\alpha }{n}\right) }}%
\left\vert A_{k}\right\vert _{\omega } &=&\int \sum_{k=1}^{\infty }\frac{%
\left\vert I\right\vert ^{1-\frac{\alpha }{n}}}{\left\vert A_{k}\right\vert
^{2\left( 1-\frac{\alpha }{n}\right) }}\mathbf{1}_{A_{k}\left( x\right)
}d\omega \left( x\right) \\
&=&\int \sum_{k=1}^{\infty }\frac{1}{2^{2\left( 1-\frac{\alpha }{n}\right) k}%
}\frac{\left\vert I\right\vert ^{1-\frac{\alpha }{n}}}{\left\vert
I\right\vert ^{2\left( 1-\frac{\alpha }{n}\right) }}\mathbf{1}_{A_{k}\left(
x\right) }d\omega \left( x\right) \\
&\lesssim &\int \left( \frac{\left\vert I\right\vert ^{\frac{1}{n}}}{\left[
\left\vert I\right\vert ^{\frac{1}{n}}+\limfunc{quasidist}\left( x,I\right) %
\right] ^{2}}\right) ^{n-\alpha }d\omega \left( x\right) =\mathcal{P}%
^{\alpha }\left( I,\omega \right) .
\end{eqnarray*}

\subsection{The dual Poisson testing inequality}

Again we split the integration on the left side of (\ref{e.t2 n}) into local
and global parts:%
\begin{equation}
\int_{\mathbb{R}}[\mathbb{P}^{\alpha \ast }(t\mathbf{1}_{\widehat{I}}\mu
)]^{2}\sigma =\int_{I}[\mathbb{P}^{\alpha \ast }(t\mathbf{1}_{\widehat{I}%
}\mu )]^{2}\sigma +\int_{\mathbb{R}\setminus I}[\mathbb{P}^{\alpha \ast }(t%
\mathbf{1}_{\widehat{I}}\mu )]^{2}\sigma \equiv \mathbf{Local}\left(
I\right) +\mathbf{Global}\left( I\right) .  \label{loc and glo}
\end{equation}%
Here is a brief schematic diagram of the decompositions, with bounds in $%
\fbox{}$, used in this subsection:%
\begin{equation*}
\fbox{$%
\begin{array}{ccccc}
\mathbf{Local}\left( I\right) &  &  &  &  \\ 
\downarrow &  &  &  &  \\ 
U_{s} &  &  &  &  \\ 
\downarrow &  &  &  &  \\ 
T_{s}^{\limfunc{proximal}} & + & V_{s}^{\limfunc{remote}} &  &  \\ 
\fbox{$\mathcal{A}_{2}^{\alpha }+\mathcal{E}_{\alpha }\sqrt{A_{2}^{\alpha }}%
+A_{2}^{\alpha }$} &  & \downarrow &  &  \\ 
&  & \downarrow &  &  \\ 
&  & T_{s}^{\limfunc{difference}} & + & T_{s}^{\limfunc{intersection}} \\ 
&  & \fbox{$\mathcal{A}_{2}^{\alpha }+\mathcal{E}_{\alpha }\sqrt{%
A_{2}^{\alpha }}+A_{2}^{\alpha }$} &  & \fbox{$\mathcal{E}_{\alpha }\sqrt{%
A_{2}^{\alpha }}$}%
\end{array}%
$}\text{ and }\fbox{$%
\begin{array}{ccc}
\mathbf{Global}\left( I\right) &  &  \\ 
\downarrow &  &  \\ 
A & + & B \\ 
\fbox{$A_{2}^{\alpha }$} &  & \fbox{$\mathcal{A}_{2}^{\alpha }$}%
\end{array}%
$}.
\end{equation*}%
We begin with the local part $\mathbf{Local}\left( I\right) $. Note that the
right hand side of (\ref{e.t2 n}) is 
\begin{equation}
\int_{\widehat{I}}t^{2}d\mu =\sum_{F\in \mathcal{F}}\sum_{\substack{ J\in 
\mathcal{M}_{\mathbf{r}-\limfunc{deep}}\left( F\right)  \\ J\subset I}}%
\lVert \mathsf{P}_{F,J}^{\omega }\mathbf{x}\rVert _{L^{2}\left( \omega
\right) }^{2}\,,  \label{mu I hat}
\end{equation}%
We now compute 
\begin{equation}
\mathbb{P}^{\alpha \ast }\left( t\mathbf{1}_{\widehat{I}}\mu \right) \left(
y\right) =\sum_{F\in \mathcal{F}}\sum_{\substack{ J\in \mathcal{M}_{\mathbf{r%
}-\limfunc{deep}}\left( F\right)  \\ J\subset I}}\frac{\lVert \mathsf{P}%
_{F,J}^{\omega }\mathbf{x}\rVert _{L^{2}\left( \omega \right) }^{2}}{\left(
\left\vert J\right\vert ^{\frac{1}{n}}+\left\vert y-c_{J}\right\vert \right)
^{n+1-\alpha }},  \label{PI hat}
\end{equation}%
and then expand the square and integrate to obtain that the local term $%
\mathbf{Local}\left( I\right) $ is 
\begin{equation*}
\sum_{\substack{ F\in \mathcal{F}  \\ J\in \mathcal{M}_{\mathbf{r}-\limfunc{%
deep}}\left( F\right)  \\ J\subset I}}\sum_{\substack{ F^{\prime }\in 
\mathcal{F}:  \\ J^{\prime }\in \mathcal{M}_{\mathbf{r}-\limfunc{deep}%
}\left( F^{\prime }\right)  \\ J^{\prime }\subset I}}\int_{I}\frac{%
\left\Vert \mathsf{P}_{F,J}^{\omega }\mathbf{x}\right\Vert _{L^{2}\left(
\omega \right) }^{2}}{\left( \left\vert J\right\vert ^{\frac{1}{n}%
}+\left\vert y-c_{J}\right\vert \right) ^{n+1-\alpha }}\frac{\left\Vert 
\mathsf{P}_{F^{\prime },J^{\prime }}^{\omega }\mathbf{x}\right\Vert
_{L^{2}\left( \omega \right) }^{2}}{\left( \left\vert J^{\prime }\right\vert
^{\frac{1}{n}}+\left\vert y-c_{J^{\prime }}\right\vert \right) ^{n+1-\alpha }%
}d\sigma \left( y\right) .
\end{equation*}%
By symmetry we may assume that $\ell \left( J^{\prime }\right) \leq \ell
\left( J\right) $. We fix an integer $s$, and consider those quasicubes $J$
and $J^{\prime }$ with $\ell \left( J^{\prime }\right) =2^{-s}\ell \left(
J\right) $. For fixed $s$ we will control the expression 
\begin{eqnarray*}
U_{s} &\equiv &\sum_{\substack{ F,F^{\prime }\in \mathcal{F}}}\sum 
_{\substack{ J\in \mathcal{M}_{\mathbf{r}-\limfunc{deep}}\left( F\right) ,\
J^{\prime }\in \mathcal{M}_{\mathbf{r}-\limfunc{deep}}\left( F^{\prime
}\right)  \\ J,J^{\prime }\subset I,\ \ell \left( J^{\prime }\right)
=2^{-s}\ell \left( J\right) }} \\
&&\times \int_{I}\frac{\left\Vert \mathsf{P}_{F,J}^{\omega }\mathbf{x}%
\right\Vert _{L^{2}\left( \omega \right) }^{2}}{\left( \left\vert
J\right\vert ^{\frac{1}{n}}+\left\vert y-c_{J}\right\vert \right)
^{n+1-\alpha }}\frac{\left\Vert \mathsf{P}_{F^{\prime },J^{\prime }}^{\omega
}\mathbf{x}\right\Vert _{L^{2}\left( \omega \right) }^{2}}{\left( \left\vert
J^{\prime }\right\vert ^{\frac{1}{n}}+\left\vert y-c_{J^{\prime
}}\right\vert \right) ^{n+1-\alpha }}d\sigma \left( y\right) ,
\end{eqnarray*}%
by proving that%
\begin{equation}
U_{s}\lesssim 2^{-\varepsilon s}\left( \mathcal{A}_{2}^{\alpha }+\mathcal{E}%
_{\alpha }\sqrt{A_{2}^{\alpha }}\right) .  \label{Us bound}
\end{equation}%
With this accomplished, we can sum in $s\geq 0$ to control the local term $%
\mathbf{Local}\left( I\right) $.

Our first decomposition is to write%
\begin{equation}
U_{s}=T_{s}^{\limfunc{proximal}}+V_{s}^{\limfunc{remote}}\ ,
\label{initial decomp}
\end{equation}%
where we fix $\varepsilon >0$ to be chosen later ($\varepsilon =\frac{1}{2n}$
works), and in the `proximal' term $T_{s}^{\limfunc{proximal}}$ we restrict
the summation over pairs of quasicubes $J,J^{\prime }$ to those satisfying $%
\left\vert c\left( J\right) -c\left( J^{\prime }\right) \right\vert _{%
\limfunc{quasi}}<2^{s\varepsilon }\ell \left( J\right) $; while in the
`remote' term $V_{s}^{\limfunc{remote}}$ we restrict the summation over
pairs of quasicubes $J,J^{\prime }$ to those satisfying the opposite
inequality $\left\vert c\left( J\right) -c\left( J^{\prime }\right)
\right\vert _{\limfunc{quasi}}\geq 2^{s\varepsilon }\ell \left( J\right) $.
Then we further decompose 
\begin{equation*}
V_{s}^{\limfunc{remote}}=T_{s}^{\limfunc{difference}}+T_{s}^{\limfunc{%
intersection}},
\end{equation*}%
where in the `difference' term $T_{s}^{\limfunc{difference}}$ we restict
integration in $y$ to the difference $I\setminus B\left( J,J^{\prime
}\right) $ of $I$ and 
\begin{equation*}
B\left( J,J^{\prime }\right) \equiv B\left( c_{J},\frac{1}{2}\left\vert
c_{J}-c_{J^{\prime }}\right\vert _{\limfunc{quasi}}\right) ,
\end{equation*}%
the ball centered at $c_{J}$ with radius $\frac{1}{2}\left\vert
c_{J}-c_{J^{\prime }}\right\vert _{\limfunc{quasi}}$; while in the
`intersection' term $T_{s}^{\limfunc{intersection}}$ we restict integration
in $y$ to the intersection $I\cap B\left( J,J^{\prime }\right) $ of $I$ with
the ball $B\left( J,J^{\prime }\right) $; i.e. 
\begin{eqnarray*}
T_{s}^{\limfunc{intersection}} &\equiv &\sum_{\substack{ F,F^{\prime }\in 
\mathcal{F}}}\sum_{\substack{ J\in \mathcal{M}_{\mathbf{r}-\limfunc{deep}%
}\left( F\right) ,\ J^{\prime }\in \mathcal{M}_{\mathbf{r}-\limfunc{deep}%
}\left( F^{\prime }\right)  \\ J,J^{\prime }\subset I,\ \ell \left(
J^{\prime }\right) =2^{-s}\ell \left( J\right)  \\ \left\vert c\left(
J\right) -c\left( J^{\prime }\right) \right\vert _{\limfunc{quasi}}\geq
2^{s\left( 1+\varepsilon \right) }\ell \left( J^{\prime }\right) }} \\
&&\times \int_{I\cap B\left( J,J^{\prime }\right) }\frac{\left\Vert \mathsf{P%
}_{F,J}^{\omega }\mathbf{x}\right\Vert _{L^{2}\left( \omega \right) }^{2}}{%
\left( \left\vert J\right\vert ^{\frac{1}{n}}+\left\vert y-c_{J}\right\vert
\right) ^{n+1-\alpha }}\frac{\left\Vert \mathsf{P}_{F^{\prime },J^{\prime
}}^{\omega }\mathbf{x}\right\Vert _{L^{2}\left( \omega \right) }^{2}}{\left(
\left\vert J^{\prime }\right\vert ^{\frac{1}{n}}+\left\vert y-c_{J^{\prime
}}\right\vert \right) ^{n+1-\alpha }}d\sigma \left( y\right) ,
\end{eqnarray*}%
We will exploit the restriction of integration to $I\cap B\left( J,J^{\prime
}\right) $, together with the condition 
\begin{equation*}
\left\vert c_{J}-c_{J^{\prime }}\right\vert _{\limfunc{quasi}}\geq
2^{s\left( 1+\varepsilon \right) }\ell \left( J^{\prime }\right)
=2^{s\varepsilon }\ell \left( J\right) ,
\end{equation*}%
in establishing (\ref{important}) below, which will then give an estimate
for the term $T_{s}^{\limfunc{intersection}}$ using an argument dual to that
used for the other terms $T_{s}^{\limfunc{proximal}}$ and $T_{s}^{\limfunc{%
difference}}$. We now turn to estimating the proximal and difference terms.

\subsubsection{The proximal and difference terms}

We have using (\ref{mu I hat}) that%
\begin{align*}
T_{s}^{\limfunc{proximal}}& \equiv \sum_{\substack{ F,F^{\prime }\in 
\mathcal{F}}}\sum_{\substack{ J\in \mathcal{M}_{\mathbf{r}-\limfunc{deep}%
}\left( F\right) ,\ J^{\prime }\in \mathcal{M}_{\mathbf{r}-\limfunc{deep}%
}\left( F^{\prime }\right)  \\ J,J^{\prime }\subset I,\ \ell \left(
J^{\prime }\right) =2^{-s}\ell \left( J\right) \text{ and }\left\vert
c_{J}-c_{J^{\prime }}\right\vert _{\limfunc{quasi}}<2^{s\varepsilon }\ell
\left( J\right) }} \\
& \times \int_{I}\frac{\left\Vert \mathsf{P}_{F,J}^{\omega }\mathbf{x}%
\right\Vert _{L^{2}\left( \omega \right) }^{2}}{\left( \left\vert
J\right\vert ^{\frac{1}{n}}+\left\vert y-c_{J}\right\vert \right)
^{n+1-\alpha }}\frac{\left\Vert \mathsf{P}_{F^{\prime },J^{\prime }}^{\omega
}\mathbf{x}\right\Vert _{L^{2}\left( \omega \right) }^{2}}{\left( \left\vert
J^{\prime }\right\vert ^{\frac{1}{n}}+\left\vert y-c_{J^{\prime
}}\right\vert \right) ^{n+1-\alpha }}d\sigma \left( y\right) \\
& \leq M_{s}^{\limfunc{proximal}}\sum_{F\in \mathcal{F}}\sum_{\substack{ %
J\in \mathcal{M}_{\mathbf{r}-\limfunc{deep}}\left( F\right)  \\ J\subset I}}%
\lVert \mathsf{P}_{F,J}^{\omega }z\rVert _{\omega }^{2}=M_{s}^{\limfunc{%
proximal}}\int_{\widehat{I}}t^{2}d\mu ,
\end{align*}%
where%
\begin{align*}
M_{s}^{\limfunc{proximal}}& \equiv \sup_{F\in \mathcal{F}}\sup_{\substack{ %
J\in \mathcal{M}_{\mathbf{r}-\limfunc{deep}}\left( F\right) }}A_{s}^{%
\limfunc{proximal}}\left( J\right) ; \\
A_{s}^{\limfunc{proximal}}\left( J\right) & \equiv \sum_{F^{\prime }\in 
\mathcal{F}}\sum_{\substack{ J^{\prime }\in \mathcal{M}_{\mathbf{r}-\limfunc{%
deep}}\left( F^{\prime }\right)  \\ J^{\prime }\subset I,\ \ell \left(
J^{\prime }\right) =2^{-s}\ell \left( J\right) \text{ and }\left\vert
c_{J}-c_{J^{\prime }}\right\vert _{\limfunc{quasi}}<2^{s\varepsilon }\ell
\left( J\right) }}\int_{I}S_{\left( J^{\prime },J\right) }^{F^{\prime
}}\left( y\right) d\sigma \left( y\right) ; \\
S_{\left( J^{\prime },J\right) }^{F^{\prime }}\left( x\right) & \equiv \frac{%
1}{\left( \left\vert J\right\vert ^{\frac{1}{n}}+\left\vert
y-c_{J}\right\vert \right) ^{n+1-\alpha }}\frac{\left\Vert \mathsf{P}%
_{F^{\prime },J^{\prime }}^{\omega }\mathbf{x}\right\Vert _{L^{2}\left(
\omega \right) }^{2}}{\left( \left\vert J^{\prime }\right\vert ^{\frac{1}{n}%
}+\left\vert y-c_{J^{\prime }}\right\vert \right) ^{n+1-\alpha }},
\end{align*}%
and similarly%
\begin{align*}
T_{s}^{\limfunc{difference}}& \equiv \sum_{\substack{ F,F^{\prime }\in 
\mathcal{F}}}\sum_{\substack{ J\in \mathcal{M}_{\mathbf{r}-\limfunc{deep}%
}\left( F\right) ,\ J^{\prime }\in \mathcal{M}_{\mathbf{r}-\limfunc{deep}%
}\left( F^{\prime }\right)  \\ J,J^{\prime }\subset I,\ \ell \left(
J^{\prime }\right) =2^{-s}\ell \left( J\right) \text{ and }\left\vert
c_{J}-c_{J^{\prime }}\right\vert _{\limfunc{quasi}}\geq 2^{s\varepsilon
}\ell \left( J\right) }} \\
& \times \int_{I\setminus B\left( J,J^{\prime }\right) }\frac{\left\Vert 
\mathsf{P}_{F,J}^{\omega }\mathbf{x}\right\Vert _{L^{2}\left( \omega \right)
}^{2}}{\left( \left\vert J\right\vert ^{\frac{1}{n}}+\left\vert
y-c_{J}\right\vert \right) ^{n+1-\alpha }}\frac{\left\Vert \mathsf{P}%
_{F^{\prime },J^{\prime }}^{\omega }\mathbf{x}\right\Vert _{L^{2}\left(
\omega \right) }^{2}}{\left( \left\vert J^{\prime }\right\vert ^{\frac{1}{n}%
}+\left\vert y-c_{J^{\prime }}\right\vert \right) ^{n+1-\alpha }}d\sigma
\left( y\right) \\
& \leq M_{s}^{\limfunc{difference}}\sum_{F\in \mathcal{F}}\sum_{\substack{ %
J\in \mathcal{M}_{\mathbf{r}-\limfunc{deep}}\left( F\right)  \\ J\subset I}}%
\lVert \mathsf{P}_{F,J}^{\omega }z\rVert _{\omega }^{2}=M_{s}^{\limfunc{%
difference}}\int_{\widehat{I}}t^{2}d\mu ,
\end{align*}%
where%
\begin{eqnarray*}
M_{s}^{\limfunc{difference}} &\equiv &\sup_{F\in \mathcal{F}}\sup_{\substack{
J\in \mathcal{M}_{\mathbf{r}-\limfunc{deep}}\left( F\right) }}A_{s}^{\func{%
difference}}\left( J\right) ; \\
A_{s}^{\limfunc{difference}}\left( J\right) &\equiv &\sum_{F^{\prime }\in 
\mathcal{F}}\sum_{\substack{ J^{\prime }\in \mathcal{M}_{\mathbf{r}-\limfunc{%
deep}}\left( F^{\prime }\right)  \\ J^{\prime }\subset I,\ \ell \left(
J^{\prime }\right) =2^{-s}\ell \left( J\right) \text{ and }\left\vert
c_{J}-c_{J^{\prime }}\right\vert _{\limfunc{quasi}}\geq 2^{s\varepsilon
}\ell \left( J\right) }}\int_{I\setminus B\left( J,J^{\prime }\right)
}S_{\left( J^{\prime },J\right) }^{F^{\prime }}\left( y\right) d\sigma
\left( y\right) .
\end{eqnarray*}%
The restriction of integration in $A_{s}^{\limfunc{difference}}$ to $%
I\setminus B\left( J,J^{\prime }\right) $ will be used to establish (\ref%
{vanishing close}) below.

\begin{notation}
Since the quasicubes$F,J,F^{\prime },J^{\prime }$ that arise in all of the
sums here satisfy 
\begin{equation*}
J\in \mathcal{M}_{\mathbf{r}-\limfunc{deep}}\left( F\right) ,J^{\prime }\in 
\mathcal{M}_{\mathbf{r}-\limfunc{deep}}\left( F^{\prime }\right) \text{ and }%
\ell \left( J^{\prime }\right) =2^{-s}\ell \left( J\right) ,
\end{equation*}%
we will often employ the notation $\overset{\ast }{\sum }$ to remind the
reader that, as applicable, these three conditions are in force even when
they are\ not explictly mentioned.
\end{notation}

Now fix $J$ as in $M_{s}^{\limfunc{proximal}}$ respectively $M_{s}^{\limfunc{%
difference}}$, and decompose the sum over $J^{\prime }$ in $A_{s}^{\limfunc{%
proximal}}\left( J\right) $ respectively $A_{s}^{\limfunc{difference}}\left(
J\right) $ by%
\begin{eqnarray*}
A_{s}^{\limfunc{proximal}}\left( J\right) &=&\sum_{F^{\prime }\in \mathcal{F}%
}\sum_{\substack{ J^{\prime }\in \mathcal{M}_{\mathbf{r}-\limfunc{deep}%
}\left( F^{\prime }\right)  \\ J^{\prime }\subset I,\ \ell \left( J^{\prime
}\right) =2^{-s}\ell \left( J\right) \text{ and }\left\vert
c_{J}-c_{J^{\prime }}\right\vert _{\limfunc{quasi}}<2^{s\varepsilon }\ell
\left( J\right) }}\int_{I}S_{\left( J^{\prime },J\right) }^{F^{\prime
}}\left( y\right) d\sigma \left( y\right) \\
&=&\sum_{F^{\prime }\in \mathcal{F}}\overset{\ast }{\sum_{\substack{ %
c_{J^{\prime }}\in 2J  \\ \left\vert c_{J}-c_{J^{\prime }}\right\vert _{%
\limfunc{quasi}}<2^{s\varepsilon }\ell \left( J\right) }}}\int_{I}S_{\left(
J^{\prime },J\right) }^{F^{\prime }}\left( y\right) d\sigma \left( y\right)
+\sum_{F^{\prime }\in \mathcal{F}}\sum_{\ell =1}^{\infty }\overset{\ast }{%
\sum_{\substack{ c_{J^{\prime }}\in 2^{\ell +1}J\setminus 2^{\ell }J  \\ %
\left\vert c_{J}-c_{J^{\prime }}\right\vert _{\limfunc{quasi}%
}<2^{s\varepsilon }\ell \left( J\right) }}}\int_{I}S_{\left( J^{\prime
},J\right) }^{F^{\prime }}\left( y\right) d\sigma \left( y\right) \\
&\equiv &\sum_{\ell =0}^{\infty }A_{s}^{\limfunc{proximal},\ell }\left(
J\right) ,
\end{eqnarray*}%
respectively%
\begin{eqnarray*}
A_{s}^{\limfunc{difference}}\left( J\right) &=&\sum_{F^{\prime }\in \mathcal{%
F}}\sum_{\substack{ J^{\prime }\in \mathcal{M}_{\mathbf{r}-\limfunc{deep}%
}\left( F^{\prime }\right)  \\ J^{\prime }\subset I,\ \ell \left( J^{\prime
}\right) =2^{-s}\ell \left( J\right) \text{ and }\left\vert
c_{J}-c_{J^{\prime }}\right\vert _{\limfunc{quasi}}\geq 2^{s\varepsilon
}\ell \left( J\right) }}\int_{I\setminus B\left( J,J^{\prime }\right)
}S_{\left( J^{\prime },J\right) }^{F^{\prime }}\left( y\right) d\sigma
\left( y\right) \\
&=&\sum_{F^{\prime }\in \mathcal{F}}\overset{\ast }{\sum_{\substack{ %
c_{J^{\prime }}\in 2J  \\ \left\vert c_{J}-c_{J^{\prime }}\right\vert _{%
\limfunc{quasi}}\geq 2^{s\varepsilon }\ell \left( J\right) }}}%
\int_{I\setminus B\left( J,J^{\prime }\right) }S_{\left( J^{\prime
},J\right) }^{F^{\prime }}\left( y\right) d\sigma \left( y\right) \\
&&+\sum_{\ell =1}^{\infty }\sum_{F^{\prime }\in \mathcal{F}}\overset{\ast }{%
\sum_{\substack{ c_{J^{\prime }}\in 2^{\ell +1}J\setminus 2^{\ell }J  \\ %
\left\vert c_{J}-c_{J^{\prime }}\right\vert _{\limfunc{quasi}}\geq
2^{s\varepsilon }\ell \left( J\right) }}}\int_{I\setminus B\left(
J,J^{\prime }\right) }S_{\left( J^{\prime },J\right) }^{F^{\prime }}\left(
y\right) d\sigma \left( y\right) \\
&\equiv &\sum_{\ell =0}^{\infty }A_{s}^{\limfunc{difference},\ell }\left(
J\right) .
\end{eqnarray*}%
Let $m$ be the smallest integer for which 
\begin{equation}
2^{-m}\sqrt{n}\leq \frac{1}{3}.  \label{smallest m}
\end{equation}%
Now decompose the integrals over $I$ in $A_{s}^{\limfunc{proximal},\ell
}\left( J\right) $ by%
\begin{eqnarray*}
A_{s}^{\limfunc{proximal},0}\left( J\right) &=&\sum_{F^{\prime }\in \mathcal{%
F}}\overset{\ast }{\sum_{\substack{ c_{J^{\prime }}\in 2J  \\ \left\vert
c_{J}-c_{J^{\prime }}\right\vert _{\limfunc{quasi}}<2^{s\varepsilon }\ell
\left( J\right) }}}\int_{I\setminus 4J}S_{\left( J^{\prime },J\right)
}^{F^{\prime }}\left( y\right) d\sigma \left( y\right) \\
&&+\sum_{F^{\prime }\in \mathcal{F}}\overset{\ast }{\sum_{\substack{ %
c_{J^{\prime }}\in 2J  \\ \left\vert c_{J}-c_{J^{\prime }}\right\vert _{%
\limfunc{quasi}}<2^{s\varepsilon }\ell \left( J\right) }}}\int_{I\cap
4J}S_{\left( J^{\prime },J\right) }^{F^{\prime }}\left( y\right) d\sigma
\left( y\right) \\
&\equiv &A_{s,far}^{\limfunc{proximal},0}\left( J\right) +A_{s,near}^{%
\limfunc{proximal},0}\left( J\right) , \\
A_{s}^{\limfunc{proximal},\ell }\left( J\right) &=&\sum_{F^{\prime }\in 
\mathcal{F}}\overset{\ast }{\sum_{\substack{ c_{J^{\prime }}\in 2^{\ell
+1}J\setminus 2^{\ell }J  \\ \left\vert c_{J}-c_{J^{\prime }}\right\vert _{%
\limfunc{quasi}}<2^{s\varepsilon }\ell \left( J\right) }}}\int_{I\setminus
2^{\ell +2}J}S_{\left( J^{\prime },J\right) }^{F^{\prime }}\left( y\right)
d\sigma \left( y\right) \\
&&+\sum_{F^{\prime }\in \mathcal{F}}\overset{\ast }{\sum_{\substack{ %
c_{J^{\prime }}\in 2^{\ell +1}J\setminus 2^{\ell }J  \\ \left\vert
c_{J}-c_{J^{\prime }}\right\vert _{\limfunc{quasi}}<2^{s\varepsilon }\ell
\left( J\right) }}}\int_{I\cap \left( 2^{\ell +2}J\setminus 2^{\ell
-m}J\right) }S_{\left( J^{\prime },J\right) }^{F^{\prime }}\left( y\right)
d\sigma \left( y\right) \\
&&+\sum_{F^{\prime }\in \mathcal{F}}\overset{\ast }{\sum_{\substack{ %
c_{J^{\prime }}\in 2^{\ell +1}J\setminus 2^{\ell }J  \\ \left\vert
c_{J}-c_{J^{\prime }}\right\vert _{\limfunc{quasi}}<2^{s\varepsilon }\ell
\left( J\right) }}}\int_{I\cap 2^{\ell -m}J}S_{\left( J^{\prime },J\right)
}^{F^{\prime }}\left( y\right) d\sigma \left( y\right) \\
&\equiv &A_{s,far}^{\limfunc{proximal},\ell }\left( J\right) +A_{s,near}^{%
\limfunc{proximal},\ell }\left( J\right) +A_{s,close}^{\limfunc{proximal}%
,\ell }\left( J\right) ,\ \ \ \ \ \ell \geq 1.
\end{eqnarray*}%
Similarly we decompose the integrals over $I^{\ast }\equiv I\setminus
B\left( J,J^{\prime }\right) $ in $A_{s}^{\limfunc{difference},\ell }\left(
J\right) $ by%
\begin{eqnarray*}
A_{s}^{\limfunc{difference},0}\left( J\right) &=&\sum_{F^{\prime }\in 
\mathcal{F}}\overset{\ast }{\sum_{\substack{ c_{J^{\prime }}\in 2J  \\ %
\left\vert c_{J}-c_{J^{\prime }}\right\vert _{\limfunc{quasi}}\geq
2^{s\varepsilon }\ell \left( J\right) }}}\int_{I^{\ast }\setminus
4J}S_{\left( J^{\prime },J\right) }^{F^{\prime }}\left( y\right) d\sigma
\left( y\right) \\
&&+\sum_{F^{\prime }\in \mathcal{F}}\overset{\ast }{\sum_{\substack{ %
c_{J^{\prime }}\in 2J  \\ \left\vert c_{J}-c_{J^{\prime }}\right\vert _{%
\limfunc{quasi}}\geq 2^{s\varepsilon }\ell \left( J\right) }}}\int_{I^{\ast
}\cap 4J}S_{\left( J^{\prime },J\right) }^{F^{\prime }}\left( y\right)
d\sigma \left( y\right) \\
&\equiv &A_{s,far}^{\limfunc{difference},0}\left( J\right) +A_{s,near}^{%
\limfunc{difference},0}\left( J\right) , \\
A_{s}^{\limfunc{difference},\ell }\left( J\right) &=&\sum_{F^{\prime }\in 
\mathcal{F}}\overset{\ast }{\sum_{\substack{ c_{J^{\prime }}\in 2^{\ell
+1}J\setminus 2^{\ell }J  \\ \left\vert c_{J}-c_{J^{\prime }}\right\vert _{%
\limfunc{quasi}}\geq 2^{s\varepsilon }\ell \left( J\right) }}}\int_{I^{\ast
}\setminus 2^{\ell +2}J}S_{\left( J^{\prime },J\right) }^{F^{\prime }}\left(
y\right) d\sigma \left( y\right) \\
&&+\sum_{F^{\prime }\in \mathcal{F}}\overset{\ast }{\sum_{\substack{ %
c_{J^{\prime }}\in 2^{\ell +1}J\setminus 2^{\ell }J  \\ \left\vert
c_{J}-c_{J^{\prime }}\right\vert _{\limfunc{quasi}}\geq 2^{s\varepsilon
}\ell \left( J\right) }}}\int_{I^{\ast }\cap \left( 2^{\ell +2}J\setminus
2^{\ell -m}J\right) }S_{\left( J^{\prime },J\right) }^{F^{\prime }}\left(
y\right) d\sigma \left( y\right) \\
&&+\sum_{F^{\prime }\in \mathcal{F}}\overset{\ast }{\sum_{\substack{ %
c_{J^{\prime }}\in 2^{\ell +1}J\setminus 2^{\ell }J  \\ \left\vert
c_{J}-c_{J^{\prime }}\right\vert _{\limfunc{quasi}}\geq 2^{s\varepsilon
}\ell \left( J\right) }}}\int_{I^{\ast }\cap 2^{\ell -m}J}S_{\left(
J^{\prime },J\right) }^{F^{\prime }}\left( y\right) d\sigma \left( y\right)
\\
&\equiv &A_{s,far}^{\limfunc{difference},\ell }\left( J\right) +A_{s,near}^{%
\limfunc{difference},\ell }\left( J\right) +A_{s,close}^{\limfunc{difference}%
,\ell }\left( J\right) ,\ \ \ \ \ \ell \geq 1.
\end{eqnarray*}%
We now note the important point that the close terms $A_{s,close}^{\limfunc{%
proximal},\ell }\left( J\right) $ and $A_{s,close}^{\limfunc{difference}%
,\ell }\left( J\right) $ both $\emph{vanish}$ for $\ell >\varepsilon s$
because of the decomposition (\ref{initial decomp}):%
\begin{equation}
A_{s,close}^{\limfunc{proximal},\ell }\left( J\right) =A_{s,close}^{\limfunc{%
difference},\ell }\left( J\right) =0,\ \ \ \ \ \ell >1+\varepsilon s.
\label{vanishing close}
\end{equation}%
Indeed, if $c_{J^{\prime }}\in 2^{\ell +1}J\setminus 2^{\ell }J$, then we
have%
\begin{equation}
\frac{1}{2}2^{\ell }\ell \left( J\right) \leq \left\vert c_{J}-c_{J^{\prime
}}\right\vert _{\limfunc{quasi}},  \label{distJJ'}
\end{equation}%
and if $\ell >1+\varepsilon s$, then%
\begin{equation*}
\left\vert c_{J}-c_{J^{\prime }}\right\vert _{\limfunc{quasi}}\geq
2^{\varepsilon s}\ell \left( J\right) =2^{\left( 1+\varepsilon \right)
s}\ell \left( J^{\prime }\right) .
\end{equation*}%
It now follows from the definition of $V_{s}$ and $T_{s}$ in (\ref{initial
decomp}), that $A_{s,close}^{\limfunc{proximal},\ell }\left( J\right) =0$,
and so we are left to consider the term $A_{s,close}^{\limfunc{difference}%
,\ell }\left( J\right) $, where the integration is taken over the set $%
I\setminus B\left( J,J^{\prime }\right) $. But we are also restricted in $%
A_{s,close}^{\limfunc{difference},\ell }\left( J\right) $ to integrating
over the quasicube $2^{\ell -m}J$, which is contained in $B\left(
J,J^{\prime }\right) $ by (\ref{distJJ'}). Indeed, the smallest\ ball
centered at $c\left( J\right) $ that contains $2^{\ell -m}J$ has radius $%
\sqrt{n}\frac{1}{2}2^{\ell -m}\ell \left( J\right) $, which by (\ref%
{smallest m}) and (\ref{distJJ'}) is at most $\frac{1}{4}2^{\ell }\ell
\left( J\right) \leq \frac{1}{2}\left\vert c_{J}-c_{J^{\prime }}\right\vert
_{\limfunc{quasi}}$, the radius of $B\left( J,J^{\prime }\right) $. Thus the
range of integration in the term $A_{s,close}^{\limfunc{difference},\ell
}\left( J\right) $ is the empty set, and so $A_{s,close}^{\limfunc{difference%
},\ell }\left( J\right) =0$ as well as $A_{s,close}^{\limfunc{proximal},\ell
}\left( J\right) =0$. This proves (\ref{vanishing close}).

Thus from now on in this subsubsection, we may replace $I\setminus B\left(
J,J^{\prime }\right) $ by $I$ since all the terms are positive, and we treat 
$T_{s}^{\limfunc{proximal}}$ and $T_{s}^{\limfunc{difference}}$ in the same
way now that we know the terms $A_{s,close}^{\limfunc{proximal},\ell }\left(
J\right) $ and $A_{s,close}^{\limfunc{difference},\ell }\left( J\right) $
both vanish for $\ell >1+\varepsilon s$. Thus we will suppress the
superscripts $\limfunc{proximal}$ and $\limfunc{difference}$ in the $far$, $%
near$ and $close$ decomposition of $A_{s,close}^{\limfunc{proximal},\ell
}\left( J\right) $ and $A_{s,close}^{\limfunc{difference},\ell }\left(
J\right) $, and we will also suppress the conditions $\left\vert
c_{J}-c_{J^{\prime }}\right\vert _{\limfunc{quasi}}<2^{s\varepsilon }\ell
\left( J\right) $ and $\left\vert c_{J}-c_{J^{\prime }}\right\vert _{%
\limfunc{quasi}}\geq 2^{s\varepsilon }\ell \left( J\right) $ in the proximal
and difference terms since they no longer play a role. Using the bounded
overlap of the shifted coronas $\mathcal{C}_{F}^{\limfunc{good},\mathbf{\tau 
}-\limfunc{shift}}$, we have $\sum_{F^{\prime }\in \mathcal{F}}\left\Vert 
\mathsf{P}_{F^{\prime },J^{\prime }}^{\omega }\mathbf{x}\right\Vert
_{L^{2}\left( \omega \right) }^{2}\lesssim \mathbf{\tau }\left\vert
J^{\prime }\right\vert ^{\frac{2}{n}}\left\vert J^{\prime }\right\vert
_{\omega }$ and so%
\begin{eqnarray*}
A_{s,far}^{0}\left( J\right) &=&\sum_{F^{\prime }\in \mathcal{F}}\overset{%
\ast }{\sum_{c_{J^{\prime }}\in 2J}}\int_{I\setminus \left( 3J\right)
}S_{\left( J^{\prime },J\right) }^{F^{\prime }}\left( y\right) d\sigma
\left( y\right) \\
&\lesssim &\mathbf{\tau }\sum_{c_{J^{\prime }}\in 2J}\int_{I\setminus \left(
3J\right) }\frac{\left\vert J^{\prime }\right\vert ^{\frac{2}{n}}\left\vert
J^{\prime }\right\vert _{\omega }}{\left( \left\vert J\right\vert ^{\frac{1}{%
n}}+\left\vert y-c_{J}\right\vert \right) ^{2\left( n+1-\alpha \right) }}%
d\sigma \left( y\right) \\
&=&\mathbf{\tau }2^{-2s}\left( \sum_{c_{J^{\prime }}\in 2J}\left\vert
J^{\prime }\right\vert _{\omega }\right) \int_{I\setminus \left( 3J\right) }%
\frac{\left\vert J\right\vert ^{\frac{2}{n}}}{\left( \left\vert J\right\vert
^{\frac{1}{n}}+\left\vert y-c_{J}\right\vert \right) ^{2\left( n+1-\alpha
\right) }}d\sigma \left( y\right) ,
\end{eqnarray*}%
which is dominated by%
\begin{eqnarray*}
&&\mathbf{\tau }2^{-2s}\left\vert 3J\right\vert _{\omega }\int_{I\setminus
\left( 3J\right) }\frac{1}{\left( \left\vert J\right\vert ^{\frac{1}{n}%
}+\left\vert y-c_{J}\right\vert \right) ^{2\left( n-\alpha \right) }}d\sigma
\left( y\right) \\
&\approx &\mathbf{\tau }2^{-2s}\frac{\left\vert 3J\right\vert _{\omega }}{%
\left\vert 4J\right\vert ^{1-\frac{\alpha }{n}}}\int_{I\setminus \left(
3J\right) }\left( \frac{\left\vert J\right\vert ^{\frac{1}{n}}}{\left(
\left\vert J\right\vert ^{\frac{1}{n}}+\left\vert y-c_{J}\right\vert \right)
^{2}}\right) ^{n-\alpha }d\sigma \left( y\right) \\
&\lesssim &\mathbf{\tau }2^{-2s}\frac{\left\vert 3J\right\vert _{\omega }}{%
\left\vert 3J\right\vert ^{1-\frac{\alpha }{n}}}\mathcal{P}^{\alpha }\left(
3J,\sigma \right) \lesssim \mathbf{\tau }2^{-2s}\mathcal{A}_{2}^{\alpha }\ .
\end{eqnarray*}%
To estimate the near term $A_{s,near}^{0}\left( J\right) $, we initially
keep the energy $\left\Vert \mathsf{P}_{F^{\prime },J^{\prime }}^{\omega
}z\right\Vert _{L^{2}\left( \omega \right) }^{2}$ and write 
\begin{eqnarray*}
A_{s,near}^{0}\left( J\right) &=&\sum_{F^{\prime }\in \mathcal{F}}\overset{%
\ast }{\sum_{c_{J^{\prime }}\in 2J}}\int_{I\cap \left( 3J\right) }S_{\left(
J^{\prime },J\right) }^{F^{\prime }}\left( y\right) d\sigma \left( y\right)
\\
&\approx &\sum_{F^{\prime }\in \mathcal{F}}\overset{\ast }{%
\sum_{c_{J^{\prime }}\in 2J}}\int_{I\cap \left( 3J\right) }\frac{1}{%
\left\vert J\right\vert ^{\frac{1}{n}\left( n+1-\alpha \right) }}\frac{%
\left\Vert \mathsf{P}_{F^{\prime },J^{\prime }}^{\omega }\mathbf{x}%
\right\Vert _{L^{2}\left( \omega \right) }^{2}}{\left( \left\vert J^{\prime
}\right\vert ^{\frac{1}{n}}+\left\vert y-c_{J^{\prime }}\right\vert \right)
^{n+1-\alpha }}d\sigma \left( y\right) \\
&=&\sum_{F^{\prime }\in \mathcal{F}}\frac{1}{\left\vert J\right\vert ^{\frac{%
1}{n}\left( n+1-\alpha \right) }}\overset{\ast }{\sum_{c_{J^{\prime }}\in 2J}%
}\left\Vert \mathsf{P}_{F^{\prime },J^{\prime }}^{\omega }\mathbf{x}%
\right\Vert _{L^{2}\left( \omega \right) }^{2}\int_{I\cap \left( 3J\right) }%
\frac{1}{\left( \left\vert J^{\prime }\right\vert ^{\frac{1}{n}}+\left\vert
y-c_{J^{\prime }}\right\vert \right) ^{n+1-\alpha }}d\sigma \left( y\right)
\\
&=&\sum_{F^{\prime }\in \mathcal{F}}\frac{1}{\left\vert J\right\vert ^{\frac{%
1}{n}\left( n+1-\alpha \right) }}\overset{\ast }{\sum_{c_{J^{\prime }}\in 2J}%
}\left\Vert \mathsf{P}_{F^{\prime },J^{\prime }}^{\omega }\mathbf{x}%
\right\Vert _{L^{2}\left( \omega \right) }^{2}\frac{\mathrm{P}^{\alpha
}\left( J^{\prime },\mathbf{1}_{I\cap \left( 3J\right) }\sigma \right) }{%
\left\vert J^{\prime }\right\vert ^{\frac{1}{n}}}.
\end{eqnarray*}%
Now by Cauchy-Schwarz and the shifted local estimate (\ref{shifted local})
in Lemma \ref{shifted}, this is dominated by%
\begin{eqnarray*}
&&\frac{1}{\left\vert J\right\vert ^{\frac{1}{n}\left( n+1-\alpha \right) }}%
\left( \sum_{F^{\prime }\in \mathcal{F}}\overset{\ast }{\sum_{c\left(
J^{\prime }\right) \in 2J\text{ and }J^{\prime }\subset I}}\left\Vert 
\mathsf{P}_{F^{\prime },J^{\prime }}^{\omega }\mathbf{x}\right\Vert
_{L^{2}\left( \omega \right) }^{2}\right) ^{\frac{1}{2}} \\
&&\ \ \ \ \ \ \ \ \ \ \times \left( \sum_{F^{\prime }\in \mathcal{F}}\overset%
{\ast }{\sum_{c_{J^{\prime }}\in 2J\text{ and }J^{\prime }\subset I}}%
\left\Vert \mathsf{P}_{F^{\prime },J^{\prime }}^{\omega }\mathbf{x}%
\right\Vert _{L^{2}\left( \omega \right) }^{2}\left( \frac{\mathrm{P}%
^{\alpha }\left( J^{\prime },\mathbf{1}_{I\cap \left( 4J\right) }\sigma
\right) }{\left\vert J^{\prime }\right\vert ^{\frac{1}{n}}}\right)
^{2}\right) ^{\frac{1}{2}} \\
&\lesssim &\frac{1}{\left\vert J\right\vert ^{\frac{1}{n}\left( n+1-\alpha
\right) }}\left( \mathbf{\tau }\sum_{c_{J^{\prime }}\in 2J}\left\vert
J^{\prime }\right\vert ^{\frac{2}{n}}\left\vert J^{\prime }\right\vert
_{\omega }\right) ^{\frac{1}{2}}\mathcal{E}_{\alpha }\sqrt{\mathbf{\tau }%
\left\vert CJ\right\vert _{\sigma }} \\
&\lesssim &\mathbf{\tau }\frac{2^{-s}\left\vert J\right\vert ^{\frac{1}{n}}}{%
\left\vert J\right\vert ^{\frac{1}{n}\left( n+1-\alpha \right) }}\sqrt{%
\left\vert CJ\right\vert _{\omega }}\mathcal{E}_{\alpha }\sqrt{\left\vert
CJ\right\vert _{\sigma }}\lesssim \mathbf{\tau }2^{-s}\mathcal{E}_{\alpha }%
\sqrt{\frac{\left\vert CJ\right\vert _{\omega }}{\left\vert J\right\vert ^{%
\frac{1}{n}\left( n-\alpha \right) }}\frac{\left\vert CJ\right\vert _{\sigma
}}{\left\vert J\right\vert ^{\frac{1}{n}\left( n-\alpha \right) }}}\lesssim 
\mathbf{\tau }2^{-s}\mathcal{E}_{\alpha }\sqrt{A_{2}^{\alpha }}\ .
\end{eqnarray*}%
Here the shifted local estimate (\ref{shifted local}) applies to the
expression%
\begin{equation*}
\sum_{F^{\prime }\in \mathcal{F}}\overset{\ast }{\sum_{c_{J^{\prime }}\in 2J%
\text{ and }J^{\prime }\subset I}}\left\Vert \mathsf{P}_{F^{\prime
},J^{\prime }}^{\omega }\mathbf{x}\right\Vert _{L^{2}\left( \omega \right)
}^{2}\left( \frac{\mathrm{P}^{\alpha }\left( J^{\prime },\mathbf{1}_{I\cap
\left( 4J\right) }\sigma \right) }{\left\vert J^{\prime }\right\vert ^{\frac{%
1}{n}}}\right) ^{2}\ ,
\end{equation*}%
with $M=\widehat{J}$, where $\widehat{J}\in \mathcal{S}\Omega \mathcal{D}$
is a shifted quasicube satisfying $\dbigcup\limits_{c_{J^{\prime }}\in
2J}J^{\prime }\subset 4J\subset \widehat{J}$ and $\ell \left( \widehat{J}%
\right) \leq C\ell \left( J\right) $.

For $\ell \geq 1$, we can estimate the far term

\begin{eqnarray*}
A_{s,far}^{\ell }\left( J\right) &=&\sum_{F^{\prime }\in \mathcal{F}}\overset%
{\ast }{\sum_{c_{J^{\prime }}\in \left( 2^{\ell +1}J\right) \setminus \left(
2^{\ell }J\right) }}\int_{I\setminus \left( 2^{\ell +2}J\right) }S_{\left(
J^{\prime },J\right) }^{F^{\prime }}\left( y\right) d\sigma \left( y\right)
\\
&\lesssim &\mathbf{\tau }\sum_{c_{J^{\prime }}\in \left( 2^{\ell +1}J\right)
\setminus \left( 2^{\ell }J\right) }\int_{I\setminus \left( 2^{\ell
+2}J\right) }\frac{\left\vert J^{\prime }\right\vert ^{\frac{2}{n}%
}\left\vert J^{\prime }\right\vert _{\omega }}{\left( \left\vert
J\right\vert ^{\frac{1}{n}}+\left\vert y-c_{J}\right\vert \right) ^{2\left(
n+1-\alpha \right) }}d\sigma \left( y\right) \\
&=&\mathbf{\tau }2^{-2s}\left( \sum_{c_{J^{\prime }}\in \left( 2^{\ell
+1}J\right) }\left\vert J^{\prime }\right\vert _{\omega }\right)
\int_{I\setminus \left( 2^{\ell +2}J\right) }\frac{\left\vert J\right\vert ^{%
\frac{2}{n}}}{\left( \left\vert J\right\vert ^{\frac{1}{n}}+\left\vert
y-c_{J}\right\vert \right) ^{2\left( n+1-\alpha \right) }}d\sigma \left(
y\right) \\
&\approx &\mathbf{\tau }2^{-2s}2^{-\ell \frac{2}{n}}\left(
\sum_{c_{J^{\prime }}\in \left( 2^{\ell +1}J\right) }\left\vert J^{\prime
}\right\vert _{\omega }\right) \int_{I\setminus \left( 2^{\ell +2}J\right) }%
\frac{\left\vert 2^{\ell }J\right\vert ^{\frac{2}{n}}}{\left( \left\vert
2^{\ell }J\right\vert ^{\frac{1}{n}}+\left\vert y-c_{2^{\ell }J}\right\vert
\right) ^{2\left( n+1-\alpha \right) }}d\sigma \left( y\right) ,
\end{eqnarray*}%
which is at most%
\begin{eqnarray*}
&&\mathbf{\tau }2^{-2s}2^{-\ell \frac{2}{n}}\left\vert 2^{\ell
+2}J\right\vert _{\omega }\int_{I\setminus \left( 2^{\ell +2}J\right) }\frac{%
1}{\left( \left\vert 2^{\ell }J\right\vert ^{\frac{1}{n}}+\left\vert
y-c_{2^{\ell }J}\right\vert \right) ^{2\left( n-\alpha \right) }}d\sigma
\left( y\right) \\
&\approx &\mathbf{\tau }2^{-2s}2^{-\ell \frac{2}{n}}\frac{\left\vert 3^{\ell
+2}J\right\vert _{\omega }}{\left\vert 3^{\ell }J\right\vert ^{1-\frac{%
\alpha }{n}}}\int_{I\setminus \left( 3^{\ell +2}J\right) }\left( \frac{%
\left\vert 2^{\ell }J\right\vert ^{\frac{1}{n}}}{\left( \left\vert 2^{\ell
}J\right\vert ^{\frac{1}{n}}+\left\vert y-c_{2^{\ell }J}\right\vert \right)
^{2}}\right) ^{n-\alpha }d\sigma \left( y\right) \\
&\lesssim &\mathbf{\tau }2^{-2s}2^{-\ell \frac{2}{n}}\left\{ \frac{%
\left\vert 2^{\ell +2}J\right\vert _{\omega }}{\left\vert 2^{\ell
}J\right\vert ^{1-\frac{\alpha }{n}}}\mathcal{P}^{\alpha }\left( 2^{\ell
+2}J,\sigma \right) \right\} \lesssim \mathbf{\tau }2^{-2s}2^{-\ell \frac{2}{%
n}}\mathcal{A}_{2}^{\alpha }\ .
\end{eqnarray*}

The near term $A_{s,near}^{\ell }\left( J\right) $ is

\begin{eqnarray*}
&&\sum_{F^{\prime }\in \mathcal{F}}\overset{\ast }{\sum_{c_{J^{\prime }}\in
2^{\ell +1}J\setminus 2^{\ell }J}}\int_{I\cap \left( 2^{\ell +2}J\setminus
2^{\ell -m}J\right) }S_{\left( J^{\prime },J\right) }^{F^{\prime }}\left(
y\right) d\sigma \left( y\right) \\
&\approx &\sum_{F^{\prime }\in \mathcal{F}}\overset{\ast }{%
\sum_{c_{J^{\prime }}\in 2^{\ell +1}J\setminus 2^{\ell }J}}\int_{I\cap
\left( 2^{\ell +2}J\setminus 2^{\ell -m}J\right) }\frac{1}{\left\vert
2^{\ell \left( 1-\varepsilon \right) }J\right\vert ^{\frac{1}{n}\left(
n+1-\alpha \right) }}\frac{\left\Vert \mathsf{P}_{F^{\prime },J^{\prime
}}^{\omega }\mathbf{x}\right\Vert _{L^{2}\left( \omega \right) }^{2}}{\left(
\left\vert J^{\prime }\right\vert ^{\frac{1}{n}}+\left\vert y-c_{J^{\prime
}}\right\vert \right) ^{n+1-\alpha }}d\sigma \left( y\right) \\
&=&\frac{1}{\left\vert 2^{\ell -1}J\right\vert ^{\frac{1}{n}\left(
n+1-\alpha \right) }}\sum_{F^{\prime }\in \mathcal{F}}\overset{\ast }{%
\sum_{c_{J^{\prime }}\in 2^{\ell +1}J\setminus 2^{\ell }J}}\left\Vert 
\mathsf{P}_{F^{\prime },J^{\prime }}^{\omega }\mathbf{x}\right\Vert
_{L^{2}\left( \omega \right) }^{2}\int_{I\cap \left( 2^{\ell +2}J\setminus
2^{\ell -m}J\right) }\frac{1}{\left( \left\vert J^{\prime }\right\vert ^{%
\frac{1}{n}}+\left\vert y-c_{J^{\prime }}\right\vert \right) ^{n+1-\alpha }}%
d\sigma \left( y\right) ,
\end{eqnarray*}%
and is dominated by%
\begin{eqnarray*}
&&\frac{1}{\left\vert 2^{\ell -m}J\right\vert ^{\frac{1}{n}\left( n+1-\alpha
\right) }}\sum_{F^{\prime }\in \mathcal{F}}\overset{\ast }{%
\sum_{c_{J^{\prime }}\in 2^{\ell +1}J\setminus 2^{\ell }J}}\left\Vert 
\mathsf{P}_{F^{\prime },J^{\prime }}^{\omega }\mathbf{x}\right\Vert
_{L^{2}\left( \omega \right) }^{2}\frac{\mathrm{P}^{\alpha }\left( J^{\prime
},\mathbf{1}_{I\cap \left( 2^{\ell +2}J\right) }\sigma \right) }{\left\vert
J^{\prime }\right\vert ^{\frac{1}{n}}} \\
&\leq &\frac{1}{\left\vert 2^{\ell -m}J\right\vert ^{\frac{1}{n}\left(
n+1-\alpha \right) }}\left( \sum_{F^{\prime }\in \mathcal{F}}\overset{\ast }{%
\sum_{c_{J^{\prime }}\in 2^{\ell +1}J\setminus 2^{\ell }J}}\left\Vert 
\mathsf{P}_{F^{\prime },J^{\prime }}^{\omega }\mathbf{x}\right\Vert
_{L^{2}\left( \omega \right) }^{2}\right) ^{\frac{1}{2}} \\
&&\times \left( \sum_{F^{\prime }\in \mathcal{F}}\overset{\ast }{%
\sum_{c_{J^{\prime }}\in 2^{\ell +1}J\setminus 2^{\ell }J}}\left\Vert 
\mathsf{P}_{F^{\prime },J^{\prime }}^{\omega }\mathbf{x}\right\Vert
_{L^{2}\left( \omega \right) }^{2}\left( \frac{\mathrm{P}^{\alpha }\left(
J^{\prime },\mathbf{1}_{I\cap \left( 2^{\ell +2}J\right) }\sigma \right) }{%
\left\vert J^{\prime }\right\vert ^{\frac{1}{n}}}\right) ^{2}\right) ^{\frac{%
1}{2}}.
\end{eqnarray*}%
This can now be estimated by $\mathcal{E}_{\alpha }$ using $\sum_{F^{\prime
}\in \mathcal{F}}\left\Vert \mathsf{P}_{F^{\prime },J^{\prime }}^{\omega
}z\right\Vert _{L^{2}\left( \omega \right) }^{2}\leq \mathbf{\tau }%
\left\vert J^{\prime }\right\vert ^{\frac{2}{n}}\left\vert J^{\prime
}\right\vert _{\omega }$ and the shifted local estimate (\ref{shifted local}%
) in Lemma \ref{shifted} to get%
\begin{eqnarray*}
A_{s,near}^{\ell }\left( J\right) &\lesssim &2^{-s}2^{-\frac{\ell }{n}}\frac{%
\left\vert 2^{\ell }J\right\vert ^{\frac{1}{n}}}{\left\vert 2^{\ell
-m}J\right\vert ^{\frac{1}{n}\left( n+1-\alpha \right) }}\sqrt{\left\vert
2^{\ell +3}J\right\vert _{\omega }}\mathcal{E}_{\alpha }\sqrt{\left\vert
C2^{\ell }J\right\vert _{\sigma }} \\
&\lesssim &2^{-s}2^{-\frac{\ell }{n}}\mathcal{E}_{\alpha }\sqrt{\frac{%
\left\vert C2^{\ell }J\right\vert _{\omega }}{\left\vert C2^{\ell
}J\right\vert ^{1-\frac{\alpha }{n}}}\frac{\left\vert C2^{\ell }J\right\vert
_{\sigma }}{\left\vert C2^{\ell }J\right\vert ^{1-\frac{\alpha }{n}}}} \\
&\lesssim &2^{-s}2^{-\frac{\ell }{n}}\mathcal{E}_{\alpha }\sqrt{%
A_{2}^{\alpha }}\ .
\end{eqnarray*}%
Here the shifted local estimate (\ref{shifted local}) applies to the
expression%
\begin{equation*}
\sum_{F^{\prime }\in \mathcal{F}}\overset{\ast }{\sum_{c_{J^{\prime }}\in
2^{\ell +1}J\setminus 2^{\ell }J}}\left\Vert \mathsf{P}_{F^{\prime
},J^{\prime }}^{\omega }\mathbf{x}\right\Vert _{L^{2}\left( \omega \right)
}^{2}\left( \frac{\mathrm{P}^{\alpha }\left( J^{\prime },\mathbf{1}_{I\cap
\left( 2^{\ell +2}J\right) }\sigma \right) }{\left\vert J^{\prime
}\right\vert ^{\frac{1}{n}}}\right) ^{2}
\end{equation*}%
with $M=\widehat{J}$, where $\widehat{J}\in \mathcal{S}\Omega \mathcal{D}$
is a shifted quasicube satisfying $\dbigcup\limits_{c_{J^{\prime }}\in
2^{\ell +1}J\setminus 2^{\ell }J}J^{\prime }\subset \widehat{J}$ and $\ell
\left( \widehat{J}\right) \leq C2^{\ell }\ell \left( J\right) $. We are also
using here that $m\approx 1+\frac{1}{2}\log _{2}n$ in the definition of $%
A_{s,near}^{\ell }\left( J\right) $ is harmless. These estimates are
summable in both $s$ and $\ell $.

Now we turn to the terms $A_{s,close}^{\ell }\left( J\right) $, and recall
from (\ref{vanishing close}) that $A_{s,close}^{\ell }\left( J\right) =0$ if 
$\ell >1+\varepsilon s$. So we now suppose that $\ell \leq 1+\varepsilon s$.
We have, with $m$ as in (\ref{smallest m}),%
\begin{eqnarray*}
&&A_{s,close}^{\ell }\left( J\right) \\
&=&\sum_{F^{\prime }\in \mathcal{F}}\overset{\ast }{\sum_{c_{J^{\prime }}\in
2^{\ell +1}J\setminus 2^{\ell }J}}\int_{I\cap \left( 2^{\ell -m}J\right)
}S_{\left( J^{\prime },J\right) }^{F^{\prime }}\left( y\right) d\sigma
\left( y\right) \\
&\approx &\sum_{F^{\prime }\in \mathcal{F}}\overset{\ast }{%
\sum_{c_{J^{\prime }}\in 2^{\ell +1}J\setminus 2^{\ell }J}}\int_{I\cap
\left( 2^{\ell -m}J\right) }\frac{1}{\left( \left\vert J\right\vert ^{\frac{1%
}{n}}+\left\vert y-c_{J}\right\vert \right) ^{n+1-\alpha }}\frac{\left\Vert 
\mathsf{P}_{F^{\prime },J^{\prime }}^{\omega }\mathbf{x}\right\Vert
_{L^{2}\left( \omega \right) }^{2}}{\left\vert 2^{\ell }J\right\vert ^{\frac{%
1}{n}\left( n+1-\alpha \right) }}d\sigma \left( y\right) \\
&\approx &\left( \sum_{F^{\prime }\in \mathcal{F}}\overset{\ast }{%
\sum_{c_{J^{\prime }}\in 2^{\ell +1}J\setminus 2^{\ell }J}}\left\Vert 
\mathsf{P}_{F^{\prime },J^{\prime }}^{\omega }\mathbf{x}\right\Vert
_{L^{2}\left( \omega \right) }^{2}\right) \frac{1}{\left\vert 2^{\ell
}J\right\vert ^{\frac{1}{n}\left( n+1-\alpha \right) }}\int_{I\cap \left(
2^{\ell -m}J\right) }\frac{1}{\left( \left\vert J\right\vert ^{\frac{1}{n}%
}+\left\vert y-c_{J}\right\vert \right) ^{n+1-\alpha }}d\sigma \left(
y\right) .
\end{eqnarray*}%
Now we use the inequality $\sum_{F^{\prime }\in \mathcal{F}}\left\Vert 
\mathsf{P}_{F^{\prime },J^{\prime }}^{\omega }z\right\Vert _{L^{2}\left(
\omega \right) }^{2}\leq \mathbf{\tau }\left\vert J^{\prime }\right\vert ^{%
\frac{2}{n}}\left\vert J^{\prime }\right\vert _{\omega }$ to get the
relatively crude estimate%
\begin{eqnarray*}
A_{s,close}^{\ell }\left( J\right) &\lesssim &\mathbf{\tau }%
2^{-2s}\left\vert J\right\vert ^{\frac{2}{n}}\left\vert 2^{\ell
+1}J\right\vert _{\omega }\frac{1}{\left\vert 2^{\ell }J\right\vert ^{\frac{1%
}{n}\left( n+1-\alpha \right) }}\int_{I\cap \left( 2^{\ell -m}J\right) }%
\frac{1}{\left( \left\vert J\right\vert ^{\frac{1}{n}}+\left\vert
y-c_{J}\right\vert \right) ^{n+1-\alpha }}d\sigma \left( y\right) \\
&\lesssim &\mathbf{\tau }2^{-2s}\left\vert J\right\vert ^{\frac{2}{n}}\frac{%
\left\vert 2^{\ell +1}J\right\vert _{\omega }}{\left\vert 2^{\ell
}J\right\vert ^{\frac{1}{n}\left( n+1-\alpha \right) }}\frac{\left\vert
2^{\ell -m}J\right\vert _{\sigma }}{\left\vert J\right\vert ^{\frac{1}{n}%
\left( n+1-\alpha \right) }}\lesssim 2^{-2s}\frac{\left\vert 2^{\ell
+1}J\right\vert _{\omega }}{\left\vert 2^{\ell +1}J\right\vert ^{1-\frac{%
\alpha }{n}}}\frac{\left\vert 2^{\ell +1}J\right\vert _{\sigma }}{\left\vert
2^{\ell +1}J\right\vert ^{1-\frac{\alpha }{n}}}2^{\ell \left( n-1-\alpha
\right) } \\
&\lesssim &\mathbf{\tau }2^{-2s}2^{\ell \left( n-1-\alpha \right)
}A_{2}^{\alpha }\lesssim \mathbf{\tau }2^{-s}A_{2}^{\alpha }
\end{eqnarray*}%
provided that $\ell \leq \frac{s}{n}$. But we are assuming $\ell \leq
1+\varepsilon s$ here and so we obtain a suitable estimate for $%
A_{s,close}^{\ell }\left( J\right) $ provided we choose $0<\varepsilon <%
\frac{1}{n}$.

\begin{remark}
We cannot simply sum the estimate 
\begin{equation*}
A_{s,close}^{\ell }\left( J\right) \lesssim 2^{-2s}\left\vert J\right\vert ^{%
\frac{2}{n}}\left\vert 2^{\ell +1}J\right\vert _{\omega }\frac{1}{\left\vert
2^{\ell }J\right\vert ^{\frac{1}{n}\left( n+1-\alpha \right) }}\frac{\mathrm{%
P}^{\alpha }\left( J,\mathbf{1}_{2^{\ell -1}J}\sigma \right) }{\left\vert
J\right\vert ^{\frac{1}{n}}},
\end{equation*}%
over all $\ell \geq 1$ to get%
\begin{equation*}
\sum_{\ell }A_{s,close}^{\ell }\left( J\right) \lesssim 2^{-2s}\mathrm{P}%
^{\alpha }\left( J,\sigma \right) \sum_{\ell }\frac{\left\vert J\right\vert
^{\frac{1}{n}}}{\left\vert 2^{\ell }J\right\vert ^{\frac{1}{n}\left(
n+1-\alpha \right) }}\left\vert 2^{\ell +1}J\right\vert _{\omega }\lesssim
2^{-2s}\mathrm{P}^{\alpha }\left( J,\sigma \right) \mathrm{P}^{\alpha
}\left( J,\omega \right) ,
\end{equation*}%
since we only have control of the product $\mathrm{P}\left( J,\sigma \right) 
\mathrm{P}\left( J,\omega \right) $ in dimension $n=1$, where the two
Poisson kernels $\mathrm{P}$ and $\mathcal{P}$\ coincide, and the two-tailed 
$\mathcal{A}_{2}$ condition is known to hold.
\end{remark}

The above estimates prove%
\begin{equation*}
T_{s}^{\limfunc{proximal}}+T_{s}^{\limfunc{difference}}\lesssim 2^{-s}\left( 
\mathcal{A}_{2}^{\alpha }+\mathcal{E}_{\alpha }\sqrt{A_{2}^{\alpha }}\right)
.
\end{equation*}

\subsubsection{The intersection term}

Now we return to the term,%
\begin{eqnarray*}
T_{s}^{\limfunc{intersection}} &\equiv &\sum_{\substack{ F,F^{\prime }\in 
\mathcal{F}}}\sum_{\substack{ J\in \mathcal{M}_{\mathbf{r}-\limfunc{deep}%
}\left( F\right) ,\ J^{\prime }\in \mathcal{M}_{\mathbf{r}-\limfunc{deep}%
}\left( F^{\prime }\right)  \\ J,J^{\prime }\subset I,\ \ell \left(
J^{\prime }\right) =2^{-s}\ell \left( J\right)  \\ \left\vert c\left(
J\right) -c\left( J^{\prime }\right) \right\vert _{\limfunc{quasi}}\geq
2^{s\left( 1+\varepsilon \right) }\ell \left( J^{\prime }\right) }} \\
&&\times \int_{I\cap B\left( J,J^{\prime }\right) }\frac{\left\Vert \mathsf{P%
}_{F,J}^{\omega }\mathbf{x}\right\Vert _{L^{2}\left( \omega \right) }^{2}}{%
\left( \left\vert J\right\vert ^{\frac{1}{n}}+\left\vert y-c_{J}\right\vert
\right) ^{n+1-\alpha }}\frac{\left\Vert \mathsf{P}_{F^{\prime },J^{\prime
}}^{\omega }\mathbf{x}\right\Vert _{L^{2}\left( \omega \right) }^{2}}{\left(
\left\vert J^{\prime }\right\vert ^{\frac{1}{n}}+\left\vert y-c_{J^{\prime
}}\right\vert \right) ^{n+1-\alpha }}d\sigma \left( y\right) .
\end{eqnarray*}%
It will suffice to show that $T_{s}^{\limfunc{intersection}}$ satisfies the
estimate,%
\begin{equation*}
T_{s}^{\limfunc{intersection}}\lesssim 2^{-s\varepsilon }\mathcal{E}_{\alpha
}\sqrt{A_{2}^{\alpha }}\sum_{F\in \mathcal{F}}\sum_{\substack{ J\in \mathcal{%
M}_{\mathbf{r}-\limfunc{deep}}\left( F\right)  \\ J\subset I}}\lVert \mathsf{%
P}_{F,J}^{\omega }\mathbf{x}\rVert _{L^{2}\left( \omega \right)
}^{2}=2^{-s\varepsilon }\mathcal{E}_{\alpha }\sqrt{A_{2}^{\alpha }}\int_{%
\widehat{I}}t^{2}d\mu \ .
\end{equation*}%
Using $B\left( J,J^{\prime }\right) =B\left( c_{J},\frac{1}{2}\left\vert
c_{J}-c_{J^{\prime }}\right\vert _{\limfunc{quasi}}\right) $, we can write
(suppressing some notation for clarity),%
\begin{eqnarray*}
T_{s}^{\limfunc{intersection}} &=&\sum_{F,F^{\prime }}\sum_{\substack{ %
J,J^{\prime }}}\int_{I\cap B\left( J,J^{\prime }\right) }\frac{\left\Vert 
\mathsf{P}_{F,J}^{\omega }\mathbf{x}\right\Vert _{L^{2}\left( \omega \right)
}^{2}}{\left( \left\vert J\right\vert ^{\frac{1}{n}}+\left\vert
y-c_{J}\right\vert \right) ^{n+1-\alpha }}\frac{\left\Vert \mathsf{P}%
_{F^{\prime },J^{\prime }}^{\omega }\mathbf{x}\right\Vert _{L^{2}\left(
\omega \right) }^{2}}{\left( \left\vert J^{\prime }\right\vert ^{\frac{1}{n}%
}+\left\vert y-c_{J^{\prime }}\right\vert \right) ^{n+1-\alpha }}d\sigma
\left( y\right) \\
&\approx &\sum_{F,F^{\prime }}\sum_{\substack{ J,J^{\prime }}}\left\Vert 
\mathsf{P}_{F,J}^{\omega }\mathbf{x}\right\Vert _{L^{2}\left( \omega \right)
}^{2}\left\Vert \mathsf{P}_{F^{\prime },J^{\prime }}^{\omega }\mathbf{x}%
\right\Vert _{L^{2}\left( \omega \right) }^{2}\frac{1}{\left\vert
c_{J}-c_{J^{\prime }}\right\vert ^{n+1-\alpha }} \\
&&\times \int_{I\cap B\left( J,J^{\prime }\right) }\frac{1}{\left(
\left\vert J\right\vert ^{\frac{1}{n}}+\left\vert y-c_{J}\right\vert \right)
^{n+1-\alpha }}d\sigma \left( y\right) \\
&\approx &\sum_{F,F^{\prime }}\sum_{\substack{ J,J^{\prime }}}\left\Vert 
\mathsf{P}_{F,J}^{\omega }\mathbf{x}\right\Vert _{L^{2}\left( \omega \right)
}^{2}\left\Vert \mathsf{P}_{F^{\prime },J^{\prime }}^{\omega }\mathbf{x}%
\right\Vert _{L^{2}\left( \omega \right) }^{2}\frac{1}{\left\vert
c_{J}-c_{J^{\prime }}\right\vert ^{n+1-\alpha }}\frac{\mathrm{P}^{\alpha
}\left( J,\mathbf{1}_{I\cap B\left( J,J^{\prime }\right) }\sigma \right) }{%
\left\vert J\right\vert ^{\frac{1}{n}}} \\
&\leq &\sum_{F^{\prime }}\sum_{\substack{ J^{\prime }}}\left\Vert \mathsf{P}%
_{F^{\prime },J^{\prime }}^{\omega }\mathbf{x}\right\Vert _{L^{2}\left(
\omega \right) }^{2}\sum_{F}\sum_{\substack{ J}}\frac{1}{\left\vert
c_{J}-c_{J^{\prime }}\right\vert ^{n+1-\alpha }}\left\Vert \mathsf{P}%
_{F,J}^{\omega }\mathbf{x}\right\Vert _{L^{2}\left( \omega \right) }^{2}%
\frac{\mathrm{P}^{\alpha }\left( J,\mathbf{1}_{I\cap B\left( J,J^{\prime
}\right) }\sigma \right) }{\left\vert J\right\vert ^{\frac{1}{n}}},
\end{eqnarray*}%
and it remains to show that for each $J^{\prime }$,%
\begin{equation*}
S_{s}\left( J^{\prime }\right) \equiv \sum_{F}\overset{\ast }{\sum 
_{\substack{ J:\ \left\vert c\left( J\right) -c\left( J^{\prime }\right)
\right\vert _{\limfunc{quasi}}\geq 2^{s\left( 1+\varepsilon \right) }\ell
\left( J^{\prime }\right) }}}\frac{\left\Vert \mathsf{P}_{F,J}^{\omega }%
\mathbf{x}\right\Vert _{L^{2}\left( \omega \right) }^{2}}{\left\vert
c_{J}-c_{J^{\prime }}\right\vert ^{n+1-\alpha }}\frac{\mathrm{P}^{\alpha
}\left( J,\mathbf{1}_{I\cap B\left( J,J^{\prime }\right) }\sigma \right) }{%
\left\vert J\right\vert ^{\frac{1}{n}}}\lesssim 2^{-\varepsilon s}\mathcal{E}%
_{\alpha }\sqrt{A_{2}^{\alpha }}\ .
\end{equation*}%
We write%
\begin{eqnarray*}
S_{s}\left( J^{\prime }\right) &\approx &\sum_{k\geq s\left( 1+\varepsilon
\right) -m}\frac{1}{\left( 2^{k}\left\vert J^{\prime }\right\vert ^{\frac{1}{%
n}}\right) ^{n+1-\alpha }}\sum_{F}\overset{\ast }{\sum_{J:\ \left\vert
c_{J}-c_{J^{\prime }}\right\vert _{\limfunc{quasi}}\approx 2^{k}\ell \left(
J^{\prime }\right) }}\left\Vert \mathsf{P}_{F,J}^{\omega }\mathbf{x}%
\right\Vert _{L^{2}\left( \omega \right) }^{2}\frac{\mathrm{P}^{\alpha
}\left( J,\mathbf{1}_{I\cap B\left( J,J^{\prime }\right) }\sigma \right) }{%
\left\vert J\right\vert ^{\frac{1}{n}}} \\
&\equiv &\sum_{k\geq s\left( 1+\varepsilon \right) -m}\frac{1}{\left(
2^{k}\left\vert J^{\prime }\right\vert ^{\frac{1}{n}}\right) ^{n+1-\alpha }}%
S_{s}^{k}\left( J^{\prime }\right) \ ,
\end{eqnarray*}%
where by $\left\vert c_{J}-c_{J^{\prime }}\right\vert _{\limfunc{quasi}%
}\approx 2^{k}\ell \left( J^{\prime }\right) $ we mean $2^{k}\ell \left(
J^{\prime }\right) \leq \left\vert c_{J}-c_{J^{\prime }}\right\vert _{%
\limfunc{quasi}}\leq 2^{k+1}\ell \left( J^{\prime }\right) $. Here $m$ is as
in (\ref{smallest m}), and we are using the inequality, 
\begin{equation}
k+m\geq \left( 1+\varepsilon \right) s.  \label{important}
\end{equation}%
Indeed, in the term $V_{s}$ we have $\left\vert c_{J}-c_{J^{\prime
}}\right\vert _{\limfunc{quasi}}\geq 2^{\left( 1+\varepsilon \right) s}\ell
\left( J^{\prime }\right) $, and combined with $\left\vert
c_{J}-c_{J^{\prime }}\right\vert _{\limfunc{quasi}}\leq \sqrt{n}2^{k}\ell
\left( J^{\prime }\right) $, we obtain (\ref{important}).

Now we apply Cauchy-Schwarz and the shifted local estimate (\ref{shifted
local}) in Lemma \ref{shifted} to get%
\begin{eqnarray*}
S_{s}^{k}\left( J^{\prime }\right) &\leq &\left( \sum_{F}\overset{\ast }{%
\sum_{J:\ \left\vert c_{J}-c_{J^{\prime }}\right\vert _{\limfunc{quasi}%
}\approx 2^{k}\ell \left( J^{\prime }\right) }}\left\Vert \mathsf{P}%
_{F,J}^{\omega }\mathbf{x}\right\Vert _{L^{2}\left( \omega \right)
}^{2}\right) ^{\frac{1}{2}} \\
&&\times \left( \sum_{F}\overset{\ast }{\sum_{J:\ \left\vert
c_{J}-c_{J^{\prime }}\right\vert _{\limfunc{quasi}}\approx 2^{k}\ell \left(
J^{\prime }\right) }}\left\Vert \mathsf{P}_{F,J}^{\omega }\mathbf{x}%
\right\Vert _{L^{2}\left( \omega \right) }^{2}\left( \frac{\mathrm{P}%
^{\alpha }\left( J,\mathbf{1}_{I\cap B\left( J,J^{\prime }\right) }\sigma
\right) }{\left\vert J\right\vert ^{\frac{1}{n}}}\right) ^{2}\right) ^{\frac{%
1}{2}} \\
&\lesssim &\left( \mathbf{\tau }\overset{\ast }{\sum_{J:\ \left\vert
c_{J}-c_{J^{\prime }}\right\vert _{\limfunc{quasi}}\approx 2^{k}\ell \left(
J^{\prime }\right) }}\left\vert J\right\vert ^{\frac{2}{n}}\left\vert
J\right\vert _{\omega }\right) ^{\frac{1}{2}}\left( \mathbf{\tau }\mathcal{E}%
_{\alpha }^{2}\left\vert C2^{k}J^{\prime }\right\vert _{\sigma }\right) ^{%
\frac{1}{2}} \\
&\lesssim &\mathbf{\tau }\mathcal{E}_{\alpha }2^{s}\left\vert J^{\prime
}\right\vert ^{\frac{1}{n}}\sqrt{\left\vert C2^{k}J^{\prime }\right\vert
_{\omega }}\sqrt{\left\vert C2^{k}J^{\prime }\right\vert _{\sigma }}\lesssim 
\mathbf{\tau }\mathcal{E}_{\alpha }\sqrt{A_{2}^{\alpha }}2^{s}\left\vert
J^{\prime }\right\vert ^{\frac{1}{n}}\left\vert 2^{k}J^{\prime }\right\vert
^{1-\frac{\alpha }{n}} \\
&=&\mathbf{\tau }\mathcal{E}_{\alpha }\sqrt{A_{2}^{\alpha }}2^{s}2^{k\left(
n-\alpha \right) }\left\vert J^{\prime }\right\vert ^{\frac{1}{n}\left(
n+1-\alpha \right) },
\end{eqnarray*}%
provided 
\begin{equation*}
B\left( J,J^{\prime }\right) \subset C2^{k}J^{\prime }.
\end{equation*}%
But this follows from $\left\vert c_{J}-c_{J^{\prime }}\right\vert _{%
\limfunc{quasi}}\approx 2^{k}\ell \left( J^{\prime }\right) $ and (\ref%
{important}), which shows in particular that $k\geq s+c$. Here the shifted
local estimate (\ref{shifted local}) applies to the expression%
\begin{equation*}
\sum_{F}\overset{\ast }{\sum_{J:\ \left\vert c_{J}-c_{J^{\prime
}}\right\vert _{\limfunc{quasi}}\approx 2^{k}\ell \left( J^{\prime }\right) }%
}\left\Vert \mathsf{P}_{F,J}^{\omega }\mathbf{x}\right\Vert _{L^{2}\left(
\omega \right) }^{2}\left( \frac{\mathrm{P}^{\alpha }\left( J,\mathbf{1}%
_{I\cap B\left( J,J^{\prime }\right) }\sigma \right) }{\left\vert
J\right\vert ^{\frac{1}{n}}}\right) ^{2}
\end{equation*}%
with $I_{0}=\widehat{J}$, where $\widehat{J}\in \mathcal{S}\Omega \mathcal{D}
$ is a shifted quasicube satisfying $\dbigcup\limits_{J:\ \left\vert
c_{J}-c_{J^{\prime }}\right\vert _{\limfunc{quasi}}\approx 2^{k}\ell \left(
J^{\prime }\right) }J\subset \widehat{J}$ and $\ell \left( \widehat{J}%
\right) \leq C2^{k}\ell \left( J^{\prime }\right) $.

Then we have%
\begin{eqnarray*}
S_{s}\left( J^{\prime }\right) &=&\sum_{k\geq \left( 1+\varepsilon \right)
s-m}\frac{1}{\left( 2^{k}\left\vert J^{\prime }\right\vert ^{\frac{1}{n}%
}\right) ^{n+1-\alpha }}S_{s}^{k}\left( J^{\prime }\right) \\
&\lesssim &\mathbf{\tau }\mathcal{E}_{\alpha }\sqrt{A_{2}^{\alpha }}%
\sum_{k\geq \left( 1+\varepsilon \right) s-m}\frac{1}{\left( 2^{k}\left\vert
J^{\prime }\right\vert ^{\frac{1}{n}}\right) ^{n+1-\alpha }}2^{s}2^{k\left(
n-\alpha \right) }\left\vert J^{\prime }\right\vert ^{\frac{1}{n}\left(
n+1-\alpha \right) } \\
&\lesssim &\mathbf{\tau }\mathcal{E}_{\alpha }\sqrt{A_{2}^{\alpha }}%
\sum_{k\geq \left( 1+\varepsilon \right) s-m}2^{s-k}\lesssim \mathbf{\tau }%
2^{-\varepsilon s}\mathcal{E}_{\alpha }\sqrt{A_{2}^{\alpha }},
\end{eqnarray*}%
which is summable in $s$. This completes the proof of (\ref{Us bound}), and
hence of the estimate for the local part $\mathbf{Local}\left( I\right) $ in
(\ref{loc and glo}) of the second testing condition (\ref{e.t2 n}).

\subsubsection{The global estimate}

It remains to prove the following estimate for the global part $\mathbf{%
Global}$ in (\ref{loc and glo}) of the second testing condition (\ref{e.t2 n}%
):%
\begin{equation*}
\int_{\mathbb{R}\setminus I}[\mathbb{P}^{\alpha \ast }(t\mathbf{1}_{\widehat{%
I}}\mu )]^{2}\sigma \lesssim \mathcal{A}_{2}^{\alpha }\left\vert
I\right\vert _{\sigma }.
\end{equation*}%
We decompose the integral on the left into two pieces:%
\begin{equation*}
\int_{\mathbb{R}\setminus I}[\mathbb{P}^{\alpha \ast }(t\mathbf{1}_{\widehat{%
I}}\mu )]^{2}\sigma =\int_{\mathbb{R}\setminus 3I}[\mathbb{P}^{\alpha \ast
}(t\mathbf{1}_{\widehat{I}}\mu )]^{2}\sigma +\int_{3I\setminus I}[\mathbb{P}%
^{\alpha \ast }(t\mathbf{1}_{\widehat{I}}\mu )]^{2}\sigma =A+B.
\end{equation*}%
We further decompose term $A$ in annuli and use (\ref{PI hat}) to obtain%
\begin{eqnarray*}
A &=&\sum_{m=1}^{\infty }\int_{3^{m+1}I\setminus 3^{m}I}[\mathbb{P}^{\alpha
\ast }(t\mathbf{1}_{\widehat{I}}\mu )]^{2}\sigma \\
&=&\sum_{m=1}^{\infty }\int_{3^{m+1}I\setminus 3^{m}I}\left[ \sum_{F\in 
\mathcal{F}}\sum_{\substack{ J\in \mathcal{M}_{\mathbf{r}-\limfunc{deep}%
}\left( F\right)  \\ J\subset I}}\frac{\lVert \mathsf{P}_{F,J}^{\omega }%
\mathbf{x}\rVert _{L^{2}\left( \omega \right) }^{2}}{\left( \left\vert
J\right\vert ^{\frac{1`}{n}}+\left\vert y-c_{J}\right\vert \right)
^{n+1-\alpha }}\right] ^{2}d\sigma \left( y\right) \\
&\lesssim &\sum_{m=1}^{\infty }\int_{3^{m+1}I\setminus 3^{m}I}\left[
\sum_{F\in \mathcal{F}}\sum_{\substack{ J\in \mathcal{M}_{\mathbf{r}-%
\limfunc{deep}}\left( F\right)  \\ J\subset I}}\lVert \mathsf{P}%
_{F,J}^{\omega }\mathbf{x}\rVert _{L^{2}\left( \omega \right) }^{2}\right]
^{2}\frac{1}{\left( 3^{m}\left\vert I\right\vert ^{\frac{1}{n}}\right)
^{2\left( n+1-\alpha \right) }}d\sigma \left( y\right) .
\end{eqnarray*}%
Now use (\ref{mu I hat}) to get%
\begin{equation*}
\int_{\widehat{I}}t^{2}d\mu =\sum_{F\in \mathcal{F}}\sum_{\substack{ J\in 
\mathcal{M}_{\mathbf{r}-\limfunc{deep}}\left( F\right)  \\ J\subset I}}%
\lVert \mathsf{P}_{F,J}^{\omega }\mathbf{x}\rVert _{L^{2}\left( \omega
\right) }^{2}\lesssim \mathbf{\tau \ }\lVert \mathbf{1}_{I}\left(
x-c_{I}\right) \rVert _{L^{2}\left( \omega \right) }^{2}\lesssim \left\vert
I\right\vert ^{\frac{2}{n}}\left\vert I\right\vert _{\omega }
\end{equation*}%
to obtain that%
\begin{eqnarray*}
A &\lesssim &\sum_{m=1}^{\infty }\int_{3^{m+1}I\setminus 3^{m}I}\left[ \int_{%
\widehat{I}}t^{2}d\mu \right] \left[ \left\vert I\right\vert ^{\frac{2}{n}%
}\left\vert I\right\vert _{\omega }\right] \frac{1}{\left( 3^{m}\left\vert
I\right\vert ^{\frac{1}{n}}\right) ^{2\left( n+1-\alpha \right) }}d\sigma
\left( y\right) \\
&\lesssim &\left\{ \sum_{m=1}^{\infty }3^{-2m}\frac{\left\vert
3^{m+1}I\right\vert _{\omega }\left\vert 3^{m+1}I\right\vert _{\sigma }}{%
\left\vert 3^{m+1}I\right\vert ^{2\left( 1-\frac{\alpha }{n}\right) }}%
\right\} \left[ \int_{\widehat{I}}t^{2}d\mu \right] \lesssim A_{2}^{\alpha
}\int_{\widehat{I}}t^{2}d\mu .
\end{eqnarray*}

Finally, we estimate term $B$ by using (\ref{PI hat}) to write%
\begin{equation*}
B=\int_{3I\setminus I}\left[ \sum_{F\in \mathcal{F}}\sum_{\substack{ J\in 
\mathcal{M}_{\mathbf{r}-\limfunc{deep}}\left( F\right)  \\ J\subset I}}\frac{%
\lVert \mathsf{P}_{F,J}^{\omega }\mathbf{x}\rVert _{L^{2}\left( \omega
\right) }^{2}}{\left( \left\vert J\right\vert ^{\frac{1}{n}}+\left\vert
y-c_{J}\right\vert \right) ^{n+1-\alpha }}\right] ^{2}d\sigma \left(
y\right) ,
\end{equation*}%
and then expanding the square and calculating as in the proof of the local
part given earlier to obtain the bound $\mathcal{A}_{2}^{\alpha }$. The
details are similar, but easier in that the energy condition is not needed,
and they are left to the reader.

\section{The stopping form}

In this section we adapt the argument of M. Lacey in \cite{Lac} to apply in
the setting of a general $\alpha $-fractional Calder\'{o}n-Zygmund operator $%
T^{\alpha }$ in $\mathbb{R}^{n}$ using the Monotonicity Lemma \ref{mono} and
our quasienergy condition in Definition \ref{energy condition}. We will
prove the bound (\ref{B stop form 3}) for the stopping form%
\begin{eqnarray}
\mathsf{B}_{\limfunc{stop}}^{A}\left( f,g\right) &\equiv &\sum_{\substack{ %
I\in \mathcal{C}_{A}\text{ and }J\in \mathcal{C}_{A}^{\mathbf{\tau }-%
\limfunc{shift}}  \\ J\Subset _{\mathbf{\rho }}I_{J}}}\left( \mathbb{E}%
_{I_{J}}^{\sigma }\bigtriangleup _{I}^{\sigma }f\right) \left\langle
T_{\sigma }^{\alpha }\mathbf{1}_{A\setminus I_{J}},\bigtriangleup
_{J}^{\omega }g\right\rangle _{\omega }  \label{dummy} \\
&=&\sum_{\substack{ I:\ \pi I\in \mathcal{C}_{A}\text{ and }J\in \mathcal{C}%
_{A}^{\mathbf{\tau }-\limfunc{shift}}  \\ J\Subset _{\mathbf{\rho }}I}}%
\left( \mathbb{E}_{I}^{\sigma }\bigtriangleup _{\pi I}^{\sigma }f\right)
\left\langle T_{\sigma }^{\alpha }\mathbf{1}_{A\setminus I},\bigtriangleup
_{J}^{\omega }g\right\rangle _{\omega },  \notag
\end{eqnarray}%
where we have made the `change of dummy variable' $I_{J}\rightarrow I$ for
convenience in notation (recall that the child of $I$ that contains $J$ is
denoted $I_{J}$).

However, the Monotonicity Lemma of Lacey and Wick has an additional term on
the right hand side, and our quasienergy condition is not a direct
generalization of the one-dimensional energy condition. These differences in
higher dimension result in changes and complications that must be tracked
throughout the argument. In particular, we find it necessary to separate the
interaction of the two terms on the right side of the Monotonicity Lemma by
splitting the stopping form into the two corresponding sublinear forms in (%
\ref{def split}) below. Recall that for $A\in \mathcal{A}$ the \emph{shifted}
corona is given in Definition \ref{shifted corona} by 
\begin{equation*}
\mathcal{C}_{A}^{\mathbf{\tau }-\limfunc{shift}}=\left\{ J\in \mathcal{C}%
_{A}:J\Subset _{\mathbf{\tau }}A\right\} \cup \dbigcup\limits_{A^{\prime
}\in \mathfrak{C}_{\mathcal{A}}\left( A\right) }\left\{ J\in \Omega \mathcal{%
D}:J\Subset _{\mathbf{\tau }}A\text{ and }J\text{ is }\mathbf{\tau }\text{%
-nearby in }A^{\prime }\right\} ,
\end{equation*}%
and in particular the $\mathbf{1}$-shifted corona is given by $\mathcal{C}%
_{A}^{\mathbf{1}-\limfunc{shift}}=\left( \mathcal{C}_{A}\setminus \left\{
A\right\} \right) \cup \mathfrak{C}_{\mathcal{A}}\left( A\right) $.

\begin{definition}
Suppose that $A\in \mathcal{A}$ and that $\mathcal{P}\subset \mathcal{C}%
_{A}^{\mathbf{1}-\limfunc{shift}}\times \mathcal{C}_{A}^{\mathbf{\tau }-%
\limfunc{shift}}$. We say that the collection of pairs $\mathcal{P}$ is $A$%
\emph{-admissible} if
\end{definition}

\begin{itemize}
\item (good and $\left( \mathbf{\rho -1}\right) $-deeply embedded) $J$ is
good and $J\Subset _{\mathbf{\rho -1}}I\varsubsetneqq A$ for every $\left(
I,J\right) \in \mathcal{P},$

\item (tree-connected in the first component) if $I_{1}\subset I_{2}$ and
both $\left( I_{1},J\right) \in \mathcal{P}$ and $\left( I_{2},J\right) \in 
\mathcal{P}$, then $\left( I,J\right) \in \mathcal{P}$ for every $I$ in the
geodesic $\left[ I_{1},I_{2}\right] =\left\{ I\in \Omega \mathcal{D}%
:I_{1}\subset I\subset I_{2}\right\} $.
\end{itemize}

However, since $\left( I,J\right) \in \mathcal{P}$ implies both $J\in 
\mathcal{C}_{A}^{\mathbf{\tau }-\limfunc{shift}}$ and $J\Subset _{\mathbf{%
\rho -1}}I$, the assumption $\mathbf{\rho >\tau }$ in Definition \ref{def
parameters} shows that $I$ is in the corona $\mathcal{C}_{A}$, and hence we
may replace $\mathcal{C}_{A}^{\mathbf{1}-\limfunc{shift}}$ with the
restricted corona $\mathcal{C}_{A}^{\prime }\equiv \mathcal{C}_{A}\setminus
\left\{ A\right\} $ in the above definition of $A$\emph{-admissible}. The
basic example of an admissible collection of pairs is obtained from the
pairs of quasicubes summed in the stopping form $\mathsf{B}_{stop}^{A}\left(
f,g\right) $ in (\ref{dummy}), which occurs in (\ref{B stop form 3}) above; 
\begin{equation}
\mathcal{P}^{A}\equiv \left\{ \left( I,J\right) :I\in \mathcal{C}%
_{A}^{\prime }\text{ and }J\in \mathcal{C}_{A}^{\mathbf{\tau }-\limfunc{shift%
}}\text{ where}\ J\text{ is }\mathbf{\tau }\text{-good and\ }J\Subset _{%
\mathbf{\rho -1}}I\right\} .  \label{initial P}
\end{equation}%
Recall also that $J$ is $\mathbf{\tau }$-good if $J\in \Omega \mathcal{D}%
_{\left( \mathbf{r},\varepsilon \right) -\limfunc{good}}^{\mathbf{\tau }}$
as in (\ref{extended good grid}), i.e. if $J$ and its children and its $\ell 
$-parents up to level $\mathbf{\tau }$ are all good. Recall that the
quasiHaar support of $g$ is contained in the collection of $\mathbf{\tau }$%
-good quasicubes.

\begin{definition}
Suppose that $A\in \mathcal{A}$ and that $\mathcal{P}$ is an $A$\emph{%
-admissible} collection of pairs. Define the associated \emph{stopping} form 
$\mathsf{B}_{\limfunc{stop}}^{A,\mathcal{P}}$ by%
\begin{equation*}
\mathsf{B}_{\limfunc{stop}}^{A,\mathcal{P}}\left( f,g\right) \equiv
\sum_{\left( I,J\right) \in \mathcal{P}}\left( \mathbb{E}_{I}^{\sigma
}\bigtriangleup _{\pi I}^{\sigma }f\right) \ \left\langle T_{\sigma
}^{\alpha }\mathbf{1}_{A\setminus I},\bigtriangleup _{J}^{\omega
}g\right\rangle _{\omega }\ ,
\end{equation*}%
where we may of course further restrict $I$ to $\pi I\in \limfunc{supp}%
\widehat{f}$ if we wish.
\end{definition}

Given an $A$-admissible collection $\mathcal{P}$ of pairs define the reduced
collection $\mathcal{P}^{\limfunc{red}}$ as follows. For each fixed $J$ let $%
I_{J}^{\limfunc{red}}$ be the largest good quasicube $I$ such that $\left(
I,J\right) \in \mathcal{P}$. Then set%
\begin{equation*}
\mathcal{P}^{\limfunc{red}}\equiv \left\{ \left( I,J\right) \in \mathcal{P}%
:I\subset I_{J}^{\limfunc{red}}\right\} .
\end{equation*}%
Clearly $\mathcal{P}^{\limfunc{red}}$ is $A$-admissible. Now recall our
assumption that the quasiHaar support of $f$ is contained in the set of $%
\mathbf{\tau }$-good quasicubes, which in particular requires that their
children are all good as well. This assumption has the important implication
that $\mathsf{B}_{\limfunc{stop}}^{A,\mathcal{P}}\left( f,g\right) =\mathsf{B%
}_{\limfunc{stop}}^{A,\mathcal{P}^{\limfunc{red}}}\left( f,g\right) $.
Indeed, if $\left( I,J\right) \in \mathcal{P}\setminus \mathcal{P}^{\limfunc{%
red}}$ then $\pi I\not\in \limfunc{supp}\widehat{f}$ and so $\mathbb{E}%
_{I}^{\sigma }\bigtriangleup _{\pi I}^{\sigma }f=0$. Thus for the purpose of
bounding the stopping form, we may assume that the following additional
property holds for any $A$-admissible collection of pairs $\mathcal{P}$:

\begin{itemize}
\item if $\left( I,J\right) \in \mathcal{P}$ is maximal in the sense that $%
I\supset I^{\prime }$ for all $I^{\prime }$ satisfying $\left( I^{\prime
},J\right) \in \mathcal{P}$, then $I$ is good.
\end{itemize}

Note that there is an asymmetry in our definition of $\mathcal{P}^{\limfunc{%
red}}$ here, namely that the second components $J$ are required to be $%
\mathbf{\tau }$-good, while the maximal first components $I$ are required to
be good. Of course the treatment of the dual stopping forms will use the
reversed requirements, and this accounts for our symmetric restrictions
imposed on the quasiHaar supports of $f$ and $g$ at the outset of the proof.

\begin{definition}
\label{def reduced}We say that an admissible collection $\mathcal{P}$ is 
\emph{reduced} if $\mathcal{P}=\mathcal{P}^{\limfunc{red}}$, so that the
additional property above holds.
\end{definition}

Note that $\mathsf{B}_{\limfunc{stop}}^{A,\mathcal{P}}\left( f,g\right) =%
\mathsf{B}_{\limfunc{stop}}^{A,\mathcal{P}}\left( \mathsf{P}_{\mathcal{C}%
_{A}}^{\sigma }f,\mathsf{P}_{\mathcal{C}_{A}^{\mathbf{\tau }-\limfunc{shift}%
}}^{\omega }g\right) $. Recall that the deep quasienergy condition constant $%
\mathcal{E}_{\alpha }^{\limfunc{deep}}$ is given by%
\begin{equation*}
\left( \mathcal{E}_{\alpha }^{\limfunc{deep}}\right) ^{2}\equiv \sup_{I=\dot{%
\cup}I_{r}}\frac{1}{\left\vert I\right\vert _{\sigma }}\sum_{r=1}^{\infty
}\sum_{J\in \mathcal{M}_{\mathbf{r}-\limfunc{deep}}\left( I_{r}\right)
}\left( \frac{\mathrm{P}^{\alpha }\left( J,\mathbf{1}_{I\setminus \gamma
J}\sigma \right) }{\left\vert J\right\vert ^{\frac{1}{n}}}\right)
^{2}\left\Vert \mathsf{P}_{J}^{\limfunc{subgood},\omega }\mathbf{x}%
\right\Vert _{L^{2}\left( \omega \right) }^{2}\ .
\end{equation*}

\begin{proposition}
\label{stopping bound}Suppose that $A\in \mathcal{A}$ and that $\mathcal{P}$
is an $A$\emph{-admissible} collection of pairs. Then the stopping form $%
\mathsf{B}_{\limfunc{stop}}^{A,\mathcal{P}}$ satisfies the bound%
\begin{equation}
\left\vert \mathsf{B}_{\limfunc{stop}}^{A,\mathcal{P}}\left( f,g\right)
\right\vert \lesssim \left( \mathcal{E}_{\alpha }^{\limfunc{deep}}+\sqrt{%
A_{2}^{\alpha }}\right) \left( \left\Vert f\right\Vert _{L^{2}\left( \sigma
\right) }+\alpha _{\mathcal{A}}\left( f\right) \sqrt{\left\vert A\right\vert
_{\sigma }}\right) \left\Vert g\right\Vert _{L^{2}\left( \omega \right) }\ .
\label{B stop form}
\end{equation}
\end{proposition}

With this proposition in hand, we can complete the proof of (\ref{B stop
form 3}), and hence of Theorem \ref{T1 theorem}, by summing over the
stopping quasicubes $A\in \mathcal{A}$ with the choice $\mathcal{P}^{A}$ of $%
A$-admissible pairs for each $A$:%
\begin{eqnarray*}
&&\sum_{A\in \mathcal{A}}\left\vert \mathsf{B}_{\limfunc{stop}}^{A,\mathcal{P%
}^{A}}\left( f,g\right) \right\vert \\
&\lesssim &\sum_{A\in \mathcal{A}}\left( \mathcal{E}_{\alpha }^{\limfunc{deep%
}}+\sqrt{A_{2}^{\alpha }}\right) \left( \left\Vert \mathsf{P}_{\mathcal{C}%
_{A}}f\right\Vert _{L^{2}\left( \sigma \right) }+\alpha _{\mathcal{A}}\left(
f\right) \sqrt{\left\vert A\right\vert _{\sigma }}\right) \left\Vert \mathsf{%
P}_{\mathcal{C}_{A}^{\mathbf{\tau }-\limfunc{shift}}}g\right\Vert
_{L^{2}\left( \omega \right) } \\
&\lesssim &\left( \mathcal{E}_{\alpha }^{\limfunc{deep}}+\sqrt{A_{2}^{\alpha
}}\right) \left( \sum_{A\in \mathcal{A}}\left( \left\Vert \mathsf{P}_{%
\mathcal{C}_{A}}f\right\Vert _{L^{2}\left( \sigma \right) }^{2}+\alpha _{%
\mathcal{A}}\left( f\right) ^{2}\left\vert A\right\vert _{\sigma }\right)
\right) ^{\frac{1}{2}}\left( \sum_{A\in \mathcal{A}}\left\Vert \mathsf{P}_{%
\mathcal{C}_{A}^{\mathbf{\tau }-\limfunc{shift}}}g\right\Vert _{L^{2}\left(
\omega \right) }^{2}\right) ^{\frac{1}{2}} \\
&\lesssim &\left( \mathcal{E}_{\alpha }^{\limfunc{deep}}+\sqrt{A_{2}^{\alpha
}}\right) \left\Vert f\right\Vert _{L^{2}\left( \sigma \right) }\left\Vert
g\right\Vert _{L^{2}\left( \omega \right) }\ ,
\end{eqnarray*}%
by orthogonality $\sum_{A\in \mathcal{A}}\left\Vert \mathsf{P}_{\mathcal{C}%
_{A}}f\right\Vert _{L^{2}\left( \sigma \right) }^{2}\leq \left\Vert
f\right\Vert _{L^{2}\left( \sigma \right) }^{2}$ in corona projections $%
\mathcal{C}_{A}^{\sigma }$, `quasi' orthogonality $\sum_{A\in \mathcal{A}%
}\alpha _{\mathcal{A}}\left( f\right) ^{2}\left\vert A\right\vert _{\sigma
}\lesssim \left\Vert f\right\Vert _{L^{2}\left( \sigma \right) }^{2}$ in the
stopping quasicubes $\mathcal{A}$, and by the bounded overlap of the shifted
coronas $\mathcal{C}_{A}^{\mathbf{\tau }-\limfunc{shift}}$: 
\begin{equation*}
\sum_{A\in \mathcal{A}}\mathbf{1}_{\mathcal{C}_{A}^{\mathbf{\tau }-\limfunc{%
shift}}}\leq \mathbf{\tau 1}_{\Omega \mathcal{D}}.
\end{equation*}

To prove Proposition \ref{stopping bound}, we begin by letting $\Pi _{2}%
\mathcal{P}$ consist of the second components of the pairs in $\mathcal{P}$
and writing 
\begin{eqnarray*}
\mathsf{B}_{\limfunc{stop}}^{A,\mathcal{P}}\left( f,g\right) &=&\sum_{J\in
\Pi _{2}\mathcal{P}}\left\langle T_{\sigma }^{\alpha }\varphi _{J}^{\mathcal{%
P}},\bigtriangleup _{J}^{\omega }g\right\rangle _{\omega }; \\
\text{where }\varphi _{J}^{\mathcal{P}} &\equiv &\sum_{I\in \mathcal{C}%
_{A}^{\prime }:\ \left( I,J\right) \in \mathcal{P}}\mathbb{E}_{I}^{\sigma
}\left( \bigtriangleup _{\pi I}^{\sigma }f\right) \ \mathbf{1}_{A\setminus
I}\ .
\end{eqnarray*}%
By the tree-connected property of $\mathcal{P}$, and the telescoping
property of martingale differences, together with the bound $\alpha _{%
\mathcal{A}}\left( A\right) $ on the quasiaverages of $f$ in the corona $%
\mathcal{C}_{A}$, we have%
\begin{equation}
\left\vert \varphi _{J}^{\mathcal{P}}\right\vert \lesssim \alpha _{\mathcal{A%
}}\left( A\right) 1_{A\setminus I_{\mathcal{P}}\left( J\right) },
\label{phi bound}
\end{equation}%
where $I_{\mathcal{P}}\left( J\right) \equiv \dbigcap \left\{ I:\left(
I,J\right) \in \mathcal{P}\right\} $ is the smallest quasicube $I$ for which 
$\left( I,J\right) \in \mathcal{P}$. Another important property of these
functions is the sublinearity:%
\begin{equation}
\left\vert \varphi _{J}^{\mathcal{P}}\right\vert \leq \left\vert \varphi
_{J}^{\mathcal{P}_{1}}\right\vert +\left\vert \varphi _{J}^{\mathcal{P}%
_{2}}\right\vert ,\ \ \ \ \ \mathcal{P}=\mathcal{P}_{1}\dot{\cup}\mathcal{P}%
_{2}\ .  \label{phi sublinear}
\end{equation}%
Now apply the Monotonicity Lemma \ref{mono} to the inner product $%
\left\langle T_{\sigma }^{\alpha }\varphi _{J},\bigtriangleup _{J}^{\omega
}g\right\rangle _{\omega }$ to obtain%
\begin{eqnarray*}
\left\vert \left\langle T_{\sigma }^{\alpha }\varphi _{J},\bigtriangleup
_{J}^{\omega }g\right\rangle _{\omega }\right\vert &\lesssim &\frac{\mathrm{P%
}^{\alpha }\left( J,\left\vert \varphi _{J}\right\vert \mathbf{1}%
_{A\setminus I_{\mathcal{P}}\left( J\right) }\sigma \right) }{\left\vert
J\right\vert ^{\frac{1}{n}}}\left\Vert \bigtriangleup _{J}^{\omega }\mathbf{x%
}\right\Vert _{L^{2}\left( \omega \right) }\left\Vert \bigtriangleup
_{J}^{\omega }g\right\Vert _{L^{2}\left( \omega \right) } \\
&&+\frac{\mathrm{P}_{1+\delta }^{\alpha }\left( J,\left\vert \varphi
_{J}\right\vert \mathbf{1}_{A\setminus I_{\mathcal{P}}\left( J\right)
}\sigma \right) }{\left\vert J\right\vert ^{\frac{1}{n}}}\left\Vert \mathsf{P%
}_{J}^{\omega }\mathbf{x}\right\Vert _{L^{2}\left( \omega \right)
}\left\Vert \bigtriangleup _{J}^{\omega }g\right\Vert _{L^{2}\left( \omega
\right) }.
\end{eqnarray*}%
Thus we have%
\begin{eqnarray}
&&  \label{def split} \\
\left\vert \mathsf{B}_{\limfunc{stop}}^{A,\mathcal{P}}\left( f,g\right)
\right\vert &\leq &\sum_{J\in \Pi _{2}\mathcal{P}}\frac{\mathrm{P}%
_{1}^{\alpha }\left( J,\left\vert \varphi _{J}\right\vert \mathbf{1}%
_{A\setminus I_{\mathcal{P}}\left( J\right) }\sigma \right) }{\left\vert
J\right\vert ^{\frac{1}{n}}}\left\Vert \bigtriangleup _{J}^{\omega }\mathbf{x%
}\right\Vert _{L^{2}\left( \omega \right) }\left\Vert \bigtriangleup
_{J}^{\omega }g\right\Vert _{L^{2}\left( \omega \right) }  \notag \\
&&+\sum_{J\in \Pi _{2}\mathcal{P}}\frac{\mathrm{P}_{1+\delta }^{\alpha
}\left( J,\left\vert \varphi _{J}\right\vert \mathbf{1}_{A\setminus I_{%
\mathcal{P}}\left( J\right) }\sigma \right) }{\left\vert J\right\vert ^{%
\frac{1}{n}}}\left\Vert \mathsf{P}_{J}^{\omega }\mathbf{x}\right\Vert
_{L^{2}\left( \omega \right) }\left\Vert \bigtriangleup _{J}^{\omega
}g\right\Vert _{L^{2}\left( \omega \right) }  \notag \\
&\equiv &\left\vert \mathsf{B}\right\vert _{\limfunc{stop},1,\bigtriangleup
^{\omega }}^{A,\mathcal{P}}\left( f,g\right) +\left\vert \mathsf{B}%
\right\vert _{\limfunc{stop},1+\delta ,\mathsf{P}^{\omega }}^{A,\mathcal{P}%
}\left( f,g\right) ,  \notag
\end{eqnarray}%
where we have dominated the stopping form by two sublinear stopping forms
that involve the Poisson integrals of order $1$ and $1+\delta $
respectively, and where the smaller Poisson integral $\mathrm{P}_{1+\delta
}^{\alpha }$ is multiplied by the larger projection $\left\Vert \mathsf{P}%
_{J}^{\omega }\mathbf{x}\right\Vert _{L^{2}\left( \omega \right) }$. This
splitting turns out to be successful in separating the two energy terms from
the right hand side of the Energy Lemma, because of the two properties (\ref%
{phi bound}) and (\ref{phi sublinear}) above. It remains to show the two
inequalities:%
\begin{equation}
\left\vert \mathsf{B}\right\vert _{\limfunc{stop},1,\bigtriangleup ^{\omega
}}^{A,\mathcal{P}}\left( f,g\right) \lesssim \left( \mathcal{E}_{\alpha }^{%
\limfunc{deep}}+\sqrt{A_{2}^{\alpha }}\right) \alpha _{\mathcal{A}}\left(
A\right) \sqrt{\left\vert A\right\vert _{\sigma }}\left\Vert g\right\Vert
_{L^{2}\left( \omega \right) },  \label{First inequality}
\end{equation}%
for $f\in L^{2}\left( \sigma \right) $ satisfying where $\mathbb{E}%
_{I}^{\sigma }\left\vert f\right\vert \leq \alpha _{\mathcal{A}}\left(
A\right) $ for all $I\in \mathcal{C}_{A}$; and%
\begin{equation}
\left\vert \mathsf{B}\right\vert _{\limfunc{stop},1+\delta ,\mathsf{P}%
^{\omega }}^{A,\mathcal{P}}\left( f,g\right) \lesssim \left( \mathcal{E}%
_{\alpha }^{\limfunc{deep}}+\sqrt{A_{2}^{\alpha }}\right) \left\Vert
f\right\Vert _{L^{2}\left( \sigma \right) }\left\Vert g\right\Vert
_{L^{2}\left( \omega \right) },  \label{Second inequality}
\end{equation}%
where we only need the case $\mathcal{P}=\mathcal{P}^{A}$ in this latter
inequality as there is no recursion involved in treating this second
sublinear form. We consider first the easier inequality (\ref{Second
inequality}) that does not require recursion. In the subsequent subsections
we will control the more difficult inequality (\ref{First inequality}) by
adapting the stopping time and recursion of M. Lacey to the sublinear form $%
\left\vert \mathsf{B}\right\vert _{\limfunc{stop},1,\bigtriangleup ^{\omega
}}^{A,\mathcal{P}}\left( f,g\right) $.

\subsection{The second inequality}

Now we turn to proving (\ref{Second inequality}), i.e.%
\begin{equation*}
\left\vert \mathsf{B}\right\vert _{\limfunc{stop},1+\delta ,\mathsf{P}%
^{\omega }}^{A,\mathcal{P}}\left( f,g\right) \lesssim \left( \mathcal{E}%
_{\alpha }^{\limfunc{deep}}+\sqrt{A_{2}^{\alpha }}\right) \left\Vert
f\right\Vert _{L^{2}\left( \sigma \right) }\left\Vert g\right\Vert
_{L^{2}\left( \omega \right) },
\end{equation*}%
where since 
\begin{equation*}
\left\vert \varphi _{J}\right\vert =\left\vert \sum_{I\in \mathcal{C}%
_{A}^{\prime }:\ \left( I,J\right) \in \mathcal{P}}\mathbb{E}_{I}^{\sigma
}\left( \bigtriangleup _{\pi I}^{\sigma }f\right) \ \mathbf{1}_{A\setminus
I}\right\vert \leq \sum_{I\in \mathcal{C}_{A}^{\prime }:\ \left( I,J\right)
\in \mathcal{P}}\left\vert \mathbb{E}_{I}^{\sigma }\left( \bigtriangleup
_{\pi I}^{\sigma }f\right) \ \mathbf{1}_{A\setminus I}\right\vert ,
\end{equation*}%
the sublinear form $\left\vert \mathsf{B}\right\vert _{\limfunc{stop}%
,1+\delta ,\mathsf{P}^{\omega }}^{A,\mathcal{P}}$ can be dominated and then
decomposed by pigeonholing the ratio of side lengths of $J$ and $I$:%
\begin{eqnarray*}
\left\vert \mathsf{B}\right\vert _{\limfunc{stop},1+\delta ,\mathsf{P}%
^{\omega }}^{A,\mathcal{P}}\left( f,g\right) &=&\sum_{J\in \Pi _{2}\mathcal{P%
}}\frac{\mathrm{P}_{1+\delta }^{\alpha }\left( J,\left\vert \varphi
_{J}\right\vert \mathbf{1}_{A\setminus I_{\mathcal{P}}\left( J\right)
}\sigma \right) }{\left\vert J\right\vert ^{\frac{1}{n}}}\left\Vert \mathsf{P%
}_{J}^{\omega }\mathbf{x}\right\Vert _{L^{2}\left( \omega \right)
}\left\Vert \bigtriangleup _{J}^{\omega }g\right\Vert _{L^{2}\left( \omega
\right) } \\
&\leq &\sum_{\left( I,J\right) \in \mathcal{P}}\frac{\mathrm{P}_{1+\delta
}^{\alpha }\left( J,\left\vert \mathbb{E}_{I}^{\sigma }\left( \bigtriangleup
_{\pi I}^{\sigma }f\right) \right\vert \mathbf{1}_{A\setminus I}\sigma
\right) }{\left\vert J\right\vert ^{\frac{1}{n}}}\left\Vert \mathsf{P}%
_{J}^{\omega }\mathbf{x}\right\Vert _{L^{2}\left( \omega \right) }\left\Vert
\bigtriangleup _{J}^{\omega }g\right\Vert _{L^{2}\left( \omega \right) } \\
&\equiv &\sum_{s=0}^{\infty }\left\vert \mathsf{B}\right\vert _{\limfunc{stop%
},1+\delta ,\mathsf{P}^{\omega }}^{A,\mathcal{P};s}\left( f,g\right) ; \\
\left\vert \mathsf{B}\right\vert _{\limfunc{stop},1+\delta ,\mathsf{P}%
^{\omega }}^{A,\mathcal{P};s}\left( f,g\right) &\equiv &\sum_{\substack{ %
\left( I,J\right) \in \mathcal{P}  \\ \ell \left( J\right) =2^{-s}\ell
\left( I\right) }}\frac{\mathrm{P}_{1+\delta }^{\alpha }\left( J,\left\vert 
\mathbb{E}_{I}^{\sigma }\left( \bigtriangleup _{\pi I}^{\sigma }f\right)
\right\vert \mathbf{1}_{A\setminus I}\sigma \right) }{\left\vert
J\right\vert ^{\frac{1}{n}}}\left\Vert \mathsf{P}_{J}^{\omega }\mathbf{x}%
\right\Vert _{L^{2}\left( \omega \right) }\left\Vert \bigtriangleup
_{J}^{\omega }g\right\Vert _{L^{2}\left( \omega \right) }.
\end{eqnarray*}%
Here we have the \emph{entire} projection $\mathsf{P}_{J}^{\omega }\mathbf{x}
$ onto all of the dyadic subquasicubes of $J$, but this is offset by the
smaller Poisson integral $\mathrm{P}_{1+\delta }^{\alpha }$. We will now
adapt the argument for the stopping term starting on page 42 of \cite%
{LaSaUr2}, where the geometric gain from the assumed Energy Hypothesis there
will be replaced by a geometric gain from the smaller Poisson integral $%
\mathrm{P}_{1+\delta }^{\alpha }$ used here.

First, we exploit the additional decay in the Poisson integral $\mathrm{P}%
_{1+\delta }^{\alpha }$ as follows. Suppose that $\left( I,J\right) \in 
\mathcal{P}$ with $\ell \left( J\right) =2^{-s}\ell \left( I\right) $. We
then compute%
\begin{eqnarray*}
\frac{\mathrm{P}_{1+\delta }^{\alpha }\left( J,\mathbf{1}_{A\setminus
I}\sigma \right) }{\left\vert J\right\vert ^{\frac{1}{n}}} &\approx
&\int_{A\setminus I}\frac{\left\vert J\right\vert ^{\frac{\delta }{n}}}{%
\left\vert y-c_{J}\right\vert ^{n+1+\delta -\alpha }}d\sigma \left( y\right)
\\
&\leq &\int_{A\setminus I}\left( \frac{\left\vert J\right\vert ^{\frac{1}{n}}%
}{\limfunc{quasidist}\left( c_{J},I^{c}\right) }\right) ^{\delta }\frac{1}{%
\left\vert y-c_{J}\right\vert ^{n+1-\alpha }}d\sigma \left( y\right) \\
&\lesssim &\left( \frac{\left\vert J\right\vert ^{\frac{1}{n}}}{\limfunc{%
quasidist}\left( c_{J},I^{c}\right) }\right) ^{\delta }\frac{\mathrm{P}%
^{\alpha }\left( J,\mathbf{1}_{A\setminus I}\sigma \right) }{\left\vert
J\right\vert ^{\frac{1}{n}}},
\end{eqnarray*}%
and use the goodness inequality,%
\begin{equation*}
\limfunc{quasidist}\left( c_{J},I^{c}\right) \geq \frac{1}{2}\ell \left(
I\right) ^{1-\varepsilon }\ell \left( J\right) ^{\varepsilon }\geq \frac{1}{2%
}2^{s\left( 1-\varepsilon \right) }\ell \left( J\right) ,
\end{equation*}%
to conclude that%
\begin{equation}
\left( \frac{\mathrm{P}_{1+\delta }^{\alpha }\left( J,\mathbf{1}_{A\setminus
I}\sigma \right) }{\left\vert J\right\vert ^{\frac{1}{n}}}\right) \lesssim
2^{-s\delta \left( 1-\varepsilon \right) }\frac{\mathrm{P}^{\alpha }\left( J,%
\mathbf{1}_{A\setminus I}\sigma \right) }{\left\vert J\right\vert ^{\frac{1}{%
n}}}.  \label{Poisson decay}
\end{equation}

We next claim that for $s\geq 0$ an integer,%
\begin{eqnarray*}
\left\vert \mathsf{B}\right\vert _{\limfunc{stop},1+\delta ,\mathsf{P}%
^{\omega }}^{A,\mathcal{P};s}\left( f,g\right) &=&\sum_{\substack{ \left(
I,J\right) \in \mathcal{P}  \\ \ell \left( J\right) =2^{-s}\ell \left(
I\right) }}\frac{\mathrm{P}_{1+\delta }^{\alpha }\left( J,\left\vert \mathbb{%
E}_{I}^{\sigma }\left( \bigtriangleup _{\pi I}^{\sigma }f\right) \right\vert 
\mathbf{1}_{A\setminus I}\sigma \right) }{\left\vert J\right\vert ^{\frac{1}{%
n}}}\left\Vert \mathsf{P}_{J}^{\omega }\mathbf{x}\right\Vert _{L^{2}\left(
\omega \right) }\left\Vert \bigtriangleup _{J}^{\omega }g\right\Vert
_{L^{2}\left( \omega \right) } \\
&\lesssim &2^{-s\delta \left( 1-\varepsilon \right) }\ \left( \mathcal{E}%
_{\alpha }^{\limfunc{deep}}+\sqrt{A_{2}^{\alpha }}\right) \ \left\Vert
f\right\Vert _{L^{2}\left( \sigma \right) }\ \left\Vert g\right\Vert
_{L^{2}\left( \omega \right) }\,,
\end{eqnarray*}%
from which (\ref{Second inequality}) follows upon summing in $s\geq 0$. Now
using both%
\begin{eqnarray*}
\left\vert \mathbb{E}_{I}^{\sigma }\left( \bigtriangleup _{\pi I}^{\sigma
}f\right) \right\vert &=&\frac{1}{\left\vert I\right\vert _{\sigma }}%
\int_{I}\left\vert \bigtriangleup _{\pi I}^{\sigma }f\right\vert d\sigma
\leq \left\Vert \bigtriangleup _{\pi I}^{\sigma }f\right\Vert _{L^{2}\left(
\sigma \right) }\frac{1}{\sqrt{\left\vert I\right\vert _{\sigma }}}, \\
2^{n}\left\Vert f\right\Vert _{L^{2}(\sigma )}^{2} &=&\sum_{I\in \Omega 
\mathcal{D}}\left\Vert \bigtriangleup _{\pi I}^{\sigma }f\right\Vert
_{L^{2}\left( \sigma \right) }^{2}\ ,
\end{eqnarray*}%
we apply Cauchy-Schwarz in the $I$ variable above to see that 
\begin{eqnarray*}
&&\left[ \left\vert \mathsf{B}\right\vert _{\limfunc{stop},1+\delta ,\mathsf{%
P}^{\omega }}^{A,\mathcal{P};s}\left( f,g\right) \right] ^{2}\lesssim
\left\Vert f\right\Vert _{L^{2}(\sigma )}^{2} \\
&&\times \left[ \sum_{I\in \mathcal{C}_{A}^{\prime }}\left( \frac{1}{\sqrt{%
\left\vert I\right\vert _{\sigma }}}\sum_{\substack{ J:\ \left( I,J\right)
\in \mathcal{P}  \\ \ell \left( J\right) =2^{-s}\ell \left( I\right) }}\frac{%
\mathrm{P}_{1+\delta }^{\alpha }\left( J,\mathbf{1}_{A\setminus I}\sigma
\right) }{\left\vert J\right\vert ^{\frac{1}{n}}}\left\Vert \mathsf{P}%
_{J}^{\omega }\mathbf{x}\right\Vert _{L^{2}\left( \omega \right) }\left\Vert
\bigtriangleup _{J}^{\omega }g\right\Vert _{L^{2}\left( \omega \right)
}\right) ^{2}\right] ^{\frac{1}{2}}.
\end{eqnarray*}%
We can then estimate the sum inside the square brackets by%
\begin{equation*}
\sum_{I\in \mathcal{C}_{A}^{\prime }}\left\{ \sum_{\substack{ J:\ \left(
I,J\right) \in \mathcal{P}  \\ \ell \left( J\right) =2^{-s}\ell \left(
I\right) }}\left\Vert \bigtriangleup _{J}^{\omega }g\right\Vert
_{L^{2}\left( \omega \right) }^{2}\right\} \sum_{\substack{ J:\ \left(
I,J\right) \in \mathcal{P}  \\ \ell \left( J\right) =2^{-s}\ell \left(
I\right) }}\frac{1}{\left\vert I\right\vert _{\sigma }}\left( \frac{\mathrm{P%
}_{1+\delta }^{\alpha }\left( J,\mathbf{1}_{A\setminus I}\sigma \right) }{%
\left\vert J\right\vert ^{\frac{1}{n}}}\right) ^{2}\left\Vert \mathsf{P}%
_{J}^{\omega }\mathbf{x}\right\Vert _{L^{2}\left( \omega \right)
}^{2}\lesssim \left\Vert g\right\Vert _{L^{2}\left( \omega \right)
}^{2}A\left( s\right) ^{2},
\end{equation*}%
where%
\begin{equation*}
A\left( s\right) ^{2}\equiv \sup_{I\in \mathcal{C}_{A}^{\prime }}\sum 
_{\substack{ J:\ \left( I,J\right) \in \mathcal{P}  \\ \ell \left( J\right)
=2^{-s}\ell \left( I\right) }}\frac{1}{\left\vert I\right\vert _{\sigma }}%
\left( \frac{\mathrm{P}_{1+\delta }^{\alpha }\left( J,\mathbf{1}_{A\setminus
I}\sigma \right) }{\left\vert J\right\vert ^{\frac{1}{n}}}\right)
^{2}\left\Vert \mathsf{P}_{J}^{\omega }\mathbf{x}\right\Vert _{L^{2}\left(
\omega \right) }^{2}\,.
\end{equation*}%
Finally then we turn to the analysis of the supremum in last display. From
the Poisson decay (\ref{Poisson decay}) we have 
\begin{eqnarray*}
A\left( s\right) ^{2} &\lesssim &\sup_{I\in \mathcal{C}_{A}^{\prime }}\frac{1%
}{\left\vert I\right\vert _{\sigma }}2^{-2s\delta \left( 1-\varepsilon
\right) }\sum_{\substack{ J:\ \left( I,J\right) \in \mathcal{P}  \\ \ell
\left( J\right) =2^{-s}\ell \left( I\right) }}\left( \frac{\mathrm{P}%
^{\alpha }\left( J,\mathbf{1}_{A\setminus I}\sigma \right) }{\left\vert
J\right\vert ^{\frac{1}{n}}}\right) ^{2}\left\Vert \mathsf{P}_{J}^{\omega
}x\right\Vert _{L^{2}\left( \omega \right) }^{2} \\
&\lesssim &\sup_{I\in \mathcal{C}_{A}^{\prime }}\frac{1}{\left\vert
I\right\vert _{\sigma }}2^{-2s\delta \left( 1-\varepsilon \right)
}\sum_{K\in \mathcal{M}_{\mathbf{r}-\limfunc{deep}}\left( I\right) }\left( 
\frac{\mathrm{P}^{\alpha }\left( K,\mathbf{1}_{A\setminus I}\sigma \right) }{%
\left\vert K\right\vert ^{\frac{1}{n}}}\right) ^{2}\sum_{\substack{ J\subset
K:\ \left( I,J\right) \in \mathcal{P}  \\ \ell \left( J\right) =2^{-s}\ell
\left( I\right) }}\left\Vert \mathsf{P}_{J}^{\omega }x\right\Vert
_{L^{2}\left( \omega \right) }^{2} \\
&\lesssim &2^{-2s\delta \left( 1-\varepsilon \right) }\left[ \left( \mathcal{%
E}_{\alpha }^{\limfunc{deep}}\right) ^{2}+A_{2}^{\alpha }\right] \,,
\end{eqnarray*}%
where the last inequality is the one for which the definition of quasienergy
stopping quasicubes was designed. Indeed, from Definition~\ref{def energy
corona 3}, as $(I,J)\in \mathcal{P}$, we have that $I$ is \emph{not} a
stopping quasicube in $\mathcal{A}$, and hence that (\ref{def stop 3}) \emph{%
fails} to hold, delivering the estimate above since $J\Subset _{\mathbf{\rho
-1}}I$ good must be contained in some $K\in \mathcal{M}_{\mathbf{r}-\limfunc{%
deep}}\left( I\right) $, and since $\frac{\mathrm{P}^{\alpha }\left( J,%
\mathbf{1}_{A\setminus I}\sigma \right) }{\left\vert J\right\vert ^{\frac{1}{%
n}}}\approx \frac{\mathrm{P}^{\alpha }\left( K,\mathbf{1}_{A\setminus
I}\sigma \right) }{\left\vert K\right\vert ^{\frac{1}{n}}}$. The terms $%
\left\Vert \mathsf{P}_{J}^{\omega }x\right\Vert _{L^{2}\left( \omega \right)
}^{2}$ are additive since the $J^{\prime }s$ are pigeonholed by $\ell \left(
J\right) =2^{-s}\ell \left( I\right) $.

\subsection{The first inequality and the recursion of M. Lacey}

Now we turn to proving the more difficult inequality (\ref{First inequality}%
). Recall that in dimension $n=1$ the energy condition%
\begin{equation*}
\sum_{n=1}^{\infty }\left\vert J_{n}\right\vert _{\omega }\mathsf{E}\left(
J_{n},\omega \right) ^{2}\mathrm{P}\left( J_{n},\mathbf{1}_{I}\sigma \right)
^{2}\lesssim \left( \mathcal{NTV}\right) \ \left\vert I\right\vert _{\sigma
},\ \ \ \ \ \overset{\cdot }{\mathop{\displaystyle \bigcup }}_{n=1}^{\infty
}J_{n}\subset I,
\end{equation*}%
could not be used in the NTV argument, because the set functional $%
J\rightarrow \left\vert J\right\vert _{\omega }\mathsf{E}\left( J,\omega
\right) ^{2}$ failed to be superadditive. On the other hand, the pivotal
condition of NTV,%
\begin{equation*}
\sum_{n=1}^{\infty }\left\vert J_{n}\right\vert _{\omega }\mathrm{P}\left(
J_{n},\mathbf{1}_{I}\sigma \right) ^{2}\lesssim \left\vert I\right\vert
_{\sigma },\ \ \ \ \ \overset{\cdot }{\mathop{\displaystyle \bigcup }}%
_{n=1}^{\infty }J_{n}\subset I,
\end{equation*}%
succeeded in the NTV argument because the set functional $J\rightarrow
\left\vert J\right\vert _{\omega }$ is trivially superadditive, indeed
additive. The final piece of the argument needed to prove the NTV conjecture
was found by M. Lacey in \cite{Lac}, and amounts to first replacing the
additivity of the functional $J\rightarrow \left\vert J\right\vert _{\omega
} $ with the additivity of the projection functional $\mathcal{H}\rightarrow
\left\Vert \mathsf{P}_{\mathcal{H}}^{\omega }x\right\Vert _{L^{2}\left(
\omega \right) }^{2}$ defined on subsets $\mathcal{H}$ of the dyadic
quasigrid $\Omega \mathcal{D}$. Then a stopping time argument relative to
this more subtle functional, together with a clever recursion, constitute
the main new ingredients in Lacey's argument \cite{Lac}.

To begin the extension to a more general Calder\'{o}n-Zygmund operator $%
T^{\alpha }$, we also recall the stopping quasienergy generalized to higher
dimensions by 
\begin{equation*}
\mathbf{X}^{\alpha }\left( \mathcal{C}_{A}\right) ^{2}\equiv \sup_{I\in 
\mathcal{C}_{A}}\frac{1}{\left\vert I\right\vert _{\sigma }}\sum_{J\in 
\mathcal{M}_{\mathbf{r}-\limfunc{deep}}\left( I\right) }\left( \frac{\mathrm{%
P}^{\alpha }\left( J,\mathbf{1}_{A\setminus \gamma J}\sigma \right) }{%
\left\vert J\right\vert ^{\frac{1}{n}}}\right) ^{2}\left\Vert \mathsf{P}%
_{J}^{\limfunc{subgood},\omega }\mathbf{x}\right\Vert _{L^{2}\left( \omega
\right) }^{2}\ ,
\end{equation*}%
where $\mathcal{M}_{\mathbf{r}-\limfunc{deep}}\left( I\right) $ is the set
of maximal $\mathbf{r}$-deeply embedded subquasicubes of $I$ where $\mathbf{r%
}$ is the goodness parameter. What now follows is an adaptation to our deep
quasienergy condition and the sublinear form $\left\vert \mathsf{B}%
\right\vert _{\limfunc{stop},1,\bigtriangleup ^{\omega }}^{A,\mathcal{P}}$
of the arguments of M. Lacey in \cite{Lac}. We have the following Poisson
inequality for quasicubes $B\subset A\subset I$:%
\begin{eqnarray}
\frac{\mathrm{P}^{\alpha }\left( A,\mathbf{1}_{I\setminus A}\sigma \right) }{%
\left\vert A\right\vert ^{\frac{1}{n}}} &\approx &\int_{I\setminus A}\frac{1%
}{\left( \left\vert y-c_{A}\right\vert \right) ^{n+1-\alpha }}d\sigma \left(
y\right)  \label{BAI} \\
&\lesssim &\int_{I\setminus A}\frac{1}{\left( \left\vert y-c_{B}\right\vert
\right) ^{n+1-\alpha }}d\sigma \left( y\right) \approx \frac{\mathrm{P}%
^{\alpha }\left( B,\mathbf{1}_{I\setminus A}\sigma \right) }{\left\vert
B\right\vert ^{\frac{1}{n}}}.  \notag
\end{eqnarray}

\subsection{The stopping energy}

Fix $A\in \mathcal{A}$. We will use a `decoupled' modification of the
stopping energy $\mathbf{X}\left( \mathcal{C}_{A}\right) $. Suppose that $%
\mathcal{P}$ is an $A$-admissible collection of pairs of quasicubes in the
product set $\Omega \mathcal{D}\times \Omega \mathcal{D}_{\limfunc{good}}$
of pairs of dyadic quasicubes in $\mathbb{R}^{n}$ with second component
good. For an admissible collection $\mathcal{P}$ let $\Pi _{1}\mathcal{P}$
and $\Pi _{2}\mathcal{P}$ be the quasicubes in the first and second
components of the pairs in $\mathcal{P}$ respectively, let $\Pi \mathcal{P}%
\equiv \Pi _{1}\mathcal{P}\cup \Pi _{2}\mathcal{P}$, and for $K\in \Pi 
\mathcal{P}$ define the $\mathbf{\tau }$-$\limfunc{deep}$ projection of $%
\mathcal{P}$ relative to $K$ by 
\begin{equation*}
\Pi _{2}^{K,\mathbf{\tau }-\limfunc{deep}}\mathcal{P}\equiv \left\{ J\in \Pi
_{2}\mathcal{P}:\ J\Subset _{\mathbf{\tau }}K\right\} .
\end{equation*}%
Now the quasicubes $J$ in$\ \Pi _{2}\mathcal{P}$ are of course \emph{always}
good, but this is \emph{not} the case for quasicubes $I$ in $\Pi _{1}%
\mathcal{P}$. Indeed, the collection $\mathcal{P}$ is tree-connected in the
first component, and it is clear that there can be many \emph{bad}
quasicubes in a connected geodesic in the tree $\Omega \mathcal{D}$. But the
quasiHaar support of $f$ is contained in \emph{good} quasicubes $I$, and we
have also assumed that the children of these quasicubes $I$ are good as
well. As a consequence we may always assume that our $A$-admissible
collections $\mathcal{P}$ are reduced in the sense of Definition \ref{def
reduced}. Thus we will use as our `size testing collection' of quasicubes
for $\mathcal{P}$ the collection 
\begin{equation*}
\Pi ^{\limfunc{goodbelow}}\mathcal{P}\equiv \left\{ K^{\prime }\in \Omega 
\mathcal{D}:K^{\prime }\text{ is }\limfunc{good}\text{ and }K^{\prime
}\subset K\text{ for some }K\in \Pi \mathcal{P}\right\} ,
\end{equation*}%
which consists of all the good subquasicubes of any quasicube in $\Pi 
\mathcal{P}$. Note that the maximal quasicubes in $\Pi \mathcal{P}=\Pi 
\mathcal{P}^{\limfunc{red}}$ are already good themselves, and so we have the
important property that%
\begin{equation}
\left( I,J\right) \in \mathcal{P}=\mathcal{P}^{\limfunc{red}}\text{ implies }%
I\subset K\text{ for some quasicube }K\in \Pi ^{\limfunc{goodbelow}}\mathcal{%
P}.  \label{inclusive}
\end{equation}%
Now define the `size functional' $\mathcal{S}_{\limfunc{size}}^{\alpha
,A}\left( \mathcal{P}\right) $ of $\mathcal{P}$ as follows. Recall that a
projection $\mathsf{P}_{\mathcal{H}}^{\omega }$ on $\mathbf{x}$ satisfies 
\begin{equation*}
\left\Vert \mathsf{P}_{\mathcal{H}}^{\omega }\mathbf{x}\right\Vert
_{L^{2}\left( \omega \right) }^{2}=\sum_{J\in \mathcal{H}}\left\Vert
\bigtriangleup _{J}^{\omega }\mathbf{x}\right\Vert _{L^{2}\left( \omega
\right) }^{2}.
\end{equation*}

\begin{definition}
\label{def ext size}If $\mathcal{P}$ is $A$-admissible, define%
\begin{equation}
\mathcal{S}_{\limfunc{size}}^{\alpha ,A}\left( \mathcal{P}\right) ^{2}\equiv
\sup_{K\in \Pi ^{\limfunc{goodbelow}}\mathcal{P}}\frac{1}{\left\vert
K\right\vert _{\sigma }}\left( \frac{\mathrm{P}^{\alpha }\left( K,\mathbf{1}%
_{A\setminus K}\sigma \right) }{\left\vert K\right\vert ^{\frac{1}{n}}}%
\right) ^{2}\left\Vert \mathsf{P}_{\Pi _{2}^{K,\mathbf{\tau }-\limfunc{deep}}%
\mathcal{P}}^{\omega }\mathbf{x}\right\Vert _{L^{2}\left( \omega \right)
}^{2}.  \label{def P stop energy 3}
\end{equation}
\end{definition}

We should remark that that the quasicubes $K$ in $\Pi ^{\limfunc{goodbelow}}%
\mathcal{P}$ that fail to contain any $\mathbf{\tau }$-parents of quasicubes
from $\Pi _{2}\mathcal{P}$ will not contribute to the size functional since $%
\Pi _{2}^{K,\mathbf{\tau }-\limfunc{deep}}\mathcal{P}$ is empty in this
case. We note three essential properties of this definition of size
functional:

\begin{enumerate}
\item \textbf{Monotonicity} of size: $\mathcal{S}_{\limfunc{size}}^{\alpha
,A}\left( \mathcal{P}\right) \leq \mathcal{S}_{\limfunc{size}}^{\alpha
,A}\left( \mathcal{Q}\right) $ if $\mathcal{P}\subset \mathcal{Q}$,

\item \textbf{Goodness} of testing quasicubes: $\Pi ^{\limfunc{goodbelow}}%
\mathcal{P}\subset \Omega \mathcal{D}_{\limfunc{good}}$,

\item \textbf{Control} by deep quasienergy condition: $\mathcal{S}_{\limfunc{%
size}}^{\alpha ,A}\left( \mathcal{P}\right) \lesssim \mathcal{E}_{\alpha }^{%
\limfunc{deep}}+\sqrt{A_{2}^{\alpha }}$.
\end{enumerate}

The monotonicity property follows from $\Pi ^{\limfunc{goodbelow}}\mathcal{P}%
\subset \Pi ^{\limfunc{goodbelow}}\mathcal{Q}$ and $\Pi _{2}^{K,\mathbf{\tau 
}-\limfunc{deep}}\mathcal{P}\subset \Pi _{2}^{K,\mathbf{\tau }-\limfunc{deep}%
}\mathcal{Q}$, and the goodness property follows from the definition of $\Pi
^{\limfunc{goodbelow}}\mathcal{P}$. The control property is contained in the
next lemma, which uses the stopping quasienergy control for the form $%
\mathsf{B}_{stop}^{A}\left( f,g\right) $ associated with $A$.

\begin{lemma}
\label{energy control}If $\mathcal{P}^{A}$ is as in (\ref{initial P}) and $%
\mathcal{P}\subset \mathcal{P}^{A}$, then 
\begin{equation*}
\mathcal{S}_{\limfunc{size}}^{\alpha ,A}\left( \mathcal{P}\right) \leq 
\mathbf{X}_{\alpha }\left( \mathcal{C}_{A}\right) \lesssim \mathcal{E}%
_{\alpha }^{\limfunc{deep}}+\sqrt{A_{2}^{\alpha }}\ .
\end{equation*}
\end{lemma}

\begin{proof}
Suppose that $K\in \Pi ^{\limfunc{goodbelow}}\mathcal{P}$. To prove the
first inequality in the statement we note that%
\begin{eqnarray*}
&&\frac{1}{\left\vert K\right\vert _{\sigma }}\left( \frac{\mathrm{P}%
^{\alpha }\left( K,\mathbf{1}_{A\setminus K}\sigma \right) }{\left\vert
K\right\vert ^{\frac{1}{n}}}\right) ^{2}\left\Vert \mathsf{P}_{\left( \Pi
_{2}^{K,\mathbf{\tau }-\limfunc{deep}}\mathcal{P}\right) ^{\ast }}^{\omega }%
\mathbf{x}\right\Vert _{L^{2}\left( \omega \right) }^{2} \\
&\leq &\frac{1}{\left\vert K\right\vert _{\sigma }}\left( \frac{\mathrm{P}%
^{\alpha }\left( K,\mathbf{1}_{A\setminus K}\sigma \right) }{\left\vert
K\right\vert ^{\frac{1}{n}}}\right) ^{2}\sum_{J\in \mathcal{M}_{\mathbf{r}-%
\limfunc{deep}}\left( K\right) }\left\Vert \mathsf{P}_{J}^{\limfunc{subgood}%
,\omega }\mathbf{x}\right\Vert _{L^{2}\left( \omega \right) }^{2} \\
&\lesssim &\frac{1}{\left\vert K\right\vert _{\sigma }}\sum_{J\in \mathcal{M}%
_{\mathbf{r}-\limfunc{deep}}\left( K\right) }\left( \frac{\mathrm{P}^{\alpha
}\left( J,\mathbf{1}_{A\setminus K}\sigma \right) }{\left\vert J\right\vert
^{\frac{1}{n}}}\right) ^{2}\left\Vert \mathsf{P}_{J}^{\limfunc{subgood}%
,\omega }\mathbf{x}\right\Vert _{L^{2}\left( \omega \right) }^{2} \\
&\lesssim &\frac{1}{\left\vert K\right\vert _{\sigma }}\sum_{J\in \mathcal{M}%
_{\mathbf{r}-\limfunc{deep}}\left( K\right) }\left( \frac{\mathrm{P}^{\alpha
}\left( J,\mathbf{1}_{A\setminus \gamma J}\sigma \right) }{\left\vert
J\right\vert ^{\frac{1}{n}}}\right) ^{2}\left\Vert \mathsf{P}_{J}^{\limfunc{%
subgood},\omega }\mathbf{x}\right\Vert _{L^{2}\left( \omega \right)
}^{2}\leq \mathbf{X}_{\alpha }\left( \mathcal{C}_{A}\right) ^{2},
\end{eqnarray*}%
where the first inequality above follows since every $J^{\prime }\in \Pi
_{2}^{K,\mathbf{\tau }-\limfunc{deep}}\mathcal{P}$ is contained in some $%
J\in \mathcal{M}_{\mathbf{r}-\limfunc{deep}}\left( I\right) $, the second
inequality follows from (\ref{BAI}) with $J\subset K\subset A$, and then the
third inequality follows since $J\Subset _{\mathbf{r}}I$ implies $\gamma
J\subset I$ by (\ref{bounded overlap}), and finally since $\Pi _{2}^{K,%
\mathbf{\tau }-\limfunc{deep}}\mathcal{P}=\emptyset $ if $K\subset A$ and $%
K\notin \mathcal{C}_{A}$ by (\ref{later use}) below. The second inequality
in the statement of the lemma follows from (\ref{def stopping bounds 3}).
\end{proof}

The following useful fact is needed above and will be used later as well:%
\begin{equation}
K\subset A\text{ and }K\notin \mathcal{C}_{A}\Longrightarrow \Pi _{2}^{K,%
\mathbf{\tau }-\limfunc{deep}}\mathcal{P}=\emptyset \ .  \label{later use}
\end{equation}%
To see this, suppose that $K\in \mathcal{C}_{A}^{\mathbf{\tau }-\limfunc{%
shift}}\setminus \mathcal{C}_{A}$. Then $K\subset A^{\prime }$ for some $%
A^{\prime }\in \mathfrak{C}_{\mathcal{A}}\left( A\right) $, and so if there
is $J\in \Pi _{2}^{K,\mathbf{\tau }-\limfunc{deep}}\mathcal{P}$, then $\ell
\left( J\right) \leq 2^{-\mathbf{\tau }}\ell \left( K\right) \leq 2^{-%
\mathbf{\tau }}\ell \left( A^{\prime }\right) $, which implies that $J\notin 
\mathcal{C}_{A}^{\mathbf{\tau }-\limfunc{shift}}$, which contradicts $\Pi
_{2}^{K,\mathbf{\tau }-\limfunc{deep}}\mathcal{P}\subset \mathcal{C}_{A}^{%
\mathbf{\tau }-\limfunc{shift}}$.

We remind the reader again that $c\left\vert J\right\vert ^{\frac{1}{n}}\leq
\ell \left( J\right) \leq C\left\vert J\right\vert ^{\frac{1}{n}}$ for any
quasicube $J$, and that we will generally use $\left\vert J\right\vert ^{%
\frac{1}{n}}$ in the Poisson integrals and estimates, but will usually use $%
\ell \left( J\right) $ when defining collections of quasicubes. Now define
an atomic measure $\omega _{\mathcal{P}}$ in the upper half space $\mathbb{R}%
_{+}^{n+1}$ by%
\begin{equation*}
\omega _{\mathcal{P}}\equiv \sum_{J\in \Pi _{2}\mathcal{P}}\left\Vert
\bigtriangleup _{J}^{\omega }\mathbf{x}\right\Vert _{L^{2}\left( \omega
\right) }^{2}\ \delta _{\left( c_{J},\ell \left( J\right) \right) }.
\end{equation*}%
Define the tent $\mathbf{T}\left( K\right) $ over a quasicube $K=\Omega L$
to be $\Omega \left( \mathbf{T}\left( L\right) \right) $ where $\mathbf{T}%
\left( L\right) $ is the convex hull of the $n$-cube $L\times \left\{
0\right\} $ and the point $\left( c_{L},\ell \left( L\right) \right) \in 
\mathbb{R}_{+}^{n+1}$. Define the $\mathbf{\tau }$-$\limfunc{deep}$ tent $%
\mathbf{T}^{\mathbf{\tau }-\limfunc{deep}}\left( K\right) $ over a quasicube 
$K$ to be the restriction of the tent $\mathbf{T}\left( K\right) $ to those
points at depth $\tau $ or more below $K$:%
\begin{equation*}
\mathbf{T}^{\mathbf{\tau }-\limfunc{deep}}\left( K\right) \equiv \left\{
\left( y,t\right) \in \mathbf{T}\left( K\right) :t\leq 2^{-\tau }\ell \left(
K\right) \right\} .
\end{equation*}%
We can now rewrite the size functional (\ref{def P stop energy 3}) of $%
\mathcal{P}$ as%
\begin{equation}
\mathcal{S}_{\limfunc{size}}^{\alpha ,A}\left( \mathcal{P}\right) ^{2}\equiv
\sup_{K\in \Pi ^{\limfunc{goodbelow}}\mathcal{P}}\frac{1}{\left\vert
K\right\vert _{\sigma }}\left( \frac{\mathrm{P}^{\alpha }\left( K,\mathbf{1}%
_{A\setminus K}\sigma \right) }{\left\vert K\right\vert ^{\frac{1}{n}}}%
\right) ^{2}\omega _{\mathcal{P}}\left( \mathbf{T}^{\mathbf{\tau }-\limfunc{%
deep}}\left( K\right) \right) .  \label{def P stop energy' 3}
\end{equation}%
It will be convenient to write%
\begin{equation*}
\Psi ^{\alpha }\left( K;\mathcal{P}\right) ^{2}\equiv \left( \frac{\mathrm{P}%
^{\alpha }\left( K,\mathbf{1}_{A\setminus K}\sigma \right) }{\left\vert
K\right\vert ^{\frac{1}{n}}}\right) ^{2}\omega _{\mathcal{P}}\left( \mathbf{T%
}^{\mathbf{\tau }-\limfunc{deep}}\left( K\right) \right) ,
\end{equation*}%
so that we have simply%
\begin{equation*}
\mathcal{S}_{\limfunc{size}}^{\alpha ,A}\left( \mathcal{P}\right)
^{2}=\sup_{K\in \Pi ^{\limfunc{goodbelow}}\mathcal{P}}\frac{\Psi ^{\alpha
}\left( K;\mathcal{P}\right) ^{2}}{\left\vert K\right\vert _{\sigma }}.
\end{equation*}

\begin{remark}
The functional $\omega _{\mathcal{P}}\left( \mathbf{T}^{\mathbf{\tau }-%
\limfunc{deep}}\left( K\right) \right) $ is increasing in $K$, while the
functional $\frac{\mathrm{P}^{\alpha }\left( K,\mathbf{1}_{A\setminus
K}\sigma \right) }{\left\vert K\right\vert ^{\frac{1}{n}}}$ is `almost
decreasing' in $K$: if $K_{0}\subset K$ then%
\begin{eqnarray*}
\frac{\mathrm{P}^{\alpha }\left( K,\mathbf{1}_{A\setminus K}\sigma \right) }{%
\left\vert K\right\vert ^{\frac{1}{n}}} &=&\int_{A\setminus K}\frac{d\sigma
\left( y\right) }{\left( \left\vert K\right\vert ^{\frac{1}{n}}+\left\vert
y-c_{K}\right\vert \right) ^{n+1-\alpha }} \\
&\lesssim &\int_{A\setminus K}\frac{\left( \sqrt{n}\right) ^{n+1-\alpha
}d\sigma \left( y\right) }{\left( \left\vert K_{0}\right\vert ^{\frac{1}{n}%
}+\left\vert y-c_{K_{0}}\right\vert \right) ^{n+1-\alpha }} \\
&\leq &C_{n,\alpha }\int_{A\setminus K_{0}}\frac{d\sigma \left( y\right) }{%
\left( \left\vert K_{0}\right\vert ^{\frac{1}{n}}+\left\vert
y-c_{K_{0}}\right\vert \right) ^{n+1-\alpha }}=C_{n,\alpha }\frac{\mathrm{P}%
^{\alpha }\left( K_{0},\mathbf{1}_{A\setminus K_{0}}\sigma \right) }{%
\left\vert K_{0}\right\vert ^{\frac{1}{n}}},
\end{eqnarray*}%
since $\left\vert K_{0}\right\vert ^{\frac{1}{n}}+\left\vert
y-c_{K_{0}}\right\vert \leq \left\vert K\right\vert ^{\frac{1}{n}%
}+\left\vert y-c_{K}\right\vert +\frac{1}{2}\limfunc{diam}\left( K\right) $
for $y\in A\setminus K$.
\end{remark}

\subsection{The recursion}

Recall that if $\mathcal{P}$ is an admissible collection for a dyadic
quasicube $A$, the corresponding sublinear form in (\ref{First inequality})
is given in (\ref{def split}) by%
\begin{eqnarray*}
\left\vert \mathsf{B}\right\vert _{\limfunc{stop},1,\bigtriangleup ^{\omega
}}^{A,\mathcal{P}}\left( f,g\right) &\equiv &\sum_{J\in \Pi _{2}\mathcal{P}}%
\frac{\mathrm{P}^{\alpha }\left( J,\left\vert \varphi _{J}^{\mathcal{P}%
}\right\vert \mathbf{1}_{A\setminus I_{\mathcal{P}}\left( J\right) }\sigma
\right) }{\left\vert J\right\vert ^{\frac{1}{n}}}\left\Vert \bigtriangleup
_{J}^{\omega }\mathbf{x}\right\Vert _{L^{2}\left( \omega \right) }\left\Vert
\bigtriangleup _{J}^{\omega }g\right\Vert _{L^{2}\left( \omega \right) }; \\
\text{where }\varphi _{J}^{\mathcal{P}} &\equiv &\sum_{I\in \mathcal{C}%
_{A}^{\prime }:\ \left( I,J\right) \in \mathcal{P}}\mathbb{E}_{I}^{\sigma
}\left( \bigtriangleup _{\pi I}^{\sigma }f\right) \ \mathbf{1}_{A\setminus
I}\ .
\end{eqnarray*}%
In the notation for $\left\vert \mathsf{B}\right\vert _{\limfunc{stop}%
,1,\bigtriangleup ^{\omega }}^{A,\mathcal{P}}$, we are omitting dependence
on the parameter $\alpha $, and to avoid clutter, we will often do so from
now on when the dependence on $\alpha $ is inconsequential. Following Lacey 
\cite{Lac}, we now claim the following proposition, from which we obtain (%
\ref{First inequality}) as a corollary below. Motivated by the conclusion of
Proposition \ref{stopping bound}, we define the \emph{restricted} norm $%
\mathfrak{N}_{\limfunc{stop},1,\bigtriangleup }^{A,\mathcal{P}}$ of the
sublinear form $\left\vert \mathsf{B}\right\vert _{\limfunc{stop}%
,1,\bigtriangleup ^{\omega }}^{A,\mathcal{P}}$ to be the best constant $%
\mathfrak{N}_{\limfunc{stop},1,\bigtriangleup }^{A,\mathcal{P}}$ in the
inequality%
\begin{equation*}
\left\vert \mathsf{B}\right\vert _{\limfunc{stop},1,\bigtriangleup ^{\omega
}}^{A,\mathcal{P}}\left( f,g\right) \leq \mathfrak{N}_{\limfunc{stop}%
,1,\bigtriangleup }^{A,\mathcal{P}}\left( \alpha _{\mathcal{A}}\left(
A\right) \sqrt{\left\vert A\right\vert _{\sigma }}+\left\Vert f\right\Vert
_{L^{2}\left( \sigma \right) }\right) \left\Vert g\right\Vert _{L^{2}\left(
\omega \right) },
\end{equation*}%
where $f\in L^{2}\left( \sigma \right) $ satisfies $\mathbb{E}_{I}^{\sigma
}\left\vert f\right\vert \leq \alpha _{\mathcal{A}}\left( A\right) $ for all 
$I\in \mathcal{C}_{A}^{\limfunc{good}}$.

\begin{proposition}
\label{bottom up 3}(This is a variant for sublinear forms of the Size Lemma
in Lacey \cite{Lac}) Suppose $\varepsilon >0$. Let $\mathcal{P}$ be an \emph{%
admissible} collection of pairs for a dyadic quasicube $A$. Then we can
decompose $\mathcal{P}$ into two disjoint collections $\mathcal{P}=\mathcal{P%
}^{big}\dot{\cup}\mathcal{P}^{small}$, and further decompose $\mathcal{P}%
^{small}$ into pairwise disjoint collections $\mathcal{P}_{1}^{small},%
\mathcal{P}_{2}^{small}...\mathcal{P}_{\ell }^{small}...$ i.e.%
\begin{equation*}
\mathcal{P}=\mathcal{P}^{big}\dot{\cup}\left( \overset{\cdot }{\dbigcup }%
_{\ell =1}^{\infty }\mathcal{P}_{\ell }^{small}\right) \ ,
\end{equation*}%
such that the collections $\mathcal{P}^{big}$ and $\mathcal{P}_{\ell
}^{small}$ are admissible and satisfy 
\begin{equation}
\sup_{\ell \geq 1}\mathcal{S}_{\limfunc{size}}^{\alpha ,A}\left( \mathcal{P}%
_{\ell }^{small}\right) ^{2}\leq \varepsilon \mathcal{S}_{\limfunc{size}%
}^{\alpha ,A}\left( \mathcal{P}\right) ^{2},  \label{small 3}
\end{equation}%
and 
\begin{equation}
\mathfrak{N}_{\limfunc{stop},1,\bigtriangleup }^{A,\mathcal{P}}\leq
C_{\varepsilon }\mathcal{S}_{\limfunc{size}}^{\alpha ,A}\left( \mathcal{P}%
\right) +\sqrt{n\mathbf{\tau }}\sup_{\ell \geq 1}\mathfrak{N}_{\limfunc{stop}%
,1,\bigtriangleup }^{A,\mathcal{P}_{\ell }^{small}}\ .  \label{big 3}
\end{equation}
\end{proposition}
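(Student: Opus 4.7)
The plan is a single-generation stopping-time construction inside $\mathcal{P}$, tuned by the parameter $\varepsilon$, so that below the selected stopping cubes the stopping energy drops by a factor of $\varepsilon$ while at and above the stopping cubes the form obeys a quadratic bound in the energy. Write $S^{2}\equiv \mathcal{E}_{A}^{\alpha}(\mathcal{P})^{2}$. First I would form the atomic measure $\omega_{\mathcal{P}}$ and, using formulation (\ref{def P stop energy' 3}), select the set $\mathcal{L}=\{L_{\ell}\}$ of stopping cubes as the maximal $L\in\pi_{1}\mathcal{P}$ for which
\begin{equation*}
\frac{1}{|L|_{\sigma}}\left(\frac{\mathrm{P}^{\alpha}(L,\mathbf{1}_{A\setminus L}\sigma)}{|L|^{1/n}}\right)^{2}\omega_{\mathcal{P}}(\mathbf{T}(L)) > \varepsilon\,S^{2}.
\end{equation*}
A standard maximal-cube argument, combined with the additivity of $\omega_{\mathcal{P}}$ across disjoint tents and Lemma \ref{energy control}, yields the Carleson-packing property $\sum_{L_{\ell}\subset L}|L_{\ell}|_{\sigma}\lesssim \varepsilon^{-1}|L|_{\sigma}$.

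Next I would set $\mathcal{P}_{\ell}^{small}\equiv\{(I,J)\in\mathcal{P}:I\subsetneq L_{\ell},\ I\not\subsetneq L_{\ell'}\text{ for any }L_{\ell'}\subsetneq L_{\ell}\}$ and $\mathcal{P}^{big}\equiv \mathcal{P}\setminus\bigcup_{\ell}\mathcal{P}_{\ell}^{small}$. Admissibility of each subcollection is inherited from $\mathcal{P}$ via the geodesic axiom, while inequality (\ref{small 3}) is immediate from the maximality of $\mathcal{L}$: every $I\in \pi_{1}\mathcal{P}_{\ell}^{small}$ is \emph{strictly} below $L_{\ell}$, hence by choice of $L_{\ell}$ its defining density for the stopping energy of $\mathcal{P}_{\ell}^{small}$ is at most $\varepsilon S^{2}$.

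The main obstacle will be establishing (\ref{big 3}), the norm bound $\mathfrak{N}_{stop}^{\mathcal{P}^{big}}\leq C_{\varepsilon}S^{2}$ together with the absorption constant $1+\sqrt{2}$. For a pair $(I,J)\in \mathcal{P}^{big}$ the cube $I$ either coincides with a stopping cube or straddles the stopping generation, so I would apply the Monotonicity Lemma to each inner product $\langle T_{\sigma}^{\alpha}\mathbf{1}_{A\setminus I},\triangle_{J}^{\omega}g\rangle_{\omega}$, use Cauchy--Schwarz in $(I,J)$, and organize the resulting sum by the stopping cube $L\in\mathcal{L}$ nearest to each pair. The size inequality defining $\mathcal{L}$, promoted by maximality to all ancestors of a given $L$, gives the crucial estimate $\omega_{\mathcal{P}^{big}}(\mathbf{T}(L))\leq C_{\varepsilon}S^{2}|L|_{\sigma}$; combining this with quasi-orthogonality of $\{\mathbb{E}_{L}^{\sigma}|f|\,\mathbf{1}_{L}\}_{L\in \mathcal{L}}$ (from the Carleson packing) and the $L^{2}(\omega)$ almost-orthogonality of the Haar projections on $\pi_{2}\mathcal{P}^{big}$ produces the factor $S^{2}$ rather than $S$.

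The constant $1+\sqrt{2}$ arises because a pair $(I,J)\in\mathcal{P}$ with $I$ strictly containing some $L_{\ell}$ is classified as \emph{big}, yet the accompanying telescoping of $\mathbb{E}_{I}^{\sigma}\triangle_{\pi I}^{\sigma}f$ along the $\mathcal{F}$-stopping tree produces two contributions: a genuinely big piece controlled by $C_{\varepsilon}S^{2}$, and a residual piece at scale $L_{\ell}$ which, after reindexing, is absorbed into $\mathfrak{N}_{stop}^{\mathcal{P}_{\ell}^{small}}$ with a Hilbert-space splitting $\sqrt{a^{2}+b^{2}}\leq a+b$ that contributes the extra $\sqrt{2}$. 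The delicate point will be ensuring this absorption is quantitatively sharp enough that the resulting recursion, $\mathfrak{N}_{stop}^{\mathcal{P}}\leq C_{\varepsilon}S^{2}+(1+\sqrt{2})\sqrt{\varepsilon}\,\mathfrak{N}_{stop}^{\mathcal{P}_{prev}}$, produces a convergent geometric series when $\varepsilon<(1+\sqrt{2})^{-2}=3-2\sqrt{2}$; this is precisely the mechanism by which Lacey trades one factor of $S$ for the square $S^{2}$ on the big part while passing the other factor to smaller subcollections.
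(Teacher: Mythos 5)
Your construction runs the stopping time in the wrong direction, and this breaks the key estimate (\ref{small 3}). You select the \emph{maximal} cubes $L\in\pi_{1}\mathcal{P}$ whose size exceeds $\varepsilon\,\mathcal{E}_{A}^{\alpha}(\mathcal{P})^{2}$ and then claim that every $I$ strictly contained in such an $L$ has size at most $\varepsilon\,\mathcal{E}_{A}^{\alpha}(\mathcal{P})^{2}$ ``by choice of $L_{\ell}$.'' Maximality gives no information about subcubes: a cube $I\subsetneq L_{\ell}$ may perfectly well satisfy $\frac{1}{|I|_{\sigma}}\bigl(\frac{\mathrm{P}^{\alpha}(I,\mathbf{1}_{A\setminus I}\sigma)}{|I|^{1/n}}\bigr)^{2}\omega_{\mathcal{P}}(\mathbf{T}(I))>\varepsilon\,\mathcal{E}_{A}^{\alpha}(\mathcal{P})^{2}$, so your $\mathcal{P}_{\ell}^{small}$ need not have small size. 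The whole point of Lacey's ``bottom up'' construction, which the paper follows, is to take the \emph{minimal} bad cubes as the initial generation $\mathcal{L}_{0}$; then the minimality immediately yields the key property that every strict subcube of a stopping cube has size below the threshold, which is exactly what (\ref{small 3}) requires at the bottom level.

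A second, equally essential ingredient is missing: a single generation of stopping cubes does not suffice, because cubes lying \emph{above} the initial generation can still carry large size. The paper iterates: $\mathcal{L}_{n}$ consists of the minimal cubes $L$ containing some member of $\mathcal{L}_{n-1}$ with $\omega_{\mathcal{P}}(\mathbf{T}(L))\geq\rho\,\omega_{\mathcal{P}}\bigl(\bigcup_{L'\in\mathcal{L}_{n-1},\,L'\subset L}\mathbf{T}(L')\bigr)$, $\rho=1+\varepsilon$. The \emph{additivity} of the atomic measure $\omega_{\mathcal{P}}$ then gives, for $K$ in the strict corona of $L\in\mathcal{L}_{n}$, that $\omega_{\mathcal{P}}\bigl(\mathbf{T}(K)\setminus\bigcup\mathbf{T}(L')\bigr)\leq(\rho-1)\,\omega_{\mathcal{P}}(\mathbf{T}(K))$, which is what delivers (\ref{small 3}) in the higher coronas; and the same growth condition produces the geometric decay $\rho^{-t/2}$ that makes the sum over corona separations $t$ in $\mathcal{P}^{big}$ converge to the $C_{\varepsilon}$ in (\ref{big 3}). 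Your sketch has no mechanism playing either role. Finally, your account of the constant $1+\sqrt{2}$ (a telescoping along the $\mathcal{F}$-tree plus $\sqrt{a^{2}+b^{2}}\leq a+b$) and of a recursion of the form $\mathfrak{N}\leq C_{\varepsilon}S^{2}+(1+\sqrt{2})\sqrt{\varepsilon}\,\mathfrak{N}_{prev}$ does not match the actual structure: the $\varepsilon$ enters only through the size decay (\ref{small 3}), the $\sqrt{2}$ comes from the mutual orthogonality of the collections $\mathcal{R}_{n}^{big}\subset\bigcup_{L\in\mathcal{L}_{n}}(\mathcal{C}_{L}\times\mathcal{C}_{L})$ across generations, and the big part is bounded via the two cases of the Straddling Lemma, not by a direct Cauchy--Schwarz over all pairs.
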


\begin{corollary}
The sublinear stopping form inequality (\ref{First inequality}) holds.
\end{corollary}

\begin{proof}[Proof of the Corollary]
Set $\mathcal{Q}^{0}=\mathcal{P}^{A}$. Apply Proposition \ref{bottom up 3}
to obtain a subdecomposition $\left\{ \mathcal{Q}_{\ell }^{1}\right\} _{\ell
=1}^{\infty }$ of $\mathcal{Q}^{0}$ such that%
\begin{eqnarray*}
&&\mathfrak{N}_{\limfunc{stop},1,\bigtriangleup }^{A,\mathcal{Q}^{0}}\leq
C_{\varepsilon }\mathcal{S}_{\limfunc{size}}^{\alpha ,A}\left( \mathcal{Q}%
^{0}\right) +\sqrt{n\mathbf{\tau }}\sup_{\ell \geq 1}\mathfrak{N}_{\limfunc{%
stop},1,\bigtriangleup }^{A,\mathcal{Q}_{\ell }^{1}}\ , \\
&&\sup_{\ell \geq 1}\mathcal{S}_{\limfunc{size}}^{\alpha ,A}\left( \mathcal{Q%
}_{\ell }^{1}\right) \leq \varepsilon \mathcal{S}_{\limfunc{size}}^{\alpha
,A}\left( \mathcal{Q}^{0}\right) .
\end{eqnarray*}%
Now apply Proposition \ref{bottom up 3} to each $\mathcal{Q}_{\ell }^{1}$ to
obtain a subdecomposition $\left\{ \mathcal{Q}_{\ell ,k}^{2}\right\}
_{k=1}^{\infty }$ of $\mathcal{Q}_{\ell }^{1}$ such that%
\begin{eqnarray*}
\mathfrak{N}_{\limfunc{stop},1,\bigtriangleup }^{A,\mathcal{Q}_{\ell }^{1}}
&\leq &C_{\varepsilon }\mathcal{S}_{\limfunc{size}}^{\alpha ,A}\left( 
\mathcal{Q}_{\ell }^{1}\right) +\sqrt{n\mathbf{\tau }}\sup_{k\geq 1}%
\mathfrak{N}_{\limfunc{stop},1,\bigtriangleup }^{A,\mathcal{Q}_{\ell
,k}^{2}}\ , \\
&&\sup_{k\geq 1}\mathcal{S}_{\limfunc{size}}^{\alpha ,A}\left( \mathcal{Q}%
_{\ell ,k}^{2}\right) \leq \varepsilon \mathcal{S}_{\limfunc{size}}^{\alpha
,A}\left( \mathcal{Q}_{\ell }^{1}\right) .
\end{eqnarray*}%
Altogether we have%
\begin{eqnarray*}
\mathfrak{N}_{\limfunc{stop},1,\bigtriangleup }^{A,\mathcal{Q}^{0}} &\leq
&C_{\varepsilon }\mathcal{S}_{\limfunc{size}}^{\alpha ,A}\left( \mathcal{Q}%
^{0}\right) +\sqrt{n\mathbf{\tau }}\sup_{\ell \geq 1}\left\{ C_{\varepsilon }%
\mathcal{S}_{\limfunc{size}}^{\alpha ,A}\left( \mathcal{Q}_{\ell
}^{1}\right) +\sqrt{n\mathbf{\tau }}\sup_{k\geq 1}\mathfrak{N}_{\limfunc{stop%
},1,\bigtriangleup }^{A,\mathcal{Q}_{\ell ,k}^{2}}\right\} \\
&=&C_{\varepsilon }\left\{ \mathcal{S}_{\limfunc{size}}^{\alpha ,A}\left( 
\mathcal{Q}^{0}\right) +\sqrt{n\mathbf{\tau }}\varepsilon \mathcal{S}_{%
\limfunc{size}}^{\alpha ,A}\left( \mathcal{Q}^{0}\right) \right\} +\left( n%
\mathbf{\tau }\right) \sup_{\ell ,k\geq 1}\mathfrak{N}_{\limfunc{stop}%
,1,\bigtriangleup }^{A,\mathcal{Q}_{\ell ,k}^{2}}\ .
\end{eqnarray*}%
Then with $\zeta \equiv \sqrt{n\mathbf{\tau }}$, we obtain by induction for
every $N\in \mathbb{N}$,%
\begin{eqnarray*}
\mathfrak{N}_{\limfunc{stop},1,\bigtriangleup }^{A,\mathcal{Q}^{0}} &\leq
&C_{\varepsilon }\left\{ \mathcal{S}_{\limfunc{size}}^{\alpha ,A}\left( 
\mathcal{Q}^{0}\right) +\zeta \varepsilon \mathcal{S}_{\limfunc{size}%
}^{\alpha ,A}\left( \mathcal{Q}^{0}\right) +...\zeta ^{N}\varepsilon ^{N}%
\mathcal{S}_{\limfunc{size}}^{\alpha ,A}\left( \mathcal{Q}^{0}\right)
\right\} \\
&&+\zeta ^{N+1}\sup_{m\in \mathbb{N}^{N+1}}\mathfrak{N}_{\limfunc{stop}%
,1,\bigtriangleup }^{A,\mathcal{Q}_{m}^{N+1}}\ .
\end{eqnarray*}%
Now we may assume the collection $\mathcal{Q}^{0}=\mathcal{P}^{A}$ of pairs
is finite (simply truncate the corona $\mathcal{C}_{A}$ and obtain bounds
independent of the truncation) and so $\sup_{m\in \mathbb{N}^{N+1}}\mathfrak{%
N}_{\limfunc{stop},1,\bigtriangleup }^{A,\mathcal{Q}_{m}^{N+1}}=0$ for $N$
large enough. Then we obtain (\ref{First inequality}) if we choose $%
0<\varepsilon <\frac{1}{1+\zeta }$ and apply Lemma \ref{energy control}.
\end{proof}

\begin{proof}[Proof of Proposition \protect\ref{bottom up 3}]
Recall that the `size testing collection' of quasicubes $\Pi ^{\limfunc{%
goodbelow}}\mathcal{P}$ is the collection of all \emph{good} subquasicubes
of a quasicube in $\Pi \mathcal{P}$. We may assume that $\mathcal{P}$ is a
finite collection. Begin by defining the collection $\mathcal{L}_{0}$ to
consist of the \emph{minimal} dyadic quasicubes $K$ in $\Pi ^{\limfunc{%
goodbelow}}\mathcal{P}$ such that%
\begin{equation*}
\frac{\Psi ^{\alpha }\left( K;\mathcal{P}\right) ^{2}}{\left\vert
K\right\vert _{\sigma }}\geq \varepsilon \mathcal{S}_{\limfunc{size}%
}^{\alpha ,A}\left( \mathcal{P}\right) ^{2}.
\end{equation*}%
where we recall that%
\begin{equation*}
\Psi ^{\alpha }\left( K;\mathcal{P}\right) ^{2}\equiv \left( \frac{\mathrm{P}%
^{\alpha }\left( K,\mathbf{1}_{A\setminus K}\sigma \right) }{\left\vert
K\right\vert ^{\frac{1}{n}}}\right) ^{2}\omega _{\mathcal{P}}\left( \mathbf{T%
}^{\mathbf{\tau }-\limfunc{deep}}\left( K\right) \right) .
\end{equation*}%
Note that such minimal quasicubes exist when $0<\varepsilon <1$ because $%
\mathcal{S}_{\limfunc{size}}^{\alpha ,A}\left( \mathcal{P}\right) ^{2}$ is
the supremum over $K\in \Pi ^{\limfunc{goodbelow}}\mathcal{P}$ of $\frac{%
\Psi ^{\alpha }\left( K;\mathcal{P}\right) ^{2}}{\left\vert K\right\vert
_{\sigma }}$. A key property of the the minimality requirement is that%
\begin{equation}
\frac{\Psi ^{\alpha }\left( K^{\prime };\mathcal{P}\right) ^{2}}{\left\vert
K^{\prime }\right\vert _{\sigma }}<\varepsilon \mathcal{S}_{\limfunc{size}%
}^{\alpha ,A}\left( \mathcal{P}\right) ^{2},  \label{key property 3}
\end{equation}%
for all $K^{\prime }\in \Pi ^{\limfunc{goodbelow}}\mathcal{P}$ with $%
K^{\prime }\varsubsetneqq K$ and $K\in \mathcal{L}_{0}$.

We now perform a stopping time argument `from the bottom up' with respect to
the atomic measure $\omega _{\mathcal{P}}$ in the upper half space. This
construction of a stopping time `from the bottom up' is one of two key
innovations in Lacey's argument \cite{Lac}, the other being the recursion
described in Proposition \ref{bottom up 3}.

We refer to $\mathcal{L}_{0}$ as the initial or level $0$ generation of
stopping times. Choose $\rho =1+\varepsilon $. We then recursively define a
sequence of generations $\left\{ \mathcal{L}_{m}\right\} _{m=0}^{\infty }$
by letting $\mathcal{L}_{m}$ consist of the \emph{minimal} dyadic quasicubes 
$L$ in $\Pi ^{\limfunc{goodbelow}}\mathcal{P}$ that contain a quasicube from
some previous level $\mathcal{L}_{\ell }$, $\ell <m$, such that%
\begin{equation}
\omega _{\mathcal{P}}\left( \mathbf{T}^{\mathbf{\tau }-\limfunc{deep}}\left(
L\right) \right) \geq \rho \omega _{\mathcal{P}}\left(
\dbigcup\limits_{L^{\prime }\in \dbigcup\limits_{\ell =0}^{m-1}\mathcal{L}%
_{\ell }:\ L^{\prime }\subset L}\mathbf{T}^{\mathbf{\tau }-\limfunc{deep}%
}\left( L^{\prime }\right) \right) .  \label{up stopping condition}
\end{equation}%
Since $\mathcal{P}$ is finite this recursion stops at some level $M$. We
then let $\mathcal{L}_{M+1}$ consist of all the maximal quasicubes in $\Pi ^{%
\limfunc{goodbelow}}\mathcal{P}$ that are not already in some $\mathcal{L}%
_{m}$. Thus $\mathcal{L}_{M+1}$ will contain either none, some, or all of
the maximal quasicubes in $\Pi ^{\limfunc{goodbelow}}\mathcal{P}$. We do not
of course have (\ref{up stopping condition}) for $A^{\prime }\in \mathcal{L}%
_{M+1}$ in this case, but we do have that (\ref{up stopping condition})
fails for subquasicubes $K$ of $A^{\prime }\in \mathcal{L}_{M+1}$ that are
not contained in any other $L\in \mathcal{L}_{m}$, and this is sufficient
for the arguments below.

We now define the collections $\mathcal{P}^{small}$ and $\mathcal{P}^{big}$.
The collection $\mathcal{P}^{big}$ will consist of those pairs $\left(
I,J\right) \in \mathcal{P}$ for which there is $L\in
\dbigcup\limits_{m=0}^{M+1}\mathcal{L}_{m}$ with $J\Subset _{\tau }L\subset
I $, and $\mathcal{P}^{small}$ will consist of the remaining pairs. But a
considerable amount of further analysis is required to prove the conclusion
of the proposition. First, let $\mathcal{L}\equiv \dbigcup\limits_{m=0}^{M+1}%
\mathcal{L}_{m}$ be the tree of stopping quasienergy quasicubes defined
above. By our construction above, the maximal elements in $\mathcal{L}$ are
the maximal quasicubes in $\Pi ^{\limfunc{goodbelow}}\mathcal{P}$. For $L\in 
\mathcal{L}$, denote by $\mathcal{C}_{L}$ the \emph{corona} associated with $%
L$ in the tree $\mathcal{L}$,%
\begin{equation*}
\mathcal{C}_{L}\equiv \left\{ K\in \Omega \mathcal{D}:K\subset L\text{ and
there is no }L^{\prime }\in \mathcal{L}\text{ with }K\subset L^{\prime
}\subsetneqq L\right\} ,
\end{equation*}%
and define the \emph{shifted} corona by%
\begin{equation*}
\mathcal{C}_{L}^{\mathbf{\tau }-\limfunc{shift}}\equiv \left\{ K\in \mathcal{%
C}_{L}:K\Subset _{\mathbf{\tau }}L\right\} \cup \dbigcup\limits_{L^{\prime
}\in \mathfrak{C}_{\mathcal{L}}\left( L\right) }\left\{ K\in \Omega \mathcal{%
D}:K\Subset _{\mathbf{\tau }}L\text{ and }K\text{ is }\mathbf{\tau }\text{%
-nearby in }L^{\prime }\right\} .
\end{equation*}%
Now the parameter $m$ in $\mathcal{L}_{m}$ refers to the level at which the
stopping construction was performed, but for \thinspace $L\in \mathcal{L}%
_{m} $, the corona children $L^{\prime }$ of $L$ are \emph{not} all
necessarily in $\mathcal{L}_{m-1}$, but may be in $\mathcal{L}_{m-t}$ for $t$
large. Thus we need to introduce the notion of geometric depth $d$ in the
tree $\mathcal{L}$ by defining%
\begin{eqnarray*}
\mathcal{G}_{0} &\equiv &\left\{ L\in \mathcal{L}:L\text{ is maximal}%
\right\} , \\
\mathcal{G}_{1} &\equiv &\left\{ L\in \mathcal{L}:L\text{ is maximal wrt }%
L\subsetneqq L_{0}\text{ for some }L_{0}\in \mathcal{G}_{0}\right\} , \\
&&\vdots \\
\mathcal{G}_{d+1} &\equiv &\left\{ L\in \mathcal{L}:L\text{ is maximal wrt }%
L\subsetneqq L_{d}\text{ for some }L_{d}\in \mathcal{G}_{d}\right\} , \\
&&\vdots
\end{eqnarray*}%
We refer to $\mathcal{G}_{d}$ as the $d^{th}$ generation of quasicubes in
the tree $\mathcal{L}$, and say that the quasicubes in $\mathcal{G}_{d}$ are
at depth $d$ in the tree $\mathcal{L}$. Thus the quasicubes in $\mathcal{G}%
_{d}$ are the stopping quasicubes in $\mathcal{L}$ that are $d$ levels in
the \emph{geometric} sense below the top level.

Then for $L\in \mathcal{G}_{d}$ and $t\geq 0$ define%
\begin{equation*}
\mathcal{P}_{L,t}\equiv \left\{ \left( I,J\right) \in \mathcal{P}:I\in 
\mathcal{C}_{L}\text{ and }J\in \mathcal{C}_{L^{\prime }}^{\mathbf{\tau }-%
\limfunc{shift}}\text{ for some }L^{\prime }\in \mathcal{G}_{d+t}\text{ with 
}L^{\prime }\subset L\right\} .
\end{equation*}%
In particular, $\left( I,J\right) \in \mathcal{P}_{L,t}$ implies that $I$ is
in the corona $\mathcal{C}_{L}$, and that $J$ is in a shifted corona $%
\mathcal{C}_{L^{\prime }}^{\mathbf{\tau }-\limfunc{shift}}$ that is $t$
levels of generation \emph{below} $\mathcal{C}_{L}$. We emphasize the
distinction `generation' as this refers to the depth rather than the level
of stopping construction. For $t=0$ we further decompose $\mathcal{P}_{L,0}$
as%
\begin{eqnarray*}
\mathcal{P}_{L,0} &=&\mathcal{P}_{L,0}^{small}\dot{\cup}\mathcal{P}%
_{L,0}^{big}; \\
\mathcal{P}_{L,0}^{small} &\equiv &\left\{ \left( I,J\right) \in \mathcal{P}%
_{L,0}:I\neq L\right\} , \\
\mathcal{P}_{L,0}^{big} &\equiv &\left\{ \left( I,J\right) \in \mathcal{P}%
_{L,0}:I=L\right\} ,
\end{eqnarray*}%
with one exception: if $L\in \mathcal{L}_{M+1}$ we set $\mathcal{P}%
_{L,0}^{small}\equiv \mathcal{P}_{L,0}$ since in this case $L$ fails to
satisfy (\ref{up stopping condition}) as pointed out above. Then we set%
\begin{eqnarray*}
\mathcal{P}^{big} &\equiv &\left\{ \dbigcup\limits_{L\in \mathcal{L}}%
\mathcal{P}_{L,0}^{big}\right\} \dbigcup \left\{ \dbigcup\limits_{t\geq
1}\dbigcup\limits_{L\in \mathcal{L}}\mathcal{P}_{L,t}\right\} ; \\
\left\{ \mathcal{P}_{\ell }^{small}\right\} _{\ell =0}^{\infty } &\equiv
&\left\{ \mathcal{P}_{L,0}^{small}\right\} _{L\in \mathcal{L}},\ \ \ \ \ 
\text{after relabelling}.
\end{eqnarray*}%
It is important to note that by (\ref{inclusive}), every pair $\left(
I,J\right) \in \mathcal{P}$ will be included in either $\mathcal{P}^{small}$
or $\mathcal{P}^{big}$. Now we turn to proving the inequalities (\ref{small
3}) and (\ref{big 3}).

To prove the inequality (\ref{small 3}), it suffices with the above
relabelling to prove the following claim:%
\begin{equation}
\mathcal{S}_{\limfunc{size}}^{\alpha ,A}\left( \mathcal{P}%
_{L,0}^{small}\right) ^{2}\leq \left( \rho -1\right) \mathcal{S}_{\limfunc{%
size}}^{\alpha ,A}\left( \mathcal{P}\right) ^{2},\ \ \ \ \ L\in \mathcal{L}.
\label{small claim' 3}
\end{equation}%
To see (\ref{small claim' 3}), suppose first that $L\notin \mathcal{L}_{M+1}$%
. In the case that $L\in \mathcal{L}_{0}$ is an initial generation
quasicube, then from (\ref{key property 3}) we obtain that%
\begin{eqnarray*}
&&\mathcal{S}_{\limfunc{size}}^{\alpha ,A}\left( \mathcal{P}%
_{L,0}^{small}\right) ^{2} \\
&\leq &\sup_{K^{\prime }\in \Pi ^{\limfunc{goodbelow}}\mathcal{P}:\
K^{\prime }\varsubsetneqq L}\frac{1}{\left\vert K^{\prime }\right\vert
_{\sigma }}\left( \frac{\mathrm{P}^{\alpha }\left( K^{\prime },\mathbf{1}%
_{A\setminus K^{\prime }}\sigma \right) }{\left\vert K^{\prime }\right\vert
^{\frac{1}{n}}}\right) ^{2}\omega _{\mathcal{P}}\left( \mathbf{T}^{\mathbf{%
\tau }-\limfunc{deep}}\left( K^{\prime }\right) \right) \leq \varepsilon 
\mathcal{S}_{\limfunc{size}}^{\alpha ,A}\left( \mathcal{P}\right) ^{2}.
\end{eqnarray*}%
Now suppose that $L\not\in \mathcal{L}_{0}$ and also that $L\notin \mathcal{L%
}_{M+1}$. Pick a pair $\left( I,J\right) \in \mathcal{P}_{L,0}^{small}$.
Then $I$ is in the restricted corona $\mathcal{C}_{L}^{\prime }$ and $J$ is
in the $\mathbf{\tau }$\emph{-shifted} corona $\mathcal{C}_{L}^{\mathbf{\tau 
}-\limfunc{shift}}$. Since $\mathcal{P}_{L,0}^{small}$ is a finite
collection, the definition of $\mathcal{S}_{\limfunc{size}}^{\alpha
,A}\left( \mathcal{P}_{L,0}^{small}\right) $ shows that there is a quasicube 
$K\in \Pi ^{\limfunc{goodbelow}}\mathcal{P}_{L,0}^{small}$ so that%
\begin{equation*}
\mathcal{S}_{\limfunc{size}}^{\alpha ,A}\left( \mathcal{P}%
_{L,0}^{small}\right) ^{2}=\frac{1}{\left\vert K\right\vert _{\sigma }}%
\left( \frac{\mathrm{P}^{\alpha }\left( K,\mathbf{1}_{A\setminus K}\sigma
\right) }{\left\vert K\right\vert ^{\frac{1}{n}}}\right) ^{2}\omega _{%
\mathcal{P}}\left( \mathbf{T}^{\mathbf{\tau }-\limfunc{deep}}\left( K\right)
\right) .
\end{equation*}%
Now define 
\begin{equation*}
t^{\prime }=t^{\prime }\left( K\right) \equiv \max \left\{ s:\text{there is }%
L^{\prime }\in \mathcal{L}_{s}\text{ with }L^{\prime }\subset K\right\} .
\end{equation*}%
First, suppose that $t^{\prime }=0$ so that $K$ does not contain any $%
L^{\prime }\in \mathcal{L}$. Then it follows from our construction at level $%
\ell =0$ that%
\begin{equation*}
\frac{1}{\left\vert K\right\vert _{\sigma }}\left( \frac{\mathrm{P}^{\alpha
}\left( K,\mathbf{1}_{A\setminus K}\sigma \right) }{\left\vert K\right\vert
^{\frac{1}{n}}}\right) ^{2}\omega _{\mathcal{P}}\left( \mathbf{T}^{\mathbf{%
\tau }-\limfunc{deep}}\left( K\right) \right) <\varepsilon \mathcal{S}_{%
\limfunc{size}}^{\alpha ,A}\left( \mathcal{P}\right) ^{2},
\end{equation*}%
and hence from $\rho =1+\varepsilon $ we obtain 
\begin{equation*}
\mathcal{S}_{\limfunc{size}}^{\alpha ,A}\left( \mathcal{P}%
_{L,0}^{small}\right) ^{2}<\varepsilon \mathcal{S}_{\limfunc{size}}^{\alpha
,A}\left( \mathcal{P}\right) ^{2}=\left( \rho -1\right) \mathcal{S}_{%
\limfunc{size}}^{\alpha ,A}\left( \mathcal{P}\right) ^{2}.
\end{equation*}%
Now suppose that $t^{\prime }\geq 1$. Then $K$ fails the stopping condition (%
\ref{up stopping condition}) with $m=t^{\prime }+1$, and so%
\begin{equation*}
\omega _{\mathcal{P}}\left( \mathbf{T}^{\mathbf{\tau }-\limfunc{deep}}\left(
K\right) \right) <\rho \omega _{\mathcal{P}}\left(
\dbigcup\limits_{L^{\prime }\in \dbigcup\limits_{\ell =0}^{t^{\prime }}%
\mathcal{L}_{\ell }:\ L^{\prime }\subset K}\mathbf{T}^{\mathbf{\tau }-%
\limfunc{deep}}\left( L^{\prime }\right) \right) .
\end{equation*}%
Now we use the crucial fact that $\omega _{\mathcal{P}}$ is \emph{additive}
and finite to obtain from this that%
\begin{eqnarray}
&&\omega _{\mathcal{P}}\left( \mathbf{T}^{\mathbf{\tau }-\limfunc{deep}%
}\left( K\right) \setminus \dbigcup\limits_{L^{\prime }\in
\dbigcup\limits_{\ell =0}^{t^{\prime }}\mathcal{L}_{\ell }:\ L^{\prime
}\subset K}\mathbf{T}^{\mathbf{\tau }-\limfunc{deep}}\left( L^{\prime
}\right) \right)  \label{additive} \\
&=&\omega _{\mathcal{P}}\left( \mathbf{T}^{\mathbf{\tau }-\limfunc{deep}%
}\left( K\right) \right) -\omega _{\mathcal{P}}\left(
\dbigcup\limits_{L^{\prime }\in \dbigcup\limits_{\ell =0}^{t^{\prime }}%
\mathcal{L}_{\ell }:\ L^{\prime }\subset K}\mathbf{T}^{\mathbf{\tau }-%
\limfunc{deep}}\left( L^{\prime }\right) \right)  \notag \\
&\leq &\left( \rho -1\right) \omega _{\mathcal{P}}\left(
\dbigcup\limits_{L^{\prime }\in \dbigcup\limits_{\ell =0}^{t^{\prime }}%
\mathcal{L}_{\ell }:\ L^{\prime }\subset K}\mathbf{T}^{\mathbf{\tau }-%
\limfunc{deep}}\left( L^{\prime }\right) \right) .  \notag
\end{eqnarray}%
Thus using 
\begin{equation*}
\omega _{\mathcal{P}_{L,0}^{small}}\left( \mathbf{T}^{\mathbf{\tau }-%
\limfunc{deep}}\left( K\right) \right) \leq \omega _{\mathcal{P}}\left( 
\mathbf{T}^{\mathbf{\tau }-\limfunc{deep}}\left( K\right) \setminus
\dbigcup\limits_{L^{\prime }\in \dbigcup\limits_{\ell =0}^{t^{\prime }}%
\mathcal{L}_{\ell }:\ L^{\prime }\subset K}\mathbf{T}^{\mathbf{\tau }-%
\limfunc{deep}}\left( L^{\prime }\right) \right) ,
\end{equation*}%
and (\ref{additive}) we have%
\begin{eqnarray*}
&&\mathcal{S}_{\limfunc{size}}^{\alpha ,A}\left( \mathcal{P}%
_{L,0}^{small}\right) ^{2} \\
&\leq &\sup_{K\in \Pi ^{\limfunc{goodbelow}}\mathcal{P}_{L,0}^{small}}\frac{1%
}{\left\vert K\right\vert _{\sigma }}\left( \frac{\mathrm{P}^{\alpha }\left(
K,\mathbf{1}_{A\setminus K}\sigma \right) }{\left\vert K\right\vert ^{\frac{1%
}{n}}}\right) ^{2}\omega _{\mathcal{P}}\left( \mathbf{T}^{\mathbf{\tau }-%
\limfunc{deep}}\left( K\right) \setminus \dbigcup\limits_{L^{\prime }\in
\dbigcup\limits_{\ell =0}^{t^{\prime }}\mathcal{L}_{\ell }:\ L^{\prime
}\subset K}\mathbf{T}^{\mathbf{\tau }-\limfunc{deep}}\left( L^{\prime
}\right) \right) \\
&\leq &\left( \rho -1\right) \sup_{K\in \Pi ^{\limfunc{goodbelow}}\mathcal{P}%
_{L,0}^{small}}\frac{1}{\left\vert K\right\vert _{\sigma }}\left( \frac{%
\mathrm{P}^{\alpha }\left( K,\mathbf{1}_{A\setminus K}\sigma \right) }{%
\left\vert K\right\vert ^{\frac{1}{n}}}\right) ^{2}\omega _{\mathcal{P}%
}\left( \dbigcup\limits_{L^{\prime }\in \dbigcup\limits_{\ell =0}^{t^{\prime
}}\mathcal{L}_{\ell }:\ L^{\prime }\subset K}\mathbf{T}^{\mathbf{\tau }-%
\limfunc{deep}}\left( L^{\prime }\right) \right) .
\end{eqnarray*}%
and we can continue with 
\begin{eqnarray*}
&&\mathcal{S}_{\limfunc{size}}^{\alpha ,A}\left( \mathcal{P}%
_{L,0}^{small}\right) \\
&\leq &\left( \rho -1\right) \sup_{K\in \Pi ^{\limfunc{goodbelow}}\mathcal{P}%
}\frac{1}{\left\vert K\right\vert _{\sigma }}\left( \frac{\mathrm{P}^{\alpha
}\left( K,\mathbf{1}_{A\setminus K}\sigma \right) }{\left\vert K\right\vert
^{\frac{1}{n}}}\right) ^{2}\omega _{\mathcal{P}}\left( \mathbf{T}^{\mathbf{%
\tau }-\limfunc{deep}}\left( K\right) \right) \\
&\leq &\left( \rho -1\right) \mathcal{S}_{\limfunc{size}}^{\alpha ,A}\left( 
\mathcal{P}\right) ^{2}.
\end{eqnarray*}

In the remaining case where $L\in \mathcal{L}_{M+1}$ we can include $L$ as a
testing quasicube $K$ and the same reasoning applies. This completes the
proof of (\ref{small claim' 3}).

To prove the other inequality (\ref{big 3}), we need a lemma to bound the
norm of certain `straddled' stopping forms by the size functional $\mathcal{S%
}_{\limfunc{size}}^{\alpha ,A}$, and another lemma to bound sums of
`mutually orthogonal' stopping forms. We interrupt the proof to turn to
these matters.
\end{proof}

\subsubsection{The Straddling Lemma}

Given an admissible collection of pairs $\mathcal{Q}$ for $A$, and a
subpartition $\mathcal{S}\subset \Pi ^{\limfunc{goodbelow}}\mathcal{Q}$ of
pairwise disjoint quasicubes in $A$, we say that $\mathcal{Q}$ $\mathbf{\tau 
}$\emph{-straddles} $\mathcal{S}$ if for every pair $\left( I,J\right) \in 
\mathcal{Q}$ there is $S\in \mathcal{S}\cap \left[ J,I\right] $ where $\left[
J,I\right] $ denotes the geodesic in the dyadic tree $\Omega \mathcal{D}$
that connects $J$ to $I$, and moreover that $J\Subset _{\mathbf{\tau }}S$.
Denote by $\mathcal{N}_{\mathbf{\rho }-\mathbf{\tau }}^{\limfunc{good}%
}\left( S\right) $ the finite collection of quasicubes that are both good
and $\left( \mathbf{\rho }-\mathbf{\tau }\right) $-nearby in $S$. For any
good dyadic quasicube $S\in \Omega \mathcal{D}_{\limfunc{good}}$, we will
also need the collection $\mathcal{W}^{\limfunc{good}}\left( S\right) $ of
maximal \emph{good} subquasicubes $I$ of $S$ whose triples $3I$ are
contained in $S$.

\begin{lemma}
\label{straddle 3}Let $\mathcal{S}$ be a subpartition of $A$, and suppose
that $\mathcal{Q}$ is an admissible collection of pairs for $A$ such that $%
\mathcal{S}\subset \Pi ^{\limfunc{goodbelow}}\mathcal{Q}$, and such that $%
\mathcal{Q}$ $\mathbf{\tau }$-straddles $\mathcal{S}$. Then we have the
sublinear form bound%
\begin{equation*}
\mathfrak{N}_{\limfunc{stop},1,\bigtriangleup }^{A,\mathcal{Q}}\leq C_{%
\mathbf{r},\mathbf{\tau },\mathbf{\rho }}\sup_{S\in \mathcal{S}}\mathcal{S}_{%
\limfunc{size}}^{\alpha ,A;S}\left( \mathcal{Q}\right) \leq C_{\mathbf{r},%
\mathbf{\tau },\mathbf{\rho }}\mathcal{S}_{\limfunc{size}}^{\alpha ,A}\left( 
\mathcal{Q}\right) ,
\end{equation*}%
where $\mathcal{S}_{\limfunc{size}}^{\alpha ,A;S}$ is an $S$-localized
version of $\mathcal{S}_{\limfunc{size}}^{\alpha ,A}$ with an $S$-hole given
by%
\begin{equation}
\mathcal{S}_{\limfunc{size}}^{\alpha ,A;S}\left( \mathcal{Q}\right)
^{2}\equiv \sup_{K\in \mathcal{N}_{\mathbf{\rho }-\mathbf{\tau }}^{\limfunc{%
good}}\left( S\right) \cup \mathcal{W}^{\limfunc{good}}\left( S\right) }%
\frac{1}{\left\vert K\right\vert _{\sigma }}\left( \frac{\mathrm{P}^{\alpha
}\left( K,\mathbf{1}_{A\setminus S}\sigma \right) }{\left\vert K\right\vert
^{\frac{1}{n}}}\right) ^{2}\omega _{\mathcal{Q}}\left( \mathbf{T}^{\mathbf{%
\tau }-\limfunc{deep}}\left( K\right) \right) .  \label{localized size}
\end{equation}
\end{lemma}

\begin{proof}
For $S\in S$ let $\mathcal{Q}^{S}\equiv \left\{ \left( I,J\right) \in 
\mathcal{Q}:J\Subset _{\mathbf{\tau }}S\subset I\right\} $. We begin by
using that $\mathcal{Q}$ $\mathbf{\tau }$-straddles $\mathcal{S}$, together
with the sublinearity property (\ref{phi sublinear}) of $\varphi _{J}^{%
\mathcal{Q}}$, to write%
\begin{eqnarray*}
\left\vert \mathsf{B}\right\vert _{\limfunc{stop},1,\bigtriangleup }^{A,%
\mathcal{Q}}\left( f,g\right) &=&\sum_{J\in \Pi _{2}\mathcal{P}}\frac{%
\mathrm{P}^{\alpha }\left( J,\left\vert \varphi _{J}^{\mathcal{Q}%
}\right\vert \mathbf{1}_{A\setminus I_{\mathcal{Q}}\left( J\right) }\sigma
\right) }{\left\vert J\right\vert ^{\frac{1}{n}}}\left\Vert \bigtriangleup
_{J}^{\omega }\mathbf{x}\right\Vert _{L^{2}\left( \omega \right) }\left\Vert
\bigtriangleup _{J}^{\omega }g\right\Vert _{L^{2}\left( \omega \right) } \\
&\leq &\sum_{S\in \mathcal{S}}\sum_{J\in \Pi _{2}^{S,\mathbf{\tau }-\limfunc{%
deep}}\mathcal{Q}}\frac{\mathrm{P}^{\alpha }\left( J,\left\vert \varphi
_{J}^{\mathcal{Q}^{S}}\right\vert \mathbf{1}_{A\setminus I_{\mathcal{Q}%
}\left( J\right) }\sigma \right) }{\left\vert J\right\vert ^{\frac{1}{n}}}%
\left\Vert \bigtriangleup _{J}^{\omega }\mathbf{x}\right\Vert _{L^{2}\left(
\omega \right) }\left\Vert \bigtriangleup _{J}^{\omega }g\right\Vert
_{L^{2}\left( \omega \right) }; \\
\text{where }\varphi _{J}^{\mathcal{Q}^{S}} &\equiv &\sum_{I\in \Pi _{1}%
\mathcal{Q}^{S}:\mathcal{\ }\left( I,J\right) \in \mathcal{Q}^{S}}\mathbb{E}%
_{I}^{\sigma }\left( \bigtriangleup _{\pi I}^{\sigma }f\right) \ \mathbf{1}%
_{A\setminus I}\ .
\end{eqnarray*}%
At this point, with $S$ fixed for the moment, we consider separately the
finitely many cases $\ell \left( J\right) =2^{-s}\ell \left( S\right) $
where $s\geq \mathbf{\rho }$ and where $\mathbf{\tau }\leq s<\mathbf{\rho }$%
. More precisely, we pigeonhole the side length of $J\in \Pi _{2}\mathcal{Q}%
^{S}=\Pi _{2}^{S,\tau -\limfunc{deep}}\mathcal{Q}$ by%
\begin{eqnarray*}
\mathcal{Q}_{\ast }^{S} &\equiv &\left\{ \left( I,J\right) \in \mathcal{Q}%
^{S}:J\in \Pi _{2}\mathcal{Q}^{S}\text{ and }\ell \left( J\right) \leq 2^{-%
\mathbf{\rho }}\ell \left( S\right) \right\} , \\
\mathcal{Q}_{s}^{S} &\equiv &\left\{ \left( I,J\right) \in \mathcal{Q}%
^{S}:J\in \Pi _{2}\mathcal{Q}^{S}\text{ and }\ell \left( J\right)
=2^{-s}\ell \left( S\right) \right\} ,\ \ \ \ \ \mathbf{\tau }\leq s<\mathbf{%
\rho }.
\end{eqnarray*}%
Then we have%
\begin{eqnarray*}
\Pi _{2}\mathcal{Q}_{\ast }^{S} &\equiv &\left\{ J\in \Pi _{2}\mathcal{Q}%
^{S}:\ell \left( J\right) \leq 2^{-\mathbf{\rho }}\ell \left( S\right)
\right\} , \\
\Pi _{2}\mathcal{Q}_{s}^{S} &\equiv &\left\{ J\in \Pi _{2}\mathcal{Q}%
^{S}:\ell \left( J\right) =2^{-s}\ell \left( S\right) \right\} ,\ \ \ \ \ 
\mathbf{\tau }\leq s<\mathbf{\rho },
\end{eqnarray*}%
and we make the corresponding decomposition for the sublinear form%
\begin{eqnarray*}
\left\vert \mathsf{B}\right\vert _{\limfunc{stop},1,\bigtriangleup }^{A,%
\mathcal{Q}}\left( f,g\right) &=&\left\vert \mathsf{B}\right\vert _{\limfunc{%
stop},1,\bigtriangleup }^{A,\mathcal{Q}_{\ast }}\left( f,g\right)
+\dsum\limits_{\mathbf{\tau }\leq s<\mathbf{\rho }}\left\vert \mathsf{B}%
\right\vert _{\limfunc{stop},1,\bigtriangleup }^{A,\mathcal{Q}_{s}}\left(
f,g\right) \\
&\equiv &\sum_{S\in \mathcal{S}}\sum_{J\in \Pi _{2}\mathcal{Q}_{\ast }^{S}}%
\frac{\mathrm{P}^{\alpha }\left( J,\left\vert \varphi _{J}^{\mathcal{Q}%
_{\ast }^{S}}\right\vert \mathbf{1}_{A\setminus I_{\mathcal{Q}_{\ast
}}\left( J\right) }\sigma \right) }{\left\vert J\right\vert ^{\frac{1}{n}}}%
\left\Vert \bigtriangleup _{J}^{\omega }\mathbf{x}\right\Vert _{L^{2}\left(
\omega \right) }\left\Vert \bigtriangleup _{J}^{\omega }g\right\Vert
_{L^{2}\left( \omega \right) } \\
&&+\dsum\limits_{\mathbf{\tau }\leq s<\mathbf{\rho }}\sum_{S\in \mathcal{S}%
}\sum_{J\in \Pi _{2}\mathcal{Q}_{s}^{S}}\frac{\mathrm{P}^{\alpha }\left(
J,\left\vert \varphi _{J}^{\mathcal{Q}_{s}^{S}}\right\vert \mathbf{1}%
_{A\setminus I_{\mathcal{Q}_{S}}\left( J\right) }\sigma \right) }{\left\vert
J\right\vert ^{\frac{1}{n}}}\left\Vert \bigtriangleup _{J}^{\omega }\mathbf{x%
}\right\Vert _{L^{2}\left( \omega \right) }\left\Vert \bigtriangleup
_{J}^{\omega }g\right\Vert _{L^{2}\left( \omega \right) }\ .
\end{eqnarray*}%
By the tree-connected property of $\mathcal{Q}$, and the telescoping
property of martingale differences, together with the bound $\alpha _{%
\mathcal{A}}\left( A\right) $ on the quasiaverages of $f$ in the corona $%
\mathcal{C}_{A}$, we have%
\begin{equation}
\left\vert \varphi _{J}^{\mathcal{Q}_{\ast }^{S}}\right\vert ,\left\vert
\varphi _{J}^{\mathcal{Q}_{s}^{S}}\right\vert \lesssim \alpha _{\mathcal{A}%
}\left( A\right) 1_{A\setminus I_{\mathcal{Q}^{S}}\left( J\right) },
\label{bfi 3}
\end{equation}%
where $I_{\mathcal{Q}^{S}}\left( J\right) \equiv \dbigcap \left\{ I:\left(
I,J\right) \in \mathcal{Q}^{S}\right\} $ is the smallest quasicube $I$ for
which $\left( I,J\right) \in \mathcal{Q}^{S}$.

\bigskip

\textbf{Case} for $\left\vert \mathsf{B}\right\vert _{\limfunc{stop}%
,1,\bigtriangleup }^{A,\mathcal{Q}_{s}^{S}}\left( f,g\right) $ when $\mathbf{%
\tau }\leq s\leq \mathbf{\rho }$: Now is a crucial definition that permits
us to bound the form by the size functional with a large hole. Let 
\begin{equation*}
\mathcal{C}_{s}^{S}\equiv \pi ^{\mathbf{\tau }}\left( \Pi _{2}\mathcal{Q}%
_{s}^{S}\right)
\end{equation*}%
be the collection of $\mathbf{\tau }$-parents of quasicubes in $\Pi _{2}%
\mathcal{Q}_{s}^{S}$, and denote by $\mathcal{M}_{s}^{S}$ the set of \emph{%
maximal} quasicubes in the collection $\mathcal{C}_{s}^{S}$. We have that
the quasicubes in $\mathcal{M}_{s}^{S}$ are good by our assumption that the
quasiHaar support of $g$ is contained in the $\mathbf{\tau }$-good quasigrid 
$\Omega \mathcal{D}_{\left( \mathbf{r},\varepsilon \right) -\limfunc{good}}^{%
\mathbf{\tau }}$, and so $\mathcal{M}_{s}^{S}\subset \mathcal{N}_{\mathbf{%
\rho }-\mathbf{\tau }}\left( S\right) $. Here is the first of two key
inclusions:%
\begin{equation}
J\Subset _{\mathbf{\tau }}K\subset S\text{ if }K\in \mathcal{M}_{s}^{S}\text{
is the unique quasicube containing }J.  \label{first key}
\end{equation}

Let $I_{s}\equiv \pi ^{\mathbf{\rho }-s}S$ so that for each $J$ in $\Pi _{2}%
\mathcal{Q}_{s}^{S}$ we have the second key inclusion%
\begin{equation}
\pi ^{\mathbf{\rho }}J=I_{s}\subset I_{\mathcal{Q}^{S}}\left( J\right) .
\label{second key}
\end{equation}%
Now\ each $K\in \mathcal{M}_{s}^{S}$ is also $\left( \mathbf{\rho }-\mathbf{%
\tau }\right) $-deeply embedded in $I_{s}$ if $\mathbf{\rho }\geq \mathbf{r}+%
\mathbf{\tau }$, so that in particular, $3K\subset I_{s}$. This and (\ref%
{second key}) have the consequence that the following Poisson inequalities
hold:%
\begin{equation*}
\frac{\mathrm{P}^{\alpha }\left( J,\mathbf{1}_{A\setminus I_{\mathcal{Q}%
^{S}}\left( J\right) }\sigma \right) }{\left\vert J\right\vert ^{\frac{m}{n}}%
}\lesssim \frac{\mathrm{P}^{\alpha }\left( J,\mathbf{1}_{A\setminus
I_{s}}\sigma \right) }{\left\vert J\right\vert ^{\frac{m}{n}}}\lesssim \frac{%
\mathrm{P}^{\alpha }\left( K,\mathbf{1}_{A\setminus I_{s}}\sigma \right) }{%
\left\vert K\right\vert ^{\frac{m}{n}}}\lesssim \frac{\mathrm{P}^{\alpha
}\left( K,\mathbf{1}_{A\setminus S}\sigma \right) }{\left\vert K\right\vert
^{\frac{m}{n}}}.
\end{equation*}%
Let $\Pi _{2}\mathcal{Q}_{s}^{S}\left( K\right) \equiv \left\{ J\in \Pi _{2}%
\mathcal{Q}_{s}^{S}:J\subset K\right\} $. Let%
\begin{eqnarray*}
\left[ \Pi _{2}\mathcal{Q}_{s}^{S}\right] _{\ell } &\equiv &\left\{ J\in \Pi
_{2}\mathcal{Q}_{s}^{S}:\ell \left( J^{\prime }\right) =2^{-\ell }\ell
\left( K\right) \right\} , \\
\left[ \Pi _{2}\mathcal{Q}_{s}^{S}\right] _{\ell }^{\ast } &\equiv &\left\{
J^{\prime }:J^{\prime }\subset J\in \Pi _{2}\mathcal{Q}_{s}^{S}:\ell \left(
J^{\prime }\right) =2^{-\ell }\ell \left( K\right) \right\} .
\end{eqnarray*}%
Now set $\mathcal{Q}_{s}\equiv \dbigcup\limits_{S\in \mathcal{S}}\mathcal{Q}%
_{s}^{S}$. We apply (\ref{bfi 3}) and Cauchy-Schwarz in $J$ to bound $%
\left\vert \mathsf{B}\right\vert _{\limfunc{stop},1,\bigtriangleup }^{A,%
\mathcal{Q}_{s}}\left( f,g\right) $ by 
\begin{equation*}
\alpha _{\mathcal{A}}\left( A\right) \sum_{S\in \mathcal{S}}\sum_{K\in 
\mathcal{M}_{s}^{S}}\left( \frac{\mathrm{P}^{\alpha }\left( K,\mathbf{1}%
_{A\setminus S}\sigma \right) }{\left\vert K\right\vert ^{\frac{1}{n}}}%
\right) \left\Vert \mathsf{P}_{\Pi _{2}^{S,\mathbf{\tau }-\limfunc{deep}}%
\mathcal{Q}_{s};K}^{\omega }\mathbf{x}\right\Vert _{L^{2}\left( \omega
\right) }\left\Vert \mathsf{P}_{\Pi _{2}^{S,\mathbf{\tau }-\limfunc{deep}}%
\mathcal{Q}_{s};K}^{\omega }g\right\Vert _{L^{2}\left( \omega \right) },
\end{equation*}%
where the localized projections $\mathsf{P}_{\Pi _{2}^{S,\mathbf{\tau }-%
\limfunc{deep}}\mathcal{Q}_{s};K}^{\omega }$ are defined in (\ref{def
localization}) above.

Thus using Cauchy-Schwarz in $K$ we have that $\left\vert \mathsf{B}_{%
\limfunc{stop},1,\bigtriangleup }^{A,\mathcal{Q}_{s}}\left( f,g\right)
\right\vert $ is bounded by%
\begin{eqnarray*}
&&\alpha _{\mathcal{A}}\left( A\right) \sum_{S\in \mathcal{S}}\sum_{K\in 
\mathcal{M}_{s}^{S}}\sqrt{\left\vert K\right\vert _{\sigma }} \\
&&\times \frac{1}{\sqrt{\left\vert K\right\vert _{\sigma }}}\left( \frac{%
\mathrm{P}^{\alpha }\left( K,\mathbf{1}_{A\setminus S}\sigma \right) }{%
\left\vert K\right\vert ^{\frac{1}{n}}}\right) \left\Vert \mathsf{P}_{\Pi
_{2}\mathcal{Q}_{s}^{S}\left( K\right) }^{\omega }\mathbf{x}\right\Vert
_{L^{2}\left( \omega \right) }\left\Vert \mathsf{P}_{\Pi _{2}\mathcal{Q}%
_{s}^{S}\left( K\right) }^{\omega }g\right\Vert _{L^{2}\left( \omega \right)
} \\
&\leq &\alpha _{\mathcal{A}}\left( A\right) \sup_{S\in \mathcal{S}}\mathcal{S%
}_{\limfunc{size}}^{\alpha ,A;S}\left( \mathcal{Q}\right) \left( \sum_{S\in 
\mathcal{S}}\sum_{K\in \mathcal{N}_{\mathbf{\rho }-\mathbf{\tau }}\left(
S\right) }\left\vert K\right\vert _{\sigma }\right) ^{\frac{1}{2}}\left\Vert
g\right\Vert _{L^{2}\left( \omega \right) } \\
&\leq &\sup_{S\in \mathcal{S}}\mathcal{S}_{\limfunc{size}}^{\alpha
,A;S}\left( \mathcal{Q}\right) \alpha _{\mathcal{A}}\left( A\right) \sqrt{%
\left\vert A\right\vert _{\sigma }}\left\Vert g\right\Vert _{L^{2}\left(
\omega \right) },
\end{eqnarray*}%
since $J\Subset _{\mathbf{\tau }}M\subset K$ by (\ref{first key}), since $%
\mathcal{M}_{s}^{S}\subset \mathcal{N}_{\mathbf{\rho }-\mathbf{\tau }}\left(
S\right) $, and since the collection of quasicubes $\dbigcup\limits_{S\in 
\mathcal{S}}\mathcal{M}_{s}^{S}$ is pairwise disjoint in $A$.

\textbf{Case} for $\left\vert \mathsf{B}\right\vert _{\limfunc{stop}%
,1,\bigtriangleup }^{A,\mathcal{Q}_{\ast }}\left( f,g\right) $: This time we
let $\mathcal{C}_{\ast }^{S}\equiv \pi ^{\mathbf{\tau }}\left( \Pi _{2}%
\mathcal{Q}_{\ast }^{S}\right) $ and denote by $\mathcal{M}_{\ast }^{S}$ the
set of \emph{maximal} quasicubes in the collection $\mathcal{C}_{\ast }^{S}$%
. We have the two key inclusions,%
\begin{equation*}
J\Subset _{\mathbf{\tau }}M\Subset _{\mathbf{\rho }-\mathbf{\tau }}S\text{
if }M\in \mathcal{M}_{\ast }^{S}\text{ is the unique quasicube containing }J,
\end{equation*}%
and%
\begin{equation*}
\pi ^{\mathbf{\rho }}J\subset S\subset I_{\mathcal{Q}}\left( J\right) .
\end{equation*}%
Moreover there is $K\in \mathcal{W}^{\limfunc{good}}\left( S\right) $ that
contains $M$. Thus $3K\subset S$ and we have 
\begin{equation*}
\frac{\mathrm{P}^{\alpha }\left( J,\mathbf{1}_{A\setminus S}\sigma \right) }{%
\left\vert J\right\vert ^{\frac{1}{n}}}\lesssim \frac{\mathrm{P}^{\alpha
}\left( K,\mathbf{1}_{A\setminus S}\sigma \right) }{\left\vert K\right\vert
^{\frac{1}{n}}},
\end{equation*}%
and $\left\vert \varphi _{J}\right\vert \lesssim \alpha _{\mathcal{A}}\left(
A\right) 1_{A\setminus S}$. Now set $\mathcal{Q}_{\ast }\equiv
\dbigcup\limits_{S\in \mathcal{S}}\mathcal{Q}_{\ast }^{S}$. Arguing as
above, but with $\mathcal{W}^{\limfunc{good}}\left( S\right) $ in place of $%
\mathcal{N}_{\mathbf{\rho }-\mathbf{\tau }}\left( S\right) $, and using $%
J\Subset _{\mathbf{\rho }}I_{\mathcal{Q}}\left( J\right) $, we can bound $%
\left\vert \mathsf{B}\right\vert _{\limfunc{stop},1,\bigtriangleup }^{A,%
\mathcal{Q}_{\ast }}\left( f,g\right) $ by%
\begin{eqnarray*}
&&\alpha _{\mathcal{A}}\left( A\right) \sum_{S\in \mathcal{S}}\sum_{K\in 
\mathcal{W}^{\limfunc{good}}\left( S\right) }\sqrt{\left\vert K\right\vert
_{\sigma }} \\
&&\times \frac{1}{\sqrt{\left\vert K\right\vert _{\sigma }}}\left( \frac{%
\mathrm{P}^{\alpha }\left( K,\mathbf{1}_{A\setminus S}\sigma \right) }{%
\left\vert K\right\vert ^{\frac{1}{n}}}\right) \left\Vert \mathsf{P}_{\Pi
_{2}\mathcal{Q}_{s}^{S}\left( K\right) }^{\omega }\mathbf{x}\right\Vert
_{L^{2}\left( \omega \right) }\left\Vert \mathsf{P}_{\Pi _{2}\mathcal{Q}%
_{\ast }^{S}\left( K\right) }^{\omega }g\right\Vert _{L^{2}\left( \omega
\right) } \\
&\leq &\alpha _{\mathcal{A}}\left( A\right) \sup_{S\in \mathcal{S}}\mathcal{S%
}_{\limfunc{size}}^{\alpha ,A;S}\left( \mathcal{Q}\right) \left( \sum_{S\in 
\mathcal{S}}\sum_{K\in \mathcal{W}^{\limfunc{good}}\left( S\right)
}\left\vert K\right\vert _{\sigma }\right) ^{\frac{1}{2}}\left\Vert
g\right\Vert _{L^{2}\left( \omega \right) } \\
&\leq &\sup_{S\in \mathcal{S}}\mathcal{S}_{\limfunc{size}}^{\alpha
,A;S}\left( \mathcal{Q}\right) \alpha _{\mathcal{A}}\left( A\right) \sqrt{%
\left\vert A\right\vert _{\sigma }}\left\Vert g\right\Vert _{L^{2}\left(
\omega \right) }.
\end{eqnarray*}%
We now sum these bounds in $s$ and $\ast $ and use $\sup_{S\in \mathcal{S}}%
\mathcal{S}_{\limfunc{size}}^{\alpha ,A;S}\left( \mathcal{Q}\right) \leq 
\mathcal{S}_{\limfunc{size}}^{\alpha ,A}\left( \mathcal{Q}\right) $ to
complete the proof of Lemma \ref{straddle 3}.
\end{proof}

\subsubsection{The Orthogonality Lemma}

Given a set $\left\{ \mathcal{Q}_{m}\right\} _{m=0}^{\infty }$ of admissible
collections for $A$, we say that the collections $\mathcal{Q}_{m}$ are \emph{%
mutually orthogonal}, if each collection $\mathcal{Q}_{m}$ satisfies%
\begin{equation*}
\mathcal{Q}_{m}\subset \dbigcup\limits_{j=0}^{\infty }\left\{ \mathcal{A}%
_{m,j}\times \mathcal{B}_{m,j}\right\} \ ,
\end{equation*}%
where the sets $\left\{ \mathcal{A}_{m,j}\right\} _{m,j}$ and $\left\{ 
\mathcal{B}_{m,j}\right\} _{m,j}$ each have bounded overlap on the dyadic
quasigrid $\Omega \mathcal{D}$: 
\begin{equation*}
\sum_{m,j=0}^{\infty }\mathbf{1}_{\mathcal{A}_{m,j}}\leq A\mathbf{1}_{\Omega 
\mathcal{D}}\text{ and }\sum_{m,j=0}^{\infty }\mathbf{1}_{\mathcal{B}%
_{m,j}}\leq B\mathbf{1}_{\Omega \mathcal{D}}.
\end{equation*}

\begin{lemma}
\label{mut orth}Suppose that $\left\{ \mathcal{Q}_{m}\right\} _{m=0}^{\infty
}$ is a set of admissible collections for $A$ that are \emph{mutually
orthogonal}. Then if $\mathcal{Q}\equiv \dbigcup\limits_{m=0}^{\infty }%
\mathcal{Q}_{m}$, the sublinear stopping form $\left\vert \mathsf{B}%
\right\vert _{\limfunc{stop},1,\bigtriangleup }^{A,\mathcal{Q}}\left(
f,g\right) $ has its restricted norm $\mathfrak{N}_{\limfunc{stop}%
,1,\bigtriangleup }^{A,\mathcal{Q}}$ controlled by the \emph{supremum} of
the restricted norms $\mathfrak{N}_{\limfunc{stop},1,\bigtriangleup }^{A,%
\mathcal{Q}_{m}}$: 
\begin{equation*}
\mathfrak{N}_{\limfunc{stop},1,\bigtriangleup }^{A,\mathcal{Q}}\leq \sqrt{nAB%
}\sup_{m\geq 0}\mathfrak{N}_{\limfunc{stop},1,\bigtriangleup }^{A,\mathcal{Q}%
_{m}}.
\end{equation*}
\end{lemma}

\begin{proof}
If $\mathsf{P}_{m}^{\sigma }=\dsum\limits_{j\geq 0}\dsum\limits_{I\in 
\mathcal{A}_{m,j}}\bigtriangleup _{\pi I}^{\sigma }$ (note the parent $\pi I$
in the projection $\bigtriangleup _{\pi I}^{\sigma }$ because of our `change
of dummy variable' in (\ref{dummy})) and $\mathsf{P}_{m}^{\omega
}=\dsum\limits_{j\geq 0}\dsum\limits_{J\in \mathcal{B}_{m,j}}\bigtriangleup
_{J}^{\omega }$, then we have%
\begin{equation*}
\mathsf{B}_{\limfunc{stop}}^{A,\mathcal{Q}_{m}}\left( f,g\right) =\mathsf{B}%
_{\limfunc{stop}}^{A,\mathcal{Q}_{m}}\left( \mathsf{P}_{m}^{\sigma }f,%
\mathsf{P}_{m}^{\omega }g\right) ,
\end{equation*}%
and%
\begin{eqnarray*}
\sum_{m\geq 0}\left\Vert \mathsf{P}_{m}^{\sigma }f\right\Vert _{L^{2}\left(
\sigma \right) }^{2} &\leq &\sum_{m\geq 0}\sum_{j\geq 0}\left\Vert \mathsf{P}%
_{\mathcal{A}_{m,j}}^{\sigma }f\right\Vert _{L^{2}\left( \sigma \right)
}^{2}\leq An\left\Vert f\right\Vert _{L^{2}\left( \sigma \right) }^{2}, \\
\sum_{m\geq 0}\left\Vert \mathsf{P}_{m}^{\omega }g\right\Vert _{L^{2}\left(
\sigma \right) }^{2} &\leq &\sum_{m\geq 0}\sum_{j\geq 0}\left\Vert \mathsf{P}%
_{\mathcal{B}_{m,j}}^{\omega }g\right\Vert _{L^{2}\left( \omega \right)
}^{2}\leq B\left\Vert g\right\Vert _{L^{2}\left( \omega \right) }^{2}\ .
\end{eqnarray*}%
The sublinear inequality (\ref{phi sublinear}) and Cauchy-Schwarz now give%
\begin{eqnarray*}
\left\vert \mathsf{B}\right\vert _{\limfunc{stop},1,\bigtriangleup }^{A,%
\mathcal{Q}}\left( f,g\right) &\leq &\sum_{m\geq 0}\left\vert \mathsf{B}%
\right\vert _{\limfunc{stop},1,\bigtriangleup }^{A,\mathcal{Q}_{m}}\left(
f,g\right) \leq \sum_{m\geq 0}\mathfrak{N}_{stop}^{A,\mathcal{Q}%
_{m}}\left\Vert \mathsf{P}_{m}^{\sigma }f\right\Vert _{L^{2}\left( \sigma
\right) }\left\Vert \mathsf{P}_{m}^{\omega }g\right\Vert _{L^{2}\left(
\sigma \right) } \\
&\leq &\left( \sup_{m\geq 0}\mathfrak{N}_{\limfunc{stop},1,\bigtriangleup
}^{A,\mathcal{Q}_{m}}\right) \sqrt{\sum_{m\geq 0}\left\Vert \mathsf{P}%
_{m}^{\sigma }f\right\Vert _{L^{2}\left( \sigma \right) }^{2}}\sqrt{%
\sum_{m\geq 0}\left\Vert \mathsf{P}_{m}^{\omega }g\right\Vert _{L^{2}\left(
\sigma \right) }^{2}} \\
&\leq &\left( \sup_{m\geq 0}\mathfrak{N}_{\limfunc{stop},1,\bigtriangleup
}^{A,\mathcal{Q}_{m}}\right) \sqrt{nAB}\sqrt{n}\left\Vert f\right\Vert
_{L^{2}\left( \sigma \right) }\left\Vert g\right\Vert _{L^{2}\left( \omega
\right) }.
\end{eqnarray*}
\end{proof}

\subsubsection{Completion of the proof}

Now we return to the proof of inequality (\ref{big 3}) in Proposition \ref%
{bottom up 3}.

\begin{proof}[Proof of (\protect\ref{big 3})]
Recall that 
\begin{eqnarray*}
\mathcal{P}^{big} &=&\left\{ \dbigcup\limits_{L\in \mathcal{L}}\mathcal{P}%
_{L,0}^{big}\right\} \dbigcup \left\{ \dbigcup\limits_{t\geq
1}\dbigcup\limits_{L\in \mathcal{L}}\mathcal{P}_{L,t}\right\} \equiv 
\mathcal{Q}_{0}^{big}\dbigcup \mathcal{Q}_{1}^{big}; \\
\mathcal{Q}_{0}^{big} &\equiv &\dbigcup\limits_{L\in \mathcal{L}}\mathcal{P}%
_{L,0}^{big}\ ,\ \ \ \ \ \mathcal{Q}_{1}^{big}\equiv \dbigcup\limits_{t\geq
1}\mathcal{P}_{t}^{big},\ \ \ \ \ \mathcal{P}_{t}^{big}\equiv
\dbigcup\limits_{L\in \mathcal{L}}\mathcal{P}_{L,t}.
\end{eqnarray*}%
We first consider the collection $\mathcal{Q}_{0}^{big}=\dbigcup\limits_{L%
\in \mathcal{L}}\mathcal{P}_{L,0}^{big}$, and claim that%
\begin{equation}
\mathfrak{N}_{\limfunc{stop},1,\bigtriangleup }^{A,\mathcal{P}%
_{L,0}^{big}}\leq C\mathcal{S}_{\limfunc{size}}^{\alpha ,A}\left( \mathcal{P}%
_{L,0}^{big}\right) \leq C\mathcal{S}_{\limfunc{size}}^{\alpha ,A}\left( 
\mathcal{P}\right) ,\ \ \ \ \ L\in \mathcal{L}.  \label{big t 3}
\end{equation}%
To see this we note that $\mathcal{P}_{L,0}^{big}$ $\mathbf{\tau }$%
-straddles the trivial collection $\left\{ L\right\} $ consisting of a
single quasicube, since the pairs $\left( I,J\right) $ that arise in $%
\mathcal{P}_{L,0}^{big}$ have $I=L$ and $J$ in the shifted corona $\mathcal{C%
}_{I}^{\mathbf{\tau }-\limfunc{shift}}$. Thus we can apply Lemma \ref%
{straddle 3} with $\mathcal{Q}=\mathcal{P}_{L,0}^{big}$ and $\mathcal{S}%
=\left\{ L\right\} $ to obtain (\ref{big t 3}).

Next, we observe that the collections $\mathcal{P}_{L,0}^{big}$ are \emph{%
mutually orthogonal}, namely 
\begin{eqnarray*}
\mathcal{P}_{L,0}^{big} &\subset &\mathcal{C}_{L}\times \mathcal{C}_{L}^{%
\mathbf{\tau }-\limfunc{shift}}\ , \\
\dsum\limits_{L\in \mathcal{L}}\mathbf{1}_{\mathcal{C}_{L}} &\leq &1\text{
and }\dsum\limits_{L\in \mathcal{L}}\mathbf{1}_{\mathcal{C}_{L}^{\mathbf{%
\tau }-\limfunc{shift}}}\leq \mathbf{\tau }.
\end{eqnarray*}%
Thus the Orthogonality Lemma \ref{mut orth} shows that%
\begin{equation*}
\mathfrak{N}_{\limfunc{stop},1,\bigtriangleup }^{A,\mathcal{Q}%
_{0}^{big}}\leq \sqrt{n\mathbf{\tau }}\sup_{L\in \mathcal{L}}\mathfrak{N}_{%
\limfunc{stop},1,\bigtriangleup }^{A,\mathcal{P}_{L,0}^{big}}\leq \sqrt{n%
\mathbf{\tau }}C\mathcal{S}_{\limfunc{size}}^{\alpha ,A}\left( \mathcal{P}%
\right) .
\end{equation*}

Now we turn to the collection%
\begin{eqnarray*}
\mathcal{Q}_{1}^{big} &=&\dbigcup\limits_{t\geq 1}\dbigcup\limits_{L\in 
\mathcal{L}}\mathcal{P}_{L,t}=\dbigcup\limits_{t\geq 1}\mathcal{P}_{t}^{big};
\\
\mathcal{P}_{t}^{big} &\equiv &\dbigcup\limits_{L\in \mathcal{L}}\mathcal{P}%
_{L,t}\ ,\ \ \ \ \ t\geq 0.
\end{eqnarray*}%
We claim that%
\begin{equation}
\mathfrak{N}_{\limfunc{stop},1,\bigtriangleup }^{A,\mathcal{P}%
_{t}^{big}}\leq C\rho ^{-\frac{t}{2}}\mathcal{S}_{\limfunc{size}}^{\alpha
,A}\left( \mathcal{P}\right) ,\ \ \ \ \ t\geq 1.  \label{S big t 3}
\end{equation}%
Note that with this claim established, we have%
\begin{equation*}
\mathfrak{N}_{\limfunc{stop},1,\bigtriangleup }^{A,\mathcal{P}^{big}}\leq 
\mathfrak{N}_{\limfunc{stop},1,\bigtriangleup }^{A,\mathcal{Q}_{0}^{big}}+%
\mathfrak{N}_{\limfunc{stop},1,\bigtriangleup }^{A,\mathcal{Q}%
_{1}^{big}}\leq \mathfrak{N}_{\limfunc{stop},1,\bigtriangleup }^{A,\mathcal{Q%
}_{0}^{big}}+\sum_{t=1}^{\infty }\mathfrak{N}_{\limfunc{stop}%
,1,\bigtriangleup }^{A,\mathcal{P}_{t}^{big}}\leq C_{\mathbf{\rho }}\mathcal{%
S}_{\limfunc{size}}^{\alpha ,A}\left( \mathcal{P}\right) ,
\end{equation*}%
which proves (\ref{big 3}) if we apply the Orthogonal Lemma \ref{mut orth}
to the set of collections $\left\{ \mathcal{P}_{L,0}^{small}\right\} _{L\in 
\mathcal{L}}$, which is mutually orthogonal since $\mathcal{P}%
_{L,0}^{small}\subset \mathcal{C}_{L}^{\prime }\times \mathcal{C}_{L}^{%
\mathbf{\tau }-\limfunc{shift}}$ . With this the proof of Proposition \ref%
{bottom up 3} is now complete since $\rho =1+\varepsilon $. Thus it remains
only to show that (\ref{S big t 3}) holds.

The cases $1\leq t\leq \mathbf{r}+1$ can be handled with relative ease since
decay in $t$ is not needed there. Indeed, $\mathcal{P}_{L,t}$ $\mathbf{\tau }
$-straddles the collection $\mathfrak{C}_{\mathcal{L}}\left( L\right) $ of $%
\mathcal{L}$-children of $L$, and so the Straddling Lemma applies to give%
\begin{equation*}
\mathfrak{N}_{\limfunc{stop},1,\bigtriangleup }^{A,\mathcal{P}_{L,t}}\leq C%
\mathcal{S}_{\limfunc{size}}^{\alpha ,A}\left( \mathcal{P}_{L,t}\right) \leq
C\mathcal{S}_{\limfunc{size}}^{\alpha ,A}\left( \mathcal{P}\right) ,
\end{equation*}%
and then the Orthogonality Lemma \ref{mut orth} applies to give%
\begin{equation*}
\mathfrak{N}_{\limfunc{stop},1,\bigtriangleup }^{A,\mathcal{P}%
_{t}^{big}}\leq \sqrt{n\mathbf{\tau }}\sup_{L\in \mathcal{L}}\mathfrak{N}_{%
\limfunc{stop},1,\bigtriangleup }^{A,\mathcal{P}_{L,t}}\leq C\sqrt{n\mathbf{%
\tau }}\mathcal{S}_{\limfunc{size}}^{\alpha ,A}\left( \mathcal{P}\right) ,
\end{equation*}%
since $\left\{ \mathcal{P}_{L,t}\right\} _{L\in \mathcal{L}}$ is mutually
orthogonal as $\mathcal{P}_{L,t}\subset \mathcal{C}_{L}\times \mathcal{C}%
_{L^{\prime }}^{\mathbf{\tau }-\limfunc{shift}}$ with $L\in \mathcal{G}_{d}$
and $L^{\prime }\in \mathcal{G}_{d+t}$ for depth $d=d\left( L\right) $.

Now we consider the case $t\geq \mathbf{r}+2$, where it is essential to
obtain decay in $t$. We again apply Lemma \ref{straddle 3} to $\mathcal{P}%
_{L,t}$ with $\mathcal{S}=\mathfrak{C}_{\mathcal{L}}\left( L\right) $, but
this time we must use the stronger localized bounds $\mathcal{S}_{\limfunc{%
size}}^{\alpha ,A;S}$ with an $S$-hole, that give%
\begin{equation}
\mathfrak{N}_{\limfunc{stop},1,\bigtriangleup }^{A,\mathcal{P}_{L,t}}\leq
C\sup_{S\in \mathfrak{C}_{\mathcal{L}}\left( L\right) }\mathcal{S}_{\limfunc{%
size}}^{\alpha ,A;S}\left( \mathcal{P}_{L,t}\right) ,\ \ \ \ \ t\geq 0.
\label{t,n 3}
\end{equation}%
Fix $L\in \mathcal{G}_{d}$. Now we note that if $J\in \Pi _{2}^{L,\mathbf{%
\tau }-\limfunc{deep}}\mathcal{P}_{L,t}$ then $J$ belongs to the $\mathbf{%
\tau }$-shifted corona $\mathcal{C}_{L^{d+t}}^{\mathbf{\tau }-\limfunc{shift}%
}$ for some quasicube $L^{d+t}\in \mathcal{G}_{d+t}$. Then $\pi ^{\mathbf{%
\tau }}J$ is $\mathbf{\tau }$ levels above $J$, hence in the corona $%
\mathcal{C}_{L^{d+t}}$. This quasicube $L^{d+t}$ lies in some child $S\in 
\mathcal{S}=\mathfrak{C}_{\mathcal{L}}\left( L\right) $. So fix $S\in 
\mathcal{S}$ and a quasicube $L^{d+t}\in \mathcal{G}_{d+t}$ that is
contained in $S$ with $t\geq \mathbf{r}+2$. Now the quasicubes $K$ that
arise in the supremum defining $\mathcal{S}_{\limfunc{size}}^{\alpha
,A;S}\left( \mathcal{P}_{L,t}\right) $ in (\ref{localized size}) belong to
either $\mathcal{N}_{\mathbf{\rho }-\mathbf{\tau }}\left( S\right) $ or $%
\mathcal{W}^{\limfunc{good}}\left( S\right) $. We will consider these two
cases separately.

So first suppose that $K\in \mathcal{N}_{\mathbf{\rho }-\mathbf{\tau }%
}\left( S\right) $. A simple induction on levels yields%
\begin{eqnarray*}
\omega _{\mathcal{P}_{L,t}}\left( \mathbf{T}^{\mathbf{\tau }-\limfunc{deep}%
}\left( K\right) \right) &=&\sum_{\substack{ J\in \Pi _{2}^{S,\mathbf{\tau }-%
\limfunc{deep}}\mathcal{P}_{L,t}  \\ J\subset K}}\left\Vert \bigtriangleup
_{J}^{\omega }\mathbf{x}\right\Vert _{L^{2}\left( \omega \right) }^{2} \\
&\leq &\omega _{\mathcal{P}}\left( \dbigcup\limits_{L^{d+t}\in \mathcal{G}%
_{d+t}:\ L^{d+t}\subset K}\mathbf{T}^{\mathbf{\tau }-\limfunc{deep}}\left(
L^{d+t}\right) \right) \\
&\leq &\frac{1}{\rho }\omega _{\mathcal{P}}\left(
\dbigcup\limits_{L^{d+t-1}\in \mathcal{G}_{d+t-1}:\ L^{d+t-1}\subset K}%
\mathbf{T}^{\mathbf{\tau }-\limfunc{deep}}\left( L^{d+t-1}\right) \right) \\
&&\vdots \\
&\lesssim &\rho ^{-\left( t-\mathbf{\rho }-\mathbf{\tau }\right) }\omega _{%
\mathcal{P}}\left( \mathbf{T}^{\mathbf{\tau }-\limfunc{deep}}\left( K\right)
\right) ,\ \ \ \ \ t\geq \mathbf{\rho }-\mathbf{\tau +}2.
\end{eqnarray*}%
Thus we have%
\begin{eqnarray*}
&&\frac{1}{\left\vert K\right\vert _{\sigma }}\left( \frac{\mathrm{P}%
^{\alpha }\left( K,\mathbf{1}_{A\setminus S}\sigma \right) }{\left\vert
K\right\vert ^{\frac{1}{n}}}\right) ^{2}\omega _{\mathcal{P}_{L,t}}\left( 
\mathbf{T}^{\mathbf{\tau }-\limfunc{deep}}\left( K\right) \right) \\
&\lesssim &\rho ^{-t}\frac{1}{\left\vert K\right\vert _{\sigma }}\left( 
\frac{\mathrm{P}^{\alpha }\left( K,\mathbf{1}_{A\setminus S}\sigma \right) }{%
\left\vert K\right\vert ^{\frac{1}{n}}}\right) ^{2}\omega _{\mathcal{P}%
}\left( \mathbf{T}^{\mathbf{\tau }-\limfunc{deep}}\left( K\right) \right)
\lesssim \rho ^{-t}\mathcal{S}_{\limfunc{size}}^{\alpha ,A}\left( \mathcal{P}%
\right) ^{2}.
\end{eqnarray*}

Now suppose that $K\in \mathcal{W}^{\limfunc{good}}\left( S\right) $ and
that $J\in \Pi _{2}^{S,\mathbf{\tau }-\limfunc{deep}}\mathcal{P}_{L,t}$ and $%
J\subset K$. There is a unique quasicube $L^{d+\mathbf{r}+1}\in \mathcal{G}%
_{d+\mathbf{r}+1}$ such that $J\subset L^{d+\mathbf{r}+1}\subset S$. Now $%
L^{d+\mathbf{r}+1}$ is good so $L^{d+\mathbf{r}+1}\Subset _{\mathbf{r}}S$.
Thus in particular $3L^{d+\mathbf{r}+1}\subset S$ so that $L^{d+\mathbf{r}%
+1}\subset K$. The above simple induction applies here to give%
\begin{eqnarray*}
\sum_{\substack{ J\in \Pi _{2}^{S,\mathbf{\tau }-\limfunc{deep}}\mathcal{P}%
_{L,t}  \\ J\subset L^{d+\mathbf{r}+1}}}\left\Vert \bigtriangleup
_{J}^{\omega }\mathbf{x}\right\Vert _{L^{2}\left( \omega \right) }^{2} &\leq
&\omega _{\mathcal{P}}\left( \dbigcup\limits_{L^{d+t}\in \mathcal{G}_{d+t}:\
L^{m-t}\subset L^{d+\mathbf{r}+1}}\mathbf{T}^{\mathbf{\tau }-\limfunc{deep}%
}\left( L^{d+t}\right) \right) \\
&\lesssim &\rho ^{-\left( t-1-\mathbf{r}\right) }\omega _{\mathcal{P}}\left( 
\mathbf{T}^{\mathbf{\tau }-\limfunc{deep}}\left( L^{d+\mathbf{r}+1}\right)
\right) ,\ \ \ \ \ t\geq \mathbf{r}+2.
\end{eqnarray*}%
Thus we have,%
\begin{eqnarray*}
&&\left( \frac{\mathrm{P}^{\alpha }\left( K,\mathbf{1}_{A\setminus S}\sigma
\right) }{\left\vert K\right\vert ^{\frac{1}{n}}}\right) ^{2}\sum_{\substack{
J\in \Pi _{2}^{K,\mathbf{\tau }-\limfunc{deep}}\mathcal{P}_{L,t}  \\ %
J\subset K}}\left\Vert \bigtriangleup _{J}^{\omega }\mathbf{x}\right\Vert
_{L^{2}\left( \omega \right) }^{2} \\
&\leq &C\left( \frac{\mathrm{P}^{\alpha }\left( K,\mathbf{1}_{A\setminus
S}\sigma \right) }{\left\vert K\right\vert ^{\frac{1}{n}}}\right) ^{2}\rho
^{-\left( t-1-\mathbf{r}\right) }\sum_{\substack{ L^{d+\mathbf{r}+1}\in 
\mathcal{G}_{d+\mathbf{r}+1}  \\ L^{d+\mathbf{r}+1}\subset K}}\omega _{%
\mathcal{P}}\left( \mathbf{T}^{\mathbf{\tau }-\limfunc{deep}}\left( L^{d+%
\mathbf{r}+1}\right) \right) \\
&\leq &C\rho ^{-\left( t-1-\mathbf{r}\right) }\left( \frac{\mathrm{P}%
^{\alpha }\left( K,\mathbf{1}_{A\setminus S}\sigma \right) }{\left\vert
K\right\vert ^{\frac{1}{n}}}\right) ^{2}\omega _{\mathcal{P}}\left( \mathbf{T%
}^{\mathbf{\tau }-\limfunc{deep}}\left( K\right) \right) \leq C\rho
^{-\left( t-1-\mathbf{r}\right) }\mathcal{S}_{\limfunc{size}}^{\alpha
,A}\left( \mathcal{P}\right) ^{2}.
\end{eqnarray*}

So altogether we conclude that%
\begin{eqnarray*}
&&\sup_{S\in \mathfrak{C}_{\mathcal{L}}\left( L\right) }\mathcal{S}_{%
\limfunc{size}}^{\alpha ,A;S}\left( \mathcal{P}_{L,t}\right) ^{2} \\
&=&\sup_{S\in \mathfrak{C}_{\mathcal{L}}\left( L\right) }\sup_{K\in \mathcal{%
N}_{\mathbf{\rho }-\mathbf{\tau }}\left( S\right) \cup \mathcal{W}^{\limfunc{%
good}}\left( S\right) }\frac{1}{\left\vert K\right\vert _{\sigma }}\left( 
\frac{\mathrm{P}^{\alpha }\left( K,\mathbf{1}_{A\setminus K}\sigma \right) }{%
\left\vert K\right\vert ^{\frac{1}{n}}}\right) ^{2}\sum_{\substack{ J\in \Pi
_{2}^{K,\mathbf{\tau }-\limfunc{deep}}\mathcal{P}_{L,t}  \\ J\subset K}}%
\left\Vert \mathsf{P}_{J}^{\omega }\mathbf{x}\right\Vert _{L^{2}\left(
\omega \right) }^{2} \\
&\leq &C_{\mathbf{r},\mathbf{\tau },\mathbf{\rho }}\rho ^{-t}\mathcal{S}_{%
\limfunc{size}}^{\alpha ,A}\left( \mathcal{P}\right) ^{2},
\end{eqnarray*}%
and combined with (\ref{t,n 3}) this gives (\ref{S big t 3}). As we pointed
out above, this completes the proof of Proposition \ref{bottom up 3}, hence
of Proposition \ref{stopping bound}, and finally of Theorem \ref{T1 theorem}.
\end{proof}

\end{document}